\newtheorem{theorem}{Theorem}[section]
\newtheorem{proposition}[theorem]{Proposition}
\newtheorem{definition}[theorem]{Definition}
\newtheorem{remark}[theorem]{Remark}
\newtheorem{corollary}[theorem]{Corollary}
\numberwithin{equation}{section}
\newlength{\drop}
\definecolor{amethyst}{rgb}{0.6, 0.4, 0.8}
\definecolor{burgundy}{rgb}{0.5, 0.0, 0.13}
\title{On enforcing maximum principles and achieving 
element-wise species balance for advection-diffusion-reaction 
equations under the finite element method}
\author{\textbf{M.~K.~Mudunuru} and \textbf{K.~B.~Nakshatrala} \\
{\small Department of Civil and Environmental Engineering, University of Houston.}}
\date{\today}
\begin{document}

\begin{titlepage}
  \drop=0.1\textheight
  \centering
  \vspace*{\baselineskip}
  \rule{\textwidth}{1.6pt}\vspace*{-\baselineskip}\vspace*{2pt}
  \rule{\textwidth}{0.4pt}\\[\baselineskip]
  {\LARGE \textbf{\color{burgundy}
    On enforcing maximum principles and \\ [0.3\baselineskip]
    achieving element-wise species balance for \\ [0.3\baselineskip] 
    advection-diffusion-reaction equations under \\ [0.3\baselineskip] 
    the finite element method}}\\[0.3\baselineskip]
    \rule{\textwidth}{0.4pt}\vspace*{-\baselineskip}\vspace{3.2pt}
    \rule{\textwidth}{1.6pt}\\[\baselineskip]
    \scshape
    \vspace*{1\baselineskip}
    Authored by \\[\baselineskip]
    {\Large M.~K.~Mudunuru\par}
    {\itshape Graduate Student, University of Houston.}\\[\baselineskip]
    {\Large K.~B.~Nakshatrala\par}
    {\itshape Department of Civil \& Environmental Engineering \\
    University of Houston, Houston, Texas 77204--4003. \\ 
    \textbf{phone:} +1-713-743-4418, \textbf{e-mail:} knakshatrala@uh.edu \\
    \textbf{website:} http://www.cive.uh.edu/faculty/nakshatrala\par}
    \vspace*{1\baselineskip}
    \begin{figure*}[h]
      \centering
      \subfigure{\includegraphics[scale = 0.30,clip]
      {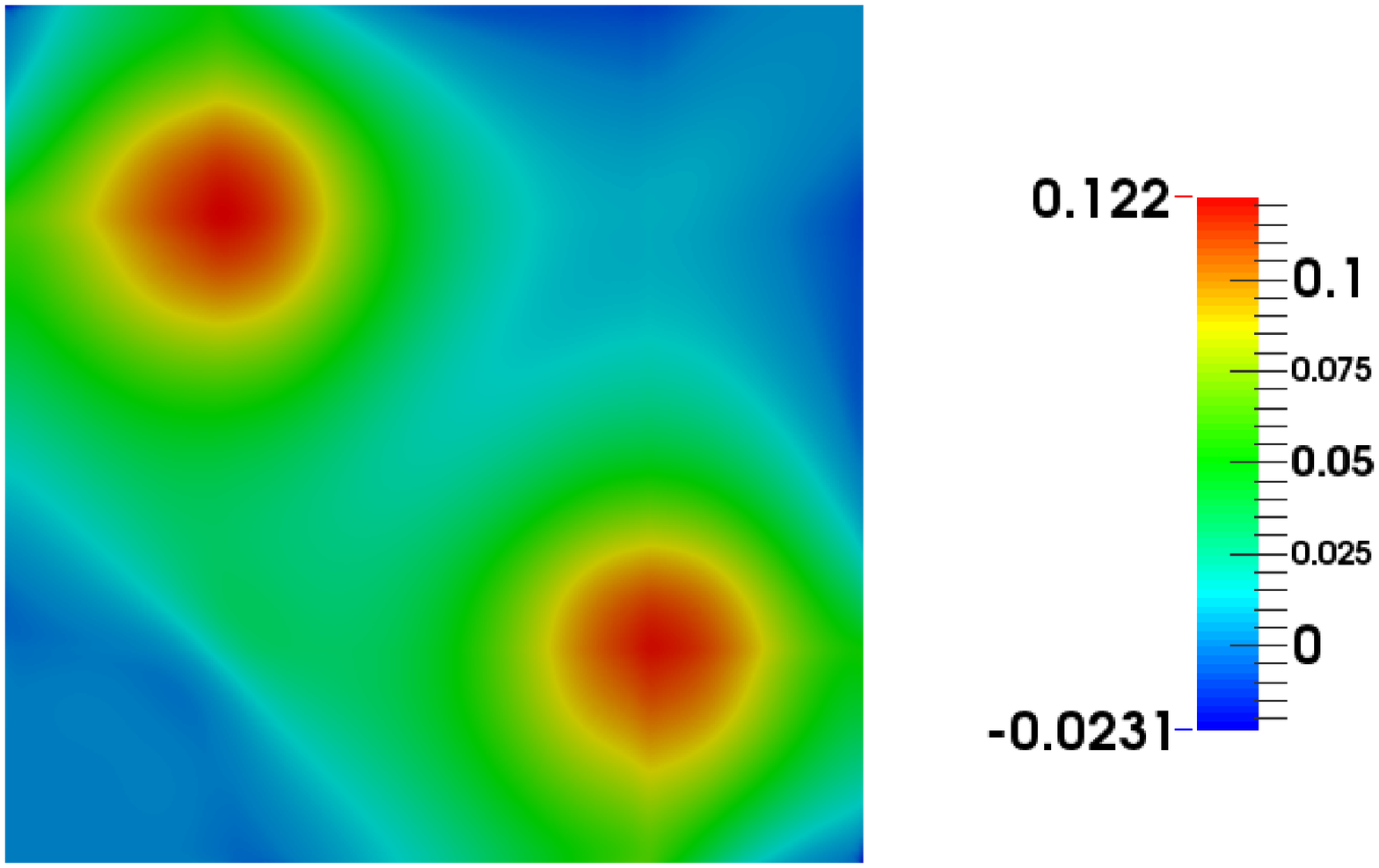}}
      \hspace{0.15in}
      \subfigure{\includegraphics[scale=0.30,clip]
      {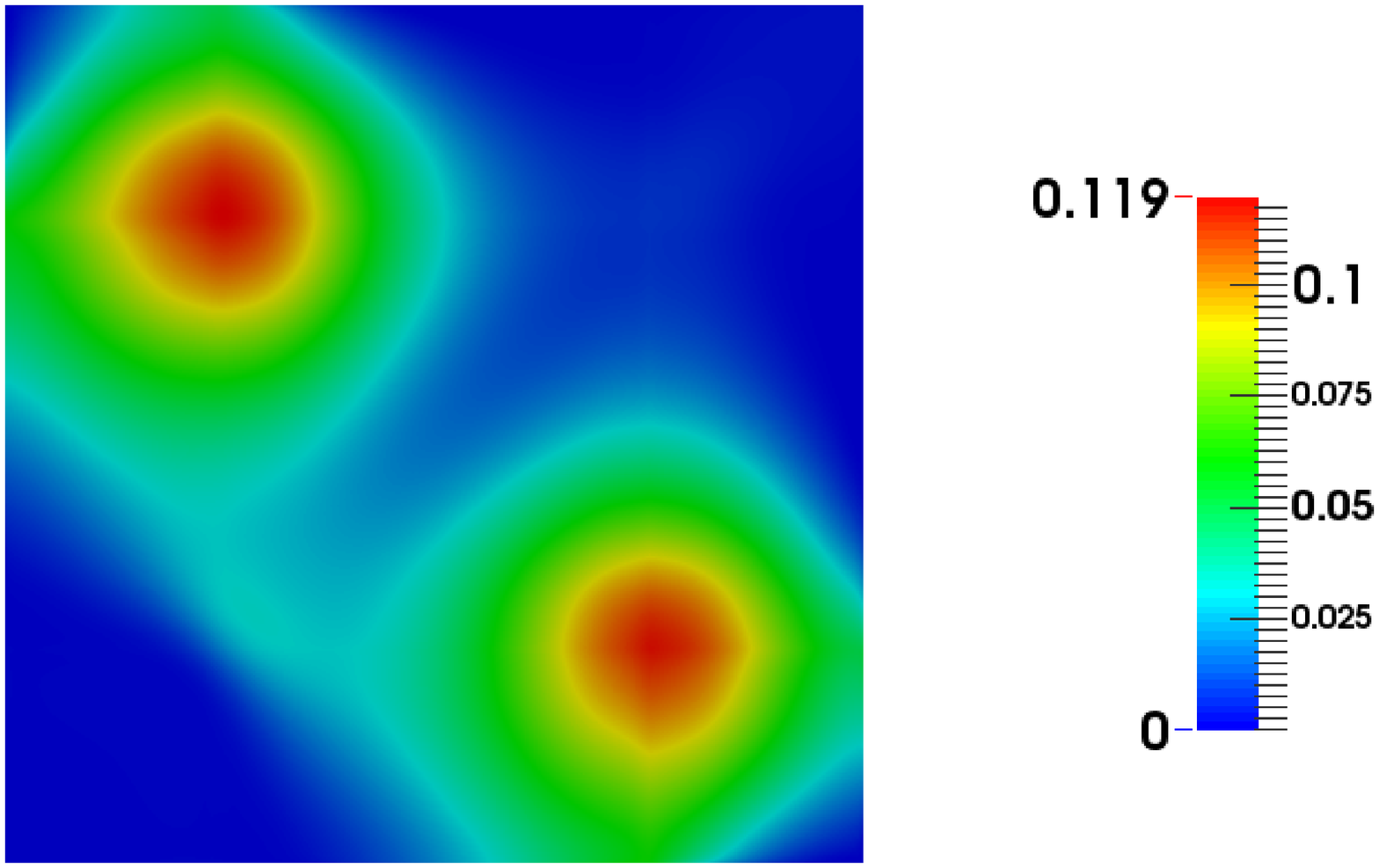}}
      \caption*{These figures show the fate of the product 
      in a transient transport-controlled bimolecular reaction 
      under vortex-stirred mixing. The left figure is obtained 
      using a popular numerical formulation, which violates the 
      non-negative constraint. The right figure is based on the 
      proposed computational framework. These figures clearly 
      illustrate the main contribution of this paper: \emph{The 
        proposed computational framework produces physically 
        meaningful results for advective-diffusive-reactive 
        systems, which is not the case with many popular 
        formulations.}}
    \end{figure*}
    \vfill
    {\scshape 2015} \\
    {\small Computational \& Applied Mechanics Laboratory} \par
\end{titlepage}

\begin{abstract}
  We present a robust computational framework for 
  advective-diffusive-reactive systems that satisfies 
  maximum principles, the non-negative constraint, and 
  element-wise species balance property. The proposed 
  methodology is valid on general computational grids, 
  can handle heterogeneous anisotropic media, and provides 
  accurate numerical solutions even for very high P\'eclet 
  numbers. 
  The significant contribution of this paper is to incorporate 
  advection (which makes the spatial part of the differential 
  operator non-self-adjoint) into the non-negative computational 
  framework, and overcome numerical challenges associated 
  with advection.
  We employ low-order mixed finite element formulations 
  based on least-squares formalism, and enforce explicit 
  constraints on the discrete problem to meet the desired 
  properties. 
  The resulting constrained discrete problem belongs 
  to convex quadratic programming for which a unique 
  solution exists. Maximum principles and the non-negative 
  constraint give rise to bound constraints while element-wise 
  species balance gives rise to equality constraints. 
  The resulting convex quadratic programming problems 
  are solved using an interior-point algorithm. Several 
  numerical results pertaining to advection-dominated 
  problems are presented to illustrate the robustness, 
  convergence, and the overall performance of the 
  proposed computational framework. 
\end{abstract}
\keywords{advection-diffusion-reaction equations; 
  non-self-adjoint operators; maximum principles; 
  non-negative constraint; local and global species 
  balance; least-squares mixed formulations; convex 
  optimization}
\maketitle

\section{INTRODUCTION AND MOTIVATION}
\label{Sec:S1_NN_AD_Intro}
Advection-diffusion-reaction (ADR) equations are 
pervasive in the mathematical modeling of several 
important phenomena in mathematical physics and 
engineering sciences. Some examples include 
degradation/healing of materials under extreme 
environmental conditions \cite{Chatterjee_etal}, 
coupled chemo-thermo-mechano-diffusion problems 
in composites \cite{Sih_etal}, contaminant transport 
\cite{Bear_PorousMedia}, turbulent mixing in atmospheric 
sciences \cite{Cant_Mastorakos}, diffusion-controlled 
biochemical reactions \cite{Saltzman}, tracer modeling 
in hydrogeology \cite{Leibundgut_etal}, and ionic mobility 
in biological systems \cite{Keener_Sneyd_Vol1}. Additionally, 
ADR equation serves as a good mathematical model in numerical 
analysis, as it offers various unique challenges in obtaining 
stable and accurate numerical solutions \cite{Morton}.

The primary variables in these mathematical models are 
typically the concentration and/or the (absolute) temperature. 
These quantities naturally attain non-negative values. Under 
some popular constitutive models (such as Fourier model and 
Fickian model, and their generalizations), these physical 
quantities satisfy diffusion-type equations, which are 
elliptic/parabolic partial differential equations (PDEs) 
and can be non-self-adjoint. These PDEs are known to satisfy 
important mathematical properties like maximum principles and 
the non-negative constraint (e.g., see \cite{Gilbarg_Trudinger}). 
A predictive numerical formulation needs to satisfy these 
mathematical properties and physical laws like the (local 
and global) species balance. It is well-documented in the 
literature that traditional numerical methods perform poorly 
for advection-dominated ADR equations (e.g., see \cite{Donea_Huerta,
Morton}). In the past few decades, considerable progress 
has been made to obtain sufficiently accurate numerical 
solutions for ADR equations on coarse computational grids 
\cite{Codina_CMAME_2000_v188_p61}. It is then natural to 
ask: ``\emph{why there is a need for yet another numerical 
formulation for ADR equation}?''. We now outline several 
reasons for such a need.

\begin{enumerate}[(a)]
\item \emph{Localized phenomena and node-to-node spurious oscillations:}
  For advection-dominated problems, it is well-known 
  that the standard single-field Galerkin finite element 
  formulation gives node-to-node spurious oscillations 
  on coarse computational grids \cite{Morton}. Moreover, 
  it cannot capture steep gradients such as interior and 
  boundary layers. Various alternate numerical techniques 
  have been proposed to avoid these spurious oscillations 
  \cite{Gresho_Sani_v1}. Some methods seem to capture steep 
  gradients in interior layers while others capture boundary 
  layers. However, most of them do not seem to capture both 
  interior and boundary layers, and at the same time avoid 
  node-to-node spurious oscillations \cite{2011_Augustin_etal_CMAME_v200_p3395_p3409}. 
  A notable work towards this direction is by Hsieh and Yang 
  \cite{2009_Hsieh_Yang_CMAME_v199_p183_p196}, which can 
  capture both interior and boundary layers under adequate 
  mesh refinement. However, this formulation has several 
  other deficiencies some of which are discussed below and 
  illustrated using numerical examples in subsequent sections. 
\item \emph{Violation of the non-negative constraint and 
  maximum principles:}~As mentioned earlier, physical 
  quantities such as concentration and temperature 
  naturally attain non-negative values. It is highly 
  desirable for a numerical formulation to respect 
  these physical constraints. This is particularly 
  important in a numerical simulation of reactive 
  transport, as a negative value for concentration 
  will result in an algorithmic failure. However, 
  it is clearly documented in the literature that 
  many existing formulations based on finite element 
  \cite{Liska_Shashkov_CiCP_2008_v3_p852,
  Nakshatrala_Valocchi_JCP_2009_v228_p6726,
  Nagarajan_Nakshatrala_IJNMF_2011_v67_p820}, 
  finite volume \cite{LePotier_CRM_2005_v341_p787}, 
  and finite difference \cite{Brezzi_Lipnikov_Shashkov_SIAMJNA_2005_v43_p1872} 
  do not satisfy the non-negative constraint and 
  maximum principles in the discrete setting. They 
  also discuss various methodologies to satisfy 
  such properties. But most of these methodologies 
  are for pure diffusion equations, which are 
  self-adjoint.
  For example, in Reference \cite{Nakshatrala_Valocchi_JCP_2009_v228_p6726},
  two mixed formulations based on RT0 spaces and variational 
  multiscale formalism have been modified to meet the non-negative 
  constraint for diffusion equations. This approach is not directly 
  applicable to ADR equations for two reasons. First, these formulations 
  do not cure the node-to-node spurious oscillations. Second, they do 
  not possess a variational structure for ADR equations.
  Some numerical formulations are constructed to satisfy 
  the non-negative constraint and maximum principles by taking 
  advection into account (e.g., \cite{Burman_Hansbo_CMAME_2004_v193_p1437,
  Burman_Ern_MathComput_2005_v74_p1637}). However, they do not 
  satisfy local and global species balance, and are restricted 
  to \emph{isotropic} diffusion. Conservative post-processing 
  methods exist in the literature to recover certain desired 
  properties such as species balance. But such formulations 
  are not variationally consistent 
  \cite{2012_Burdakov_etal_JCP_v231_p3126_p3142}.
\item \emph{Satisfying local and global species balance:} In 
  transport problems, the balance of species is an important 
  physical law that needs to be met. It is therefore desirable 
  for a numerical formulation to satisfy local and global species 
  balance, say, up to machine precision (which is approximately 
  $10^{-16}$ on a 64-bit machine). 
  However, many finite element formulations do not 
  satisfy local and global species balance (see 
  \cite{Codina_CMAME_1998_v156_p185,Codina_CMAME_2000_v188_p61,
    2009_Hsieh_Yang_CMAME_v199_p183_p196}). The main focus of the 
  methods outlined in these papers is to capture the localized 
  phenomena such as boundary and interior layers. 
  Moreover, these works did not quantify the errors incurred 
  in satisfying local species balance and global species balance. 
  It needs to be emphasized that many finite element methods do 
  exist that inherently satisfy local and global species balance, 
  for example, Raviart-Thomas spaces \cite{Raviart_Thomas_MAFEM_1977_p292} 
  and BDM spaces \cite{Brezzi_Douglas_Durran_Marini_NumerMath_1987_v51_p237}. 
  But these approaches do not fix other issues discussed herein 
  such as the node-to-node spurious oscillations or meeting 
  maximum principles for ADR equations.
\item \emph{Other influential factors:} Some other important 
  factors that can affect the performance of a numerical formulation 
  are the advection velocity field and its divergence, anisotropy 
  of the medium, reaction coefficients, topology of the medium, 
  computational mesh, multiple spatial-scales arising due to the 
  heterogeneity of the medium, and multiple time-scales involved 
  in various physical processes. Another aspect that brings 
  tremendous numerical challenges is chemical reactions 
  involving multiple species. 
\end{enumerate}

It is a herculean task to address all the 
aforementioned deficiencies, and we strongly 
believe that it may take a series of papers to 
overcome all the deficiencies. A similar sentiment 
is shared in the review article by Stynes entitled 
``\emph{Numerical methods for convection-diffusion 
problems or the 30 years war}'' \cite{2013_Stynes_arXiv}.
We therefore 
take motivation from George P\'olya's quote 
\cite{Polya}: ``\emph{If you can't solve a problem, 
then there is an easier problem you can solve: find it.}'' 
In this paper, we shall pose a problem that is 
simpler than the grand challenge of overcoming 
all the aforementioned numerical deficiencies 
but still make a significant advancement with 
respect to the current state-of-the-art. We then 
provide a solution to this simpler problem. 
To state it more precisely, the main contribution of this 
paper is developing a least-squares-based finite element 
framework for ADR equations that possesses the following 
properties on general computational grids:
\begin{enumerate}[(P1)]
\item No spurious node-to-node oscillations in the entire domain.
\item Captures interior and boundary layers for advection-dominated 
  problems.
\item Satisfies discrete maximum principles and the non-negative 
  constraint.
\item Satisfies local and global species balance.
\item Gives sufficiently accurate solutions even on coarse computational 
  grids\footnote{One may expect some subjectivity in calling a mesh to be 
    coarse. But in this paper, we will define precisely what is meant by a 
    ``coarse mesh'' for advection-diffusion-reaction equations in terms of 
    $M$-matrices.}.
\end{enumerate}
To the best of authors' knowledge, there 
exists no finite element methodology for 
advective-diffusive-reactive systems that 
possesses the desirable properties (P1)--(P5).

The rest of the paper is organized as follows. Section 
\ref{Sec:S2_NN_AD_GE} presents the governing equations 
for an advective-diffusive-reactive system, and discusses 
the associated mathematical properties. 
Section \ref{Sec:S3_NN_AD_PLCP} outlines several 
plausible approaches, and discusses their drawbacks 
in meeting the mentioned mathematical properties. 
In Section \ref{Sec:NN_AD_Proposed}, we propose a 
constrained optimization-based low-order mixed finite 
element method to satisfy maximum principles, the 
non-negative constraint, local species balance, and 
global species balance. 
In Section \ref{Sec:NN_AD_NumericalConvergence}, we perform 
a numerical $h$-convergence study using a benchmark problem. 
In Section \ref{Sec:NN_AD_Fast_Reactions}, we specialize to 
transport-limited bimolecular reactions to solve problems 
related to plume development and mixing in isotropic/anisotropic 
heterogeneous media. Finally, conclusions are drawn in Section 
\ref{Sec:NN_AD_Conclusions}. 
If one is interested only in the implementation 
of the proposed method, the reader can directly 
go to Section \ref{Sec:NN_AD_Proposed}, and 
appendices A and C. 

We shall denote scalars by lowercase English alphabet or 
lowercase Greek alphabet (e.g., concentration $c$ and 
stabilization parameter $\tau$). The continuum vectors 
are denoted by lowercase boldface normal letters, and 
the second-order tensors will be denoted using uppercase 
boldface normal letters (e.g., vector $\mathbf{x}$ and 
second-order tensor $\mathbf{D}$). In the finite element 
context, the vectors are denoted using lowercase boldface 
italic letters, and the matrices are denoted using uppercase 
boldface italic letters (e.g., vector $\boldsymbol{v}$ and 
matrix $\boldsymbol{K}$). 
We shall use NN to denote non-negative, DMP denotes discrete 
maximum principle, LSB to denote local species balance, and 
GSB to denote global species balance. We shall use XSeed to 
denote the number of (finite element) nodes in a mesh along 
x-direction. Likewise for YSeed. 
Other notational conventions adopted in 
this paper are introduced as needed.

\section{GOVERNING EQUATIONS:~ADVECTIVE-DIFFUSIVE-REACTIVE SYSTEMS}
\label{Sec:S2_NN_AD_GE}
Let $\Omega \subset \mathbb{R}^{nd}$ be a bounded open domain, 
where ``$nd$'' denotes the number of spatial dimensions. The 
boundary of the domain is denoted by $\partial \Omega$, which 
is assumed to be piecewise smooth. Mathematically, $\partial 
\Omega := \overline{\Omega} - \Omega$, where a superposed bar 
denotes the set closure. A spatial point is denoted by $\mathbf{x} 
\in \overline{\Omega}$. The gradient and divergence operators 
with respect to $\mathbf{x}$ are, respectively, denoted 
by $\mathrm{grad}[\bullet]$ and $\mathrm{div}[\bullet]$. 
The unit outward normal to the boundary is denoted 
by $\widehat{\mathbf{n}}(\mathbf{x})$.
Let $c(\mathbf{x})$ denote the concentration field. 
The boundary is divided into two parts: $\Gamma^{c}$ 
and $\Gamma^{q}$ such that $\mathrm{meas} (\Gamma^{c}) 
> 0$, $\overline{\Gamma}^{c} \cup \overline{\Gamma}^{q} 
= \partial \Omega$, and $\Gamma^{c} \cap \Gamma^{q} = 
\emptyset$. $\Gamma^{c}$ is the part of the boundary 
on which the concentration is prescribed and $\Gamma^{q}$ 
is the other part of the boundary on which the diffusive/total 
flux is prescribed.

The governing equations for steady-state response 
of an ADR system take the following form:
\begin{subequations}
  \begin{alignat}{2}
    \label{Eqn:NN_AD_GE_BE}
    &\alpha(\mathbf{x}) c(\mathbf{x}) + \mathrm{div} 
    \left[c(\mathbf{x}) \mathbf{v} (\mathbf{x}) - 
    \mathbf{D}(\mathbf{x}) \mathrm{grad}[c(\mathbf{x})] 
    \right] = f(\mathbf{x}) \quad &&\mathrm{in} \; 
    \Omega \\
    \label{Eqn:NN_AD_GE_DBC}
    &c(\mathbf{x}) = c^{\mathrm{p}}(\mathbf{x}) \quad 
    &&\mathrm{on} \; 
    \Gamma^{c} \\
    \label{Eqn:NN_AD_GE_NBC}
    &\left(\left( \frac{1 - \mathrm{Sign}[\mathbf{v} 
    \bullet \widehat{\mathbf{n}}]}{2} \right) \mathbf{v}
    (\mathbf{x}) c(\mathbf{x}) - \mathbf{D}(\mathbf{x}) 
    \mathrm{grad}[c(\mathbf{x})] \right) \bullet 
    \widehat{\mathbf{n}}(\mathbf{x}) = q^{\mathrm{p}} 
    (\mathbf{x}) \quad &&\mathrm{on} \; \Gamma^{q} 
  \end{alignat}
\end{subequations}
$\mathbf{v}(\mathbf{x})$ is the known advection velocity 
field, $f(\mathbf{x})$ is the prescribed volumetric source, 
$\mathbf{D}(\mathbf{x})$ is the anisotropic diffusivity 
tensor, $\alpha(\mathbf{x})$ is the linear reaction 
coefficient, $c^{\mathrm{p}}(\mathbf{x})$ is the prescribed 
concentration, $q^{\mathrm{p}}(\mathbf{x})$ is the prescribed 
diffusive/total flux, and the sign function is defined as 
follows:
\begin{align} 
  \label{Eqn:Sign_Function}
  \mathrm{Sign}[\varphi] := 
  \begin{cases}
    -1 &\quad \mathrm{if} \; \varphi < 0 \\
    0 &\quad \mathrm{if} \; \varphi = 0 \\
    +1 &\quad \mathrm{if} \; \varphi > 0
  \end{cases}
\end{align}
The advection velocity need not be solenoidal in our 
treatment (i.e., $\mathrm{div}[\mathbf{v}(\mathbf{x})]$ 
need not be zero). The Neumann boundary condition given 
in equation \eqref{Eqn:NN_AD_GE_NBC} can be interpreted 
as follows:
\begin{subequations}
  \begin{alignat}{2}
    \label{Eqn:Total_Flux_NeuBCs}
    &\widehat{\mathbf{n}}(\mathbf{x}) \bullet 
    \left(\mathbf{v}(\mathbf{x}) c(\mathbf{x}) 
    - \mathbf{D}(\mathbf{x}) \mathrm{grad}
    [c(\mathbf{x})]\right) = q^{\mathrm{p}} 
    (\mathbf{x}) \; &&\mathrm{on} \; 
    \Gamma^{q}_{-} \; (\mbox{total flux on 
    inflow boundary}) \\
    \label{Eqn:Diffusive_Flux_NeuBCs}
    -&\widehat{\mathbf{n}}(\mathbf{x}) \bullet 
    \mathbf{D}(\mathbf{x}) \mathrm{grad}
    [c(\mathbf{x})] = q^{\mathrm{p}}(\mathbf{x}) 
    \; &&\mathrm{on} \; \Gamma^{q}_{+} \;
    (\mbox{diffusive flux on outflow boundary})
  \end{alignat}
\end{subequations}
where $\Gamma^{q}_{+}$ and $\Gamma^{q}_{-}$ are, 
respectively, defined as follows (see Figure 
\ref{Fig:NN_AD_inflow_outflow_BC}):
\begin{subequations}
  \begin{alignat}{2}
    \label{Eqn:Inflow_Boundary}
    \Gamma^{q}_{-} &:= \left \{ \mathbf{x} \in 
    \Gamma^{q} \; \big| \; \mathbf{v}(\mathbf{x}) 
    \bullet \widehat{\mathbf{n}}(\mathbf{x}) 
    < 0 \right \} \quad && (\mathrm{inflow \; 
    boundary}) \\
    \label{Eqn:Outflow_Boundary}
    \Gamma^{q}_{+} &:= \left \{ \mathbf{x} \in 
    \Gamma^{q} \; \big| \; \mathbf{v}(\mathbf{x}) 
    \bullet \widehat{\mathbf{n}}(\mathbf{x}) 
    \geq 0 \right \} \quad && (\mathrm{outflow \; 
    boundary})
  \end{alignat}
\end{subequations}
\begin{figure}[tbp]
  \centering
  \psfrag{A1}{$\Gamma^{q}_{-}$}
  \psfrag{A2}{$\Gamma^{c}$}
  \psfrag{A3}{$\Gamma^{q}_{+}$}
  \psfrag{v}{$\mathbf{v}(\mathbf{x})$}
  \psfrag{B1}{P}
  \psfrag{B2}{Q}
  \psfrag{B3}{R}
  \includegraphics[scale=0.75,clip]{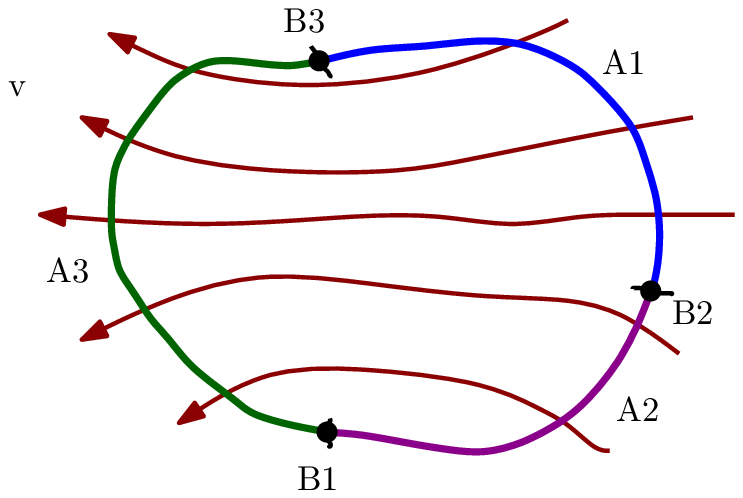}
  \caption{This figure illustrates concentration and flux 
    boundary conditions. $\Gamma^{q}_{-}$ corresponds to 
    the inflow boundary while $\Gamma^{q}_{+}$ corresponds 
    to the outflow boundary. Total flux is prescribed on 
    $\Gamma^{q}_{-}$, diffusive flux is prescribed on 
    $\Gamma^{q}_{+}$, and concentration is prescribed on 
    $\Gamma^{c}$. P = $\overline{\Gamma}^{c} \cap 
    \overline{\Gamma}^{q}_{+}$, Q = $\overline{\Gamma}
    ^{c} \cap \overline{\Gamma}^{q}_{-}$, and R = 
    $\overline{\Gamma}^{q}_{+} \cap \overline{\Gamma}^{q}_{-}$.
    For well-posedness, we have $\overline{\Gamma}^{c} \cup 
    \overline{\Gamma}^{q}_{+} \cup \overline{\Gamma}^{q}_{-} 
    = \partial \Omega$, $\Gamma^{c} \cap \Gamma^{q}_{+} = 
    \emptyset$, $\Gamma^{c} \cap \Gamma^{q}_{-} = \emptyset$, 
    and $\Gamma^{q}_{+} \cap \Gamma^{q}_{-} = \emptyset$.
    \label{Fig:NN_AD_inflow_outflow_BC}}
\end{figure}

\begin{remark}
  In the literature, more predominantly in the numerical 
  literature, the term \emph{advection} is often used 
  synonymously with \emph{convection}. It should, however, 
  be noted that these two terms describe different physical 
  phenomena, and there is a need to clarify the terminology 
  here. An ADR equation arises from the balance of mass of 
  a given species. In 1D, an ADR equation takes the following 
  form:
  \begin{align}
    \label{Eqn:Remark_ADR_1D}
    \alpha(x) c(x) + \frac{d (vc)}{dx} - 
    \frac{d}{dx} \left(D(x) \frac{d c}{dx}
    \right) = f(x)
  \end{align}
  which is mathematically equivalent to the following equation: 
  \begin{align}
    \label{Eqn:1D_ADR}
    \left(\alpha(x) + \frac{dv}{dx}\right)c(x) 
    + v(x)\frac{dc}{dx} - \frac{d}{dx} \left(D(x) 
    \frac{d c}{dx}\right) = f(x)
  \end{align}
  One can obtain a ``similar'' mathematical 
  equation by linearizing the incompressible 
  Navier-Stokes equation, and an appropriate 
  name for this linearized equation is the 
  convection-dissipation-reaction (CDR) 
  equation. The CDR equation in 1D has 
  the following mathematical form:
  \begin{align}
    \label{Eqn:1D_CDR}
    \frac{d v_0}{dx} \tilde{v}(x) 
    + v_0(x) \frac{d \tilde{v}}{dx} 
    - \frac{d}{dx} \left(\mu(x) \frac{d \tilde{v}}{dx}\right) 
    = b(x,p_0(x)) + 2 v_0(x) \frac{d v_0}{dx}
  \end{align}
  where $\tilde{v}(x)$ is the velocity of the fluid, 
  and $p_0(x)$ and $v_0(x)$ are known pressure and 
  velocity fields about which the Navier-Stokes 
  equation is linearized. From equations 
  \eqref{Eqn:1D_ADR} and \eqref{Eqn:1D_CDR}, it is 
  evident that 1D ADR equation and 1D CDR equation 
  have similar mathematical forms. However, their 
  physical underpinnings are completely different, 
  as the Navier-Stokes equation is obtained by 
  substituting a specific constitutive model 
  into the balance of linear momentum.
\end{remark}

\subsection{Weak formulations}
\label{Subsec:Weak_Formulations}
The following function spaces will be 
used in the rest of this paper: 
\begin{subequations}
  \begin{align}
    \label{Eqn:Function_Space_C}
    \mathcal{C} &:= \left\{c(\mathbf{x}) 
    \in H^{1}(\Omega) \; \big| \; c(\mathbf{x}) 
    = c^{\mathrm{p}}(\mathbf{x}) \; \mathrm{on} 
    \; \Gamma^{c} \right\} \\
    \label{Eqn:Function_Space_W}
    \mathcal{W} &:= \left\{w(\mathbf{x}) 
    \in H^{1}(\Omega) \; \big| \; w(\mathbf{x}) 
    = 0 \; \mathrm{on} \; \Gamma^{c} \right\} \\
    \label{Eqn:Function_Space_Q}
    \mathcal{Q} &:= \left\{\mathbf{q}(\mathbf{x}) 
    \in \left(L_{2}(\Omega)\right)^{nd} \; \big| 
    \; \mathrm{div}[\mathbf{q}(\mathbf{x})] \in 
    L_{2}(\Omega) \right\}
  \end{align}
\end{subequations}
where $\mathbf{q}(\mathbf{x}) = c(\mathbf{x}) \mathbf{v}
(\mathbf{x}) - \mathbf{D}(\mathbf{x}) \mathrm{grad}[c(\mathbf{x})]$ 
and $H^{1}(\Omega)$ is a standard Sobolev space \cite{Evans_PDE}. 
Given two vector fields $a(\mathbf{x})$ and $b(\mathbf{x})$ 
on a set $K$, the standard $L_2$ inner product over $K$ is 
denoted as follows:
\begin{align}
  \label{Eqn:L2_Inner_Product}
  \left(a;b\right)_{K} = \displaystyle \int 
  \limits_{K} a(\mathbf{x}) \bullet b(\mathbf{x}) 
  \; \mathrm{d} K
\end{align}
The subscript will be dropped if $K = \Omega$. The 
most popular way to construct a weak formulation 
is to employ the Galerkin formalism. Based on the 
manner in which one applies the divergence theorem, 
the single-field Galerkin formulation for equations 
\eqref{Eqn:NN_AD_GE_BE}--\eqref{Eqn:NN_AD_GE_NBC} 
can be posed in two different ways. 

\textbf{Single-field Galerkin formulation \#1 
  ($\mathrm{SG}_1$):} Find $c(\mathbf{x}) \in 
\mathcal{C}$ such that we have
\begin{align}
  \label{Eqn:SG1}
  (w;\alpha c) &- (\mathrm{grad}[w] \bullet \mathbf{v};c) 
  + (\mathrm{grad}[w];\mathbf{D}(\mathbf{x}) \mathrm{grad}[c]) 
  + \left(w; \left( \frac{1 + \mathrm{Sign}[\mathbf{v} \bullet 
  \widehat{\mathbf{n}}]}{2}\right) (\mathbf{v} \bullet \widehat{
  \mathbf{n}}) \; c \right)_{\Gamma^{q}} \nonumber \\
  &= (w;f) - \left(w;q^{\mathrm{p}}\right)_{\Gamma^{q}} 
  \quad \forall w(\mathbf{x}) \in \mathcal{W}
\end{align}

\textbf{Single-field Galerkin formulation \#2 
  ($\mathrm{SG}_2$):} Find $c(\mathbf{x}) \in 
\mathcal{C}$ such that we have
\begin{align}
  \label{Eqn:SG2}
  \left(w;\left(\alpha + \mathrm{div}[\mathbf{v}] 
  \right) c \right) &+ (w;\mathrm{grad}[c] \bullet 
  \mathbf{v}) + (\mathrm{grad}[w];\mathbf{D}(\mathbf{x}) 
  \mathrm{grad}[c]) - \left(w; \left( \frac{1 - 
  \mathrm{Sign}[\mathbf{v} \bullet \widehat{\mathbf{n}}]}{2} 
  \right) (\mathbf{v} \bullet \widehat{\mathbf{n}}) 
  \; c \right)_{\Gamma^{q}} \nonumber \\
  &= (w;f) - \left(w;q^{\mathrm{p}}\right)_{\Gamma^{q}} 
  \quad \forall w(\mathbf{x}) \in \mathcal{W}
\end{align}

Note that the solution obtained will be the same regardless 
whether we use either $\mathrm{SG}_1$ or $\mathrm{SG}_2$. 
However, this is not true if one uses total/diffusive flux 
on Neumann boundary without giving due consideration to 
inflow and/or outflow Neumann boundary conditions. For 
more details, see subsection \ref{SubSec:Neumann_BCs_Issues}. 

\subsection{Maximum principles and the non-negative constraint}
\label{SubSec:MaxPrinciples_NN_Constraints}
A basic qualitative property of elliptic boundary value problems 
is the maximum principle. This property gives a priori estimate 
for $c(\mathbf{x})$ in $\Omega$ through its values on $\Gamma^{c}$. 
The following assumptions will be made to present a continuous 
maximum principle for ADR equations with both Dirichlet and 
Neumann boundary conditions: 
\begin{enumerate}[({A}1)]
  \item $\Omega$ is piecewise smooth domain with Lipschitz 
    continuous boundary $\partial \Omega$.
  \item The scalar functions $\alpha: \overline{\Omega} 
    \rightarrow \mathbb{R}$, $(\mathbf{v})_{i}: \overline{\Omega} 
    \rightarrow \mathbb{R}$, and $(\mathbf{D})_{ij}: \overline{\Omega} 
    \rightarrow \mathbb{R}$ are continuously differentiable 
    in their respective domains for $i = 1, \cdots, nd$. 
    Furthermore, $f \in L_{2}(\Omega)$, $q^{\mathrm{p}} \in 
    L_{2}(\Gamma^{q})$, and $c^{\mathrm{p}} = g^{*} |_{\Gamma^{c}}$ 
    with $g^{*} \in H^{1}(\Omega)$.
  \item The diffusivity tensor is assumed to be symmetric, 
    uniformly elliptic, and bounded above. That is, there 
    exists two constants (i.e., independent of $\mathbf{x}$), 
    $0 < \gamma_{\mathrm{lb}} \leq \gamma_{\mathrm{ub}} < + 
    \infty$, such that we have
    \begin{align}
      \label{Eqn:Uniformly_Elliptic}
      0 < \gamma_{\mathrm{lb}} \mathbf{y} \bullet \mathbf{y} 
      \leq \mathbf{y} \bullet \mathbf{D}(\mathbf{x}) \mathbf{y} 
      \leq \gamma_{\mathrm{ub}} \mathbf{y} \bullet \mathbf{y} 
      \quad \forall \mathbf{y} \in \mathbb{R}^{nd} \backslash 
      \{\mathbf{0}\}
    \end{align}
  \item The advection velocity field $\mathbf{v}(\mathbf{x})$ 
    and the reaction coefficient $\alpha(\mathbf{x})$ are 
    restricted as follows:
    \begin{subequations}
      \begin{alignat}{2}
        \label{Eqn:CMP_Restrict_1}
        &0 \leq \alpha(\mathbf{x}) + \mathrm{div} 
        \left[\mathbf{v}(\mathbf{x})\right] \leq 
        \beta_{1}(\mathbf{x}) &&\quad \forall 
        \mathbf{x} \in \Omega \\
        \label{Eqn:CMP_Restrict_2}
        &0 \leq \alpha(\mathbf{x}) + \frac{1}{2} 
        \mathrm{div}\left[\mathbf{v}(\mathbf{x}) 
        \right] \leq \beta_{2}(\mathbf{x}) && 
        \quad \forall \; \mathbf{x} \in \Omega \\
        \label{Eqn:CMP_Restrict_3}
        &0 \leq |\mathbf{v}(\mathbf{x}) \bullet 
        \widehat{\mathbf{n}}(\mathbf{x})| \leq 
        \beta_3(\mathbf{x}) && \quad \forall 
        \mathbf{x} \in \Gamma^{q}
      \end{alignat}
    \end{subequations}
    where $\beta_1(\mathbf{x}) \in L_{nd/2}(\Omega)$, 
    $\beta_2(\mathbf{x}) \in L_{nd/2}(\Omega)$, and 
    $\beta_3(\mathbf{x}) \in L_{nd-1}(\Gamma^{q})$. 
    It is assumed that functions $\beta_1(\mathbf{x})$, 
    $\beta_2(\mathbf{x})$, and $\beta_3(\mathbf{x})$ 
    are bounded above for a unique weak solution to 
    exist based on the Lax-Milgram theorem.
  \item The part of the boundary on which Dirichlet 
    boundary condition is enforced is not empty (i.e., 
    $\Gamma^{c} \neq \emptyset$). 
\end{enumerate}
We shall use the standard abbreviation of a.e. 
for almost everywhere \cite{Evans_PDE}. 

\begin{theorem}[\texttt{A continuous maximum principle}]
  \label{Thm:CMP}
  Let assumptions (A1)--(A5) hold and let the unique 
  weak solution $c(\mathbf{x})$ of equations 
  \eqref{Eqn:NN_AD_GE_BE}--\eqref{Eqn:NN_AD_GE_NBC} 
  belong to $C^{1}(\Omega) \cap C^{0}(\overline{\Omega})$. 
  If $f(\mathbf{x}) \in L_2(\Omega)$ and $q^{\mathrm{p}}
  (\mathbf{x}) \in L_2(\Gamma^{q})$ satisfy:  
  \begin{subequations}
    \label{Eqn:Restriction_CMP_VolSource_NormalFluxBCs}
    \begin{alignat}{2}
      &f(\mathbf{x}) \leq 0 && \quad \mathrm{a.e.} \; 
      \mathrm{in} \; \Omega \\
      &q^{\mathrm{p}}(\mathbf{x}) \geq 0 && \quad 
      \mathrm{a.e.} \; \mathrm{on} \; \Gamma^{q}_{+} 
      \cup \Gamma^{q}_{-}
    \end{alignat}
  \end{subequations}
  then $c(\mathbf{x})$ satisfies a continuous 
  maximum principle of the following form: 
  \begin{align}
    \label{Eqn:General_CMP}
    \mathop{\mathrm{max}}_{\mathbf{x} \in \overline{\Omega}} 
    \left [ c(\mathbf{x}) \right ] \leq \mathop{\mathrm{max}} 
    \left [ 0, \mathop{\mathrm{max}}_{\mathbf{x} \in \Gamma^{c}} 
    \left [ c^{\mathrm{p}}(\mathbf{x}) \right ] \right ]
  \end{align}
  In particular, if $c^{\mathrm{p}}(\mathbf{x}) \geq 0$ then
  \begin{align}
    \label{Eqn:Particular_CMP}
    \mathop{\mathrm{max}}_{\mathbf{x} \in \overline{\Omega}} 
    \left [ c(\mathbf{x}) \right ] = \mathop{\mathrm{max}}_
    {\mathbf{x} \in \Gamma^{c}} \left [ c^{\mathrm{p}}(\mathbf{x}) 
    \right ]
  \end{align}
  If $c^{\mathrm{p}}(\mathbf{x}) \leq 0$ then we 
  have the following non-positive property: 
  \begin{align}
    \label{Eqn:Non_Positivity_CMP}
    \mathop{\mathrm{max}}_{\mathbf{x} \in \overline{\Omega}} 
    \left [ c(\mathbf{x}) \right ] \leq 0
  \end{align} 
\end{theorem}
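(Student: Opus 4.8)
The plan is to establish the general bound \eqref{Eqn:General_CMP} by a truncation (energy) argument and then read off the two special cases. I would set $k := \max\!\big[0,\,\max_{\mathbf{x}\in\Gamma^{c}}c^{\mathrm{p}}(\mathbf{x})\big]$, so that $k \geq 0$, and introduce the test function $w(\mathbf{x}) := \max[c(\mathbf{x})-k,\,0]$. Since $c = c^{\mathrm{p}} \leq k$ on $\Gamma^{c}$, the function $w$ vanishes on $\Gamma^{c}$ and hence $w \in \mathcal{W}$; moreover $w \geq 0$, while $\mathrm{grad}[w] = \mathrm{grad}[c]$ on the open set $\Omega^{+} := \{\mathbf{x}\in\Omega : c(\mathbf{x}) > k\}$ and $\mathrm{grad}[w] = \mathbf{0}$ elsewhere. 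The goal is to insert $w$ into the weak statement $\mathrm{SG}_1$, equation~\eqref{Eqn:SG1}, and show that the resulting energy forces $\mathrm{grad}[w] \equiv \mathbf{0}$, whence $w \equiv 0$ and therefore $c \leq k$ throughout $\overline{\Omega}$.

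First I would dispose of the right-hand side: because $w \geq 0$, $f \leq 0$ a.e.\ in $\Omega$, and $q^{\mathrm{p}} \geq 0$ a.e.\ on $\Gamma^{q}$ by hypothesis \eqref{Eqn:Restriction_CMP_VolSource_NormalFluxBCs}, both $(w;f)$ and $-(w;q^{\mathrm{p}})_{\Gamma^{q}}$ are non-positive, so the right-hand side of \eqref{Eqn:SG1} is $\leq 0$. On the left-hand side the diffusion term is coercive: by uniform ellipticity \eqref{Eqn:Uniformly_Elliptic} and $\mathrm{grad}[c]=\mathrm{grad}[w]$ on the support of $\mathrm{grad}[w]$, one has $(\mathrm{grad}[w];\mathbf{D}\,\mathrm{grad}[c]) \geq \gamma_{\mathrm{lb}}\,\|\mathrm{grad}[w]\|_{L_2(\Omega)}^{2}$. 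The crux is therefore to show that the reaction term $(w;\alpha c)$, the advection term $-(\mathrm{grad}[w]\bullet\mathbf{v};c)$, and the boundary term of \eqref{Eqn:SG1} together contribute a non-negative amount, so that $\gamma_{\mathrm{lb}}\|\mathrm{grad}[w]\|^{2}_{L_2(\Omega)}$ is dominated by the non-positive right-hand side.

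To expose the signs I would write $c = w + k$ on $\Omega^{+}$, where all relevant integrands are supported, and integrate the advection term by parts, using $w = 0$ on the free boundary $\{c = k\}$ and on $\Gamma^{c}$ so that only $\Gamma^{q}$ survives. This splits each volume contribution into a piece quadratic in $w$ and a piece linear in $w$ (carrying the factor $k$). The quadratic volume coefficient collapses to $\alpha + \tfrac12\mathrm{div}[\mathbf{v}]$, which is $\geq 0$ by \eqref{Eqn:CMP_Restrict_2}; the linear volume coefficient collapses to $k\big(\alpha + \mathrm{div}[\mathbf{v}]\big)$, which is $\geq 0$ by \eqref{Eqn:CMP_Restrict_1} together with $k \geq 0$. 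On $\Gamma^{q}$, the integration-by-parts boundary term combines with the outflow boundary term of $\mathrm{SG}_1$, whose weight $\tfrac{1+\mathrm{Sign}[\mathbf{v}\bullet\widehat{\mathbf{n}}]}{2}(\mathbf{v}\bullet\widehat{\mathbf{n}})$ equals the positive part of $\mathbf{v}\bullet\widehat{\mathbf{n}}$: the quadratic boundary coefficient reduces to $\tfrac12|\mathbf{v}\bullet\widehat{\mathbf{n}}| \geq 0$ and the linear coefficient to $k$ times the negative part of $\mathbf{v}\bullet\widehat{\mathbf{n}}$, again $\geq 0$. I expect this sign bookkeeping across the inflow/outflow split of $\Gamma^{q}$ to be the main obstacle, since it is precisely here that the specific form of the Neumann condition \eqref{Eqn:NN_AD_GE_NBC} and the structural restrictions \eqref{Eqn:CMP_Restrict_1}--\eqref{Eqn:CMP_Restrict_3} must interlock to render every leftover term non-negative.

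Combining these estimates yields $\gamma_{\mathrm{lb}}\|\mathrm{grad}[w]\|_{L_2(\Omega)}^{2} \leq 0$, so $\mathrm{grad}[w] = \mathbf{0}$ a.e.; since $w$ vanishes on $\Gamma^{c}$ with $\mathrm{meas}(\Gamma^{c}) > 0$, the Poincar\'e--Friedrichs inequality forces $w \equiv 0$, which is exactly \eqref{Eqn:General_CMP}. Finally I would read off the special cases. If $c^{\mathrm{p}} \geq 0$ then $k = \max_{\Gamma^{c}} c^{\mathrm{p}}$, while the reverse inequality $\max_{\overline{\Omega}} c \geq \max_{\Gamma^{c}} c^{\mathrm{p}}$ is immediate from $c = c^{\mathrm{p}}$ on $\Gamma^{c} \subset \overline{\Omega}$, giving the equality \eqref{Eqn:Particular_CMP}; if $c^{\mathrm{p}} \leq 0$ then $k = 0$ and \eqref{Eqn:General_CMP} reduces to the non-positivity statement \eqref{Eqn:Non_Positivity_CMP}.
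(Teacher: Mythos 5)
Your proposal is correct and follows essentially the same route as the paper: the paper's proof defines $\Phi_{\mathrm{max}}$ and the truncation $m(\mathbf{x})=\max[0,c(\mathbf{x})-\Phi_{\mathrm{max}}]$ (your $k$ and $w$), substitutes it into $\mathrm{SG}_1$, and uses the same integration-by-parts identities to produce exactly the quadratic volume coefficient $\alpha+\tfrac12\mathrm{div}[\mathbf{v}]$, the linear coefficient $\alpha+\mathrm{div}[\mathbf{v}]$, and the boundary weights $\tfrac12|\mathbf{v}\bullet\widehat{\mathbf{n}}|$ and the negative part of $\mathbf{v}\bullet\widehat{\mathbf{n}}$ that you identify, before concluding $\mathrm{grad}[m]=\mathbf{0}$ and $m\equiv 0$ from the vanishing trace on $\Gamma^{c}$. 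The only cosmetic difference is that the paper argues $m$ is a constant that must vanish rather than invoking Poincar\'e--Friedrichs explicitly.
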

\begin{proof}
  Let $\Phi_{\mathrm{max}}$ and $m(\mathbf{x})$ are, 
  respectively, defined as follows:
  \begin{align}
    \label{Eqn:Phi_Function}
    &\Phi_{\mathrm{max}} := \displaystyle 
    \mathop{\mathrm{max}} \left [ 0, 
    \mathop{\mathrm{max}}_{\mathbf{x} \in 
    \Gamma^{c}} \left [ c^{\mathrm{p}}(
    \mathbf{x}) \right ] \right] \\
    \label{Eqn:m_C1_Function}
    &m(\mathbf{x}) := \mathop{\mathrm{max}} 
    \left [ 0, c(\mathbf{x}) - \Phi_{\mathrm{max}} 
    \right ]
  \end{align}
  It is easy to check that $m(\mathbf{x})$ is 
  a non-negative, continuous, and piecewise 
  $C^{1}(\Omega)$ function. From equation 
  \eqref{Eqn:m_C1_Function}, it is evident 
  that $m(\mathbf{x})|_{\Gamma^{c}} = 0$ 
  and $c(\mathbf{x}) = m(\mathbf{x}) + 
  \Phi_{\mathrm{max}}$ for any $\mathbf{x} \in 
  \overline{\Omega}$ unless $m(\mathbf{x}) = 0$. 
  Moreover, $m(\mathbf{x}) \in \mathcal{W}$. 
  By choosing $w(\mathbf{x}) = m(\mathbf{x})$, 
  the weak formulation given by equation 
  \eqref{Eqn:SG1} becomes:
  \begin{align}
    \label{Eqn:SG1_CMP_1}
    \left(m;\alpha (m + \Phi_{\mathrm{max}}) 
    \right) &- (\mathrm{grad}[m] \bullet 
    \mathbf{v};(m + \Phi_{\mathrm{max}})) + 
    (\mathrm{grad}[m] ;\mathbf{D} \, \mathrm{grad}
    [m]) \nonumber \\
    &+ \left(m; \left( \frac{1 + \mathrm{Sign}[\mathbf{v} 
    \bullet \widehat{\mathbf{n}}]}{2}\right) \mathbf{v}
    \bullet \widehat{\mathbf{n}} \; (m + \Phi_{\mathrm{max}}) 
    \right)_{\Gamma^{q}} \nonumber \\
    &= (m;f) - \left(m;q^{\mathrm{p}}\right)_{\Gamma^{q}}
  \end{align}
  It is easy to establish the following identities: 
  \begin{subequations}
    \begin{align}
      \label{Eqn:Identity_1}
      (m; \mathbf{v} \bullet \widehat{\mathbf{n}} \, 
      (m + \Phi_{\mathrm{max}}))_{\Gamma^{q}} 
      &= (\mathrm{grad}[m] \bullet \mathbf{v};
      (m + \Phi_{\mathrm{max}})) + (m;\mathrm{div}
      [\mathbf{v}] \, (m + \Phi_{\mathrm{max}})) 
      \nonumber \\ &+ (m; \mathrm{grad}[m] \bullet 
      \mathbf{v}) \\
      \label{Eqn:Identity_2}
      2(\mathrm{grad}[m] \bullet \mathbf{v};(m + \Phi_
      {\mathrm{max}})) &= (m; \mathbf{v} \bullet 
      \widehat{\mathbf{n}} \, (m + \Phi_{\mathrm{max}}))_
      {\Gamma^{q}} - (m; \mathrm{div}[\mathbf{v}] 
      \, (m + \Phi_{\mathrm{max}})) \nonumber \\
      &- (\Phi_{\mathrm{max}}; \mathrm{grad}[m] \bullet 
      \mathbf{v}) \\
      \label{Eqn:Identity_3}
      (\Phi_{\mathrm{max}}; \mathrm{grad}[m] \bullet 
      \mathbf{v}) &= (\Phi_{\mathrm{max}}; \mathbf{v} 
      \bullet \widehat{\mathbf{n}} \, m)_{\Gamma^{q}}
      - (\Phi_{\mathrm{max}}; \mathrm{div}[\mathbf{v}] 
      \, m) \\
      \label{Eqn:Identity_4}
      (\mathrm{grad}[m] \bullet \mathbf{v}; (m + \Phi_
      {\mathrm{max}})) &= \left(m; \mathbf{v} \bullet 
      \widehat{\mathbf{n}} \, \left(\Phi_{\mathrm{max}} 
      + \frac{1}{2} m \right) \right)_{\Gamma^{q}} \nonumber \\
      &- \left(m; \mathrm{div}[\mathbf{v}] \, \left(\Phi_
      {\mathrm{max}} + \frac{1}{2} m \right) \right)
    \end{align}
  \end{subequations}
  Using the above identities, equation \eqref{Eqn:SG1_CMP_1} 
  can be written as follows:
  \begin{align}
    \label{Eqn:SG1_CMP_2}
    &\left(m; \left(\alpha + \frac{1}{2} \mathrm{div}
    [\mathbf{v}] \right) m \right) + \left(m; 
    \left(\alpha + \mathrm{div}[\mathbf{v}] \right) 
    \Phi_{\mathrm{max}} \right) + (\mathrm{grad}[m]; 
    \mathbf{D} \, \mathrm{grad}[m]) \nonumber \\
    &+ \left(m; \frac{|\mathbf{v} \bullet \widehat{\mathbf{n}}|}{2} \; 
    m \right)_{\Gamma^{q}} - \left(m; \left( \frac{1 
    - \mathrm{Sign}[\mathbf{v} \bullet \widehat{\mathbf{n}}]}{2} 
    \right) (\mathbf{v} \bullet \widehat{\mathbf{n}}) \; \Phi_
    {\mathrm{max}} \right)_{\Gamma^{q}} = (m;f) 
    - \left(m;q^{\mathrm{p}} \right)_{\Gamma^{q}}
  \end{align}
  From equations \eqref{Eqn:Uniformly_Elliptic} and 
  \eqref{Eqn:CMP_Restrict_1}--\eqref{Eqn:CMP_Restrict_3},
  it is evident that
  \begin{align}
    \label{Eqn:SG1_CMP_3}
    &\left(m; \left(\alpha + \frac{1}{2} \mathrm{div}
    [\mathbf{v}] \right) m \right) + \left(m; \left(
    \alpha + \mathrm{div}[\mathbf{v}] \right) \Phi_
    {\mathrm{max}} \right) + (\mathrm{grad}[m]; \mathbf{D}
    \, \mathrm{grad}[m]) \nonumber \\
    &+ \left(m; \frac{|\mathbf{v} \bullet \widehat{\mathbf{n}}|}{2} 
    \; m \right)_{\Gamma^{q}} - \left(m; \left( \frac{1 - \mathrm{Sign}
    [\mathbf{v} \bullet \widehat{\mathbf{n}}]}{2} \right) \mathbf{v}
    \bullet \widehat{\mathbf{n}} \; \Phi_{\mathrm{max}} \right)_{\Gamma^{q}} 
    \geq 0
  \end{align} 
  From equation \eqref{Eqn:Restriction_CMP_VolSource_NormalFluxBCs} 
  we have: 
  \begin{align}
    \label{Eqn:SG1_CMP_4}
    (m;f) - \left(m;q^{\mathrm{p}}\right)_{\Gamma^{q}} \leq 0
  \end{align}
  Therefore, one can conclude that 
  \begin{align}
    \label{Eqn:SG1_CMP_5}
    &\left(m; \left(\alpha + \frac{1}{2} \mathrm{div}[\mathbf{v}] 
    \right) m \right) + \left(m; \left(\alpha + \mathrm{div}[\mathbf{v}] 
    \right) \Phi_{\mathrm{max}} \right) + (\mathrm{grad}[m];\mathbf{D}
    \, \mathrm{grad}[m]) \nonumber \\
    &+ \left(m; \frac{|\mathbf{v} \bullet \widehat{\mathbf{n}}|}{2} \; 
    m \right)_{\Gamma^{q}} - \left(m; \left( \frac{1 
    - \mathrm{Sign}[\mathbf{v} \bullet \widehat{\mathbf{n}}]}{2}\right) 
    (\mathbf{v} \bullet \widehat{\mathbf{n}}) \; \Phi_{\mathrm{max}} 
    \right)_{\Gamma^{q}} 
    = 0
  \end{align}
  In the light of assumption (A3) and equation \eqref{Eqn:SG1_CMP_5}, 
  we need to have $\mathrm{grad}[m] = 0$ (as $\mathbf{D}(\mathbf{x})$ 
  is bounded below by a constant $\gamma_{\mathrm{lb}}$). This further 
  implies the following:
  \begin{align}
    \label{Eqn:SG1_CMP_6}
    m(\mathbf{x}) \equiv \phi_0 \geq 0 
    \quad \forall \mathbf{x} \in \overline{\Omega} 
  \end{align}
  where $\phi_0$ is a non-negative constant. Since 
  $m(\mathbf{x})|_{\Gamma^{c}} = 0$ and $\mathrm{meas}
  (\Gamma^{c}) > 0$, we have $\phi_0 = 0$. This implies 
  that $c(\mathbf{x}) \leq \Phi_{\mathrm{max}}$, which 
  further implies the validity of the inequality 
  given by equation \eqref{Eqn:General_CMP}. Finally, 
  equations \eqref{Eqn:Particular_CMP} and 
  \eqref{Eqn:Non_Positivity_CMP} are trivial 
  consequences of equation \eqref{Eqn:General_CMP}.
\end{proof}

We have employed the $\mathrm{SG}_1$ formulation in 
the proof of Theorem \ref{Thm:CMP}. One will come to 
the same conclusion even under the $\mathrm{SG}_2$ 
formulation. By reversing the signs in equation 
\eqref{Eqn:Restriction_CMP_VolSource_NormalFluxBCs}, 
one can easily obtain the following continuous 
minimum principle.  

\begin{corollary}[\texttt{A continuous minimum principle}]
  \label{Thm:CMinP}
  Let assumptions (A1)--(A5) hold and let the unique 
  weak solution $c(\mathbf{x})$ of equations 
  \eqref{Eqn:NN_AD_GE_BE}--\eqref{Eqn:NN_AD_GE_NBC} 
  belong to $C^{1}(\Omega) \cap C^{0}(\overline{\Omega})$. 
  If $f(\mathbf{x}) \in L_2(\Omega)$ and $q^{\mathrm{p}}
  (\mathbf{x}) \in L_2(\Gamma^{q})$ satisfy  
  \begin{subequations} 
    \label{Eqn:Restriction_CMinP_VolSource_NormalFluxBCs}
    \begin{alignat}{2}
      &f(\mathbf{x}) \geq 0 && \quad \mathrm{a.e.} \; 
      \mathrm{in} \; \Omega \\
      &q^{\mathrm{p}}(\mathbf{x}) \leq 0 && \quad 
      \mathrm{a.e.} \; \mathrm{on} \; \Gamma^{q}_{+} 
      \cup \Gamma^{q}_{-}
    \end{alignat}
  \end{subequations}
  then $c(\mathbf{x})$ satisfies a continuous 
  minimum principle of the following form: 
  \begin{align}
    \label{Eqn:General_CMinP}
    \mathop{\mathrm{min}}_{\mathbf{x} \in \overline{\Omega}} 
    \left [ c(\mathbf{x}) \right ] \geq \mathop{\mathrm{min}} 
    \left [ 0, \mathop{\mathrm{min}}_{\mathbf{x} \in \Gamma^{c}} 
    \left [ c^{\mathrm{p}}(\mathbf{x}) \right ] \right ]
  \end{align}
  In particular, if $c^{\mathrm{p}}(\mathbf{x}) \leq 0$ then
  \begin{align}
    \label{Eqn:Particular_CMinP}
    \mathop{\mathrm{min}}_{\mathbf{x} \in \overline{\Omega}} 
    \left [ c(\mathbf{x}) \right ] = \mathop{\mathrm{min}}_
    {\mathbf{x} \in \Gamma^{c}} \left [ c^{\mathrm{p}}(\mathbf{x}) 
    \right ]
  \end{align}
  If $c^{\mathrm{p}}(\mathbf{x}) \geq 0$ then we 
  have the following non-negative property: 
  \begin{align}
    \label{Eqn:Non_Negativity_CMinP}
    \mathop{\mathrm{min}}_{\mathbf{x} \in \overline{\Omega}} 
    \left [c(\mathbf{x}) \right ] \geq 0
  \end{align} 
\end{corollary}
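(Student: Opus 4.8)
The plan is to deduce the minimum principle directly from Theorem~\ref{Thm:CMP} by applying it to the negated field rather than re-running the energy argument. Set $\tilde{c}(\mathbf{x}) := -c(\mathbf{x})$. Because the differential operator in \eqref{Eqn:NN_AD_GE_BE} is linear in $c$ with the reaction, advection, and diffusion coefficients $\alpha$, $\mathbf{v}$, $\mathbf{D}$ untouched, substituting $c = -\tilde{c}$ shows that $\tilde{c}$ solves the \emph{same} ADR system with the data replaced by $\tilde{f} := -f$, $\tilde{q}^{\mathrm{p}} := -q^{\mathrm{p}}$, and $\tilde{c}^{\mathrm{p}} := -c^{\mathrm{p}}$. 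First I would verify this substitution carefully for each of \eqref{Eqn:NN_AD_GE_BE}--\eqref{Eqn:NN_AD_GE_NBC}.

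The key point to check, and the only place requiring a moment's care, is the Neumann condition \eqref{Eqn:NN_AD_GE_NBC}: I would observe that the weighting factor $(1-\mathrm{Sign}[\mathbf{v}\bullet\widehat{\mathbf{n}}])/2$ depends only on the (fixed) velocity field and outward normal, and is therefore unaffected by the sign change $c \mapsto -c$. Consequently the entire left-hand flux operator is linear in the field and merely changes sign, giving $\tilde{q}^{\mathrm{p}} = -q^{\mathrm{p}}$ with the identical inflow/outflow structure. I would likewise note that assumptions (A1)--(A5) are invariant under this transformation, since (A2)--(A4) constrain only $\alpha$, $\mathbf{v}$, $\mathbf{D}$, and the Dirichlet datum $\tilde{c}^{\mathrm{p}} = -c^{\mathrm{p}} = -g^{*}|_{\Gamma^{c}}$ still arises as the trace of an $H^{1}(\Omega)$ function; moreover $\tilde{c} \in C^{1}(\Omega)\cap C^{0}(\overline{\Omega})$.

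Next I would check that the hypotheses \eqref{Eqn:Restriction_CMP_VolSource_NormalFluxBCs} of Theorem~\ref{Thm:CMP} hold for the tilded data: the present assumption $f \geq 0$ a.e.\ in $\Omega$ yields $\tilde{f} \leq 0$ a.e.\ in $\Omega$, and $q^{\mathrm{p}} \leq 0$ a.e.\ on $\Gamma^{q}_{+}\cup\Gamma^{q}_{-}$ yields $\tilde{q}^{\mathrm{p}} \geq 0$. Theorem~\ref{Thm:CMP} then applies to $\tilde{c}$ and gives $\max_{\overline{\Omega}}[\tilde{c}] \leq \max[0, \max_{\Gamma^{c}}[\tilde{c}^{\mathrm{p}}]]$. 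The final step is purely algebraic translation: using $\max_{\overline{\Omega}}[-c] = -\min_{\overline{\Omega}}[c]$, $\max_{\Gamma^{c}}[-c^{\mathrm{p}}] = -\min_{\Gamma^{c}}[c^{\mathrm{p}}]$, and the elementary identity $\max[0,-a] = -\min[0,a]$, this inequality rearranges exactly into \eqref{Eqn:General_CMinP}. I do not anticipate a genuine obstacle here; the only thing to get right is the bookkeeping of signs in the Neumann term, which is why I would carry the flux substitution explicitly before invoking the theorem.

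Finally, the two special cases follow from \eqref{Eqn:General_CMinP}. If $c^{\mathrm{p}} \leq 0$ then $\min_{\Gamma^{c}}[c^{\mathrm{p}}] \leq 0$ collapses the outer minimum, giving $\min_{\overline{\Omega}}[c] \geq \min_{\Gamma^{c}}[c^{\mathrm{p}}]$; combined with the trivial bound $\min_{\overline{\Omega}}[c] \leq \min_{\Gamma^{c}}[c] = \min_{\Gamma^{c}}[c^{\mathrm{p}}]$ (valid since $\Gamma^{c}\subset\overline{\Omega}$ and $c=c^{\mathrm{p}}$ there), this yields the equality \eqref{Eqn:Particular_CMinP}. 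If instead $c^{\mathrm{p}} \geq 0$ then the outer minimum equals $0$, immediately delivering the non-negativity \eqref{Eqn:Non_Negativity_CMinP}.
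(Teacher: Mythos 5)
Your proposal is correct and is essentially the paper's own argument: the paper simply remarks that the minimum principle follows ``by reversing the signs'' in the hypotheses of Theorem~\ref{Thm:CMP}, and your substitution $\tilde{c}=-c$ (with the careful check that the $\mathrm{Sign}[\mathbf{v}\bullet\widehat{\mathbf{n}}]$ weight in the Neumann condition is unaffected) is the standard, rigorous formalization of that remark. The deduction of the two special cases from \eqref{Eqn:General_CMinP} also matches how the paper treats the corresponding cases of the maximum principle.
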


This paper also deals with transient analysis, and the 
details are provided in Sections \ref{Sec:NN_AD_Proposed} 
and \ref{Sec:NN_AD_Fast_Reactions}. 

\subsection{On appropriate Neumann BCs}
\label{SubSec:Neumann_BCs_Issues}
Many existing numerical formulations 
\cite{Ayub_Masud_NumerHeatTransB_2003_v44_p1} and packages 
such as \textsf{ABAQUS} \cite{Abaqus_6.14-1}, \textsf{ANSYS} 
\cite{Ansys_16.0}, \textsf{COMSOL} \cite{Comsol_5.0-1}, and 
\textsf{MATLAB's PDE Toolbox} \cite{MATLAB_2015a} do not 
pose the Neumann BCs in correct form for advection-diffusion 
equations. These formulations and packages either use the 
diffusive flux or the total flux on the entire Neumann 
boundary without discerning the following situations:
\begin{itemize}
\item Do we have inflow (i.e., $\mathbf{v} \bullet 
  \widehat{\mathbf{n}} \leq 0$) on the entire 
  Neumann boundary? 
  \item Do we have outflow (i.e., $\mathbf{v} \bullet 
    \widehat{\mathbf{n}} \geq 0$) on the entire 
    Neumann boundary? 
  \item Or do we have both inflow and outflow 
    on the Neumann boundary?
\end{itemize}
These conditions will dictate whether the resulting 
boundary value problem is well-posed or not. If a 
numerical formulation does not take into account 
these conditions, the numerical solution can 
exhibit instabilities, which will have dire 
consequences in mixing problems. To illustrate, 
consider the following 1D boundary value problem: 
\begin{subequations} 
  \begin{align}
    \label{Eqn:1D_ADR_NeuBC_Check1}
    &\frac{d}{dx} \left( vc -  D 
    \frac{dc}{dx} \right) = 0 \quad 
    \forall x \in (0, L) \\
    \label{Eqn:1D_ADR_NeuBC_Check2}
    &c(x = 0) = c_0
  \end{align}
\end{subequations}
where $v$, $D$ and $c_0$ are constants, and $L$ is the 
length of the domain. We now consider two different cases 
for the Neumann BC:
\begin{subequations} 
  \begin{align}
    \label{Eqn:1D_ADR_NeuBC_TotalFlux}
    \left( vc -  D \frac{dc}{dx} \right) 
    &= q_0 \quad \mathrm{at} \; x = L \\
    \label{Eqn:1D_ADR_NeuBC_DiffusiveFlux}
    - D \frac{dc}{dx} &= q_0 \quad \mathrm{at} 
    \; x = L
\end{align}
\end{subequations}
where $q_0$ is a constant. Equation 
\eqref{Eqn:1D_ADR_NeuBC_TotalFlux} 
corresponds to the total flux BC while 
equation \eqref{Eqn:1D_ADR_NeuBC_DiffusiveFlux} 
is the diffusive flux BC. The analytical solutions 
for these two different Neumann BCs are, respectively, 
given as follows: 
\begin{subequations} 
  \begin{align}
    \label{Eqn:1D_ADR_Soln_TotalFlux}
    c_1(x) &= \frac{1}{v} \left(q_0 + \left( 
    v c_0 - q_0 \right) e^{\frac{vx}{D}} \right) \\
    \label{Eqn:1D_ADR_Soln_DiffusiveFlux}
    c_2(x) &= \frac{1}{v} \left(v c_0 + q_0 
    e^{\frac{-vL}{D}} - q_0 e^{\frac{v(x - L)}{D}} 
    \right)
\end{align}
\end{subequations}
The solution $c_1(x)$ blows if $v > 0$, and $c_2(x)$ blows 
if $v < 0$. On the other hand, the exact solution based on 
the Neumann BC given in equation \eqref{Eqn:NN_AD_GE_NBC} 
is well-posed for both inflow and outflow cases. 

To summarize, the boundary value problem is 
well-posed under the prescribed diffusive 
flux on the entire Neumann boundary if the 
flow is outflow on the entire $\Gamma^{q}$. 
The boundary value problem is well-posed under the 
prescribed total flux on the entire Neumann boundary 
if the flow is inflow on the entire $\Gamma^{q}$. The 
Neumann BC given by equation \eqref{Eqn:NN_AD_GE_NBC} 
is more general, and the boundary value problem under 
this BC is well-posed even if the Neumann boundary is 
composed of both inflow and outflow.

\section{PLAUSIBLE APPROACHES AND THEIR SHORTCOMINGS}
\label{Sec:S3_NN_AD_PLCP}
There are numerous numerical formulations available 
in the literature for advective-diffusive-reactive 
systems. A cavalier look at these formulations can 
be deceptive, as one may expect more than what these 
formulations can actually provide. We now discuss 
some approaches that seem plausible to satisfy the 
maximum principle and the non-negative constraint 
for an advective-diffusive-reactive system, and 
illustrate their shortcomings. This discussion is 
helpful in two ways.
\emph{First}, it sheds light on the complexity of 
the problem, and can bring out the main contributions
made in this paper.  
\emph{Second}, the discussion can provide a rationale 
behind the approach taken in this paper in order to 
develop the proposed computational framework.   
To start with, it is well-known that the single-field 
Galerkin formulation does not perform well, as it 
produces spurious node-to-node oscillations on coarse 
grids \cite{Donea_Huerta}. The formulation also violates 
the non-negative constraint and maximum principles for 
anisotropic medium, and does not possess element-wise 
species balance property 
\cite{Nakshatrala_Valocchi_JCP_2009_v228_p6726,
Nagarajan_Nakshatrala_IJNMF_2011_v67_p820}. 

\subsection{Approach \#1:~Clipping/cut-off methods}
\label{Subsec:Approach1_Clipping}
There are various post-processing procedures such as 
clipping/cut-off methods \cite{2012_Burdakov_etal_JCP_v231_p3126_p3142,
2014_Kreuzer_NMPDE_v30_p994_p1002} to ensure that a certain 
numerical formulation satisfies the non-negative constraint. 
  The key idea of these methods is to simply chop-off 
  the negative values in a numerical solution. 
  Although a clipping method is a variational crime, 
  this approach appeals the practitioners because of 
  its simplicity. 
  However, there are many reasons, which are often 
  overlooked by the practitioners, why a clipping 
  method is not appropriate for ADR equations with 
  anisotropic diffusivity. The reasons, which are 
  documented below, go well beyond the philosophical 
  issue of ``variational crime.'' The reasons should 
  sufficiently justify and motivate to employ a rather 
  sophisticated computational framework just like the 
  one proposed in this paper. 
  \begin{enumerate}[(i)]
  \item The violation of the non-negative constraint is small only for 
    pure diffusion equations with isotropic diffusivity. The violations 
    can be large in the case of anisotropic diffusion. 
    If the maximum eigenvalue is not much smaller than unity, 
    then naive $h/p$-refinement will not always reduce the 
    negative values and clipping procedure can give erroneous 
    results. Figure 19 and problem 6.2 in the paper illustrate this 
    point. This has been illustrated even for diffusion equations 
    in Reference \cite{2013_Nakshatrala_Mudunuru_Valocchi_JCP_v253_p278_p307}.
  \item Although tensorial dispersion frequently arises in the 
    modeling of subsurface systems, many practitioners employ 
    isotropic diffusion in their numerical simulations just to 
    avoid large non-negative violations in their reactive-transport 
    modeling. As mentioned earlier, in the case of isotropic diffusion, 
    one can go away with the clipping procedure. But there is a need for 
    predictive simulations for realistic scenarios (e.g., anisotropic 
    diffusivity), and one needs carefully designed computational frameworks. 
    Simple approaches like the clipping procedure will not suffice.
  \item A clipping procedure, by itself, does 
    not ensure local species balance. 
  \item The clipping procedure cannot eliminate 
    the spurious node-to-node oscillations. 
  \item The ramifications of clipping the negative values 
    on the species balance and on the overall accuracy of 
    solutions have not been carefully studies or documented. 
  \item Finally, both $h$- and $p$-refinements may decrease the 
    negative values and reduce spurious node-to-node oscillations 
    for advection-dominated and reaction-dominated ADR problems. 
    However, our objective is to satisfy maximum principles, non-negative 
    constraint, species balance, reduce spurious node-to-node oscillations, 
    and obtain sufficiently accurate numerical solutions on \emph{coarse 
      computational grids}. Extensive mesh and polynomial refinements 
    defeats the main purpose, as these approaches will incur excessive 
    computational cost.
  \end{enumerate}

\subsection{Approach \#2:~Mesh restrictions}
\label{Subsec:Approach2_MeshRestrictions}
Recently, there has been a surge on the study of constructing meshes 
to satisfy various discrete maximum principles both within the context 
of single-field and mixed finite element formulations \cite{2014_Huang_JCP_v274_p230_p244,
2014_Huang_Wang_arXiv_1401_6232,2015_Mudunuru_Nakshatrala_arXiv}. The 
primary objective of these methods is to develop restrictions on the 
computational meshes to meet the underlying principles. However, it 
should be noted that there are various drawbacks for these methods. 
The important ones are described as follows:
\begin{enumerate}[(i)]
  \item Most of these mesh restriction methods are for 
    simplicial meshes (such as three-node triangular 
    element and four-node tetrahedral element). Extending 
    these results to non-simplicial elements is not 
    trivial or may not be possible. 
  \item The boundary conditions are restricted to only Dirichlet 
    on the entire boundary of the domain. Incorporating mixed 
    boundary conditions or a general Neumann BC given by equation 
    \eqref{Eqn:NN_AD_GE_NBC} has not been addressed.
  \item Generating a DMP-based mesh for complex domains is 
    extremely difficult and sometimes impossible. 
  \item For highly advection-dominated and reaction-dominated problems, 
    we need a highly refined DMP-based meshes. Constructing such 
    meshes is computationally intensive.
  \item Even though the mesh restriction conditions put 
    forth for the weak Galerkin method by Huang and Wang 
    \cite{2014_Huang_Wang_arXiv_1401_6232} is locally 
    conservative, it is restricted to pure anisotropic 
    diffusion equations. Generalizing it to obtain locally 
    conservative DMP-based meshes for anisotropic ADR 
    equations is not apparent. Moreover, it still suffers 
    from the above set of drawbacks.
\end{enumerate}

\subsection{Approach \#3:~Using non-negative 
  methodologies for diffusion equations}
\label{Subsec:Approach3_NN_PureDiffusion}
Recently, optimization-based finite element methods 
\cite{Liska_Shashkov_CiCP_2008_v3_p852,
Nakshatrala_Valocchi_JCP_2009_v228_p6726,
Nagarajan_Nakshatrala_IJNMF_2011_v67_p820,
2013_Nakshatrala_Mudunuru_Valocchi_JCP_v253_p278_p307} 
are proposed to satisfy the non-negative constraint 
and maximum principles for diffusion-type equations.
These non-negative methodologies are for self-adjoint 
operators and are constructed by invoking Vainberg's 
theorem \cite{Vainberg}. That is, they utilize the 
fact that there exists a scalar functional such that 
the G\^{a}teaux variation of this functional provides 
the weak formulation and the Euler-Lagrange equations 
provide the corresponding governing equations for the 
diffusion problem. Corresponding to this continuous 
variational/minimization functional, a discrete 
non-negative constrained optimization-based finite 
element method is developed. Unfortunately, such a 
variational principle based on Vainberg's theorem 
does not exist for the Galerkin weak formulation 
for an ADR equation, as the spatial operator is 
non-self-adjoint 
\cite{2010_Nakshatrala_Valocchi_IJCM_v7_p559_p572}.

\subsection{Approach \#4:~Posing the discrete equations as a $P$-LCP}
\label{Subsec:Approach4_PLCP}
Let $h$ be the maximum element size, $\| \mathbf{v} \|_{\infty,
\Omega}$ be the maximum value for advection velocity field, 
$\alpha_{\infty,\Omega}$ be the maximum value for linear 
reaction coefficient, and $\lambda_{\mathrm{min}}$ be the 
minimum eigenvalue of $\mathbf{D}(\mathbf{x})$ in the 
entire domain. Mathematically, these quantities are 
defined as follows:
\begin{subequations}
  \begin{align}
    \label{Eqn:MaxEleSize}
    h &:= \displaystyle \mathop{\mbox{max}}
    _{\Omega_{e} \in \Omega_{h}} \left[h_{\Omega
    _{e}} \right] \\
    \label{Eqn:MaxNormVelOmega}
    \| \mathbf{v} \|_{\infty,\Omega} &:= 
    \displaystyle \mathop{\mbox{max}}_{1 \leq i 
    \leq nd} \left[|(\mathbf{v}(\mathbf{x}))_{i}| 
    \right] \quad \forall \mathbf{x} \in \Omega \\
    \label{Eqn:MaxNormAlphaOmega}
    \alpha_{\infty,\Omega} &:= \displaystyle 
    \mathop{\mbox{max}}_{\mathbf{x} \in \Omega} 
    \left[\alpha(\mathbf{x})\right] \\
    \label{Eqn:MinEigenvalueDiffusivity}
    \lambda_{\mathrm{min}} &:= \displaystyle \mathop{
    \mbox{min}}_{\mathbf{x} \in \Omega} \left[
    \lambda_{\mathrm{min},\mathbf{D}(\mathbf{x})} \right] \\
    \label{Eqn:MaxEigenvalueDiffusivity}
    \lambda_{\mathrm{max}} &:= \displaystyle \mathop{
    \mbox{max}}_{\mathbf{x} \in \Omega} \left[
    \lambda_{\mathrm{max},\mathbf{D}(\mathbf{x})} \right]
  \end{align}
\end{subequations}   
where $\Omega_{h}$ is a regular linear finite element partition 
of the domain $\Omega$ such that $\overline{\Omega}_{h} = \bigcup_
{e = 1}^{Nele} \overline{\Omega}_{\mathrm{e}}$. ``\emph{Nele}'' 
is the total number of discrete non-overlapping open sub-domains.
The boundary of $\Omega_{\mathrm{e}}$ is denoted as $\partial 
\Omega_{\mathrm{e}} := \overline{\Omega}_{\mathrm{e}} - \Omega_
{\mathrm{e}}$. $h_{\Omega_{e}}$ is the diameter of element
$\Omega_{e}$. $\lambda_{\mathrm{min},\mathbf{D}(\mathbf{x})}$ and 
$\lambda_{\mathrm{max},\mathbf{D}(\mathbf{x})}$ are, respectively, 
the minimum and maximum eigenvalue of $\mathbf{D}(\mathbf{x})$ 
at a given point $\mathbf{x} \in \Omega$. Correspondingly, the 
element P\'{e}clet number $\mathbb{P}\mathrm{e}_{h}$ and the 
element Damk\"{o}hler number $\mathbb{D}\mathrm{a}_{h}$ are 
defined as follows:
\begin{subequations}
  \begin{align}
    \label{Eqn:ElementPecletNumber}
    \mathbb{P}\mathrm{e}_{h} &:= \frac{\| \mathbf{v} \|_{\infty,
    \Omega} h}{2 \lambda_{\mathrm{min}}} \\
    \label{Eqn:ElementDamkohlerNumber} 
    \mathbb{D}\mathrm{a}_{h} &:= \frac{\alpha_{\infty,\Omega} 
    h^2}{\lambda_{\mathrm{min}}}
  \end{align}
\end{subequations}
Herein, $\mathbb{D}\mathrm{a}_{h}$ is defined based on linear 
reaction coefficient and diffusivity. However, it should be 
noted that there are various ways to construct different types
of element Damk\"{o}hler numbers (for instance, see Reference 
\cite{Chung} for isotropic diffusivity).

After low-order finite element discretization of either $\mathrm{SG}_1$
or $\mathrm{SG}_2$, the discrete equations for the ADR boundary value 
problem take the following form:
\begin{align}
  \label{Eqn:Discrete_SG1_SG2}
  \boldsymbol{K} \boldsymbol{c} = \boldsymbol{f}
\end{align}
where $\boldsymbol{K}$ is the stiffness matrix (which is neither 
symmetric nor positive definite), $\boldsymbol{c}$ is the vector 
containing nodal concentrations, and $\boldsymbol{f}$ is the 
volumetric source vector. The matrix $\boldsymbol{K}$ is of 
size $ncdofs \times ncdofs$, where ``$ncdofs$'' denotes the number 
of free degrees-of-freedom for the concentration. The vectors 
$\boldsymbol{c}$ and $\boldsymbol{f}$ are of size $ncdofs \times 1$.

In the rest of this paper, the symbols $\succeq$ and 
$\preceq$ will be used to denote the component-wise 
comparison of vectors and matrices. That is, given 
two vectors $\boldsymbol{a}$ and $\boldsymbol{b}$, 
$\boldsymbol{a} \preceq \boldsymbol{b}$ means that 
$(\boldsymbol{a})_i \leq (\boldsymbol{b})_i$ for all $i$. 
Likewise, given two matrices $\boldsymbol{A}$ and 
$\boldsymbol{B}$, $\boldsymbol{A} \preceq \boldsymbol{B}$ 
means that $(\boldsymbol{A})_{ij} \leq (\boldsymbol{B})_{ij}$ for all 
$i$ and $j$. The mathematical means of the symbols $\succeq$, 
$\prec$ and $\succ$ should now be obvious. We shall use 
$\boldsymbol{0}$ and $\boldsymbol{O}$ to denote zero 
vector and zero matrix, respectively.

\begin{definition}[\texttt{$P$-matrix, $Z$-matrix, and $M$-matrix}]
  A matrix $\boldsymbol{A} \in \mathbb{R}^{nd \times nd}$ is a 
  $P$-matrix if $\frac{1}{2}\left(\boldsymbol{A} + \boldsymbol{A}
  ^{\mathrm{T}} \right)$ is positive-definite. The matrix is a 
  $Z$-matrix if $(\boldsymbol{A})_{ij} \leq 0$, where $i \neq j$ 
  and $i, j = 1, \cdots, nd$. The matrix is an $M$-matrix if 
  $\boldsymbol{A}$ is a $P$-matrix and a $Z$-matrix. 
\end{definition}

\begin{definition}[\texttt{Coarse mesh demarcation for 
  anisotropic ADR equations}]
  \label{Definition:CoarseMesh_ADR}
   A regular low-order finite element computational mesh $\Omega_{h}$ 
   is said to be coarse with respect to
   \begin{enumerate}[(a)]
     \item spurious oscillations if $\mathbb{P}\mathrm{e}_{h}
      > 1$
     \item spurious oscillations and large linear reaction 
       coefficient if $\mathbb{P}\mathrm{e}_{h} > 1$ and
       $\mathbb{D}\mathrm{a}_{h} > 1$
     \item spurious oscillations, large linear reaction coefficient, 
       and a discrete maximum principle if the stiffness matrix 
       $\boldsymbol{K}$ associated with either $\mathrm{SG}_1$ or 
       $\mathrm{SG}_2$ is not an $M$-matrix
   \end{enumerate}
\end{definition}

It can be easily shown through counterexamples that the 
stiffness matrix $\boldsymbol{K}$ for ADR equation will 
not always be a $Z$-matrix. We shall now provide two such 
counterexamples. The first counterexample is the low-order 
finite element discretization based on two-node linear element
for the following 1D ADR equation (with constant velocity, 
diffusivity, and linear reaction coefficients): 
\begin{subequations}
  \begin{align}
    \label{Eqn:1D_ADR_FEM_GovEqn_L2_Element}
    & \alpha c +  v \frac{dc}{dx} - D \frac{d^2c}{dx^2} = 
    f(x) \quad \forall x \in \Omega:= (0,1) \\
    \label{Eqn:1D_ADR_FEM_BCs_L2_Element}
    & c(x) = c^{\mathrm{p}}(x) \quad \forall x \in \partial 
    \Omega := \{0, 1\}
  \end{align}
\end{subequations}
with $\alpha \geq 0$, $D > 0$, and $v \in \mathbb{R}$. 
The entries of stiffness matrix $\boldsymbol{K}$ for 
an $i^{\mathrm{th}}$ intermediate node (using equal-sized 
two-node linear finite element) is given as follows:
\begin{align}
  \label{Eqn:FEM_1D_ADR_L2_Element}
  \frac{\alpha h}{6} \left[ 
  \begin{array}{ccc} 
    1 & 4 & 1  
  \end{array} 
  \right] \left  \{ 
  \begin{array}{c} 
    c_{i-1} \\ c_{i} \\ c_{i+1}  
  \end{array} 
  \right \} + \frac{v}{2} \left[ 
  \begin{array}{ccc} 
    -1 & 0 & 1  
  \end{array} 
  \right] \left  \{ 
  \begin{array}{c} 
    c_{i-1} \\ c_{i} \\ c_{i+1}  
  \end{array} 
  \right \} + \frac{D}{h} \left[ 
  \begin{array}{ccc} 
    -1 & 2 & -1  
  \end{array} 
  \right] \left  \{ 
  \begin{array}{c} 
    c_{i-1} \\ c_{i} \\ c_{i+1}  
  \end{array} 
  \right \}
\end{align}
On trivial manipulations on equation \eqref{Eqn:FEM_1D_ADR_L2_Element}, 
it is evident that the stiffness matrix is a $Z$-matrix if and only 
if the following condition is satisfied:
\begin{align}
  \label{Eqn:1D_ADR_Zmatrix_Condition} 
  h \leq h_{\mathrm{max}} := \frac{12 D}{3|v| 
  + \sqrt{9v^2 + 24 \alpha D}}
\end{align}
which is not always the case. The second counterexample 
is based on a simplicial finite element discretization 
(e.g., three-node triangular/four-node tetrahedron 
element) of ADR equation with Dirichlet BCs on the 
entire boundary. If any $nd$-simplicial mesh does not 
satisfy the following condition then $\boldsymbol{K}$ 
is not a $Z$-matrix \cite[Theorem 4.3]{2015_Mudunuru_Nakshatrala_arXiv}:
\begin{align}
  \label{Eqn:2D_3D_ADR_Zmatrix_Condition}
  0 < \frac{h_{p} \, \|\mathbf{v}\|_{\infty,\overline{\Omega}_e}}{(nd+1) 
  \, \Lambda_{\mathrm{min},\widetilde{\boldsymbol{D}}_{\Omega_e}}} &+ \frac{h_{p} \, 
  h_{q} \, \alpha_{\infty,\overline{\Omega}_e}}{(nd+1) \, (nd+2) \, 
  \Lambda_{\mathrm{min},\widetilde{\boldsymbol{D}}_{\Omega_e}}} \leq \cos(\beta_{pq,
  \widetilde{\boldsymbol{D}}^{-1}_{\Omega_e}}) \nonumber \\
  &\forall p,q = 1,2,\cdots,nd+1, 
  \; p \neq q, \; \Omega_e \in \Omega_h
\end{align}
where $p$ and $q$ are the any two given arbitrary vertices of $\Omega_e$.
$\widetilde{\boldsymbol{D}}_{\Omega_e}$ is the integral element average 
anisotropic diffusivity. $\Lambda_{\mathrm{min},\widetilde{\boldsymbol{D}}_{\Omega_e}}$ 
denotes the minimum eigenvalue of $\widetilde{\boldsymbol{D}}_{\Omega_e}$. 
$h_p$ and $h_q$ are the perpendicular distance (or the height) from vertices 
$p$ and $q$ to their respective opposite faces. $\beta_{pq,\widetilde{\boldsymbol{D}}
^{-1}_{\Omega_e}}$ is the dihedral angle measured in $\widetilde{\boldsymbol{D}}^{-1}
_{\Omega_e}$ metric between two faces opposite to vertices $p$ and $q$ of 
an element $\Omega_e$.

\begin{proposition}[\texttt{$P$-matrix linear complementarity 
  problem for ADR equations}]
  \label{Theorem:PLCP_ADR_SG1_SG2}
  Given that assumptions (A1)--(A5) hold, then the stiffness 
  matrix $\boldsymbol{K}$ associated with low-order finite 
  element discretization of either $\mathrm{SG}_1$ or $\mathrm{SG}_2$ 
  is a $P$-matrix. Furthermore, if $\boldsymbol{c}$ has to be 
  DMP-preserving on any arbitrary coarse mesh, then the resulting 
  constrained discrete equations of single-field Galerkin formulation 
  can be posed as a $P$-$\mathrm{LCP}$.
\end{proposition}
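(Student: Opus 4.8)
The plan is to prove the two assertions in sequence, leaning on the same algebraic manipulations that underlie the proof of Theorem~\ref{Thm:CMP}. For the first assertion, I would establish that the symmetric part of $\boldsymbol{K}$ is positive-definite by evaluating the quadratic form $\boldsymbol{c}^{\mathrm{T}} \boldsymbol{K} \boldsymbol{c}$ directly. Writing $c_h \in \mathcal{W}$ for the finite element function whose free nodal values are the components of $\boldsymbol{c}$ (so that $c_h = 0$ on $\Gamma^{c}$), one has $\boldsymbol{c}^{\mathrm{T}} \boldsymbol{K} \boldsymbol{c}$ equal to the $\mathrm{SG}_1$ bilinear form evaluated at $w = c = c_h$. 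The only non-symmetric contribution is the advection term $-(\mathrm{grad}[c_h] \bullet \mathbf{v}; c_h)$, which I would symmetrize using $\mathrm{grad}[c_h]\, c_h = \tfrac{1}{2} \mathrm{grad}[c_h^{2}]$ together with the divergence theorem. Because $c_h$ vanishes on $\Gamma^{c}$, the resulting boundary integral lives only on $\Gamma^{q}$, and combining it with the Sign-weighted boundary term already present in $\mathrm{SG}_1$ collapses the net boundary contribution to $\tfrac{1}{2}(c_h; |\mathbf{v} \bullet \widehat{\mathbf{n}}|\, c_h)_{\Gamma^{q}}$.

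This is the same mechanism that produces the $|\mathbf{v}\bullet\widehat{\mathbf{n}}|/2$ term in equation~\eqref{Eqn:SG1_CMP_2}; indeed, setting $\Phi_{\mathrm{max}} = 0$ in the identities~\eqref{Eqn:Identity_1}--\eqref{Eqn:Identity_4} reduces that computation to the present one. After symmetrization the quadratic form reads $\boldsymbol{c}^{\mathrm{T}} \boldsymbol{K} \boldsymbol{c} = (c_h; (\alpha + \tfrac{1}{2}\mathrm{div}[\mathbf{v}])\, c_h) + (\mathrm{grad}[c_h]; \mathbf{D}\, \mathrm{grad}[c_h]) + \tfrac{1}{2}(c_h; |\mathbf{v}\bullet\widehat{\mathbf{n}}|\, c_h)_{\Gamma^{q}}$. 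The first term is non-negative by~\eqref{Eqn:CMP_Restrict_2}, the last is manifestly non-negative, and the middle term is bounded below by $\gamma_{\mathrm{lb}} \|\mathrm{grad}[c_h]\|^{2}$ via assumption~(A3). Strict positivity for $\boldsymbol{c} \neq \boldsymbol{0}$ then follows from a Poincar\'e-type inequality: $\mathrm{grad}[c_h]=\boldsymbol{0}$ would force $c_h$ constant, and since $c_h=0$ on $\Gamma^{c}$ with $\mathrm{meas}(\Gamma^{c})>0$ by~(A5), this forces $c_h \equiv 0$. An identical computation for $\mathrm{SG}_2$ yields the same form (its Sign-weighted boundary term carries a compensating sign), so $\tfrac{1}{2}(\boldsymbol{K}+\boldsymbol{K}^{\mathrm{T}})$ is positive-definite in both cases, i.e. $\boldsymbol{K}$ is a $P$-matrix.

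For the second assertion, I would first record that the continuous maximum and minimum principles (Theorem~\ref{Thm:CMP} and Corollary~\ref{Thm:CMinP}) dictate the DMP-preserving bounds that $\boldsymbol{c}$ must meet component-wise, the essential one being the lower bound $\boldsymbol{c} \succeq \boldsymbol{0}$. Rather than imposing $\boldsymbol{K}\boldsymbol{c} = \boldsymbol{f}$ exactly, I would pose the DMP-constrained discrete problem as the variational inequality of finding a feasible $\boldsymbol{c}$ with $(\boldsymbol{K}\boldsymbol{c} - \boldsymbol{f})^{\mathrm{T}}(\boldsymbol{d} - \boldsymbol{c}) \geq 0$ for every feasible $\boldsymbol{d}$. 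For the non-negativity feasible set this variational inequality is, component-wise, exactly the linear complementarity problem $\boldsymbol{w} = \boldsymbol{K}\boldsymbol{c} - \boldsymbol{f}$, $\boldsymbol{c} \succeq \boldsymbol{0}$, $\boldsymbol{w} \succeq \boldsymbol{0}$, $\boldsymbol{c}^{\mathrm{T}}\boldsymbol{w} = 0$. Its governing matrix is $\boldsymbol{K}$, which the first part shows to have positive-definite symmetric part; such a matrix is strictly monotone and in particular a $P$-matrix, so the problem is a $P$-$\mathrm{LCP}$ and admits a unique solution for every right-hand side $\boldsymbol{f}$ (equivalently, via the Samelson--Thrall--Wesler characterization).

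The main obstacle, and where I would spend the most care, is the passage from the full box constraints to a genuine $P$-$\mathrm{LCP}$. The maximum and minimum principles supply \emph{both} an upper and a lower bound, so the natural feasible set is a box $\boldsymbol{c}^{\min} \preceq \boldsymbol{c} \preceq \boldsymbol{c}^{\max}$ rather than a single orthant; a two-sided box yields a mixed complementarity problem, which I would reduce to standard LCP form by a slack-variable change of variables, verifying that the $P$-matrix (strict monotonicity) property survives that reduction. A second delicate point is that, unlike the self-adjoint diffusion case, the non-symmetry of $\boldsymbol{K}$ means the LCP does \emph{not} arise as the optimality condition of a convex quadratic program built from $\boldsymbol{K}$ itself -- such a program would involve only $\tfrac{1}{2}(\boldsymbol{K}+\boldsymbol{K}^{\mathrm{T}})$ and would fail to reproduce $\boldsymbol{K}\boldsymbol{c}=\boldsymbol{f}$ -- so well-posedness must be argued directly from $P$-matrix LCP theory, which is precisely why the $P$-matrix property of the first part is the load-bearing step. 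Finally, I would note that on an arbitrary coarse mesh, where $\boldsymbol{K}$ need not be a $Z$-matrix (cf.\ the counterexamples~\eqref{Eqn:1D_ADR_Zmatrix_Condition} and~\eqref{Eqn:2D_3D_ADR_Zmatrix_Condition}), the $P$-matrix property nonetheless persists, since the coercivity estimate above invokes only~(A3)--(A5) and no mesh restriction whatsoever.
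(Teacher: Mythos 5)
Your proposal is correct and follows essentially the same route as the paper: both arguments rest on symmetrizing the advection term so that $\tfrac{1}{2}(\boldsymbol{K}+\boldsymbol{K}^{\mathrm{T}})$ exposes the $\alpha+\tfrac{1}{2}\mathrm{div}[\mathbf{v}]$ reaction term, the diffusion term, and the $\tfrac{1}{2}|\mathbf{v}\bullet\widehat{\mathbf{n}}|$ boundary term (the paper does this element-wise in equation \eqref{Eqn:LocalStiffnessMatrix_Ke} and appeals to assembly, whereas you work with the global quadratic form directly), and both then read off the $P$-LCP from the DMP-constrained discrete equations. Your version is, if anything, slightly more complete on two points the paper passes over quickly: you supply the explicit strict-positivity argument ($\mathrm{grad}[c_h]=\boldsymbol{0}$ plus $c_h|_{\Gamma^{c}}=0$ with $\mathrm{meas}(\Gamma^{c})>0$ forces $c_h\equiv 0$) rather than citing the assembly lemma, and you flag that the full two-sided DMP bounds give a box-constrained (mixed) complementarity problem that must be reduced to standard LCP form, whereas the paper writes only the one-sided constraint $\boldsymbol{c}\succeq\boldsymbol{0}$.
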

\begin{proof}
  We prove only for $\mathrm{SG}_1$ formulation and extending 
  it to $\mathrm{SG}_2$ is a trivial manipulation. The symmetric 
  part of the element stiffness matrix $\boldsymbol{K}_e$ is 
  given as follows:
  \begin{align}
    \label{Eqn:LocalStiffnessMatrix_Ke}
    \frac{1}{2} \left(\boldsymbol{K}_e + \boldsymbol{K}
    _e^{\mathrm{T}} \right) = \int \limits_{\Omega_e} 
    \left(\alpha(\mathbf{x}) + \frac{1}{2} \mathrm{div}[
    \mathbf{v}(\mathbf{x})] \right) \boldsymbol{N}^{\mathrm{T}} 
    \boldsymbol{N} \; \mathrm{d} \Omega_e \, &+ \, \int 
    \limits_{\Omega_e} \boldsymbol{B} \mathbf{D}(\mathbf{x}) 
    \boldsymbol{B}^{\mathrm{T}} \; \mathrm{d} \Omega_e 
    \nonumber \\ 
    &+ \int \limits_{\Gamma^q_e} \frac{1}{2} |\mathbf{v} 
    \bullet \widehat{\mathbf{n}}| \boldsymbol{N}^{\mathrm{T}} 
    \boldsymbol{N} \; \mathrm{d} \Gamma^q_e
  \end{align}
  where $\boldsymbol{N}$ is row vector containing shape functions
  and $\boldsymbol{B} = (\boldsymbol{DN}) \boldsymbol{J}^{-1}$ (see 
  Appendix A for details on $\boldsymbol{DN}$ 
  and $\boldsymbol{J}$). From equation \eqref{Eqn:LocalStiffnessMatrix_Ke} 
  and assumptions (A1)--(A5), it is evident $\frac{1}{2} \left(\boldsymbol{K}_e 
  + \boldsymbol{K}_e^{\mathrm{T}} \right)$ is positive semi-definite. 
  Furthermore, the assembly procedure ensures that the global stiffness 
  matrix $\boldsymbol{K}$ is positive definite \cite[Section 2 and Section 3]
  {1989_Wathen_CMAME_v74_p271_p287}. As the mesh is coarse, $\boldsymbol{K}$ 
  is not an $M$-matrix. But we want $\boldsymbol{c}$ to satisfy the DMP 
  constraints. Hence, this results in the following constrained discrete 
  system of equations: 
  \begin{subequations}
    \begin{align}
      \label{Eqn:PLCP_Eqn1}
      \boldsymbol{K} \boldsymbol{c} &= \boldsymbol{f} + 
      \boldsymbol{\lambda} \\
      \label{Eqn:PLCP_Eqn2}
      \boldsymbol{\lambda} &\succeq \boldsymbol{0} \\
      \label{Eqn:PLCP_Eqn3}
      \boldsymbol{c} &\succeq \boldsymbol{0} \\
      \label{Eqn:PLCP_Eqn4}
      \boldsymbol{\lambda} \bullet 
      \boldsymbol{c} &= 0
    \end{align}
  \end{subequations}
  As $\boldsymbol{K}$ is a $P$-matrix, the above system is a 
  $P$-matrix linear complementarity problem. This completes 
  the proof.
\end{proof}

It needs to be emphasized that solving a LCP problem with 
P-matrix is, in general, NP-hard \cite{2007_Rust_PhD_Thesis_ETH_2007}. 
Therefore, posing the discrete equations as a LCP problem and 
numerically solving it is not a viable approach, especially for 
large-scale ADR problems with high $\mathbb{P}\mathrm{e}_{h}$. 
Moreover, it is not always feasible to find a DMP-based 
$h$-refined mesh that will produce accurate results for 
ADR equation for sufficiently high element P\'{e}clet 
number and element Damk\"{o}hler number.

\subsection{Approach \#5:~Posing the discrete problem 
  as constrained normal equations}
\label{Subsec:Approach5_NormalEquations}
One way of constructing an optimization-based approach 
to meet the non-negative constraint is to rewrite the 
discrete problem as the following constrained normal 
equations: 
\begin{subequations}
  \begin{align}
    \label{Eqn:Constrained_Normal_Equations}
    \mathop{\mathrm{minimize}}_{\boldsymbol{c} \, 
      \in \, \mathbb{R}^{ncdofs}} & \quad\frac{1}{2} 
    \langle \boldsymbol{c}; \boldsymbol{K}^{\mathrm{T}} 
    \boldsymbol{K} \boldsymbol{c} \rangle - \langle 
    \boldsymbol{c}; \boldsymbol{K}^{\mathrm{T}} 
    \boldsymbol{f} \rangle \\
    \mbox{subject to} & \quad \boldsymbol{c} 
    \succeq \boldsymbol{0}
  \end{align}
\end{subequations}
where $\langle \bullet; \bullet \rangle$ denotes the 
standard inner-product in Euclidean spaces. The corresponding 
first-order optimality conditions can be written as:
\begin{subequations}
  \begin{align}
    \label{Eqn:KKT_Constrained_Normal_Equations}
    &\boldsymbol{K}^{\mathrm{T}} \boldsymbol{K} 
    \boldsymbol{c} = \boldsymbol{K}^{\mathrm{T}} 
    \boldsymbol{f} + \boldsymbol{\lambda} \\
    &\boldsymbol{c} \succeq \boldsymbol{0} \\
    &\boldsymbol{\lambda} \succeq \boldsymbol{0} \\
    &\lambda_i c_i  = 0 \quad \forall i = 1, \cdots, 
    ncdofs
  \end{align}
\end{subequations}
If there are no constraints, the optimization problem becomes:
\begin{align}
  \label{Eqn:Unconstrained_Normal_Equations}
  \mathop{\mathrm{minimize}}_{\boldsymbol{c} \, 
  \in \, \mathbb{R}^{ncdofs}} & \quad\frac{1}{2} 
  \langle \boldsymbol{c}; \boldsymbol{K}^{\mathrm{T}} 
  \boldsymbol{K} \boldsymbol{c} \rangle - \langle 
  \boldsymbol{c}; \boldsymbol{K}^{\mathrm{T}} 
  \boldsymbol{f} \rangle 
\end{align}
The first-order optimality condition for the unconstrained 
discrete optimization problem is:
\begin{align}
  \label{Eqn:KKT_Unconstrained_Normal_Equations}
  \boldsymbol{K}^{\mathrm{T}} \boldsymbol{K} \boldsymbol{c} 
  = \boldsymbol{K}^{\mathrm{T}} \boldsymbol{f} 
\end{align}
In the numerical mathematics literature (e.g., see 
\cite{Demmel}), the above system of equations 
\eqref{Eqn:KKT_Unconstrained_Normal_Equations} is 
referred to as normal equations. The three 
main deficiencies of this approach are:
\begin{enumerate}[(i)]
\item The constrained optimization-based normal equations 
  method does not avoid node-to-node spurious 
  oscillations. In addition, there is no obvious way 
  of fixing the method to avoid this type of 
  unphysical solutions.
\item It is well-known that the condition number of 
  $\boldsymbol{K}^{\mathrm{T}} \boldsymbol{K}$ will 
  be much worse than $\boldsymbol{K}$. So the numerical 
  solution will be less reliable, less accurate, and 
  numerically not stable \cite{Demmel}.
\item The discrete optimization problem given by equation 
  \eqref{Eqn:Unconstrained_Normal_Equations} on which 
  non-negative constraints are enforced does not have 
  a corresponding continuous variational/minimization 
  problem.
\end{enumerate}

We shall use the following academic problem to 
illustrate the aforementioned deficiencies:
\begin{subequations}
  \begin{align}
    \label{Eqn:1D_Academic_Problem_GE}
    &v \frac{dc}{dx} - D \frac{d^2 c}{d x^2} 
    = f \quad \forall x \in (0,1) \\
    \label{Eqn:1D_Academic_Problem_BCs}
    &c(x = 0) = c(x = 1) = 0
  \end{align}
\end{subequations}
where $v$, $D$, and $f$ are assumed to be constants. 
In our numerical experiment, we have taken the number 
of mesh elements to be 11, $v/D = 150$, and $f = 1$. 
The element P\'eclet number ($\mathbb{P}\mathrm{e}_{h} 
= \frac{v h}{2 D}$) is approximately 6.82. Since it is 
greater than unity, there be will spurious node-to-node 
oscillations under the standard single-field Galerkin 
formulation. From Figure \ref{Fig:Normal_spurious}, it 
is evident that the normal equations approach does not 
eliminate the spurious node-to-node oscillations.
The condition number of the stiffness matrix under the 
standard single-field Galerkin formulation is 8.41, 
whereas the condition number of the stiffness matrix 
under the normal equations approach is 70.69. For small 
element P\'eclet numbers, deficiency (i) can be avoided. 
But deficiencies (ii) and (iii) will still be present and 
cannot be circumvented. \emph{Hence, posing the discrete 
problem as constrained normal equations is not a viable 
approach to meet maximum principles and the non-negative 
constraint.}

\section{PROPOSED COMPUTATIONAL FRAMEWORK}
\label{Sec:NN_AD_Proposed}
We employ least-squares formalism to develop a class 
of structure-preserving numerical formulations whose 
solutions satisfy DMP, LSB, and GSB. The formulations 
are built based on minimization of unconstrained/constrained 
quadratic least-squares functionals. In a least-squares-based 
finite element formulation, a non-physical least-squares 
functional is constructed in terms of the sum of the squares 
of the residuals in an appropriate norm. These residuals are 
based on the underlying governing equations. However, it should 
be noted that LSFEMs are different from the Galerkin least-squares 
or stabilized mixed methods, where least-squares terms are added 
locally or globally to variational problems.

The success of LSFEM is due to the rich mathematical
foundations that influence both the analysis and the 
algorithmic development. LSFEM offers several attractive
features. The resulting weak formulations are coercive. 
Hence, a unique global minimizer exists for the least-squares 
functional and this minimizer coincides with the exact solution.
Conforming finite element discretizations of least-squares 
functionals leads to stable and (eventually) optimally 
accurate numerical solutions.
For mixed LSFEM-based formulations, equal order 
interpolation can be used for all the unknowns, 
which is computationally the most convenient.
The resulting algebraic system is symmetric and positive 
definite. Thus, the discrete system can be solved using 
standard and robust iterative numerical methods.
For more details on LSFEM for various applications, see 
Bochev and Gunzberger \cite{Bochev_Gunzburger} and Jiang 
\cite{Jiang_LSFEM}.

\subsection{Design synopsis of the proposed numerical methodology}
\label{SubSec:FirstOrder_Mixed_Form}
The central idea of the proposed computational framework 
is to constrain a least-squares functional with LSB 
and non-negative constraints. The main steps involved 
in the design of the proposed computational framework 
are: 
\begin{enumerate}[(i)]
\item The governing equations of the ADR problem are 
  written in first-order mixed form. 
\item We construct a stabilized least-squares functional 
  for these first-order governing equations.
\item We construct algebraic equality constraints 
  to enforce element-wise/local species balance (LSB). 
\item We enforce bound constraints to the constructed 
  LSFEM to meet maximum principles and the non-negative 
  constraint in the discrete setting. In order to achieve 
  this, we shall use low-order finite element interpolation 
  for $c(\mathbf{x})$. 
\end{enumerate}

The first-order mixed form of the governing 
equations can be written as: 
\begin{subequations}
  \begin{alignat}{2}
    \label{Eqn:NN_AD_First_GE_CM}
    &\mathbf{q}(\mathbf{x}) - \mathbf{v}(\mathbf{x}) c(\mathbf{x}) 
    + \mathbf{D}(\mathbf{x}) \mathrm{grad}[c(\mathbf{x})] = \mathbf{0} 
    \quad &&\mathrm{in} \; \Omega \\
    \label{Eqn:NN_AD_First_GE_BE}
    &\mathrm{div}[\mathbf{q}(\mathbf{x})] = f(\mathbf{x}) - \alpha 
    (\mathbf{x}) c (\mathbf{x}) \quad &&\mathrm{in} \; \Omega \\
    \label{Eqn:NN_AD_First_GE_DBC}
    &c(\mathbf{x}) = c^{\mathrm{p}}(\mathbf{x}) \quad &&\mathrm{on} \; 
    \Gamma^{c} \\
    \label{Eqn:NN_AD_First_GE_NBC}
    &\left(\mathbf{q}(\mathbf{x}) - \left( \frac{1 + \mathrm{Sign}
    [\mathbf{v} \bullet \widehat{\mathbf{n}}]}{2}\right) \mathbf{v}
    (\mathbf{x}) c(\mathbf{x}) \right) \bullet \widehat{\mathbf{n}}
    (\mathbf{x}) = q^{\mathrm{p}} (\mathbf{x}) \quad &&\mathrm{on} 
    \; \Gamma^{q}
  \end{alignat}
\end{subequations}
The bound constraints to meet discrete maximum 
principles take the following form:
\begin{align} 
  c_{\mathrm{min}} \boldsymbol{1} \preceq \boldsymbol{c} 
  \preceq c_{\mathrm{max}} \boldsymbol{1} \quad \mathrm{in} 
  \; \overline{\Omega}_h
\end{align}
where $c_{\mathrm{min}}$ and $c_{\mathrm{max}}$ are the minimum and 
maximum concentration values possible in $\overline{\Omega}$. 
The LSB equality constraints can be constructed in 
two different ways. The first approach is based on 
the integral statement of the balance of species on 
an element, and takes the following mathematical 
form: 
\begin{align}
  \label{Eqn:LSB_Second_Method}
   \int \limits_{\Omega_e} \alpha (\mathbf{x}) 
   c(\mathbf{x}) \; \mathrm{d} \Omega_e + \int 
   \limits_{\partial \Omega_e} \mathbf{q}(\mathbf{x}) 
   \bullet \widehat{\mathbf{n}}(\mathbf{x}) \; 
   \mathrm{d} \Gamma_e = \int \limits_{\Omega_e} 
   f(\mathbf{x}) \; \mathrm{d} \Omega_e
\end{align}
The second approach is to enforce equation 
\eqref{Eqn:NN_AD_First_GE_BE} in each mesh 
element $\overline{\Omega}_{\mathrm{e}}$ in 
an integral form:
\begin{align}
  \label{Eqn:LSB_First_Method}
   \int \limits_{\Omega_e} \alpha (\mathbf{x}) 
   c(\mathbf{x}) \; \mathrm{d} \Omega_e + \int 
   \limits_{\Omega_e} \mathrm{div}[\mathbf{q}
   (\mathbf{x})] \; \mathrm{d} \Omega_e = \int 
   \limits_{\Omega_e} f(\mathbf{x}) \; \mathrm{d} 
   \Omega_e
\end{align}
One can obtain equation \eqref{Eqn:LSB_Second_Method} 
by applying the divergence theorem to equation 
\eqref{Eqn:LSB_First_Method}, which means that 
these two approaches are equivalent in the 
continuous setting. This will not always be 
the case in the discrete setting. 
In the case of simplicial and non-simplicial low-order 
finite elements, these approaches are equivalent. However, 
these two approaches can be different in the case of 
higher-order finite elements. This is because in certain 
higher-order finite elements (e.g., nine-node quadrilateral 
element), not all the nodes are on the boundary of the 
element. The flux at an interior node contributes to the 
second integral in equation \eqref{Eqn:LSB_First_Method} 
but not to the corresponding term in equation 
\eqref{Eqn:LSB_Second_Method}. These issues are 
beyond the scope of this paper. Herein, we take 
the first approach given by equation 
\eqref{Eqn:LSB_Second_Method}.

We next construct two different least-squares functionals and 
analyze the influence of various constraints on the performance 
of these LSFEMs. It should be noted that Hsieh and Yang 
\cite{2009_Hsieh_Yang_CMAME_v199_p183_p196} have proposed 
similar least-squares functionals, but they considered homogeneous 
isotropic steady-state advection-diffusion equations. Moreover, 
even in the simple setting of isotropic diffusivity, they did 
not consider general Neumann BCs, spatially varying velocity 
fields, simplicial vs. non-simplicial elements, or the effects 
of DMPs and LSB on the performance of the least-squares functionals. 
This paper investigates all the mentioned aspects: we incorporate 
anisotropy, heterogeneity, transient effects, linear reaction terms, 
non-solenoidal spatially varying velocity fields, and DMP and LSB 
constraints.

\subsection{Weighted primitive LSFEM}
\label{Eqn:Weighted_Primitive_LSFEM}
The weighted primitive LSFEM is the standard way of 
constructing a LSFEM-based formulation. It does not 
contain any additional stabilization terms. The weighted 
primitive least-squares functional $\mathfrak{F}_{\mathrm{Prim}}
(c,\mathbf{q}) :\mathcal{C} \times \mathcal{Q} \rightarrow 
\mathbb{R}$ in $L_2$-norm can be written as:
\begin{align}
  \label{Eqn:Weighted_Primitive_LS_Functional}
  \mathfrak{F}_{\mathrm{Prim}} \left( c,\mathbf{q} \right) &:=
  \frac{1}{2} \Big \| \mathbf{A}(\mathbf{x}) \big( \mathbf{q} 
  - c \mathbf{v} + \mathbf{D} \mathrm{grad}[c] \big) \Big \|^2_
  {\Omega} \nonumber \\
  &+ \frac{1}{2} \Big \| \beta (\mathbf{x}) \big( \alpha c + 
  \mathrm{div}[ \mathbf{q} ] - f \big) \Big \|^2_{\Omega} 
  \nonumber \\
  &+ \frac{1}{2} \Bigg \| \left(\mathbf{q} - \left(\frac{1 + 
  \mathrm{Sign}[\mathbf{v} \bullet \widehat{\mathbf{n}}]}{2} 
  \right) c \mathbf{v} \right) \bullet \widehat{\mathbf{n}} 
  - q^{\mathrm{p}} \Bigg \|^2_{\Gamma^{q}}
\end{align}
where the second-order tensor $\mathbf{A}(\mathbf{x})$ 
and the scalar function $\beta(\mathbf{x})$ are the 
weights, which are defined as follows:
\begin{subequations}
  \begin{align}
    \label{Eqn:Weight_A_Tensor}
    \mathbf{A}(\mathbf{x}) &= \left \{
    \begin{array}{cl}
      \mathbf{I} & \text{LS Type-1 } \\
      \mathbf{D}^{-1/2}(\mathbf{x}) & \text{LS Type-2}
    \end{array} \right. \\
    \label{Eqn:Weight_beta_Scalar}
    \beta(\mathbf{x}) &= \left \{
    \begin{array}{cl}
      1 & \text{LS Type-1} \\
      \left. \begin{aligned}
               1 \qquad  & \quad \text{if } \alpha(\mathbf{x}) = 0 \\
               \alpha^{-1/2}(\mathbf{x}) & \quad \text{if } \alpha(
               \mathbf{x}) \neq 0
             \end{aligned} 
      \right \} & \text{LS Type-2}
    \end{array} \right.
  \end{align}
\end{subequations}
A corresponding weak form can be obtained by 
setting the G\^ateaux variation of the functional 
\eqref{Eqn:Weighted_Primitive_LS_Functional} to 
zero. 
We shall show in Sections \ref{Sec:NN_AD_NumericalConvergence} 
and \ref{Sec:NN_AD_Fast_Reactions} that a naive way of constructing 
LSFEM formulation, just like the weighted primitive LSFEM, does 
not perform well for advection-dominated ADR problems. Moreover, 
enforcing LSB and DMP constraints do not seem to have a profound 
effect. In order to adequately capture steep boundary 
and interior layers, we introduce an alternate stabilized LSFEM 
formulation, which will be referred to as the weighted negatively 
stabilized streamline diffusion LSFEM. This stabilized LSFEM 
formulation will be able to handle a wide spectrum of ADR problems 
ranging from advection-dominated to reaction-dominated problems. 

\subsection{Weighted negatively stabilized streamline diffusion LSFEM}
\label{SubSec:Weighted_NSSD_LSFEM}
The underlying idea of the proposed stabilized LSFEM formulation 
is to combine the streamline diffusion and stabilized Galerkin 
formulations. This is motivated by the prior studies that 
combining these two formulations exhibit enhanced stability 
(for example, see \cite{1997_Lazarov_EWJNM_v5_p249_p264,
2009_Hsieh_Yang_CMAME_v199_p183_p196}). In this formulation, 
we introduce a small element-wise stabilization parameter 
$\delta_{\Omega_{\mathrm{e}}}$ to correct $\mathbf{q}(\mathbf{x})$ 
in the streamline direction by adding second-order derivatives 
of $c(\mathbf{x})$. The modified flux along the streamline 
direction takes the following form:
\begin{align}
  \label{Eqn:Weighted_NSSD_Modified_Flux}
  \mathbf{q} = c \mathbf{v} - \mathbf{D} 
  \mathrm{grad}[c] + \delta_{\Omega_{\mathrm{e}}} 
  \mathbf{v} \left( \mathrm{div}[c \mathbf{v} - 
  \mathbf{D} \mathrm{grad}[c]] \right)
\end{align}
The basic philosophy of the correction to the 
flux given by equation \eqref{Eqn:Weighted_NSSD_Modified_Flux} 
      is in the spirit of stabilized finite element formulations 
      such as SUPG \cite{Franca_Frey_Hughes_CMAME_1992_v95_p253,
        1997_Lazarov_EWJNM_v5_p249_p264}. This flux correction 
      is different from that of the Flux-Corrected Transport 
      (FCT) methods \cite{Kuzmin_Lohner_Turek}.
Correspondingly, the species balance equation 
accounting for these corrections will be:
\begin{align}
  \label{Eqn:Weighted_NSSD_Modified_Species_Balance}
  \alpha c + \mathrm{div}[ \mathbf{q} ] = f 
  + f_{\delta_{\Omega_{\mathrm{e}}}}
\end{align}
where 
\begin{align}
  \label{Eqn:NSSD_Stabilized_VolSource}
  f_{\delta_{\Omega_{\mathrm{e}}}} := \delta_{\Omega_{\mathrm{e}}} \big( 
  \mathrm{grad}[f - \alpha c] \bullet \mathbf{v} + \mathrm{div}[\mathbf{v}] 
  \, \left( f - \alpha c \right) \big)
\end{align}
The modification to the flux (given by equations 
\eqref{Eqn:Weighted_NSSD_Modified_Flux}--\eqref{Eqn:NSSD_Stabilized_VolSource}) 
will present two different ways of constructing Neumann BCs.  

The first way utilizes the quantities $\mathbf{q}
(\mathbf{x})$, $c(\mathbf{x})$, $\alpha(\mathbf{x})$, 
and $f(\mathbf{x})$, and takes the following form: 
\begin{align}
  \label{Eqn:Neumann_BC_NSSD_Method1}
  \left(\mathbf{q} - \left( 
  \frac{1 + \mathrm{Sign}[\mathbf{v} \bullet 
  \widehat{\mathbf{n}}]}{2}\right) c \mathbf{v} 
  - \delta_{\Omega_{\mathrm{e}}} \left(f - \alpha 
  c \right) \mathbf{v} \right) \bullet 
  \widehat{\mathbf{n}}(\mathbf{x}) 
  = q^{\mathrm{p}} (\mathbf{x}) \quad \mathrm{on} \; 
  \Gamma^{q}
\end{align}
The second way utilizes $\mathbf{q}(\mathbf{x})$, 
$c(\mathbf{x})$, and the first and second derivatives 
of $c(\mathbf{x})$. The corresponding expression for 
Neumann BCs takes the following form:
\begin{align}
  \label{Eqn:Neumann_BC_NSSD_Method2}
  \left(\mathbf{q} - \left( \frac{1 +
  \mathrm{Sign}[\mathbf{v} \bullet 
  \widehat{\mathbf{n}}]}{2}\right) c 
  \mathbf{v} - \delta_{\Omega_{\mathrm{e}}} 
  \left(\mathrm{div}[c \mathbf{v} -
  \mathbf{D} \mathrm{grad}[c]] \right) 
  \mathbf{v} \right) \bullet 
  \widehat{\mathbf{n}}(\mathbf{x}) 
  = q^{\mathrm{p}} (\mathbf{x}) \quad \mathrm{on} 
  \; \Gamma^{q}
\end{align}
In the continuous setting, equations \eqref{Eqn:Neumann_BC_NSSD_Method1} 
and \eqref{Eqn:Neumann_BC_NSSD_Method2} are equivalent. However, 
in the discrete setting, the performance of these equations can 
be different based on the kind of (finite) element being employed. 
For example, for simplicial elements
(such as three-node triangular (T3) element and four-node tetrahedral 
(T4) element) and four-node quadrilateral (Q4) element, both $\mathrm{div}
[\mathrm{grad}[c(\mathbf{x})]]$ and $\mathrm{grad}[\mathrm{grad}[c(\mathbf{x})]]$ 
are zero for $\overline{\Omega}_{\mathrm{e}} \in {\Gamma^{q}}$. 
This is because the Hessian of $\boldsymbol{N}$, which is 
$\boldsymbol{DDN}$, is a zero matrix for both two-node 
linear (L2) and three-node triangular elements. For more 
details, see Appendix A. But, this is not the case with 
non-simplicial linear finite elements for $nd = 3$ and 
higher-order finite elements (in any dimension). Hence, 
the Neumann BCs based on equation \eqref{Eqn:Neumann_BC_NSSD_Method2} 
are not always physically consistent. However, Neumann BCs based on 
equation \eqref{Eqn:Neumann_BC_NSSD_Method1} are always consistent 
irrespective of the finite element used. Herein, we have chosen 
Neumann BCs given by equation \eqref{Eqn:Neumann_BC_NSSD_Method1}.

Based on the above set of equations 
\eqref{Eqn:Weighted_NSSD_Modified_Flux}--\eqref{Eqn:Neumann_BC_NSSD_Method1}, 
we construct a $L_2$-norm based least-squares functional. Additionally, 
as in the Galerkin least-squares method, we add a stabilization term 
to this functional. This stabilization term is as follows:
\begin{align}
  \label{Eqn:Stabilization_LS_Term_NSSD}
  \frac{1}{2} \sum_{\Omega_{\mathrm{e}} \in \Omega_h} \tau_{
  \Omega_{\mathrm{e}}} \Big \| \mathrm{div} \big [ c \mathbf{v} - 
  \mathbf{D} \mathrm{grad}[ c ] \big ] + \alpha c - f \Big 
  \|^2_{\Omega_{\mathrm{e}}}
\end{align}
The least-squares functional for the weighted negatively 
stabilized streamline diffusion formulation 
$\mathfrak{F}_{\mathrm{NgStb}}(c,\mathbf{q}): \mathcal{C} \times 
\mathcal{Q} \rightarrow \mathbb{R}$ in $L_2$-norm takes the 
following form: 
\begin{align}
  \label{Eqn:Weighted_NSSD_LS_Functional}
  \mathfrak{F}_{\mathrm{NgStb}} \left( c,\mathbf{q} \right) &:=
  \frac{1}{2} \sum_{\Omega_{\mathrm{e}} \in \Omega_h} \Big \| 
  \mathbf{A}(\mathbf{x}) \big( \mathbf{q} - c \mathbf{v} + \mathbf{D} 
  \mathrm{grad}[c] - \delta_{\Omega_{\mathrm{e}}} \mathbf{v} \left( 
  \mathrm{div}[c \mathbf{v} - \mathbf{D} \mathrm{grad}[c]] \right)  
  \big) \Big \|^2_{\Omega_{\mathrm{e}}} \nonumber \\
  &+ \frac{1}{2} \sum_{\Omega_{\mathrm{e}} \in \Omega_h} \Big \| 
  \beta (\mathbf{x}) \big( \alpha c + \mathrm{div}[ \mathbf{q} ] - f 
  - f_{\delta_{\Omega_{\mathrm{e}}}} \big) \Big \|^2_{\Omega_{\mathrm{e}}} 
  \nonumber \\
  &+ \frac{1}{2} \sum_{\Omega_{\mathrm{e}} \in {\Gamma^{q}}} \Bigg 
  \| \left(\mathbf{q} - \left( \frac{1 + \mathrm{Sign}
  [\mathbf{v} \bullet \widehat{\mathbf{n}}]}{2}\right) c \mathbf{v} - 
  \delta_{\Omega_{\mathrm{e}}} \left(f - \alpha c \right) \mathbf{v} 
  \right) \bullet \widehat{\mathbf{n}} - q^{\mathrm{p}} \Bigg \|^2_
  {\Omega_{\mathrm{e}}} \nonumber \\
  &+\frac{1}{2} \sum_{\Omega_{\mathrm{e}} \in \Omega_h} \tau_{
  \Omega_{\mathrm{e}}} \Big \| \mathrm{div} \big [ c \mathbf{v} - 
  \mathbf{D} \mathrm{grad}[ c ] \big ] + \alpha c - f \Big 
  \|^2_{\Omega_{\mathrm{e}}}
\end{align}
The element dependent parameters $\tau_{\Omega_{\mathrm{e}}} 
\leq 0$ and $\delta_{\Omega_{\mathrm{e}}} \leq 0$ are given as:
\begin{subequations}
  \begin{align}
    \label{Eqn:StrDiff_Parameter}
    \delta_{\Omega_{\mathrm{e}}} &= - \frac{\delta_o 
    \lambda_{\mathrm{min}} h^2_{\Omega_{\mathrm{e}}}}{\left( 
    \lambda^2_{\mathrm{max}} + \delta_1 \displaystyle \mathop{\mathrm{max}}_
    {\mathbf{x} \in \overline{\Omega}} \left [ \left(\alpha + 
    \mathrm{div} [\mathbf{v}] \right)^2 \right ] h^2 + 
    \delta_2 \displaystyle \mathop{\mathrm{max}}_
    {\mathbf{x} \in \overline{\Omega}} \left [ \| 
    \mathrm{div}[\mathbf{D}] \|^2 \right ] h^2 \right)} \\
    \label{Eqn:Stab_Parameter}
    \tau_{\Omega_{\mathrm{e}}} &= - \frac{\tau_o 
    \lambda^2_{\mathrm{min}} h^2_{\Omega_{\mathrm{e}}}}{\left( 
    \lambda^2_{\mathrm{max}} + \tau_1 \displaystyle \mathop{\mathrm{max}}_
    {\mathbf{x} \in \overline{\Omega}} \left [ \left(\alpha + 
    \mathrm{div} [\mathbf{v}] \right)^2 \right ] h^2 + 
    \tau_2 \displaystyle \mathop{\mathrm{max}}_
    {\mathbf{x} \in \overline{\Omega}} \left [ \| 
    \mathrm{div}[\mathbf{D}] \|^2 \right ] h^2 \right)}
  \end{align}
\end{subequations}
where $\delta_o$, $\delta_1$, $\delta_2$, $\tau_o$, 
$\tau_1$, and $\tau_2$ are non-negative constants. 
Appendix B provides a thorough mathematical 
justification behind the above stabilization 
parameters.

For unconstrained LSFEMs, the errors incurred 
in satisfying LSB and GSB can be calculated as:
\begin{subequations}
  \label{Eqn:Errors_in_LSB_GSB}
  \begin{align}
    \label{Eqn:Error_in_LSB}
     \epsilon^{(e)}_{\text{\tiny {LSB}}} &= \int \limits_{\Omega_e} 
     \alpha (\mathbf{x}) c(\mathbf{x}) \; \mathrm{d} \Omega + 
     \int \limits_{\partial \Omega_e} \mathbf{q}(\mathbf{x}) 
     \bullet \widehat{\mathbf{n}}(\mathbf{x}) \; \mathrm{d} 
     \Gamma - \int \limits_{\Omega_e} f(\mathbf{x}) \; 
     \mathrm{d} \Omega \\
     \label{Eqn:Error_in_GSB}
     \epsilon_{\text{\tiny {GSB}}} &= \sum_{e = 1}^{Nele} 
     \epsilon^{(e)}_{\text{\tiny {LSB}}}
  \end{align}
\end{subequations}
where $c(\mathbf{x})$ and $\mathbf{q}(\mathbf{x})$ are the 
solutions obtained by solving a given unconstrained LSFEM. 
In numerical $h$-convergence study, we are interested in 
the following quantities with respect to $h$-refinement:
\begin{align}
  \label{Eqn:AbsError_in_LSB}
  \epsilon_{\text{\tiny {MaxAbsLSB}}} &= 
  \mathop{\mathrm{max}}_{\Omega_{e} \in 
    \Omega_{h}} \left [ | \epsilon^{(e)}_{\text{\tiny 
        {LSB}}} | \right ] \\
  \label{Eqn:AbsError_in_GSB}
  \epsilon_{\text{\tiny {AbsGSB}}} &= 
  |\epsilon_{\text{\tiny {GSB}}}|
\end{align}
Few remarks about the species balance are in order.
In writing equation \eqref{Eqn:AbsError_in_GSB}, 
we have assumed that the mesh is conforming, and 
the test and trial functions belong to $C^{0}(\Omega)$ 
(i.e., there is inter-element continuity of the
functions). Under the proposed computational 
framework, we place explicit (equality) constraints 
to meet $\epsilon^{(e)}_{\text{\tiny {LSB}}} = 0 \quad 
\forall e = 1, \cdots, Nele$. By meeting the 
local species balance, the global species 
balance is trivially met.

\subsection{Discrete equations}
\label{Eqn:Discrete_CQP_KKT}
Let $\boldsymbol{K}_{cc}$ denote the stiffness matrix 
obtained by lower-order finite element discretization 
of the LSFEM terms involving $c(\mathbf{x})$ and $w(\mathbf{x})$. Similarly, 
we can define the stiffness matrices $\boldsymbol{K}_{c \mathbf{q}}$, 
$\boldsymbol{K}_{\mathbf{q} c}$, and $\boldsymbol{K}_{\mathbf{q} \mathbf{q}}$.
The load vectors are denoted by $\boldsymbol{r}_{c}$ and $\boldsymbol{r}_
{\mathbf{q}}$, respectively. These vectors are obtained from the finite 
element discretization of the LSFEM terms involving $w(\mathbf{x})$ and 
$\mathbf{p}(\mathbf{x})$. It should be noted that the stiffness matrices 
$\boldsymbol{K}_{cc}$ and $\boldsymbol{K}_{\mathbf{q} \mathbf{q}}$ are 
symmetric and positive definite. Furthermore, $\boldsymbol{K}_{\mathbf{q} 
c} = \boldsymbol{K}_{c \mathbf{q}}^{\mathrm{T}}$. For more details, see 
Appendix C.

The corresponding constrained optimization problem in the 
discrete setting for the proposed locally conservative 
DMP-preserving LSFEMs can be written as follows:
\begin{subequations}
  \begin{align}
    \label{Eqn:Constrained_Proposed_LSFEMs}
    \mathop{\mathrm{minimize}}_{\substack{\boldsymbol{c} \, 
    \in \, \mathbb{R}^{ncdofs} \\ \boldsymbol{q} \, 
    \in \, \mathbb{R}^{nqdofs}}} & \quad \frac{1}{2} 
    \langle \boldsymbol{c}; \boldsymbol{K}_{cc} 
    \boldsymbol{c} \rangle + \langle \boldsymbol{c}; 
    \boldsymbol{K}_{c \mathbf{q}} \boldsymbol{q} \rangle 
    +  \frac{1}{2} \langle \boldsymbol{q}; \boldsymbol{K}
    _{\mathbf{q} \mathbf{q}} \boldsymbol{q} \rangle - 
    \langle \boldsymbol{c}; \boldsymbol{r}_c \rangle - 
    \langle \boldsymbol{q}; \boldsymbol{r}_{\mathbf{q}} 
    \rangle \\
    \label{Eqn:Constrained_Proposed_LSFEMs_Constraints}
    \mbox{subject to} & \quad 
    \begin{cases}
      \boldsymbol{A}_c \boldsymbol{c} + 
      \boldsymbol{A}_{\mathbf{q}} 
      \boldsymbol{q} = \boldsymbol{b}_f \\
      c_{\mathrm{min}} \boldsymbol{1} \preceq 
      \boldsymbol{c} \preceq c_{\mathrm{max}} 
      \boldsymbol{1}
    \end{cases}
  \end{align}
\end{subequations}
where ``$nqdofs$'' denotes the number of degrees-of-freedom 
for the flux vector, and ``$ncdofs$'' denotes the number of 
degrees-of-freedom for the concentration.
The vector of size $ncdofs \times 1$ with all 
entries to be unity is denoted as $\mathbf{1}$. 
Recall that $\langle \bullet;\bullet\rangle$ 
denotes the standard inner-product on the Euclidean 
spaces.  
The finite element discretization of the local species 
balance equation gives rise to the global LSB matrices 
$\boldsymbol{A}_c$ and $\boldsymbol{A}_{\mathbf{q}}$, and 
the global LSB vector $\boldsymbol{b}_f$.
The matrices $\boldsymbol{A}_c$ and $\boldsymbol{A}_{\mathbf{q}}$ 
are of sizes $Nele \times ncdofs$ and $Nele \times nqdofs$, 
respectively. Similar inference can be drawn on the sizes of 
$\boldsymbol{b}_f$, $\boldsymbol{r}_c$, $\boldsymbol{r}_{\mathbf{q}}$, 
$\boldsymbol{K}_{c \mathbf{q}}$, and $\boldsymbol{K}_{\mathbf{q} \mathbf{q}}$.
Since $\boldsymbol{K}_{\mathbf{q} c} = \boldsymbol{K}_{c \mathbf{q}}
^{\mathrm{T}}$ and the matrices $\boldsymbol{K}_{cc}$ and 
$\boldsymbol{K}_{\mathbf{q} \mathbf{q}}$ are symmetric and 
positive definite, the constrained optimization problem 
\eqref{Eqn:Constrained_Proposed_LSFEMs}--\eqref{Eqn:Constrained_Proposed_LSFEMs_Constraints} 
belongs to \emph{convex quadratic programming} and has a 
unique global minimizer \cite{Boyd_convex_optimization}. 
The corresponding first-order optimality conditions -- 
popularly known as the Karush-Kuhn-Tucker (KKT) conditions 
-- for this discrete optimization problem take the following 
form:
\begin{subequations}
  \begin{align}
    \label{Eqn:KKT_Cond_wrt_Conc}
    &\boldsymbol{K}_{cc} \boldsymbol{c} + \boldsymbol{K}
    _{c \mathbf{q}} \boldsymbol{q} = \boldsymbol{r}_c 
    - \boldsymbol{A}^{\mathrm{T}}_c \boldsymbol{\lambda}_c
    + \boldsymbol{\mu}_{\mathrm{min}} - \boldsymbol{\mu}_
    {\mathrm{max}} \\
    \label{Eqn:KKT_Cond_wrt_Flux}
    &\boldsymbol{K}_{c \mathbf{q}}^{\mathrm{T}} 
    \boldsymbol{c} + \boldsymbol{K}_{\mathbf{q} 
    \mathbf{q}} \boldsymbol{q} = \boldsymbol{r}
    _{\mathbf{q}} - \boldsymbol{A}^{\mathrm{T}}
    _{\mathbf{q}} \boldsymbol{\lambda}_{\mathbf{q}}
    \\
    \label{Eqn:KKT_Cond_wrt_LSB}
    &\boldsymbol{A}_c \boldsymbol{c} + 
    \boldsymbol{A}_{\mathbf{q}} 
    \boldsymbol{q} = \boldsymbol{b}_f \\
    \label{Eqn:KKT_Cond_KKT_MinMultiplier}
    &\boldsymbol{\mu}_{\mathrm{min}} \succeq
    \boldsymbol{0} \\
    \label{Eqn:KKT_Cond_KKT_MaxMultiplier}
    &\boldsymbol{\mu}_{\mathrm{max}} \succeq
    \boldsymbol{0} \\
    \label{Eqn:KKT_Cond_wrt_ConcMin}
    &\left(\boldsymbol{c} - c_{\mathrm{min}} 
    \boldsymbol{1} \right) \bullet 
    \boldsymbol{\mu}_{\mathrm{min}} = 0 \\
    \label{Eqn:KKT_Cond_wrt_ConcMax}
    &\left( c_{\mathrm{max}} \boldsymbol{1} 
    - \boldsymbol{c} \right) \bullet 
    \boldsymbol{\mu}_{\mathrm{max}} = 0
  \end{align}
\end{subequations}
where $\boldsymbol{\lambda}_c$ and $\boldsymbol{\lambda}_{\mathbf{q}}$ 
are the Lagrange multipliers enforcing the LSB equality constraints, 
which stem from equation \eqref{Eqn:KKT_Cond_wrt_LSB}. 
$\boldsymbol{\mu}_{\mathrm{min}}$ and $\boldsymbol{\mu}_{\mathrm{max}}$ 
are the KKT multipliers enforcing the DMP inequality constraints 
given by $c_{\mathrm{min}} \boldsymbol{1} \preceq \boldsymbol{c}$ 
and $\boldsymbol{c} \preceq c_{\mathrm{max}} \boldsymbol{1}$. Note 
that the non-negative constraint is a subset of the DMP inequality 
constraints. To wit, setting $c_{\mathrm{min}} = 0$ and $c_{\mathrm{max}} 
= +\infty$ will result in explicit non-negative constraints on the 
nodal concentrations.

\begin{remark}
  Note that the continuous problem, in general, cannot \emph{always} 
  be written as an optimization problem. This is certainly the case 
  with respect to ADR equation \cite{2010_Nakshatrala_Valocchi_IJCM_v7_p559_p572}. 
  Moreover, in the continuous setting, the non-negative and local species 
  balance constraints are satisfied trivially. Therefore, the Lagrange 
  multipliers are zero in the continuous setting (if one can write the 
  problem as an optimization problem). 
  The violations of the non-negative and local species balance 
  constraints are only in the discrete setting. This is the reason 
  why one needs to obtain the discrete form before one can fix 
  the deficiencies in solving the discrete equations.
\end{remark}

In the next two sections, we illustrate the performance of the 
proposed computational framework for advection-dominated ADR problems 
and transport-controlled irreversible fast bimolecular reactions. 
In all the numerical simulations reported in this paper, the constrained 
optimization problem is solved using the \textsf{MATLAB's} \cite{MATLAB_2015a} 
built-in function handler \textsf{quadprog}, which has a robust solver 
based on an interior-point numerical algorithm presented in References 
\cite{2004_Gould_Toint_MPSB_v100_p95_p132,1992_Mehrotra_SIAMJO_v2_p575_p601,
1996_Gondzio_COA_v6_p137_p156}. 
One can alternatively employ the open-source optimization solvers 
such as \texttt{COBYLA}, \texttt{SLSQP}, \texttt{L-BFGS-B}, or 
\texttt{TNC} from \textsf{SciPy} \cite{jones2014scipy}. 
The tolerance in the stopping criterion for solving convex 
quadratic programming problems is taken as $100 \epsilon_
{\mathrm{mach}}$, where $\epsilon_{\mathrm{mach}} \approx 2.22 
\times 10^{-16}$ is the machine precision for a 64-bit machine.

There are various approaches to numerically solve 
transient diffusion-type systems. It is desirable 
to have a numerical strategy that converts and 
utilizes the solvers for steady-state diffusion-type 
equations to solve transient systems. It has been 
recently shown that the method of horizontal lines 
using the backward Euler time-stepping scheme is 
one of the viable approaches to respect maximum 
principles and the non-negative constraint in the 
discrete setting 
\cite{2013_Nakshatrala_Nagarajan_Shabouei_arXiv}.
The method of horizontal lines discretizes the time domain 
first, and thereby converts the transient ADR equations at 
each time-level into a system of governing equations similar 
to \eqref{Eqn:NN_AD_GE_BE}--\eqref{Eqn:NN_AD_GE_NBC}. This 
methodology, thus, helps us to use the computational framework 
provided in Section \ref{Sec:NN_AD_Proposed}. One can employ 
a numerical procedure similar to Algorithm $1$ provided in 
Reference \cite{2013_Nakshatrala_Nagarajan_Shabouei_arXiv} 
to advance the numerical solution over the time. Numerical 
results for transient systems are presented in Section 
\ref{Sec:NN_AD_Fast_Reactions}.

\section{NUMERICAL $h$-CONVERGENCE AND BENCHMARK PROBLEMS}
\label{Sec:NN_AD_NumericalConvergence}
We shall employ a popular problem from the literature, 
which is commonly used to assess the accuracy of numerical 
formulations for advective-diffusive systems (e.g., see 
\cite{1998_Franca_Nesliturk_Stynes_CMAME_v166_p35_p49,
2009_Hsieh_Yang_CMAME_v199_p183_p196}). The test problem 
is constructed using the method of manufactured solutions. 
The computational domain is a bi-unit square: $\Omega = 
(0,1) \times (0,1)$. The advection velocity vector field 
is taken as $\mathbf{v}(\mathbf{x}) = \hat{\mathbf{e}}_y$, 
where $\hat{\mathbf{e}}_y$ is the unit vector along the 
$y$-direction. The scalar diffusivity is denoted by 
$D(\mathbf{x})$. The concentration field is taken as 
follows:
\begin{align}
  \label{Eqn:NumConv_ConcField_AnalyticExp}
  c(x,y)= \frac{\sin(\pi x)}{e^{m_2 - m_1} - 1} \left(e^{m_2 - m_1} 
  e^{m_1 y} - e^{m_2 y} \right)
\end{align}
where the constants $m_1$ and $m_2$ are given 
in terms of the scalar diffusivity:
\begin{subequations}
  \begin{align}
    \label{Eqn:m1_hConv}
    m_1 = \frac{1 - \sqrt{1 + 4 \pi^2 D^2}}{2D} \\
    \label{Eqn:m2_hConv}
    m_2 = \frac{1 + \sqrt{1 + 4 \pi^2 D^2}}{2D}
  \end{align}
\end{subequations}
We have taken $D(\mathbf{x}) = 10^{-2}$ in our numerical 
simulations. This choice is arbitrary, and is primarily 
motivated to check whether the proposed framework gives 
stable, reliable, and accurate numerical results for 
advection-dominated problems. For the chosen value 
of the diffusivity, the solution 
\eqref{Eqn:NumConv_ConcField_AnalyticExp} exhibits 
steep gradients near the boundary of the domain. A 
pictorial description of the boundary value problem 
is provided by Figure \ref{Fig:2D_Numerical_Convergence}. 

Numerical solutions for the concentration and the 
flux vector are obtained by prescribing Dirichlet 
boundary conditions on all the four sides of the 
computational domain. These conditions are enforced 
strongly and are given as follows: 
\begin{align}
  \label{Eqn:hConv_BC}
  &c(\mathbf{x}) = \; 
  \begin{cases}
    \sin(\pi x) \quad & \mathrm{for} \; y = 0 \\
    0 \quad & \mathrm{for} \; x = 0 \; \mathrm{or} 
    \; x = 1 \; \mathrm{or} \; y = 1 
  \end{cases} 
\end{align}
Using equation \eqref{Eqn:NumConv_ConcField_AnalyticExp}, 
one can calculate the corresponding flux vector and 
volumetric source needed for the convergence analysis. 

\subsection{Convergence analysis for $D(x,y) = 10^{-2}$}
\label{SubSec:hConvergence_Analysis_AD}
Herein, we will discuss the performance of negatively 
stabilized streamline diffusion LSFEM with and without 
LSB constraints. In case of unconstrained setting, we 
also quantify the errors incurred in satisfying LSB 
and GSB. Numerical simulations are performed using 
a series of hierarchical structured meshes based on 
three-node triangular (T3) and four-node quadrilateral 
(Q4) elements with XSeed and YSeed ranging from 11 to 81. 
Figure \ref{Fig:2D_Numerical_Convergence_HierarchicalMeshes} 
provides the typical computational meshes used in the numerical 
$h$-convergence analysis.

The weights for the primitive and negatively stabilized 
streamline diffusion LSFEMs are taken to be of LS Type-1 
(i.e., $\mathbf{A}(\mathbf{x}) = 
\mathbf{I}$ and $\beta(\mathbf{x}) = 1$). The element stabilization 
parameters for negatively stabilized streamline diffusion LSFEM are 
taken as $\delta_o = 0.01$ and $\tau_o = 0.01$. The 
convergence of the proposed computational framework with respect to 
$L_2$-norm and $H^1$-semi-norm is illustrated in Figure 
\ref{Fig:2D_NumConvDiff1by100_ConcFluxT3Q4_NoAndLSB_Constraints}. 
From this figure, one can notice that near optimal convergence 
rates are achieved for the concentration field in both $L_2$-norm and 
$H^1$-semi-norm for unconstrained negatively stabilized streamline 
diffusion formulation. For the flux vector, near optimal convergence 
rate is obtained in $L_2$-norm but not in 
$H^1$-semi-norm. This is because of the steep gradients in the concentration 
field at the boundary $y = 1$, which is due to the small value for the 
diffusivity. Enforcing LSB constraints considerably improves the $H^1$-semi-norm 
convergence rate for the flux vector. However, for the flux variables, 
there is a slight decrease in $L_2$-norm convergence rate as compared 
to the unconstrained negatively stabilized streamline diffusion LSFEM. 
Similar decrease in convergence rates of $L_2$-norm and $H^1$-semi-norm 
for the concentration has been observed. This can be attributed to the 
fact that LSB constraints improve the accuracy of the flux vector 
inside the boundary layers but has little effect away from it. 

\begin{remark}
  It should be noted that the convergence rates reported in Figure 
  \ref{Fig:2D_NumConvDiff1by100_ConcFluxT3Q4_NoAndLSB_Constraints}   
  for the unconstrained negatively stabilized streamline diffusion LSFEM 
  are in accordance with the mathematical analysis provided by Kopteva 
  \cite{2004_Kopteva_CMAME_v193_p4875_p4889} and Stynes \cite{2013_Stynes_arXiv}. 
  These results are obtained for singularly perturbed advection-diffusion 
  equation based on a class of unconstrained streamline diffusion finite 
  element formulations. Kopteva \cite{2004_Kopteva_CMAME_v193_p4875_p4889} 
  shows that one can get at best first-order convergence inside 
  boundary and characteristic layers even on special meshes.
\end{remark}

From Figure \ref{Fig:2D_NumConvDiff1by100_ConcFluxT3Q4_NoAndLSB_Constraints} 
one can also conclude that the Q4 element performs better than the T3 element. 
These trends in the convergence rates for different meshes are due to the 
fact that higher-order derivatives (e.g., $\mathrm{div} [ \mathrm{grad} 
[c(\mathbf{x})]$) in the stabilization terms for negatively stabilized 
streamline diffusion LSFEM vanish for T3 element. But these stabilization 
terms are non-zero for a Q4 element. The reason is that the shape functions 
for a T3 element are affine while that of a Q4 element are bilinear. 

Another important aspect of this numerical $h$-convergence study is to 
quantify the errors incurred in satisfying LSB and GSB for unconstrained 
LSFEMs. The contours of the error distribution in LSB and the Lagrange 
multipliers enforcing the LSB constraints are shown in Figure 
\ref{Fig:2D_NumConvDiff1by100_LocMass_LambdaLMBT3MeshAllFourLSFEM}. It 
is apparent that errors incurred in satisfying LSB are smaller under 
Q4 meshes than under T3 meshes. 
The decrease in $\epsilon_{\text{\tiny {MaxAbsLSB}}}$ and $\epsilon_
{\text{\tiny {AbsGSB}}}$ on $h$-refinement is shown in Figure 
\ref{Fig:2D_NumConvDiff1by100_ErrorsLMBandGMB_AllFourLSFEMs}.
From this figure, one can notice that the errors in LSB and GSB 
for a Q4 mesh are lesser than that of a T3 mesh. On $h$-refinement, 
the decrease in $\epsilon_{\text{\tiny {MaxAbsLSB}}}$ and $\epsilon_
{\text{\tiny {AbsGSB}}}$ is slow and not close to machine precision.

Finally, the computational cost of the unconstrained and 
constrained LSFEMs for both T3 and Q4 meshes are shown in Figures 
\ref{Fig:2D_NumConvDiff1by100_TicTocNoConsT3Q4_AllFourLSFEMs} 
and \ref{Fig:2D_NumConvDiff1by100_TicTocLocMassT3Q4_AllFourLSFEMs}. 
It is clear that the computational cost associated with a Q4 mesh is 
higher than that of a T3 mesh. This can be again be attributed to 
the non-vanishing stabilization terms (e.g., $\mathrm{div}[\mathrm{grad} 
[c(\mathbf{x})]$) in the negatively stabilized streamline diffusion 
LSFEM for Q4 meshes. 
For constrained LSFEMs, the maximum additional computational 
cost (for both LSFEMs) did not exceed $15\%$, which has been 
tested on a hierarchy of meshes.    

\subsection{Thermal boundary layer problem}
\label{SubSec:2D_Thermal_BoundaryLayer_Problem}
This benchmark problem has wide practical applications 
in the areas of heat and mass transfer. Herein, we shall 
use this benchmark problem to study the performance of 
unconstrained and constrained LSFEM formulations in 
capturing steep gradients near the boundary for 
advection-dominated scenarios. 
Consider a rectangular domain $\Omega = \{ (x,y) \in [0,1] 
\times [0,0.5] \} $ with velocity field $\mathbf{v}(x,y) = 
2 y \mathbf{\hat{e}}_x$, where $\mathbf{\hat{e}}_x$ is the 
unit vector along the $x$-direction. 
The volumetric source is assumed to be homogeneous (i.e., 
$f(x,y) = 0$), and the scalar diffusivity is taken to be 
$D(x,y) = 10^{-4}$. The boundary conditions are:
\begin{align} 
  c(x,y) &= 
  \begin{cases}
    0 \quad \mathrm{for} \; 0 < x \leq 1 \; \mathrm{and} \;  y = 0 \\
    2y \; \; \; \mathrm{for} \; x = 1 \; \mathrm{and} \; 0 \leq y \leq 0.5 \\
    1 \quad \mathrm{for} \; 0 \leq x \leq 1 \; \mathrm{and} \;  y = 1 \\
    1 \quad \mathrm{for} \; x = 0 \; \mathrm{and} \; 0 \leq y \leq 0.5 \\
  \end{cases}
\end{align}
A pictorial description of the boundary value problem is 
provided in Figure \ref{Fig:2D_ThermalBoundaryLayer}. The 
weights are taken to be that of LS Type-1 (see equations 
\eqref{Eqn:Weight_A_Tensor} and \eqref{Eqn:Weight_beta_Scalar}).
The element-level stabilization parameters for negatively 
stabilized streamline diffusion LSFEM are taken to be 
$\delta_o = 0.01$ and $\tau_o = 0.001$.
Numerical simulations are performed using four-node quadrilateral 
mesh with XSeed = 41 and YSeed = 21. The element P\'eclet number 
will then be $\mathbb{P}\mathrm{e}_{h} = 125$. 
The obtained concentration contours are shown in Figure 
\ref{Fig:2D_ThermalBoundaryLayer_Results}. It is evident 
from these figures that numerical solution obtained from 
the primitive LSFEM contains node-to-node spurious 
oscillations. These oscillations did not reduce even 
after enforcing the LSB and NN constraints. But the 
negatively stabilized streamline diffusion LSFEM is 
able to capture the steep gradients near the boundary 
without producing spurious oscillations. The errors 
incurred in satisfying LSB for \emph{unconstrained} 
LSFEM formulations are shown in Figure 
\ref{Fig:2D_ThermalBoundaryLayer_LSB}.

\section{TRANSPORT-CONTROLLED BIMOLECULAR CHEMICAL REACTIONS}
\label{Sec:NN_AD_Fast_Reactions}
In this section, we shall apply the proposed mixed LSFEM-based 
computational framework to study transport-controlled bimolecular 
chemical reactions. Specifically, we are interested in the spatial 
distribution, plume formation, and chaotic mixing of chemical 
species at high P\'{e}clet numbers. To this end, consider the 
following irreversible bimolecular chemical reaction: 
\begin{align}
  \label{Eqn:BimolFastReaction}
    n_{A} \, A \, + \, n_{B} \, B \longrightarrow n_{C} \, C
\end{align}
where $A$, $B$, and $C$ are the species involved in the 
chemical reaction; $n_A$, $n_B$, and $n_C$ are their 
respective (positive) stoichiometric coefficients. 
The fate of these chemical species are governed by 
the following coupled advective-diffusive-reactive 
system:
\begin{subequations}
  \label{Eqn:ADR_for_A_B_C}
  \begin{align}
    \label{Eqn:ADR_for_A}
    &\frac{\partial c_A}{\partial t} + \mathrm{div}[\mathbf{v} 
    c_A - \mathbf{D}(\mathbf{x},t) \, \mathrm{grad}[c_A]] = f_A
    (\mathbf{x},t) - n_{\small{A}} \, r(\mathbf{x},t,c_A,c_B,c_C) 
    \quad \mathrm{in} \; \Omega \times ]0, \mathcal{I}[ \\
    \label{Eqn:ADR_for_B} 
    &\frac{\partial c_B}{\partial t} + \mathrm{div}[\mathbf{v} 
    c_B - \mathbf{D}(\mathbf{x},t) \, \mathrm{grad}[c_B]] = f_B
    (\mathbf{x},t) - n_{\small{B}} \, r(\mathbf{x},t,c_A,c_B,c_C) 
    \quad \mathrm{in} \; \Omega \times ]0, \mathcal{I}[ \\
    \label{Eqn:ADR_for_C} 
    &\frac{\partial c_C}{\partial t} + \mathrm{div}[\mathbf{v} 
    c_C - \mathbf{D}(\mathbf{x},t) \, \mathrm{grad}[c_C]] = f_C
    (\mathbf{x},t) + n_{\small{C}} \, r(\mathbf{x},t,c_A,c_B,c_C) 
    \quad \mathrm{in} \; \Omega \times ]0, \mathcal{I}[ \\
    \label{Eqn:ADR_for_Dirchlet_ABC}
    &c_i(\mathbf{x},t) = c^{\mathrm{p}}_i(\mathbf{x},t) \quad 
    \mathrm{on} \; \Gamma^{c}_{i} \times ]0, \mathcal{I}[ \\
    \label{Eqn:ADR_for_Neumann_ABC}
    &\left(\left( \frac{1 - \mathrm{Sign}[\mathbf{v} \bullet 
    \widehat{\mathbf{n}}]}{2} \right) \mathbf{v}(\mathbf{x},t) 
    c_i(\mathbf{x},t) - \mathbf{D}(\mathbf{x},t) \mathrm{grad}
    [c_i(\mathbf{x},t)] \right) \bullet \widehat{\mathbf{n}}
    (\mathbf{x}) = h^{\mathrm{p}}_i(\mathbf{x},t) \quad 
    \mathrm{on} \; \Gamma^{q}_{i} \times ]0, \mathcal{I}[ \\
    \label{Eqn:ADR_for_IC_ABC}
    &c_i(\mathbf{x},t=0) = c^{0}_i(\mathbf{x}) \quad 
    \mathrm{in} \; \Omega
  \end{align}
\end{subequations}
where $i = A, \, B$, and $C$. $\mathbf{v}(\mathbf{x},t)$ is 
the advection velocity vector field, $f_i(\mathbf{x},t)$ 
constitutes the non-reactive volumetric source, $c^{\mathrm{p}}
_i(\mathbf{x},t)$ is the Dirichlet boundary condition, and $h^{\mathrm{p}}
_i(\mathbf{x},t)$ is the Neumann boundary condition of the $i$-th chemical 
species. $r(\mathbf{x},t,c_A,c_B,c_C)$ is the bimolecular chemical reaction 
rate, which is a non-linear function of the concentrations of the 
chemical species involved in the reaction. $c^{0}_i(\mathbf{x})$ is the 
initial condition of $i$-th chemical species. $t \in [0,\mathcal{I}]$ 
denote the time, where $\mathcal{I}$ is the total time of interest. The 
coupled governing equations \eqref{Eqn:ADR_for_A}--\eqref{Eqn:ADR_for_Neumann_ABC} 
can be converted to a set of uncoupled advection-diffusion equations 
using the following linear algebraic transformation:
\begin{subequations}
  \begin{align}
    \label{Eqn:ADR_Def_of_F}
    c_F &:= c_A + \left( \frac{n_A}{n_C} \right) c_C \\ 
    \label{Eqn:ADR_Def_of_G}
    c_G &:= c_B + \left( \frac{n_B}{n_C} \right) c_C 
  \end{align}
\end{subequations}
As we are interested in \emph{fast} bimolecular chemical 
reactions, it is acceptable to assume that the chemical 
species $A$ and $B$ cannot co-exist at any given location 
$\mathbf{x}$ and time $t$. Hence, $c_A$, $c_B$, and $c_C$ 
can be evaluated as follows:
\begin{subequations}
  \label{Eqn:ADR_Fast_A_B_C}
  \begin{align}
    \label{Eqn:ADR_Fast_A}
    &c_A(\mathbf{x},t) = \mathrm{max} \left[c_F(\mathbf{x},t) 
    - \left(\frac{n_A}{n_B}\right) c_G(\mathbf{x},t), \, 0 
    \right] \\
    \label{Eqn:ADR_Fast_B}
    &c_B(\mathbf{x},t) = \mathrm{max}\left[c_G(\mathbf{x},t) - 
    \left( \frac{n_B}{n_A} \right) c_F(\mathbf{x},t), \, 0 
    \right] \\
    \label{Eqn:ADR_Fast_C}
    &c_C(\mathbf{x},t) = \left( \frac{n_C}{n_A} \right) \; 
    \big(c_F(\mathbf{x},t) - c_A(\mathbf{x},t) \big)
  \end{align}
\end{subequations}

In Reference \cite{2013_Nakshatrala_Mudunuru_Valocchi_JCP_v253_p278_p307}, 
a similar mathematical model has been studied in the context of maximum 
principles and the non-negative constraint. However, the study has 
neglected the advection, and did not address local and global species 
balance. These aspects are very important and cannot be neglected in 
the numerical simulations of chemically reacting systems. In particular, 
advection can play a predominant role in the study of bioremediation 
\cite{1986_Borden_Bedient_AWR_v22_p_1983_p1990}, transverse mixing-controlled 
chemical reactions in hydro-geological media \cite{2008_Willingham_etal_EST_v42_p3185_p3193}, 
and contaminant degradation problems \cite{Dentz_Borgne_Englert_Bijeljic_JHC_2011_v120_p1}. 
This paper precisely addresses such problems in which advection is 
dominant, and satisfying species balance at both local and global 
levels is extremely important. 

\begin{remark}
  Non-linear chemical dynamics is a huge field with 
  various interesting artifacts, which include chaos 
  and limit cycles \cite{Neufeld_Garcia,Epstein_Pojman,
    Erdi_Toth}. Non-linear reactions will bring many 
  additional complications, which need to be addressed 
  systematically. Our approach can handle zeroth-order 
  and first-order kinetics, as these two cases do not 
  bring additional challenges. Other reaction kinetic 
  models need to be addressed case-by-case. A general 
  treatment of non-linear chemical dynamics is not 
  trivial, and is beyond the scope of this paper.
\end{remark}

Herein, we perform numerical simulations for highly 
spatially varying advection velocity fields and 
time-periodic flows. See Reference \cite{Neufeld_Garcia} 
for a discussion on time-periodic flows. For such problems 
in 2D, the following quantity is of considerable 
importance, which is referred as the position 
weighted second moment of the product $C$ 
concentration:
\begin{align}
  \label{Eqn:Conc_Width_Second_Moment}
  \Theta_C^2(t) = \frac{\displaystyle \int \limits_{\Omega} 
  (y - y_0)^2 c_C(\mathbf{x},t) \; \mathrm{d} \Omega}{
  \displaystyle \int \limits_{\Omega} c_C(\mathbf{x},t) 
  \; \mathrm{d} \Omega}
\end{align}
where $y_0$ is the location of a convenient reference 
horizontal line. In our numerical simulations, we have 
taken $y_0$ to be the $y$-coordinate of the start of 
the formation of product $C$. Since $c_C(\mathbf{x},t) 
\geq 0$, $\Theta^{2}_C(t)$ is a non-negative quantity. 
In subsequent sections, we study the utility of this 
quantity as \emph{a posteriori} criterion to assess 
numerical accuracy. We also analyze the variation of 
$\Theta^{2}_C$ with respect to $\mathbb{P}\mathrm{e}_{h}$. 
We also present the numerical results that shed light on 
the impact of advection on the formation of the product $C$. 
In all our numerical simulations, we have taken the weights 
in primitive and negatively stabilized streamline diffusion 
LSFEMs to be that of LS Type-1 (i.e., $\mathbf{A}(\mathbf{x}) 
= \mathbf{I}$ and $\beta(\mathbf{x}) = 1$).

\begin{remark}
  In the literature, to study mixing processes due to advection, 
  spectral methods \cite{2002_Adrover_etal_CCM_v26_p125_p139}, 
  pseudospectral methods \cite{2009_Tsang_PRE_v80_p026305}, and 
  model reduction methods \cite{Neufeld_Garcia} are commonly 
  employed. However, such methods are limited to time-periodic 
  flows, periodic initial and boundary conditions, simple geometries, 
  and homogeneous isotropic diffusivity. Extending these methods to 
  complicated geometries, general initial and boundary conditions, 
  complicated advection velocity fields, and heterogeneous isotropic 
  and anisotropic diffusivity is not trivial and may not even be 
  possible. Moreover, these methods do not guarantee the satisfaction 
  of non-negativity, DMPs, LSB, and GSB. The proposed computational 
  framework is aimed at filling this lacuna. 
\end{remark}

\subsection{One-dimensional steady-state analysis of product formation in fast reactions}
\label{SubSec:1D_SS_ADR_Bimol}
Analysis is performed for two different advection velocities: $v = 
0.25$ and $v = 1.0$. Diffusivity is assumed to be $2.5 \times 10^{-3}$. 
The stoichiometric coefficients are assumed to be: $n_A = 2$, $n_B = 1$, 
and $n_C = 1$. Numerical simulations are performed for two different 
cases as described below.
\subsubsection{\textbf{Case \#1}}
A pictorial description of the boundary value problem is shown 
in Figure \ref{Fig:1D_Bimolecular_Type1_Type2}. The objective of this 
case study is to analyze whether the proposed negatively stabilized 
streamline diffusion LSFEM can produce physically meaningful values 
for $c_i(x)$ on coarse meshes. Based on the linear algebraic transformation 
given by equations \eqref{Eqn:ADR_Def_of_F}--\eqref{Eqn:ADR_Def_of_G}, 
the analytical solution for invariants $F$ and $G$ can be written as 
follows:
\begin{subequations}
  \begin{align}
    \label{Eqn:ADR_1D_Bimolecular_invF_Type1}
    c_F(x) &= \left( 1 - \frac{1 - \exp(vx/D)}{1 - 
    \exp(v/D)} \right) \\ 
    \label{Eqn:ADR_1D_Bimolecular_invG_Type1}
    c_G(x) &=   \frac{f_G}{v} \left( x - \frac{1 - 
    \exp(vx/D)}{1 - \exp(v/D)} \right)
  \end{align}
\end{subequations}
Using equations \eqref{Eqn:ADR_Fast_A}--\eqref{Eqn:ADR_Fast_C}, 
one can obtain the analytical solution for product $C$. 

For the numerical solution, we have taken XSeed = 11. The 
element stabilization parameters for negatively stabilized 
streamline diffusion LSFEM are taken as $\delta_o = 0.08$ 
and $\tau_o = 0.04$ when $v = 0.25$. For $v = 1.0$, $\delta
_o$ and $\tau_o$, are assumed to equal to 0.083 and 0.0121, 
respectively. The analytical and numerical solutions are 
compared in Figure \ref{Fig:1D_Bimolecular_Type1_concC_Unconstrained}. 
As per this figure, the primitive LSFEM produces node-to-node 
oscillations near the boundaries of the domain. Furthermore, 
its numerical solution considerably deviates from the analytical 
solution in the entire domain. For $\mathbb{P}\mathrm{e}_{h} 
= 5$ and $\mathbb{P}\mathrm{e}_{h} = 20$, the negative value 
for the concentration is as low as $-1.25$ and  $-0.47$. 
On the other hand, the negatively stabilized streamline 
diffusion LSFEM is able capture the analytical solution 
profile in the entire domain without producing negative 
values in the concentration field.

\subsubsection{\textbf{Case \#2}}
A pictorial description of the boundary value problem is 
provided in Figure \ref{Fig:1D_Bimolecular_Type1_Type2}. 
The objective of this case study is to examine whether 
the proposed LSFEM can capture steep gradients in the 
solution near the boundary. The analytical solution 
for the invariants $F$ and $G$ can be written as 
follows:
\begin{subequations}
  \begin{align}
    \label{Eqn:ADR_1D_Bimolecular_invF_Type2}
    c_F(x) &= \left( 1 - \frac{1 - \exp(vx/D)}{1 - 
    \exp(v/D)} \right) \\ 
    \label{Eqn:ADR_1D_Bimolecular_invG_Type2}
    c_G(x) &= \left( \frac{1 - \exp(vx/D)}{1 - 
    \exp(v/D)} \right)
  \end{align}
\end{subequations}
Figure \ref{Fig:1D_Bimolecular_Type2OtherSet_concABC_Unconstrained} 
compares the obtained the numerical solution with the analytical 
solution. The negatively stabilized streamline diffusion LSFEM 
is able to accurately capture the steep gradients near the boundary.

\subsection{Steady-state plume formation from boundary in a reaction tank}
\label{SubSec:2D_SS_RxnTank}
A pictorial description of the boundary value problem is 
provided in Figure \ref{Fig:2D_Bimolecular_ScalDiff_Plume}. The 
computational domain is a rectangle with $L_x = 2$ and $L_y = 1$. 
Dirichlet boundary conditions with $c^{\mathrm{p}}_{A} = c^{\mathrm{p}}
_{B} = 1$ are specified on the left side of the domain. Elsewhere, 
$c^{\mathrm{p}}_{i}(\mathbf{x})$ is taken to be zero for all the chemical 
species involved in the bimolecular reaction. The non-reactive 
volumetric source is assumed to be zero in the entire domain for all the chemical 
species. The stoichiometric coefficients are taken as $n_A = 1$, 
$n_B = 1$ and $n_C = 1$. The advection velocity field is defined 
through the following multi-mode stream function 
\cite{2013_Nakshatrala_Mudunuru_Valocchi_JCP_v253_p278_p307}:
\begin{align}
  \label{Eqn:Plume_Subsurface_Stream_Function}
  \psi(\mathbf{x}) &= -\mathrm{y} - \sum_{k=1}^{3} A_k \cos 
  \left(\frac{{p}_k \pi \mathrm{x}}{L_x} - \frac{\pi}{2}\right) 
  \sin \left( \frac{q_k \pi \mathrm{y}}{L_y}\right) 
\end{align}
where $\mathbf{x} = (x,y)$, $(p_1, p_2, p_3) = (4, 5, 10)$, 
$(q_1, q_2, q_3) = (1, 5, 10)$, and $(A_1, A_2, A_3) = 
(0.08, 0.02, 0.01)$. The corresponding components of 
the advection velocity can be written as follows:
\begin{subequations}
  \begin{align}
    \label{Eqn:Plume_Subsurface_VelX}
    \mathrm{v}_{x}(\mathbf{x}) &= -\frac{\partial \psi}{\partial 
    \mathrm{y}} = 1 + \sum_{k=1}^{3} A_k \frac{q_k \pi}{L_y} \cos 
    \left(\frac{p_k \pi \mathrm{x}}{L_x} - \frac{\pi}{2} \right) 
    \cos\left(\frac{q_k \pi \mathrm{y}}{L_y}\right) \\
    \label{Eqn:Plume_Subsurface_VelY}
    \mathrm{v}_{y}(\mathbf{x}) &= +\frac{\partial \psi}{\partial 
    \mathrm{x}} = \sum_{k=1}^{3} A_k \frac{p_k \pi}{L_x} \sin 
    \left(\frac{p_k \pi \mathrm{x}}{L_x} - \frac{\pi}{2}\right) 
    \sin\left(\frac{q_k \pi \mathrm{y}}{L_y}\right)
  \end{align}
\end{subequations}
It is easy to check that $\mathrm{div}[\mathbf{v}(\mathbf{x})] 
= 0$. The contours of the stream function and the corresponding 
advection velocity vector field are shown in Figure 
\ref{Fig:2D_Bimolecular_ScalDiff_Plume}. Numerical 
simulations are performed using the following two 
different types of diffusivities:
\begin{itemize}
  \item \emph{Type \#1}: $D(\mathbf{x}) = 10^{-2}$
  \item \emph{Type \#2}: $\mathbf{D}(\mathbf{x}) = 
    \mathbf{R} \mathbf{D}_{0} \mathbf{R}^{\mathrm{T}}$, 
    where $\mathbf{R}$ and $\mathbf{D}_{0}$ are given 
    as follows:
    \begin{subequations}
      \begin{align}
        \label{Eqn:Rotation_Tensor}
        \mathbf{R} &= \left(\begin{array}{cc}
        \cos(\theta) & -\sin(\theta) \\
        \sin(\theta) & \cos(\theta) \\
        \end{array}\right) \\
        \label{Eqn:LePotier_AnisoDiff}
        \mathbf{D}_0(\mathbf{x}) &= \omega_0
        \left(\begin{array}{cc}
        y^2_* + \omega_2 x^2_* & - (1 - \omega_2) 
        x_* y_* \\
        -(1 - \omega_2) x_* y_* & \omega_2 
        y^2_* + x^2_* \\
        \end{array}\right)
      \end{align}
    \end{subequations}
    where $x_* = x + \omega_1$ and $y_* = y + \omega_1$. The 
    parameters $\theta$, $\omega_0$, $\omega_1$, and $\omega_2$ 
    are equal to $\frac{\pi}{6}$, $1.0$, $10^{-3}$, and $10^{-3}$. 
    Correspondingly, the eigenvalues of $\mathbf{D}(\mathbf{x})$ 
    are $\omega_0(x^2_* + y^2_*)$ and $\omega_0 \omega_2 \left(
    x^2_* + y^2_* \right)$. The contrast/anisotropic ratio of the 
    media (which is the ratio of maximum to minimum eigenvalue) is 
    as high as $10^3$.
\end{itemize} 
Herein, we employed a structured mesh based on Q4 elements. 
Numerical simulations are performed with varying mesh sizes 
and polynomial orders ($p = 1, 2, 3$) to demonstrate the 
pros and cons of various unconstrained and constrained LSFEMs. 
The stabilization parameters are taken as $\delta_o = \tau_o 
= 10^{-3}$ and $\delta_2 = \tau_2 = 10^{-4}$. The contours of 
the concentration of the product $C$ are shown in Figures 
\ref{Fig:2D_Bimolecular_ScalDiff_ConcC_PrimNoCons}--\ref{Fig:2D_Bimolecular_AnisoDiff_ConcC_NegStabStrDiff} 
for both the primitive and negatively stabilized streamline diffusion 
LSFEMs. The white patches in the figures denote the regions in 
which the non-negative constraint has been violated. The variation 
of $\Theta^2_C$ with respect to XSeed and $Pe_L$ are shown in Figures 
\ref{Fig:2D_Bimolecular_ScalDiff_ConcC_NegStabStrDiff_NSSD}--\ref{Fig:2D_BiMolScalDiff_ConcC_NSSD}.
From these figures, the following inferences can be drawn:
\begin{enumerate}[(i)]
  \item It is clear that both low-order and higher-order 
    polynomials violate the non-negative constraint and 
    DMPs under unconstrained formulations. Moreover, mesh 
    refinement and polynomial refinement do not seem to 
    reduce the amount of violated region for DMP constraints.
  \item The proposed framework based on $p = 1$ is 
    able to satisfy all the desired properties, and is 
    able to predict physically meaningful values for 
    the concentration and the flux. 
  \item The primitive LSFEM and the unconstrained negatively 
    stabilized streamline diffusion LSFEM give unphysical 
    values for the position weighted second moment of the 
    product $C$ (i.e., $\Theta_C^2$). 
    On the other hand, the proposed computational 
    framework is able to accurately describe the 
    variation of $\Theta_C^2$ with respect to mesh 
    refinement. In addition, the numerical values 
    for $\Theta_C^2$ reaches a plateau on $h$-refinement, 
    which indicates convergence. However, this is not 
    observed with the unconstrained primitive and 
    negatively stabilized streamline diffusion LSFEMs.
\end{enumerate}
Finally, it should be emphasized that placing explicit 
non-negative constraints on the nodal concentrations 
does not ensure non-negativity of the concentration 
in the entire computational domain. This is due to 
the fact that higher-order shape functions change 
their sign within an element 
\cite{Payette_Nakshatrala_Reddy_IJNME_2012}.

\subsection{Transient analysis of non-chaotic and chaotic vortex stirred mixing in a reaction tank}
\label{SubSec:2D_Trans_Mixing}
Figure \ref{Fig:2D_Bimolecular_ScalDiff_Mixing_Type1} provides 
a pictorial description of the problem with appropriate initial 
and boundary conditions. The computational domain is a square 
with $L_x = L_y = 1$. For all chemical species, zero flux boundary 
condition is prescribed on the entire boundary. The non-reactive 
volumetric source is zero in the entire domain for all the chemical 
species $A$, $B$, and $C$. Reactant $A$ is placed at the center of 
vortices, which are positioned at $(0.25,0.75)$ and $(0.75,0.25)$. 
The width of the square slug $A$ is equal to 0.25. Within this 
slug, $c_A(\mathbf{x},t=0) = 8$. Elsewhere, the initial condition 
for $A$ is equal to zero. Correspondingly, the initial condition 
for reactant $B$ is zero in these two square regions centered at 
$(0.25,0.75)$ and $(0.75,0.25)$. Elsewhere, $c_B(\mathbf{x},t=0) 
= 1.5$. The stoichiometric coefficients are taken as $n_A = 1$, 
$n_B = 1$, and $n_C = 1$. The total time of interest is taken as 
$\mathcal{I} = 5$. We assume scalar diffusivity to be $D = 10^{-2}$. 
The stabilization parameters are taken as $\delta_o = \tau_o = 10^{-3}$ 
and $\delta_1 = \tau_1 = 10^{-4}$. For advection velocity, we employ 
the following vortex-based flow field:
\begin{itemize}
  \item \emph{Type \#1}: Non-chaotic vortex-based advection velocity 
    field
    \begin{align}
      \label{Eqn:TransMixing_VortexFlowModel}
      \mathbf{v}(\mathbf{x}) = \cos(2 \pi y) \mathbf{\hat{e}}_x
      + \cos(2 \pi x) \mathbf{\hat{e}}_y 
    \end{align}  
  \item \emph{Type \#2}: Chaotic vortex-based advection velocity 
    field
    \begin{align}
      \label{Eqn:TransMixing_ChaoticVortexFlowModel1}
      \mathrm{v}_{x}(\mathbf{x},t) = 
      \begin{cases}
        \cos(2 \pi y) + v_o \sin(2 \pi y) &\quad \mathrm{if} \; 
        \nu T \leq t < \left( \nu + \frac{1}{2} \right) T  \\
        \cos(2 \pi y) &\quad \mathrm{if} \; \left( \nu + \frac{1}{2} 
        \right) T \leq t < \left( \nu + 1 \right) T
      \end{cases}
    \end{align}
    \begin{align} 
      \label{Eqn:TransMixing_ChaoticVortexFlowModel2}
      \mathrm{v}_{y}(\mathbf{x},t) = 
      \begin{cases}
        \cos(2 \pi x) &\quad \mathrm{if} \; \nu T \leq t < 
        \left( \nu + \frac{1}{2} \right) T \\
        \cos(2 \pi x) + v_o \sin(2 \pi x) &\quad \mathrm{if} \;
        \left( \nu + \frac{1}{2} \right) T \leq t < \left( 
        \nu + 1 \right) T
      \end{cases}
    \end{align}
\end{itemize}
where $\nu = 0, 1, 2, \cdots$ \cite{2002_Adrover_etal_CCM_v26_p125_p139,
2009_Tsang_PRE_v80_p026305}. $T$ denotes the period of the 
motion of the flow field. $v_{o}$ is an a-priorly chosen chaotic 
flow perturbation parameter. Herein, we choose $T = 0.8$ and 
$v_{o} = 1.0$ \cite{2004_Cox_PDNP_v199_p369_p386}.

Figures 
\ref{Fig:2D_Bimolecular_ScalDiff_FirstTimeStep_ConcC}--\ref{Fig:2D_Bimolecular_ScalDiff_Mixing_ConcC1} 
provide the concentration profiles of unconstrained and constrained 
negatively stabilized streamline diffusion LSFEM with NN constraints.
Herein, XSeed = YSeed = 121. Numerical simulations are performed for 
various different time steps. These are equal to 0.0001, 0.001, 0.01, 
and 0.1. Figure \ref{Fig:2D_Bimolecular_ScalDiff_FirstTimeStep_ConcC} 
shows the concentration profile of the product $C$ and Figure 
\ref{Fig:2D_Bimolecular_ScalDiff_y0dot5_ConcC} shows the values of 
$c_C$ at $y = 0.5$ at the first time-step. Analysis is performed 
using the unconstrained weighted negatively stabilized streamline 
diffusion LSFEM. From these figures, it is clear that unphysical negative values for $c_C$ are 
obtained even for small time steps. Furthermore, these violations 
are significant and not close to machine precision for both small 
and large time-steps. Non-negative constraints have to be enforced 
in order to get meaningful values for $c_C$.

Figures \ref{Fig:2D_Bimolecular_ScalDiff_Mixing_ConcC} and
\ref{Fig:2D_Bimolecular_ScalDiff_Mixing_ConcC1} show the concentration 
profiles of $c_C$ for both non-chaotic and chaotic vortex flow fields 
at various time levels. For non-chaotic advection, product $C$ is 
initially formed away from the vortex field. As time progresses, it 
slowly gets accumulated in the closed streamlines of the two vortices. 
Regions of higher concentration are located at the center of vortices. 
From Figure \ref{Fig:2D_Bimolecular_ScalDiff_Mixing_ConcC}, it is evident 
that $c_C$ contour is symmetric along the line $y = x$. This is because 
the non-chaotic vortex-based advection velocity vector field is 
symmetric along this line. However, this is not the case with chaotic 
vortex flow field. Qualitatively, there is no symmetry associated 
with the concentration field. This is because of the time-periodic 
sinusoidal terms given by equations \eqref{Eqn:TransMixing_ChaoticVortexFlowModel1} 
and \eqref{Eqn:TransMixing_ChaoticVortexFlowModel2}. They provide 
chaotic features for the chosen value of period of motion $T$. An 
interesting feature observed is that mixing of chemical species is 
enhanced in chaotic flow as compared to non-chaotic flow. This is 
because $c_C$ is not equal to zero in the non-chaotic flow for late 
times. 

Finally, from these figures it is evident that existing numerical 
formulations do not provide accurate information on the fate of 
reactants and products for all times. On the other hand, the 
proposed methodology predicts results accurately for both early 
and late times.

\subsection{Transient analysis of species mixing in cellular flows}
\label{SubSec:2D_Trans_SteadyUnsteady_CellFlows}
A pictorial description of the initial boundary value problem 
is provided in Figure \ref{Fig:2D_CellFlow_ScalDiff_Mixing}.
The prescribed diffusive/total flux for each chemical species 
is taken to be equal to zero. The initial condition is such 
that $c^{0}_A(\mathbf{x}) = 1$ in the bottom half of the domain
and vanishes elsewhere while $c^{0}_B(\mathbf{x}) = 1$ only in 
the upper half. The non-reactive part of the volumetric source 
is equal to zero for all the species. It should be noted that 
the concentration of the product $C$ should be between $0$ and 
$1$ because $f_i(\mathbf{x},t) = 0$. 

For numerical simulations, we have taken $L_x = 1$, $L_y = 0.5$, 
XSeed = 61 and YSeed = 241. The stoichiometric coefficients are 
taken as $n_A = 1$, $n_B = 1$, and $n_C = 1$. The time-step is 
taken as $\Delta t = 0.1$. The total time of interest is taken 
as $\mathcal{I} = 5$. The scalar diffusivity is taken as $D = 5 
\times 10^{-3}$. The stabilization parameters are taken as $\delta_o 
= \tau_o = 10^{-3}$ and $\delta_1 = \tau_1 = 10^{-4}$. 
The advection velocity vector field for the cellular flow 
is given by \cite{1982_Smolarkiewicz_MWR_v110_p1968_p1983}:
\begin{align}
  \label{Eqn:Steady_CellFlows}
  \mathbf{v}(\mathbf{x}) &= - \sin \left(\frac{2 \pi x}{L_
  {\text{\tiny {Cell}}}} \right) \cos \left(\frac{2 \pi y}{L_
  {\text{\tiny {Cell}}}} \right) \mathbf{\hat{e}}_x + \cos 
  \left(\frac{2 \pi x}{L_{\text{\tiny {Cell}}}} \right) 
  \sin \left(\frac{2 \pi y}{L_{\text{\tiny {Cell}}}} 
  \right) \mathbf{\hat{e}}_y
\end{align}
where $L_{\text{\tiny {Cell}}}$ is the cell length of 
a pair of vortices. The velocity field given by 
equation \eqref{Eqn:Steady_CellFlows} has a set 
of symmetrical vortices, and the neighboring 
vortices rotate in opposite directions. It is 
well-known that the advection velocity field given 
by equation \eqref{Eqn:Steady_CellFlows} causes numerical 
difficulties (if the underlying numerical scheme is not 
properly designed). This is because the advection velocity 
is strongly non-uniform \cite[Section 8]{1982_Smolarkiewicz_MWR_v110_p1968_p1983},
(which happens to be in our case). That is,
\begin{align}
  \label{Eqn:Strongly_NonUniform_VelField}
  \mathop{\mathrm{max}}_{\mathbf{x} \in \overline{\Omega}} 
  \left [ \frac{\partial \mathrm{v}_x}{\partial x} - 
  \frac{\partial \mathrm{v}_y}{\partial y} \right ] 
  \times \Delta t > 1
\end{align}
The main objective of this test problem is show that the proposed 
formulation is robust and can analyze velocity fields that are 
strongly non-uniform without causing numerical instabilities/oscillations.

Analysis is performed for a series of hierarchical cell lengths. 
That is $L_{\text{\tiny {Cell}}}$ equal to 0.5, 0.25, 0.125, 0.0625, 
and 0.03125. In all the cases, \emph{as the input data and the position 
of the cellular vortices is symmetric about the line $y = 0.25$, it 
may be expected that formation of product $C$ will be symmetric along 
this line}. Additional information on the symmetry of the formation 
of product $C$ can be inferred based on the cell length of adjacent 
pair of vortices, which rotate in opposite directions. This is apparent 
from the numerical results presented for product $C$ provided in Figures 
\ref{Fig:2D_CellFlow_ScalDiffMixing_cCNSSDNoCons}--\ref{Fig:2D_CellFlow_ScalDiffMixing_cCNSSDAvgLCells}

Figure \ref{Fig:2D_CellFlow_ScalDiffMixing_cCNSSDNoCons} 
shows the concentration profiles of the product $C$ under 
the unconstrained and constrained negatively stabilized 
streamline diffusion for $L_{\text{\tiny {Cell}}} = 0.5$.
From this figure, it is evident that the \emph{unconstrained} 
LSFEM violates both the non-negative and maximum constraints. 
In addition, both undershoots and overshoots are observed. 
On the other hand, the proposed computational framework with 
LSB and DMP constraints provides physically meaningful profiles 
for the concentration of the product $C$. 

For $L_{\text{\tiny {Cell}}} = 0.5$, immediately after time $t = 0$, 
we observe wing-like concentration profiles. This is because diffusion
controls the species mixing rather than advection across the adjacent 
cells along the line $y = 0.25$ (note that $\mathrm{v}_y(x,y = 0.25) = 
0$). However, once the species $A$ and $B$ enter the closed streamlines 
where advection dominates, mixing happens at much faster rate. Furthermore, 
product $C$ spreads in time along the array of counter-rotating vortices. 
After a considerable time ($t \approx 5$), we observe that formation of 
product $C$ is symmetric along the lines $x = 0.5$ and $y = 0.25$. In addition, 
product $C$ accumulates near the region where $\|\mathbf{v}(\mathbf{x})\|$ 
is close to zero (which happens to be at the center of vortices and hyperbolic 
points). This happens to be in a good agreement with the inferences drawn from 
the numerical simulations performed on cellular flows 
\cite{Neufeld_Garcia,1982_Smolarkiewicz_MWR_v110_p1968_p1983}.

Figure \ref{Fig:2D_CellFlow_ScalDiffMixing_cCNSSDNoCons} and 
\ref{Fig:2D_CellFlow_ScalDiffMixing_cCNSSDAvgLCells} 
show that the separatrices connecting the hyperbolic points inhibit 
long range transport of chemical species from one cell to another. 
By decreasing $L_{\text{\tiny {Cell}}}$, species mixing can be enhanced. 
This is evident from Figure \ref{Fig:2D_CellFlow_ScalDiffMixing_cCNSSDAvgLCells}.
Qualitatively, the numerical results presented here agree with the 
analysis presented by Neufeld and Garcia \cite{Neufeld_Garcia}, which 
tells us that mixing of chemical species is fast within a cell but 
the transport of reactants/products between the cells is controlled 
by diffusion only. In order to enhance the efficient species mixing 
in different regions in these type of flows, $L_{\text{\tiny {Cell}}}$ 
has to be as smaller.
To conclude, we would like to emphasize that the numerical solution 
based on the proposed methodology does not exhibit numerical instabilities 
and is able to capture the essential features even when the advection 
velocity is strongly non-uniform.

\section{SUMMARY AND CONCLUDING REMARKS}
\label{Sec:NN_AD_Conclusions}
We presented a robust computational framework for (steady-state 
and transient) advection-diffusion-reaction equations that satisfies 
the non-negative constraint, maximum principles, local species balance, 
and global species balance. The framework can handle general computational 
grids, anisotropic diffusivity, highly heterogeneous velocity fields, and 
provides physically meaningful numerical solutions without node-to-node 
spurious oscillations even on coarse computational meshes. The main 
\emph{contributions} of the paper can be summarized as follows: 
\begin{enumerate}[(C1)]
\item We constructed and proved a continuous maximum 
principle that includes both Dirichlet and Neumann 
boundary conditions. It also takes into account the 
inflow and outflow Neumann boundary conditions in 
establishing the maximum principle. 
\item We described in detail the shortcomings of 
  several plausible numerical approaches to satisfy the 
  maximum principle, the non-negative constraint, 
  and species balance.
\item We proposed a locally conservative DMP-preserving 
  computational framework and constructed element stabilization 
  parameters that are valid for a general reaction coefficient, 
  advection velocity, and diffusivity. The framework has been 
  carefully constructed using the least-squares finite element 
  method (LSFEM). It is also shown that a naive implementation 
  of LSFEM will not meet the desired properties. 
\item The discrete problem under the proposed 
    framework is well-posed, and it can be shown 
    that a unique solution exists.
\item We performed numerical convergence studies on the 
  computational framework. We also systematically analyzed 
  and documented the performance of the proposed framework 
  with various benchmark problems and realistic examples.
\item We obtained numerically a scaling law for 
  a transport-controlled bimolecular reaction.
\item In chemically reactive systems, it is important to predict 
  the fate of reactants and products during the early times. 
  We have shown that the existing formulations may not provide 
  accurate information for such scenarios. On the other hand, 
  using numerical experiments, we have shown that the proposed 
  framework predicts accurate results for both early and late 
  times. 
\end{enumerate}

The \emph{salient features and performance} of the 
proposed computational framework can be summarized 
as follows:
\begin{enumerate}[(S1)]
  \item The rate of decrease of errors in LSB and GSB 
    for \emph{unconstrained} negatively stabilized 
    streamline diffusion LSFEM with $h$-refinement 
    is slow and is about $\mathcal{O}(h)$. Furthermore, 
    this numerical formulation violates various discrete 
    principles and the non-negative constraint for both 
    isotropic and anisotropic diffusivities. On the 
    other hand, the proposed non-negative computational 
    framework is able to satisfy LSB and GSB up to machine 
    precision on an arbitrary computational mesh.
  \item The proposed computational framework with NN and LSB 
    constraints eliminates the spurious node-to-node oscillations 
    and provides physically meaningful values for concentration. 
    Furthermore, it is able to furnish reasonable answers with 
    various time-steps and at various time levels even on coarse 
    computational grids.
  \item It has been shown that existing formulations may 
    \emph{fail} to give acceptable results for non-negative 
    statistical quantities such as $\Theta_C^2$, which is 
    defined in Section \ref{SubSec:2D_SS_RxnTank}. However, 
    the proposed methodology always provides non-negative 
    values for $\Theta_C^2$. The quantity $\Theta_C^2$ can 
    be used as \emph{a posteriori} criteria to assess 
    accuracy of numerical solutions for complex initial 
    and boundary value problems for which non-negativity 
    and species conservation are important. 
  \item Due to the aforementioned desired properties, our 
    proposed computational framework can be an ideal candidate 
    to numerically obtain scaling laws for complicated problems 
    with non-trivial initial and boundary conditions. Therefore, 
    the proposed framework will be vital for predictive simulations 
    in groundwater modeling, reactive transport, environmental 
    fluid mechanics, and modeling of degradation of materials.
\end{enumerate}

A possible future research work is to implement and 
analyze the performance of the proposed numerical 
methodology in a parallel environment. A related 
research is to design tailored iterative solvers 
and associated pre-conditioners for our proposed 
numerical methodology.

\section{APPENDIX A:~Element-level discretization of stabilization terms}
\label{Sec:NN_AD_Appendix}
The weighted negatively stabilized streamline diffusion 
least-squares formulation requires the evaluation of 
$\mathrm{div}[\mathrm{grad}[c(\mathbf{x})]]$ and 
$\mathrm{grad}[\mathrm{grad}[c(\mathbf{x})]]$ terms 
at the element-level. 
Since these terms are not typical, we present a compact 
way of discretizing these terms under the finite element 
method. 
It should be noted that one need not evaluate these terms 
for lowest-order simplicial elements (i.e., three-node 
triangular element and four-node tetrahedron element), 
as these terms will be identically zero. 
However, $\mathrm{div}[\mathrm{grad}[c(\mathbf{x})]]$ 
and $\mathrm{grad}[\mathrm{grad}[c(\mathbf{x})]]$ will 
be non-zero for high-order simplicial and non-simplicial 
elements (i.e., four-node quadrilateral element and 
eight-node brick element).

\subsection{Fourth-order tensors}
\label{SubSec:FourthOrderTensors_Properties}
Let $\mathbf{R}$ and $\mathbf{S}$ be two second-order 
tensors. A fourth-order tensor product $\mathbf{R} 
\boxtimes \mathbf{S}$ is defined as follows:
\begin{align}
  \label{Eqn:FourthOrderTensor_Box_Product}
  \left(\mathbf{R} \boxtimes \mathbf{S}\right) \mathbf{T} 
  = \mathbf{R} \mathbf{T} \mathbf{S}^{\mathrm{T}}
\end{align}
for any second-order tensor $\mathbf{T}$. The 
fourth-order identity tensor can then be written 
as: 
\begin{align}
  \label{Eqn:FourthOrderIdentityTensor}
  \mathbb{I} = \mathbf{I} \boxtimes \mathbf{I}
\end{align}
where $\mathbf{I}$ is the second-order identity 
tensor. The fourth-order transposer $\mathbb{T}$ 
and symmetrizer $\mathbb{S}$ tensors are defined 
as follows:
\begin{subequations}
  \begin{align}
    \label{Eqn:Transposer}
    \mathbb{T} \mathbf{A} &= \mathbf{A}^{\mathrm{T}} \\
    \label{Eqn:Symmetrizer}
    \mathbb{S} \mathbf{A} &= \frac{1}{2} \left(\mathbf{A} + 
    \mathbf{A}^{\mathrm{T}} \right)
  \end{align}
\end{subequations} 
where $\mathbf{A}$ is a general second-order tensor. 

\subsection{Properties of Kronecker products relevant 
  to finite element discretization}
\label{SubSec:KronProduct_Properties}
Let $\boldsymbol{A}$ and $\boldsymbol{B}$ be matrices of 
sizes $n \times m$ and $p \times q$, respectively. Let 
$a_{ij}$ and $b_{ij}$ be, respectively, the $ij^{\mathrm{th}}$ 
components of $\boldsymbol{A}$ and $\boldsymbol{B}$. The 
\emph{Kronecker product} of these two matrices is an 
$np \times mq$ matrix that is defined as follows:
\begin{align}
  \label{Eqn:Kronecker_Product_AB}
  \boldsymbol{A} \odot \boldsymbol{B} :=
  \left[ \begin{array}{ccc} a_{11} 
  \boldsymbol{B} & \hdots & a_{1m} 
  \boldsymbol{B} \\
  \vdots & \ddots & \vdots \\
  a_{n1}\boldsymbol{B} & \hdots & 
  a_{nm}\boldsymbol{B} \end{array} \right]
\end{align}
Another operator that will be useful is the 
$\mathrm{vec}[\bullet]$ operator, which is 
defined as follows:
\begin{align}
  \label{Eqn:Vec_Operator}
  \mathrm{vec}[\boldsymbol{A}] := \left[ \begin{array}{c}
  a_{11} \\
  \vdots \\
  a_{1m} \\
  \vdots \\
  a_{n1} \\
  \vdots \\
  a_{nm} \end{array} \right]
\end{align}
The following standard properties can be established 
for $\mathrm{mat}[\bullet]$ and $\mathrm{vec}[\bullet]$ 
operators:
\begin{align}
  &\mathrm{vec}[\boldsymbol{A} + \boldsymbol{B}] = 
  \mathrm{vec}[\boldsymbol{A}] + \mathrm{vec}[\boldsymbol{B}] \\
  &\left(\boldsymbol{A} \odot \boldsymbol{B} 
  \right) \left(\boldsymbol{C} \odot \boldsymbol{D} 
  \right) = \left(\boldsymbol{A} \boldsymbol{C} 
  \odot \boldsymbol{B} \boldsymbol{D} \right) \\
  &\mathrm{vec}[\boldsymbol{A C B}] = 
  \left(\boldsymbol{B}^{\mathrm{T}} \odot \boldsymbol{A} 
  \right) \mathrm{vec}[\boldsymbol{C}]
\end{align}
For general properties of Kronecker products, see the 
book by Graham \cite{Graham_Kronecker}. However, this 
book does not contain many of the results presented 
below, which are useful for the current study.  

For an effective computer implementation of LSFEM-based 
formulations, we shall represent four-dimensional and 
three-dimensional arrays as two-dimensional matrices. 
To this end, consider a four-dimensional array 
$\boldsymbol{P}$ of size $m \times n \times p \times q$. 
The matrix representation of $\boldsymbol{P}$ is denoted 
by $\mathrm{mat}[\bullet]$, and is defined as follows:
\begin{align}
  \label{Eqn:mat_Operator_FourthOrderTensors}
  \mathrm{mat}[\boldsymbol{P}] &:=
  \left[ \begin{array}{cccccc} 
      P_{1111} & \hdots & P_{11p1} & P_{1112} & \hdots & P_{111q} \\
      \vdots & \ddots & \vdots & \vdots & \ddots & \vdots \\
      P_{mn11} & \hdots & P_{mnp1} & P_{mn12} & \hdots & P_{mnpq}
    \end{array} \right] 
\end{align}
The following properties can be established for the matrix 
representation of fourth-order tensors:
\begin{align}
  &\mathrm{vec}[\boldsymbol{P} \boldsymbol{X}] = 
  \mathrm{mat}[\boldsymbol{P}] \mathrm{vec}[\boldsymbol{X}] \\
  &\mathrm{mat}[\boldsymbol{A} \boxtimes \boldsymbol{B}] 
  = \boldsymbol{B} \odot \boldsymbol{A} \\
  &\mathrm{mat}[\mathbb{S}] = \frac{1}{2} \left(\boldsymbol{I} 
    \odot \boldsymbol{I} + \mathrm{mat}[\mathbb{T}] \right)
\end{align}
where the matrix representation of $\mathbb{T}$ 
takes the following form:
\begin{align}
  \label{Eqn:mat_Transposer}
  \mathrm{mat}[\mathbb{T}] := \left[ \begin{array}{c}
  \boldsymbol{I} \odot [1,0,0, \hdots, 0]_n \\
  \boldsymbol{I} \odot [0,1,0, \hdots, 0]_n \\
  \vdots \\
  \boldsymbol{I} \odot [0,0,0, \hdots, 1]_n 
  \end{array} \right]_{mn \times mn}
\end{align}
In the above equation, $\boldsymbol{I}$ is an 
identity matrix of size $m \times m$.
It can be shown that for a matrix $\boldsymbol{Z}$ 
of size $m \times n$, $\mathrm{vec}[\boldsymbol{Z}^{\mathrm{T}}] 
= \mathrm{mat}[\mathbb{T}] \mathrm{vec}[\boldsymbol{Z}]$. 

For a three-dimensional arrays, there are two useful 
matrix representations, which will be denoted as 
$\mathrm{mat}_1[\bullet]$ and $\mathrm{mat}_2[\bullet]$. 
Consider a three-dimensional array $\boldsymbol{Q}$ of 
size $m \times n \times p$. The corresponding matrix 
representations of $\boldsymbol{Q}$ are defined as 
follows:
\begin{align}
  \label{Eqn:mat1_Operator_ThirdOrderTensors}
  \mathrm{mat}_1[\boldsymbol{Q}] &:=
  \left[ \begin{array}{cccccccc} 
      Q_{111} & \hdots & Q_{1n1} & \hdots & \hdots & Q_{11p} & \hdots & Q_{1np} \\
      \vdots & \ddots & \vdots & \vdots & \vdots & \vdots & \ddots & \vdots \\
      Q_{m11} & \hdots & Q_{mn1} & \hdots & \hdots & Q_{m1p} & \hdots & Q_{mnp}
    \end{array} \right] 
\end{align}
\begin{align}
  \label{Eqn:mat2_Operator_ThirdOrderTensors}
  \mathrm{mat}_2[\boldsymbol{Q}] &:=
  \left[ \begin{array}{ccc}
      Q_{111} & \hdots & Q_{m11} \\
      \vdots & \ddots & \vdots \\
      Q_{11p} & \hdots & Q_{m1p} \\
      Q_{121} & \hdots & Q_{m21} \\
      \vdots & \ddots & \vdots \\
      Q_{12p} & \hdots & Q_{m2p} \\
      \vdots & \ddots & \vdots \\
      Q_{1np} & \hdots & Q_{mnp}
    \end{array} \right] 
\end{align}
The following properties can be established for 
the matrix representations of three-dimensional 
arrays: 
\begin{align}
  &\mathrm{vec}[\boldsymbol{Q} \boldsymbol{Y}] = 
  \mathrm{mat}_1[\boldsymbol{Q}] \mathrm{vec}[\boldsymbol{Y}] \\
  &\mathrm{vec}[\boldsymbol{Q} \boldsymbol{z}] = 
  \mathrm{mat}_2[\boldsymbol{Q}] \mathrm{vec}[\boldsymbol{z}]
\end{align}

\subsection{Finite element discretization of $\mathrm{div}
  [\mathrm{grad}[c(\mathbf{x})]]$ and $\mathrm{grad}
  [\mathrm{grad}[c(\mathbf{x})]]$ terms}
\label{SubSec:FEM_ScalAnisoDiff_divgrad_gradgrad}
Let $\boldsymbol{\xi}$ denote the position vector in the 
reference finite element. The row vector containing the 
shape functions is denoted by $\boldsymbol{N}$, which 
is a function of $\boldsymbol{\xi}$. The derivatives 
and Hessian of $\boldsymbol{N}$ with respect to 
$\boldsymbol{\xi}$ are, respectively, denoted as 
$\boldsymbol{DN}$ and $\boldsymbol{DDN}$. That 
is, in indicial notation we have: 
\begin{subequations}
  \label{Eqn:DN_DDN_Components}
  \begin{align}
    &\left(\boldsymbol{DN} \right)_{ij} = 
    \frac{\partial N_i}{\partial \xi_j} \\
    &\left(\boldsymbol{DDN} \right)_{ijk} = 
    \frac{\partial^2 N_i}{\partial \xi_j \xi_k}
  \end{align}
\end{subequations}
where $N_i$ and $\xi_i$ are, respectively, 
the $i^{\mathrm{th}}$-component of the vectors 
$\boldsymbol{N}$ and $\boldsymbol{\xi}$. The 
concentration field $c(\mathbf{x})$ and total 
flux vector $\mathbf{q}(\mathbf{x})$ are 
interpolated as follows:
\begin{subequations}
  \label{Eqn:Conc_Flux_LinFEM}
  \begin{align}
    c(\mathbf{x}) &= \widehat{\boldsymbol{c}}^{\mathrm{T}} 
    \boldsymbol{N}^{\mathrm{T}}\left(\boldsymbol{\xi}
    (\boldsymbol{x}) \right) \\ 
    \mathbf{q}(\mathbf{x}) &= 
    \widehat{\boldsymbol{q}}^{\mathrm{T}} \boldsymbol{N}^
    {\mathrm{T}}\left(\boldsymbol{\xi}(\boldsymbol{x}) 
    \right)
  \end{align}
\end{subequations}
where $\widehat{\boldsymbol{c}}^{\mathrm{T}}$ and $\widehat{\boldsymbol{q}}
^{\mathrm{T}}$ denote the matrix containing nodal concentration and 
total flux vector. In negatively stabilized streamline diffusion LSFEM,
$\mathrm{div}[\mathrm{grad}[c(\mathbf{x})]]$ and $\mathrm{grad}[\mathrm{grad}
[c(\mathbf{x})]]$ terms arise from the following expansions:
\begin{subequations}
\begin{align}
  \label{Eqn:ScalDiff_divDgradc}
  \mathrm{div}[D(\mathbf{x}) \mathrm{grad}[c(\mathbf{x})]] &= 
  \mathrm{grad}[D(\mathbf{x})] \bullet \mathrm{grad}[c(\mathbf{x})] + 
  D(\mathbf{x}) \, \mathrm{div}[\mathrm{grad}[c(\mathbf{x})]] \\
  \label{Eqn:AnisoDiff_divDgradc}
  \mathrm{div}[\mathbf{D}(\mathbf{x}) \mathrm{grad}[c(\mathbf{x})]] &= 
  \mathrm{div}[\mathbf{D}(\mathbf{x})] \bullet \mathrm{grad}[c(\mathbf{x})] 
  + \mathbf{D}(\mathbf{x}) \bullet \mathrm{grad}[\mathrm{grad}[c(\mathbf{x})]]
\end{align}
\end{subequations}
Based on the regularity of diffusivity, $\mathrm{grad}
[D(\mathbf{x})]$ and $\mathrm{div}[\mathbf{D}(\mathbf{x})]$ 
can be calculated in multiple ways. If the diffusivity is 
\emph{continuously differentiable}, then $\mathrm{grad}
[D(\mathbf{x})]$ and $\mathrm{div}[\mathbf{D}(\mathbf{x})]$ 
can be directly evaluated analytically. This will considerably 
reduce the computational cost in the evaluation of local 
stiffness matrices. If $D(\mathbf{x})$ is not differentiable 
(but is square integrable), then $\mathrm{grad}[D(\mathbf{x})]$ 
and $\mathrm{div}[\mathbf{D}(\mathbf{x})]$ can be evaluated as 
follows:
\begin{subequations}
  \begin{align}
    \label{Eqn:gradD_FEM_Expansion}
    \mathrm{grad}[D(\mathbf{x})] &= \widehat{\boldsymbol{D}}
    ^{\mathrm{T}} (\boldsymbol{DN}) \boldsymbol{J}^{-1} \\
    \label{Eqn:divD_FEM_Expansion}
    \mathrm{div}[\mathbf{D}(\mathbf{x})] &= \mathrm{mat}_1
    [\widehat{\boldsymbol{D}}] \mathrm{vec}[\mathrm{mat}[\mathbb{T}] 
    (\boldsymbol{DN}) \boldsymbol{J}^{-1}]
  \end{align}
\end{subequations}
where $\boldsymbol{J}$ is the Jacobian matrix, and 
$\widehat{\boldsymbol{D}}$ is the nodal values for 
the diffusivity, whose size can be inferred based 
on the context (whether the diffusivity is either 
$D(\mathbf{x})$ or $\mathbf{D}(\mathbf{x})$). Using 
equations \eqref{Eqn:DN_DDN_Components} and 
\eqref{Eqn:Conc_Flux_LinFEM}, the Laplacian and 
Hessian of $c(\mathbf{x})$ can be calculated as follows:
\begin{align}
  \label{Eqn:ScalDiff_divgradc_FEM}
  \mathrm{div}[\mathrm{grad}[c(\mathbf{x})]] &= 
  \Bigg( \left(\boldsymbol{I} - (\boldsymbol{DN}) 
  \boldsymbol{J}^{-1} \widehat{\boldsymbol{x}}^{\mathrm{T}} 
  \right) \mathrm{mat}_1[\boldsymbol{DDN}] \mathrm{vec} 
  [\boldsymbol{J}^{-1} \boldsymbol{J}^{-\mathrm{T}}] 
  \Bigg)^{\mathrm{T}} \mathrm{vec}[\widehat{\boldsymbol{c}}
  ^{\mathrm{T}}] \\
  \label{Eqn:AnisoDiff_gradgradc_FEM}
  \mathrm{vec}[\mathrm{grad}[\mathrm{grad}[c(\mathbf{x})]]] 
  &= \Bigg( \left(\boldsymbol{J}^{-\mathrm{T}} \odot 
  \boldsymbol{J}^{-\mathrm{T}} \right) \mathrm{mat}_2
  [\boldsymbol{DDN}] \left(\boldsymbol{I} \odot 
  \boldsymbol{I} - \widehat{\boldsymbol{x}} \boldsymbol{J}
  ^{\mathrm{T}} (\boldsymbol{DN})^{\mathrm{T}} \right) \Bigg)
  \mathrm{vec}[\widehat{\boldsymbol{c}}^{\mathrm{T}}]
\end{align}
where $\widehat{\boldsymbol{x}}$ is the 
matrix containing nodal coordinates.
\section{APPENDIX B:~Coercivity, error estimates, and stabilization parameters}
\label{Sec:NN_AD_Appendix2}
Herein, we shall establish coercivity and error estimates 
for the homogeneous weighted LSFEMs (i.e., $c^{\mathrm{p}}
(\mathbf{x}) = 0$ on $\partial \Omega$). Based on this 
mathematical analysis, we obtain the stabilization 
parameters that are used in this paper for the weighted 
negatively stabilized streamline diffusion LSFEM. 
To this end, let $H^{m}(\Omega)$ denote the standard 
Sobolev space for a given non-negative integer $m$ 
\cite{Evans_PDE}. The associated standard inner product 
and norm are denoted by $(\bullet;\bullet)_m$ and $\| \bullet 
\|_m$, respectively. On the function space $\mathcal{Q}$, the 
inner product $(\bullet;\bullet)_{\mathrm{div}}$ and 
norm $\| \bullet \|_{\mathrm{div}}$ are defined as follows: 
\begin{subequations}
  \begin{align}
    \label{Eqn:Hdiv_Inner_Product}
    (\mathbf{p};\mathbf{q})_{\mathrm{div}} &:=
    (\mathbf{p};\mathbf{q})_0 + (\mathrm{div}
    [\mathbf{p}]; \mathrm{div}[\mathbf{q}])_0 
    \quad \forall \mathbf{p}, \mathbf{q} \in 
    \mathcal{Q} \\
    \label{Eqn:Hdiv_Norm}
    \| \mathbf{p} \|^2_{\mathrm{div}} &:= \| 
    \mathbf{p} \|^2_0 + \| \mathrm{div}[\mathbf{p}] 
    \|^2_0 \quad \forall \mathbf{p} \in \mathcal{Q}
  \end{align}
\end{subequations}
The Poincar\'{e}-Friedrichs inequality takes the 
following form \cite{Bochev_Gunzburger}: there 
exists a constant $C_{pf} > 0$ such that we have
\begin{align}
  \label{Eqn:Poincare_Friedrichs_Inequality}
  \| u \|_0 \leq C_{pf} \| \mathrm{grad}[u] \|_0 
  \quad \forall \, u \in H^{1}_0(\Omega)
\end{align}

Consider the classical weighted primitive least-squares 
functional $\mathfrak{F}_{\mathrm{Prim}} \left( \left( 
c,\mathbf{q} \right), f \right)$ given by equation 
\eqref{Eqn:Weighted_Primitive_LS_Functional} with 
$c(\mathbf{x}) = 0$ on $\partial \Omega$. If $(c,
\mathbf{q}) \in \mathcal{C} \times \mathcal{Q}$ is 
an exact solution of the equations 
\eqref{Eqn:NN_AD_GE_BE}--\eqref{Eqn:NN_AD_GE_NBC}, 
then $(c,\mathbf{q})$ must be a unique zero minimizer 
of $\mathfrak{F}_{\mathrm{Prim}} \left( \left( c,\mathbf{q} 
\right), f \right)$ on $\mathcal{C} \times \mathcal{Q}$. 
Hence, for any $\epsilon \in \mathbb{R}$, we have
\begin{align}
  \label{Eqn:Gateaux_Variation_PrimLSFEM}
  \frac{d}{d \epsilon} \mathfrak{F}_{\mathrm{Prim}} 
  \left(\left(c,\mathbf{q} \right) + \epsilon \left(
  w,\mathbf{p} \right), f \right) \Big |_{\epsilon = 0} 
  = 0 \quad \forall \left(w,\mathbf{p} \right) \in 
  \mathcal{W} \times \mathcal{Q}
\end{align}
which is identical to the following:
\begin{align}
  \label{Eqn:Bilinear_Linear_Forms_PrimLSFEM}
  \mathfrak{B}_{\mathrm{Prim}} \left(\left(c,\mathbf{q} 
  \right); \left(w,\mathbf{p} \right) \right) = 
  \mathfrak{L}_{\mathrm{Prim}} \left(\left(w,\mathbf{p} 
  \right) \right) \quad \forall \left(w,\mathbf{p} \right) 
  \in \mathcal{W} \times \mathcal{Q}
\end{align}
where $\mathfrak{B}_{\mathrm{Prim}}(\bullet;\bullet)$ and 
$\mathfrak{L}_{\mathrm{Prim}}(\bullet)$ are the corresponding
bilinear and linear forms for the weighted primitive least-squares 
functional $\mathfrak{F}_{\mathrm{Prim}}$. It should be noted that
\begin{align}
  \label{Eqn:BilinearForm_PrimLSFEM_MainResult}
  \mathfrak{B}_{\mathrm{Prim}} \left(\left(w,\mathbf{p} 
  \right); \left(w,\mathbf{p} \right) \right) = \mathfrak{F}_
  {\mathrm{Prim}} \left( \left(w,\mathbf{p} \right), f=0 
  \right) \quad \forall \left(w,\mathbf{p} \right) \in 
  \mathcal{W} \times \mathcal{Q}
\end{align}
Equation \eqref{Eqn:BilinearForm_PrimLSFEM_MainResult} is 
used to prove coercivity and boundedness estimates for the 
bilinear form $\mathfrak{B}_{\mathrm{Prim}}$. Now consider 
the finite element discretization of the equation 
\eqref{Eqn:Bilinear_Linear_Forms_PrimLSFEM}. Let $\mathcal{C}
_h \subseteq \mathcal{C}$, $\mathcal{W}_h \subseteq \mathcal{W}$, 
and $\mathcal{Q}_h \subseteq \mathcal{Q}$ be the finite element 
function spaces spanned by piecewise polynomials of degree less 
than or equal to $r$ over $\Omega_h$. It should be noted that 
$r$ is an integer and $r \geq 1$. Then, the discrete weighted 
primitive LSFEM can be written as follows: Find $\left(c_h,
\mathbf{q}_h \right) \in \mathcal{C}_h \times \mathcal{Q}_h$ 
such that 
\begin{align}
  \label{Eqn:Bilinear_Linear_Forms_DiscretePrimLSFEM}
  \mathfrak{B}_{\mathrm{Prim}} \left(\left(c_h,\mathbf{q}_h 
  \right); \left(w_h,\mathbf{p}_h \right) \right) = 
  \mathfrak{L}_{\mathrm{Prim}} \left(\left(w_h,\mathbf{p}_h 
  \right) \right) \quad \forall \left(w_h,\mathbf{p}_h \right) 
  \in \mathcal{W}_h \times \mathcal{Q}_h
\end{align}
where $\left(c_h,\mathbf{q}_h \right)$ is the finite element 
solution with respect to the chosen basis functions spanning 
the finite element space $\mathcal{C}_h \times \mathcal{Q}_h$.
Similar inference holds for $\mathfrak{F}_{\mathrm{NgStb}}((c,\mathbf{q}),f)$, 
$\mathfrak{B}_{\mathrm{NgStb}}$, and $\mathfrak{L}_{\mathrm{NgStb}}$. 

We assume that $\Omega_h$ is quasi-uniform \cite{Jiang_LSFEM,
Bochev_Gunzburger}. That is, there exists a constant $\widehat{C} 
> 0$ independent of $h$ such that $h \leq \widehat{C} h_
{\Omega_{e}}$ for all $\Omega_{e} \in \Omega_{h}$. Additionally, 
we assume that the following inverse inequality holds on these 
quasi-uniform meshes. There exists a constant $\widetilde{C} > 
0$ independent of $h$ such that 
\begin{subequations}
  \begin{align}
    \label{Eqn:Inverse_Estimate1}
    &\widetilde{C} \sum_{\Omega_{\mathrm{e}} \in \Omega_h} 
    h^2_{\Omega_{e}} \Big \| \mathrm{div}[\mathrm{grad}[c_h]] \Big \|^2_
    {0,\Omega_{e}} \leq \| \mathrm{grad}[c_h] \|^2_0 \quad 
    \forall c_h \in \mathcal{C}_h \\
    \label{Eqn:Inverse_Estimate2}
    \Big \|\mathbf{D} \bullet \mathrm{grad}[\mathrm{grad}[c_h]] 
    \Big \|_0 &\leq \Big \|\mathrm{tr}[\mathbf{D}] \mathrm{tr}
    [\mathrm{grad}[\mathrm{grad}[c_h]]] \Big \|_0 = \mathrm{tr}
    [\mathbf{D}] \Big \| \mathrm{div}[\mathrm{grad}[c_h]] \Big 
    \|_0 \quad \forall c_h \in \mathcal{C}_h
  \end{align}
\end{subequations}
where $\mathrm{tr}[\bullet]$ is the trace of a matrix.
In proposing equation \eqref{Eqn:Inverse_Estimate2}, 
we assumed that the Hessian of $c_h$, $\mathrm{grad}[
\mathrm{grad}[c_h]]$, is positive semi-definite. 

All the results presented here are applicable for 
a general anisotropic diffusivity tensor, advection 
velocity vector field, and linear reaction coefficient. 
One can obtain simplified results for isotropy by taking 
$\mathbf{D}(\mathbf{x}) = D(\mathbf{x}) \mathbf{I}$, 
where $\mathbf{I}$ is an identity tensor.

\begin{theorem}[\texttt{Coercivity for weighted primitive LSFEM}]
  \label{Thm:Weighted_PrimLSFEM_CoerciveEstimates}
  There exist constants $C_{\mathrm{Prim}1} > 0$ and $C_
  {\mathrm{Prim}2} > 0$ independent of $\mathbf{D}$ and 
  $h$ such that for all $\left(w_h,\mathbf{p}_h \right) 
  \in \mathcal{W}_h \times \mathcal{Q}_h$:
  \begin{subequations}
    \begin{align}
      \label{Eqn:Coercivity_PrimLSFEM1}
      \mathfrak{F}_{\mathrm{Prim}} \left( \left(w_h,\mathbf{p}_h 
      \right), f=0 \right) &\geq C_{\mathrm{Prim}1} \gamma^2
      _{\mathrm{min}} \lambda^2_{\mathrm{min}} \| \mathrm{grad}[w_h] 
      \|^2_0 \\
      \label{Eqn:Coercivity_PrimLSFEM2}
      \mathfrak{F}_{\mathrm{Prim}} \left( \left(w_h,\mathbf{p}_h 
      \right), f=0 \right) &\geq C_{\mathrm{Prim}2} \gamma^2
      _{\mathrm{min}} \lambda^2_{\mathrm{min}} \left( \| w_h \|^2_1 + \frac{\| 
      \mathbf{p}_h \|^2_{\mathrm{div}}}{1 + \lambda^2_{\mathrm{min}} + 
      \lambda^2_{\mathrm{max}}} \right)
    \end{align}
  \end{subequations} 
  where the positive constant $\gamma_{\mathrm{min}}$ is:
  \begin{align}
    \label{Eqn:MinWeight_PrimLSFEM}
    \gamma_{\mathrm{min}} := \mathop{\mathrm{min}} \left [ 1,
    \mathop{\mathrm{min}}_{\mathbf{x} \in 
    \overline{\Omega}} \left [ \beta(\mathbf{x}) 
    \right ], \mathop{\mathrm{min}}_{\mathbf{x} 
    \in \overline{\Omega}} \left [ \lambda_{\mathrm{\mathrm{min}},
    \mathbf{A}(\mathbf{x})} \right ] \right ]
  \end{align}
  where $\lambda_{\mathrm{\mathrm{min}},\mathbf{A}(\mathbf{x})}$ is the 
  minimum eigenvalue of $\mathbf{A}(\mathbf{x})$ at a 
  given point $\mathbf{x} \in \overline{\Omega}$.
\end{theorem}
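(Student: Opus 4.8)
The plan is to adapt the standard least-squares coercivity machinery for first-order systems (see Bochev and Gunzburger \cite{Bochev_Gunzburger}) to accommodate the advection term, the reaction term, and the inflow/outflow boundary residual. Setting $f = 0$ and $q^{\mathrm{p}} = 0$, I would introduce the three residuals $R_1 := \mathbf{p}_h - w_h \mathbf{v} + \mathbf{D}\,\mathrm{grad}[w_h]$, $R_2 := \alpha w_h + \mathrm{div}[\mathbf{p}_h]$, and $R_3 := \left(\mathbf{p}_h - \frac{1+\mathrm{Sign}[\mathbf{v}\bullet\widehat{\mathbf{n}}]}{2}\,w_h\mathbf{v}\right)\bullet\widehat{\mathbf{n}}$, so that $\mathfrak{F}_{\mathrm{Prim}}\left(\left(w_h,\mathbf{p}_h\right),0\right) = \frac{1}{2}\|\mathbf{A}R_1\|_0^2 + \frac{1}{2}\|\beta R_2\|_0^2 + \frac{1}{2}\|R_3\|_{\Gamma^{q}}^2$. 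Since $\gamma_{\mathrm{min}} \leq 1$ and $\gamma_{\mathrm{min}} \leq \min[\beta, \lambda_{\mathrm{min},\mathbf{A}}]$ pointwise by \eqref{Eqn:MinWeight_PrimLSFEM}, the pointwise lower bounds on the weights immediately give $\mathfrak{F}_{\mathrm{Prim}} \geq \frac{1}{2}\gamma_{\mathrm{min}}^2\left(\|R_1\|_0^2 + \|R_2\|_0^2 + \|R_3\|_{\Gamma^{q}}^2\right)$. It then suffices to bound the residual sum below by $\lambda_{\mathrm{min}}^2$ times the target norms.

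The heart of estimate \eqref{Eqn:Coercivity_PrimLSFEM1} is a Green's-identity argument. First I would test the uniform ellipticity \eqref{Eqn:Uniformly_Elliptic} to get $\lambda_{\mathrm{min}}\|\mathrm{grad}[w_h]\|_0^2 \leq (\mathrm{grad}[w_h]; \mathbf{D}\,\mathrm{grad}[w_h])$, then substitute $\mathbf{D}\,\mathrm{grad}[w_h] = R_1 + w_h\mathbf{v} - \mathbf{p}_h$. Integrating $(\mathbf{p}_h; \mathrm{grad}[w_h])$ by parts (using $w_h = 0$ on $\Gamma^{c}$ and $\mathrm{div}[\mathbf{p}_h] = R_2 - \alpha w_h$) and rewriting $(w_h\mathbf{v}; \mathrm{grad}[w_h]) = \frac{1}{2}(\mathbf{v}\bullet\widehat{\mathbf{n}}; w_h^2)_{\Gamma^{q}} - \frac{1}{2}(\mathrm{div}[\mathbf{v}]; w_h^2)$ turns every term quadratic in $w_h$ into one whose sign is governed by the assumptions: the interior contributions collapse to $-\left(\left(\alpha + \frac{1}{2}\mathrm{div}[\mathbf{v}]\right)w_h; w_h\right) \leq 0$ by \eqref{Eqn:CMP_Restrict_2}, while the boundary contributions combine into $\int_{\Gamma^{q}}\left(\frac{1}{2}(\mathbf{v}\bullet\widehat{\mathbf{n}}) - \mathrm{max}[\mathbf{v}\bullet\widehat{\mathbf{n}},0]\right)w_h^2$, which is nonpositive on both the inflow and outflow portions. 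Discarding these favorable terms leaves $\lambda_{\mathrm{min}}\|\mathrm{grad}[w_h]\|_0^2 \leq (R_1;\mathrm{grad}[w_h]) + (R_2; w_h) - (R_3; w_h)_{\Gamma^{q}}$.

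Applying Cauchy--Schwarz, the Poincar\'{e}--Friedrichs inequality \eqref{Eqn:Poincare_Friedrichs_Inequality} (valid since $\mathrm{meas}(\Gamma^{c}) > 0$), and a trace inequality to control $\|w_h\|_{\Gamma^{q}}$, the right-hand side is dominated by $C\left(\|R_1\|_0 + \|R_2\|_0 + \|R_3\|_{\Gamma^{q}}\right)\|\mathrm{grad}[w_h]\|_0$ with $C$ independent of $\mathbf{D}$ and $h$. Dividing by $\|\mathrm{grad}[w_h]\|_0$ and squaring yields $\lambda_{\mathrm{min}}^2\|\mathrm{grad}[w_h]\|_0^2 \leq C\left(\|R_1\|_0^2 + \|R_2\|_0^2 + \|R_3\|_{\Gamma^{q}}^2\right)$, which combined with the weight reduction proves \eqref{Eqn:Coercivity_PrimLSFEM1}. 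For \eqref{Eqn:Coercivity_PrimLSFEM2} I would bound $\|w_h\|_1^2 \leq (1 + C_{pf}^2)\|\mathrm{grad}[w_h]\|_0^2$ by Poincar\'{e}, and recover the flux norm by inverting the residual definitions: $\mathbf{p}_h = R_1 + w_h\mathbf{v} - \mathbf{D}\,\mathrm{grad}[w_h]$ gives $\|\mathbf{p}_h\|_0^2 \leq C\left(\|R_1\|_0^2 + \|w_h\|_0^2 + \lambda_{\mathrm{max}}^2\|\mathrm{grad}[w_h]\|_0^2\right)$, while $\mathrm{div}[\mathbf{p}_h] = R_2 - \alpha w_h$ controls $\|\mathrm{div}[\mathbf{p}_h]\|_0$. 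The $\lambda_{\mathrm{max}}^2$ that appears is exactly what the denominator $1 + \lambda_{\mathrm{min}}^2 + \lambda_{\mathrm{max}}^2$ is designed to absorb: dividing by this factor, using \eqref{Eqn:Coercivity_PrimLSFEM1}, and invoking the elementary identity $\frac{1}{1 + \lambda_{\mathrm{min}}^2 + \lambda_{\mathrm{max}}^2}\left(1 + \frac{1 + \lambda_{\mathrm{max}}^2}{\lambda_{\mathrm{min}}^2}\right) = \frac{1}{\lambda_{\mathrm{min}}^2}$ shows that $\|\mathbf{p}_h\|_{\mathrm{div}}^2/(1 + \lambda_{\mathrm{min}}^2 + \lambda_{\mathrm{max}}^2)$ is bounded by $C\lambda_{\mathrm{min}}^{-2}$ times the residual sum.

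The step I expect to be the main obstacle is the sign bookkeeping for the advection and boundary contributions, namely verifying that the combined reaction/divergence term is annihilated by \eqref{Eqn:CMP_Restrict_2} and that the weighted boundary integral involving $\mathrm{Sign}[\mathbf{v}\bullet\widehat{\mathbf{n}}]$ has the correct sign on \emph{both} the inflow and outflow parts of $\Gamma^{q}$. This is precisely where the non-self-adjointness of the operator and the specialized Neumann condition \eqref{Eqn:NN_AD_GE_NBC} enter, and it is what distinguishes the argument from the classical self-adjoint diffusion case; by contrast, the remaining ingredients (Cauchy--Schwarz, Poincar\'{e}--Friedrichs, the trace inequality, and the algebraic norm-inversion for the flux) are routine. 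Note also that, unlike the stabilized functional, the primitive functional carries no second-derivative terms, so the inverse estimates \eqref{Eqn:Inverse_Estimate1}--\eqref{Eqn:Inverse_Estimate2} are not needed here.
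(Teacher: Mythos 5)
Your proposal is correct and follows essentially the same route as the paper's proof: the same reduction via $\gamma_{\mathrm{min}}$, the same Green's-formula cancellation of $\mathbf{p}_h$ combined with assumption (A4) to control the reaction/advection/boundary terms, Poincar\'e--Friedrichs, and the same triangle-inequality inversion of the residual definitions to recover $\|\mathbf{p}_h\|_{\mathrm{div}}$ with the factor $1+\lambda^2_{\mathrm{min}}+\lambda^2_{\mathrm{max}}$ in the denominator. The only (cosmetic) difference is that the paper packages the key step as a completing-the-square argument with a free parameter $\mu$ chosen as $\lambda_{\mathrm{min}}/(1+C^2_{pf})$, whereas you apply Cauchy--Schwarz and divide by $\|\mathrm{grad}[w_h]\|_0$; your handling of the boundary residual $R_3$ via a trace inequality is in fact more explicit than the paper's, which drops the $\Gamma^q$ term without comment.
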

\begin{proof}
  Consider the weighted primitive least-squares functional 
  \eqref{Eqn:Weighted_Primitive_LS_Functional} with $f = 0$. 
  Equation \eqref{Eqn:MinWeight_PrimLSFEM} implies: 
  \begin{align}
    \label{Eqn:WgtPrim_BiLinForm1}
    \frac{2 \mathfrak{F}_{\mathrm{Prim}}}{\gamma^2
    _{\mathrm{min}}} &\geq \Big \| \mathbf{p}_h - w_h \mathbf{v} 
    + \mathbf{D} \mathrm{grad}[w_h] - \mu \mathrm{grad}[w_h] 
    \Big \|^2_{0,\Omega} + \Big \| \alpha w_h + \mathrm{div}[ 
    \mathbf{p}_h ] - \mu w_h \Big \|^2_{0,\Omega} \nonumber \\
    &+ 2 \mu \left(\mathbf{p}_h - w_h \mathbf{v} + \mathbf{D} 
    \mathrm{grad}[w_h]; \mathrm{grad}[w_h] \right)_{0,\Omega} 
    + 2 \mu \left(\alpha w_h + \mathrm{div}[ \mathbf{p}_h ]; 
    w_h \right)_{0,\Omega} \nonumber \\
    &- \mu^2 \| w_h \|^2_{0,\Omega} - \mu^2 \| \mathrm{grad}[w_h] 
    \|^2_{0,\Omega}
  \end{align}
  where $\mu$ is a positive constant, which will be 
  determined later. Using Poincar\'{e}-Friedrichs 
  inequality and Green's formulae, equation 
  \eqref{Eqn:WgtPrim_BiLinForm1} can be written as:
  \begin{align}
    \label{Eqn:WgtPrim_BiLinForm2}
    \frac{2 \mathfrak{F}_{\mathrm{Prim}}}{\gamma^2
    _{\mathrm{min}}} &\geq \mu \left(2\lambda_{\mathrm{min}} - \mu 
    \left(1 + C^2_{pf} \right) \right) \| \mathrm{grad}[w_h] 
    \|^2_{0,\Omega}
  \end{align}
  We obtain equation \eqref{Eqn:Coercivity_PrimLSFEM1} 
  by choosing 
  \begin{align}
  \mu = \frac{\lambda_{\mathrm{min}}}{1 + C^2_{pf}}
  \end{align}
  
  There exist two non-negative constants $C_{\mathbf{v}}$ and $C_{\alpha}$ 
  (for instance, $C_{\mathbf{v}} = \displaystyle \mathop{\mathrm{max}}_
  {\mathbf{x} \in \overline{\Omega}} \left [ \| \mathbf{v} \|^2 
  \right ]$ and $C_{\alpha} = \displaystyle \mathop{\mathrm{max}}_
  {\mathbf{x} \in \overline{\Omega}} \left [ \alpha^2 \right ]$) 
  such that 
  \begin{subequations}
    \begin{align}
      \label{Eqn:CoerciveEst1}
      \| w_h \|^2_1 &= \| w_h \|^2_0 + \| 
      \mathrm{grad}[w_h] \|^2_0 \leq \frac{2 \left(1 
      + C^2_{pf} \right)^2}{\gamma^2_{\mathrm{min}} \lambda^2_{\mathrm{min}}} 
      \mathfrak{F}_{\mathrm{Prim}}  \\
      \label{Eqn:CoerciveEst2}
      \| \mathbf{p}_h \|^2_{0} &\leq 2 \| \mathbf{p}_h - 
      w_h \mathbf{v} + \mathbf{D} \mathrm{grad}[w_h] 
      \Big \|^2_{0,\Omega} + 2 \|- w_h \mathbf{v} 
      + \mathbf{D} \mathrm{grad}[w_h] \Big \|^2_{0,\Omega} 
      \nonumber \\
      &\leq \left(1 + \frac{2 C_{\mathbf{v}} C^2_{pf} \left(1 + 
      C^2_{pf} \right)}{\lambda^2_{\mathrm{min}}} + 2 \left(1 + C^2_{pf} \right)
      \frac{\lambda^2_{\mathrm{max}}}{\lambda^2_{\mathrm{min}}} \right) 
      \frac{4 \mathfrak{F}_{\mathrm{Prim}}}{\gamma^2_{\mathrm{min}}}\\
      \label{Eqn:CoerciveEst3}
      \| \mathrm{div}[\mathbf{p}_h] \|^2_{0} &\leq 2 \big 
      \| \alpha w_h + \mathrm{div}[ \mathbf{p}_h ] \big 
      \|^2_{0,\Omega} + 2 \big \| \alpha w_h \big \|^2_{0,\Omega}
      \leq \left(1 + \frac{C_{\alpha} C^2_{pf} \left(1 + 
      C^2_{pf} \right)}{\lambda^2_{\mathrm{min}}} \right) \frac{4 
      \mathfrak{F}_{\mathrm{Prim}}}{\gamma^2_{\mathrm{min}}}
    \end{align}
  \end{subequations}
  It is easy to check that inequalities 
  \eqref{Eqn:CoerciveEst1}--\eqref{Eqn:CoerciveEst3} 
  imply inequality \eqref{Eqn:Coercivity_PrimLSFEM2}. 
\end{proof}

\begin{theorem}[\texttt{Coercivity and boundedness estimate for NSSD LSFEM}]
  \label{Thm:Weighted_NSSDLSFEM_CoerciveEstimates}
  Given that equations \eqref{Eqn:Inverse_Estimate1}--\eqref{Eqn:Inverse_Estimate2} 
  hold. If for each $\Omega_{e} \in \Omega_{h}$ we take 
  \begin{subequations}
    \begin{align}
      \label{Eqn:StrDiff_EleStabParam}
      \delta_{\Omega_{\mathrm{e}}} &= - \frac{\widetilde{C} 
      \lambda_{\mathrm{min}} h^2_{\Omega_{\mathrm{e}}}}{4 \left( nd^2
      \lambda^2_{\mathrm{max}} + \widetilde{C} C^2_{pf} \delta_{\alpha 
      \mathbf{v}} h^2 + \widetilde{C} \delta_{\mathbf{D}} 
      h^2 \right)} \\
      \label{Eqn:NegStabilized_EleStabParam}
      \tau_{\Omega_{\mathrm{e}}} &= - \frac{\widetilde{C} 
      \lambda^2_{\mathrm{min}} h^2_{\Omega_{\mathrm{e}}}}{32 \left(1 + 
      C^2_{pf} \right) \left( nd^2 \lambda^2_{\mathrm{max}} + \widetilde{C} 
      C^2_{pf} \delta_{\alpha \mathbf{v}} h^2 + \widetilde{C} 
      \delta_{\mathbf{D}} h^2 \right)}
    \end{align}
  \end{subequations}
  then for all $\left(w_h,\mathbf{p}_h \right) \in 
  \mathcal{W}_h \times \mathcal{Q}_h$ there exist 
  two constants $C_{\mathrm{NgStb}0} > 0$ and 
  $C_{\mathrm{NgStb}4} > 0$ independent of $\mathbf{D}$ 
  and $h$ such that we have:
  
  \noindent \textbf{Coercivity}
  \begin{align}
    \label{Eqn:Coercivity_NSSD_LSFEM1}
    \mathfrak{F}_{\mathrm{NgStb}} \left( \left(w_h,\mathbf{p}_h 
    \right), f=0 \right) &\geq \frac{11 \gamma^2_{\mathrm{min}}
    \lambda^2_{\mathrm{min}} \| \mathrm{grad}[w_h] \|^2_0}{32 
    (1 + C^2_{pf})} \nonumber \\
    &+ \sum_{\Omega_{\mathrm{e}} \in \Omega_h} \frac{ \widetilde{C} 
    \gamma^2_{\mathrm{min}}\lambda^2_{\mathrm{min}} h^2_{\Omega_{\mathrm{e}}} \| 
    \mathbf{v} \bullet \mathrm{grad}[w_h] \|^2_{0,\Omega_{\mathrm{e}}}}{32 
    (1 + C^2_{pf}) \left( nd^2 \lambda^2_{\mathrm{max}} + \widetilde{C} C^2_{pf} 
    \delta_{\alpha \mathbf{v}} h^2 + \widetilde{C} \delta_{\mathbf{D}} 
    h^2 \right)}
  \end{align}

  \noindent \textbf{Boundedness estimate}
  \begin{align}
    \label{Eqn:Coercivity_NSSD_LSFEM2}
    C_{\mathrm{NgStb}1} \| w_h \|^2_1 + C_{\mathrm{NgStb}2} \| 
    \mathbf{p}_h \|^2_{\mathrm{div}} &+ C_{\mathrm{NgStb}3} \| 
    \mathbf{v} \bullet \mathrm{grad}[w_h] \|^2_0 \leq 
    \mathfrak{F}_{\mathrm{NgStb}} \left( \left(w_h,\mathbf{p}_h 
    \right), f=0 \right) \nonumber \\
    &\leq C_{\mathrm{NgStb}4} \left(\| w_h \|^2_1 + \| \mathbf{p}
    _h \|^2_{\mathrm{div}} + \| \mathbf{v} \bullet \mathrm{grad}[w_h] 
    \|^2_0 \right)
  \end{align}
  where the constant $\gamma_{\mathrm{min}}$ is given by equation 
  \eqref{Eqn:MinWeight_PrimLSFEM}. The constants 
  $\delta_{\alpha \mathbf{v}}$, $\delta_{\mathbf{D}}$,
  $C_{\mathrm{NgStb}1}$, $C_{\mathrm{NgStb}2}$, and 
  $C_{\mathrm{NgStb}2}$ are given as follows:
  \begin{subequations}
    \begin{align}
      \label{Eqn:StrDiff_StabConstant1}
      \delta_{\alpha \mathbf{v}} &= \displaystyle 
      \mathop{\mathrm{max}}_{\mathbf{x} \in \overline{\Omega}} 
      \left [ \left(\alpha + \mathrm{div} [\mathbf{v}] \right)^2 
      \right ] \\
      \label{Eqn:StrDiff_StabConstant2}
      \delta_{\mathbf{D}} &= \displaystyle \mathop{\mathrm{max}}_
      {\mathbf{x} \in \overline{\Omega}} \left [ \| 
      \mathrm{div}[\mathbf{D}] \|^2 \right ] \\
      \label{Eqn:Coercivity_Constant_NSSD1}
      C_{\mathrm{NgStb}1} &= C_{\mathrm{NgStb}0} \gamma^2
      _{\mathrm{min}} \lambda^2_{\mathrm{min}} \\
      \label{Eqn:Coercivity_Constant_NSSD2}
      C_{\mathrm{NgStb}2} &=  \frac{C_{\mathrm{NgStb}0} 
      \gamma^2_{\mathrm{min}} \lambda^2_{\mathrm{min}} \delta^2_{\alpha 
      \mathbf{v} \mathbf{D}}}{\left( 1 + \lambda^2_{\mathrm{min}} 
      + \lambda^2_{\mathrm{max}} \right) \delta^2_{\alpha \mathbf{v} 
      \mathbf{D}} + \delta_{\alpha \mathbf{v} \mathbf{D}} 
      \lambda^2_{\mathrm{min}} h^2 + \delta_{\alpha \mathbf{v}1} 
      \lambda^2_{\mathrm{min}} h^4} \\
      \label{Eqn:Coercivity_Constant_NSSD3}
      C_{\mathrm{NgStb}3} &=\frac{C_{\mathrm{NgStb}0} 
      \gamma^2_{\mathrm{min}} \lambda^2_{\mathrm{min}} h^2}{\delta_{\alpha 
      \mathbf{v} \mathbf{D}}}  
    \end{align}
  \end{subequations}
  The constants $\delta_{\alpha \mathbf{v}1}$ and 
  $\delta_{\alpha \mathbf{v} \mathbf{D}}$ in the above 
  equations are defined as follows: 
  \begin{subequations}
    \begin{align}
      \label{Eqn:NSSD_CoerEstStabConst1}
      \delta_{\alpha \mathbf{v}1} &= \displaystyle 
      \mathop{\mathrm{max}}_{\mathbf{x} \in \overline{\Omega}} 
      \left [ \left(\mathrm{grad}[\alpha] \bullet \mathbf{v} + 
      \alpha\mathrm{div} [\mathbf{v}] \right)^2 \right ] \\
      \label{Eqn:NSSD_CoerEstStabConst2}
      \delta_{\alpha \mathbf{v} \mathbf{D}} &= \lambda^2
      _{\mathrm{max}} + \delta_{\alpha \mathbf{v}} h^2 + \delta_
      {\mathbf{D}} h^2
    \end{align}
  \end{subequations}
\end{theorem}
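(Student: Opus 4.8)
The plan is to adapt the argument for Theorem~\ref{Thm:Weighted_PrimLSFEM_CoerciveEstimates}, treating the streamline-diffusion correction and the negative Galerkin--least-squares term as controlled perturbations of the primitive functional. First I would specialise \eqref{Eqn:Weighted_NSSD_LS_Functional} to $f=0$, $q^{\mathrm{p}}=0$ (so that, by \eqref{Eqn:NSSD_Stabilized_VolSource}, $f_{\delta_{\Omega_{\mathrm{e}}}} = -\delta_{\Omega_{\mathrm{e}}}(\mathrm{grad}[\alpha w_h]\bullet\mathbf{v} + \alpha w_h\,\mathrm{div}[\mathbf{v}])$), and discard the boundary residual, which is non-negative and hence only aids the lower bound. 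Abbreviating the stabilization residual by $R := \mathrm{div}[w_h\mathbf{v}-\mathbf{D}\,\mathrm{grad}[w_h]] + \alpha w_h$, the functional decomposes into a primitive-type flux square (now carrying the extra streamline piece $-\delta_{\Omega_{\mathrm{e}}}\mathbf{v}\,(R-\alpha w_h)$), a primitive-type balance square, and the single signed term $\tfrac{1}{2}\tau_{\Omega_{\mathrm{e}}}\|R\|^2_{\Omega_{\mathrm{e}}}$ with $\tau_{\Omega_{\mathrm{e}}}\le 0$.

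For the coercivity bound \eqref{Eqn:Coercivity_NSSD_LSFEM1}, I would factor out $\gamma^2_{\mathrm{min}}$ using \eqref{Eqn:MinWeight_PrimLSFEM}, then reproduce the completion-of-square step from the primitive proof: introduce an auxiliary constant $\mu>0$, add and subtract $\mu\,\mathrm{grad}[w_h]$ and $\mu w_h$ inside the two squares, and use Green's formula together with the Poincar\'e--Friedrichs inequality \eqref{Eqn:Poincare_Friedrichs_Inequality} to expose a positive multiple of $\|\mathrm{grad}[w_h]\|^2_0$. The streamline term $-\delta_{\Omega_{\mathrm{e}}}\mathbf{v}\,\mathrm{div}[w_h\mathbf{v}-\mathbf{D}\,\mathrm{grad}[w_h]]$, since $-\delta_{\Omega_{\mathrm{e}}}\ge 0$, contributes precisely the element-wise $\|\mathbf{v}\bullet\mathrm{grad}[w_h]\|^2_{0,\Omega_{\mathrm{e}}}$ term appearing in \eqref{Eqn:Coercivity_NSSD_LSFEM1}, once the lower-order and cross contributions are absorbed.

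The decisive step is controlling the negative stabilization $\tfrac{1}{2}\tau_{\Omega_{\mathrm{e}}}\|R\|^2_{\Omega_{\mathrm{e}}}$. Expanding $R$ via \eqref{Eqn:AnisoDiff_divDgradc} and bounding term-by-term, the second-derivative piece $\mathbf{D}\bullet\mathrm{grad}[\mathrm{grad}[w_h]]$ is estimated by \eqref{Eqn:Inverse_Estimate2} as $\mathrm{tr}[\mathbf{D}]\|\mathrm{div}[\mathrm{grad}[w_h]]\|_0 \le nd\,\lambda_{\mathrm{max}}\|\mathrm{div}[\mathrm{grad}[w_h]]\|_0$, while the remaining terms produce the constants $\delta_{\mathbf{D}}$ (from $\mathrm{div}[\mathbf{D}]\bullet\mathrm{grad}[w_h]$) and $\delta_{\alpha\mathbf{v}}$ (from the reaction and divergence-of-velocity contributions, with $\|w_h\|_0$ converted to $\|\mathrm{grad}[w_h]\|_0$ through $C_{pf}$). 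This yields $\|R\|^2_{\Omega_{\mathrm{e}}} \lesssim h^{-2}_{\Omega_{\mathrm{e}}}(nd^2\lambda^2_{\mathrm{max}} + \widetilde{C}C^2_{pf}\delta_{\alpha\mathbf{v}}h^2 + \widetilde{C}\delta_{\mathbf{D}}h^2)\|\mathrm{grad}[w_h]\|^2_{0,\Omega_{\mathrm{e}}}$ after invoking the inverse inequality \eqref{Eqn:Inverse_Estimate1}. The choices \eqref{Eqn:StrDiff_EleStabParam}--\eqref{Eqn:NegStabilized_EleStabParam} for $\delta_{\Omega_{\mathrm{e}}}$ and $\tau_{\Omega_{\mathrm{e}}}$, both scaling like $h^2_{\Omega_{\mathrm{e}}}$ with exactly this denominator, are calibrated so that $|\tau_{\Omega_{\mathrm{e}}}|\,\|R\|^2_{\Omega_{\mathrm{e}}}$ consumes only a fixed fraction of the $\|\mathrm{grad}[w_h]\|^2_0$ reserve, leaving the stated residual coefficient $11/(32(1+C^2_{pf}))$. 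The boundedness (upper) estimate in \eqref{Eqn:Coercivity_NSSD_LSFEM2} is then the routine direction: apply the triangle inequality and Cauchy--Schwarz to each residual and use the same term-by-term bounds to dominate every square by $\|w_h\|^2_1 + \|\mathbf{p}_h\|^2_{\mathrm{div}} + \|\mathbf{v}\bullet\mathrm{grad}[w_h]\|^2_0$.

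The hard part will be the sign-management in the coercivity direction: because $\tau_{\Omega_{\mathrm{e}}}$ and $\delta_{\Omega_{\mathrm{e}}}$ are negative, the added terms \emph{subtract} from the functional, and the uncontrolled quantity $\|\mathrm{div}[\mathrm{grad}[w_h]]\|_0$ carries no independent sign. Coercivity survives only because the inverse inequalities \eqref{Eqn:Inverse_Estimate1}--\eqref{Eqn:Inverse_Estimate2} trade this second-derivative norm for $\|\mathrm{grad}[w_h]\|_0$ at the cost of a factor $h^{-2}_{\Omega_{\mathrm{e}}}$, which the $\mathcal{O}(h^2_{\Omega_{\mathrm{e}}})$ scaling of the parameters then absorbs. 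Getting the explicit numerical fractions to close --- so that a strictly positive remainder is guaranteed uniformly in $\mathbf{D}$ and $h$ --- is the delicate bookkeeping that the specific forms of $\delta_{\Omega_{\mathrm{e}}}$ and $\tau_{\Omega_{\mathrm{e}}}$ are engineered to deliver.
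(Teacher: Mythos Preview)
Your proposal is correct and follows essentially the same approach as the paper: factor out $\gamma^2_{\mathrm{min}}$, complete the square via an auxiliary $\mu$ and Green/Poincar\'e--Friedrichs to expose $\|\mathrm{grad}[w_h]\|^2_0$, then absorb the negative $\tau_{\Omega_{\mathrm{e}}}$- and $\delta_{\Omega_{\mathrm{e}}}$-terms using the inverse estimates \eqref{Eqn:Inverse_Estimate1}--\eqref{Eqn:Inverse_Estimate2}, with the $\mathcal{O}(h^2_{\Omega_{\mathrm{e}}})$ scaling of the parameters calibrated to leave the stated positive remainder. The paper carries out the cross-term bookkeeping by listing explicit Young-type inequalities for each pairing (reaction, advection, $\mathrm{div}[\mathbf{D}]$, and $\mathbf{D}\bullet\mathrm{grad}[\mathrm{grad}[w_h]]$) and then choosing $\mu=\lambda_{\mathrm{min}}/(1+C^2_{pf})$, but this is exactly the ``sign-management'' you anticipate.
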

\begin{proof}
  The boundedness estimate is a direct consequence of the 
  triangle inequality. Herein, we shall proceed to show 
  the validity of coercivity estimates, specifically, 
  equation \eqref{Eqn:Coercivity_NSSD_LSFEM1} and the 
  left hand side of \eqref{Eqn:Coercivity_NSSD_LSFEM2}. 
  Let $\mu > 0$ be a constant, which will be determined 
  later. Using equation \eqref{Eqn:MinWeight_PrimLSFEM} 
  and \eqref{Eqn:Weighted_NSSD_LS_Functional} with 
  $f = 0$, we have
  \begin{align}
    \label{Eqn:BiLin_NSSD_LSFEM1}
    \frac{2\mathfrak{F}_{\mathrm{NgStb}}}{\gamma^2_{\mathrm{min}}} &\geq
    \sum_{\Omega_{\mathrm{e}} \in \Omega_h} \Big \| \mathbf{p}_h 
    - w_h \mathbf{v} + \mathbf{D} \mathrm{grad}[w_h] - \delta_
    {\Omega_{\mathrm{e}}} \mathbf{v} \left( \mathrm{div}[w_h 
    \mathbf{v} - \mathbf{D} \mathrm{grad}[w_h]] \right) - \mu 
    \mathrm{grad}[w_h] \Big \|^2_{0,\Omega_{\mathrm{e}}} \nonumber \\
    &+ \sum_{\Omega_{\mathrm{e}} \in \Omega_h} \Big \| 
    \alpha w_h + \mathrm{div}[ \mathbf{p}_h ] + \delta_
    {\Omega_{\mathrm{e}}} \mathrm{div}[\alpha w_h \mathbf{v}] 
    - \mu w_h \Big \|^2_{0,\Omega_{\mathrm{e}}} - \mu^2 
    \| w_h \|^2_{0,\Omega} - \mu^2 \| \mathrm{grad}[w_h]
    \|^2_{0,\Omega} \nonumber \\
    &+ \sum_{\Omega_{\mathrm{e}} \in \Omega_h} \tau_{
    \Omega_{\mathrm{e}}} \Big \| \alpha w_h + \mathrm{div} 
    \big [ w_h \mathbf{v} - \mathbf{D} \mathrm{grad}[ w_h ] 
    \big ] \Big \|^2_{0,\Omega_{\mathrm{e}}}
    + \sum_{\Omega_{\mathrm{e}} \in \Omega_h} 2 \mu \left( 
    \alpha w_h + \mathrm{div}[ \mathbf{p}_h + \delta_
    {\Omega_{\mathrm{e}}} \alpha w_h \mathbf{v}]; w_h \right)
    _{0,\Omega} \nonumber \\
    &+ \sum_{\Omega_{\mathrm{e}} \in \Omega_h} 2\mu \left(\mathbf{p}_h 
    - w_h \mathbf{v} + \mathbf{D} \mathrm{grad}[w_h] - \delta_{\Omega_
    {\mathrm{e}}} \mathbf{v} \left( \mathrm{div}[w_h \mathbf{v} - 
    \mathbf{D} \mathrm{grad}[w_h]] \right) ; \mathrm{grad}[w_h] \right)
    _{0,\Omega}
  \end{align}
  Using Theorem \ref{Thm:Weighted_PrimLSFEM_CoerciveEstimates}, 
  equation \eqref{Eqn:StrDiff_EleStabParam}--\eqref{Eqn:NegStabilized_EleStabParam}, 
  Cauchy-Schwartz inequality, Poincar\'{e}-Friedrichs inequality, 
  Green's formulae, and following inequalities
  \begin{subequations}
    \begin{align}
      \label{Eqn:Inequality_NSSD_1}
      2 \tau_{\Omega_{\mathrm{e}}} \left( \left(\alpha + 
      \mathrm{div}[\mathbf{v}] \right) w_h ; \mathbf{v} 
      \bullet \mathrm{grad}[w_h] \right)_{0,\Omega_e} &\geq 
      \tau_{\Omega_{\mathrm{e}}} \| \left(\alpha 
      + \mathrm{div}[\mathbf{v}] \right) w_h \|^2_{0,\Omega_e} 
      \nonumber \\
      &+ \tau_{\Omega_{\mathrm{e}}} \| \mathbf{v} \bullet 
      \mathrm{grad}[w_h] \|^2_{0,\Omega_e} \\
      \label{Eqn:Inequality_NSSD_2}
      -2 \tau_{\Omega_{\mathrm{e}}} \left( \left(\alpha + 
      \mathrm{div}[\mathbf{v}] \right) w_h; \mathbf{D} \bullet 
      \mathrm{grad}[\mathrm{grad}[w_h]] \right)_{0,\Omega_e} 
      &\geq \tau_{\Omega_{\mathrm{e}}} \| \left(\alpha + 
      \mathrm{div}[\mathbf{v}] \right) w_h \|^2_{0,\Omega_e} 
      \nonumber \\
      &+ \tau_{\Omega_{\mathrm{e}}} \| \mathbf{D} \bullet 
      \mathrm{grad}[\mathrm{grad}[w_h]] \|^2_{0,\Omega_e} \\
      \label{Eqn:Inequality_NSSD_3}
      -2 \tau_{\Omega_{\mathrm{e}}} \left( \mathbf{v} \bullet 
      \mathrm{grad}[w_h] ; \mathbf{D} \bullet \mathrm{grad}
      [\mathrm{grad}[w_h]] \right)_{0,\Omega_e} &\geq \tau_
      {\Omega_{\mathrm{e}}} \| \mathbf{v} \bullet \mathrm{grad}[w_h] 
      \|^2_{0,\Omega_e} \nonumber \\
      &+ \tau_{\Omega_{\mathrm{e}}} \| \mathbf{D} \bullet 
      \mathrm{grad}[\mathrm{grad}[w_h]] \|^2_{0,\Omega_e} \\
      \label{Eqn:Inequality_NSSD_4}
      2 \tau_{\Omega_{\mathrm{e}}} \left( \left(\alpha + 
      \mathrm{div}[\mathbf{v}] \right) w_h ; \mathrm{div}
      [\mathbf{D}] \bullet \mathrm{grad}[w_h] \right)_{0,\Omega_e} 
      &\geq \tau_{\Omega_{\mathrm{e}}} \| \left(\alpha 
      + \mathrm{div}[\mathbf{v}] \right) w_h \|^2_{0,\Omega_e} 
      \nonumber \\
      &+ \tau_{\Omega_{\mathrm{e}}} \| \mathrm{div}
      [\mathbf{D}] \bullet \mathrm{grad}[w_h] \|^2_{0,\Omega_e} \\
      \label{Eqn:Inequality_NSSD_5}
      -2 \tau_{\Omega_{\mathrm{e}}} \left( \mathrm{div}[\mathbf{D}] 
      \bullet \mathrm{grad}[w_h] ; \mathbf{D} \bullet \mathrm{grad}
      [\mathrm{grad}[w_h]] \right)_{0,\Omega_e} &\geq \tau_
      {\Omega_{\mathrm{e}}} \| \mathrm{div}[\mathbf{D}] \bullet 
      \mathrm{grad}[w_h] \|^2_{0,\Omega_e} \nonumber \\
      &+ \tau_{\Omega_{\mathrm{e}}} \| \mathbf{D} \bullet 
      \mathrm{grad}[\mathrm{grad}[w_h]] \|^2_{0,\Omega_e} \\
      \label{Eqn:Inequality_NSSD_6}
      2 \tau_{\Omega_{\mathrm{e}}} \left( \mathbf{v} \bullet 
      \mathrm{grad}[w_h] ; \mathrm{div}[\mathbf{D}] \bullet 
      \mathrm{grad}[w_h] \right)_{0,\Omega_e} &\geq \tau_
      {\Omega_{\mathrm{e}}} \| \mathbf{v} \bullet \mathrm{grad}[w_h] 
      \|^2_{0,\Omega_e} \nonumber \\
      &+ \tau_{\Omega_{\mathrm{e}}} \| \mathrm{div}[\mathbf{D}] 
      \bullet \mathrm{grad}[w_h] \|^2_{0,\Omega_e}
    \end{align}
  \end{subequations}  
  we have the following inequality:
  \begin{align}
    \label{Eqn:Eqn:Inequality_NSSD_7}
    \sum_{\Omega_{\mathrm{e}} \in \Omega_h} \tau_{
    \Omega_{\mathrm{e}}} \Big \| \alpha w_h + \mathrm{div} 
    \big [ w_h \mathbf{v} &- \mathbf{D} \mathrm{grad}[ w_h ] 
    \big ] \Big \|^2_{0,\Omega_{\mathrm{e}}} \geq - 
    \frac{\lambda^2_{\mathrm{min}}}{16 \left(1 + C^2_{pf} \right)} \nonumber \\
    &- \sum_{\Omega_{\mathrm{e}} \in \Omega_h} \frac{ \widetilde{C} 
    \lambda^2_{\mathrm{min}} h^2_{\Omega_{\mathrm{e}}} \| \mathbf{v} \bullet 
    \mathrm{grad}[w_h] \|^2_{0,\Omega_{\mathrm{e}}}}{16 (1 + C^2_{pf}) 
    \left( nd^2 \lambda^2_{\mathrm{max}} + \widetilde{C} C^2_{pf} \delta_{\alpha 
    \mathbf{v}} h^2 + \widetilde{C} \delta_{\mathbf{D}} h^2 \right)}
  \end{align}
  Similarly, using the following equality:
  \begin{align}
    \label{Eqn:Eqn:Inequality_NSSD_8}
    2 \mu \delta_{\Omega_{\mathrm{e}}} \left(\mathrm{div}[
    \alpha w_h \mathbf{v}]; w_h \right)_{0,\Omega_e} = - 2 \mu 
    \delta_{\Omega_{\mathrm{e}}} \left( \alpha w_h; \mathbf{v} 
    \bullet \mathrm{grad}[w_h] \right)_{0,\Omega_e} = \mu 
    \delta_{\Omega_{\mathrm{e}}} \left(\mathrm{div}[\alpha 
    \mathbf{v}] w_h; w_h \right)_{0,\Omega_e}
  \end{align}
  in combination with the following inequalities
  \begin{subequations}
    \begin{align}
      \label{Eqn:Eqn:Inequality_NSSD_9}
      -2 \mu \delta_{\Omega_{\mathrm{e}}} \left( \left(\alpha + 
      \mathrm{div}[\mathbf{v}] \right) w_h ; \mathbf{v} 
      \bullet \mathrm{grad}[w_h] \right)_{0,\Omega_e} &\geq 
      2 \mu \delta_{\Omega_{\mathrm{e}}} \| \left(\alpha 
      + \mathrm{div}[\mathbf{v}] \right) w_h \|^2_{0,\Omega_e} 
      \nonumber \\
      &+ \frac{\mu \delta_{\Omega_{\mathrm{e}}}}{2} \| \mathbf{v} 
      \bullet \mathrm{grad}[w_h] \|^2_{0,\Omega_e} \\
      \label{Eqn:Eqn:Inequality_NSSD_10}
      2 \mu \delta_{\Omega_{\mathrm{e}}} \left(\mathrm{div}
      [\mathbf{D}] \bullet \mathrm{grad}[w_h] ; \mathbf{v} 
      \bullet \mathrm{grad}[w_h] \right)_{0,\Omega_e} &\geq 
      2 \mu \delta_{\Omega_{\mathrm{e}}} \| \mathrm{div}
      [\mathbf{D}] \bullet \mathrm{grad}[w_h] \|^2_{0,\Omega_e} 
      \nonumber \\
      &+ \frac{\mu \delta_{\Omega_{\mathrm{e}}}}{2} \| \mathbf{v} 
      \bullet \mathrm{grad}[w_h] \|^2_{0,\Omega_e} \\
      \label{Eqn:Eqn:Inequality_NSSD_11}
      2 \mu \delta_{\Omega_{\mathrm{e}}} \left(\mathbf{D} 
      \bullet \mathrm{grad}[\mathrm{grad}[w_h]] ; \mathbf{v} 
      \bullet \mathrm{grad}[w_h] \right)_{0,\Omega_e} &\geq 
      2 \mu \delta_{\Omega_{\mathrm{e}}} \| \mathbf{D} \bullet 
      \mathrm{grad}[\mathrm{grad}[w_h]] \|^2_{0,\Omega_e} 
      \nonumber \\
      &+ \frac{\mu \delta_{\Omega_{\mathrm{e}}}}{2} \| \mathbf{v} 
      \bullet \mathrm{grad}[w_h] \|^2_{0,\Omega_e}
    \end{align}
  \end{subequations}
  and choosing $\mu = \frac{\lambda_{\mathrm{min}}}{1 + C^2_{pf}}$, 
  equation \eqref{Eqn:BiLin_NSSD_LSFEM1} reduces to the 
  following:
  \begin{align}
    \label{Eqn:Eqn:Inequality_NSSD_12}
    \frac{2\mathfrak{F}_{\mathrm{NgStb}}}{\gamma^2_{\mathrm{min}}} &\geq
    \frac{3\lambda^2_{\mathrm{min}}}{4 \left(1 + C^2_{pf} \right)} +
    \sum_{\Omega_{\mathrm{e}} \in \Omega_h} \frac{ \widetilde{C} 
    \lambda^2_{\mathrm{min}} h^2_{\Omega_{\mathrm{e}}} \| \mathbf{v} \bullet 
    \mathrm{grad}[w_h] \|^2_{0,\Omega_{\mathrm{e}}}}{8 (1 + C^2_{pf}) 
    \left( nd^2 \lambda^2_{\mathrm{max}} + \widetilde{C} C^2_{pf} \delta_{\alpha 
    \mathbf{v}} h^2 + \widetilde{C} \delta_{\mathbf{D}} h^2 \right)} 
    \nonumber \\
    &+ \sum_{\Omega_{\mathrm{e}} \in \Omega_h} \tau_{
    \Omega_{\mathrm{e}}} \Big \| \alpha w_h + \mathrm{div} 
    \big [ w_h \mathbf{v} - \mathbf{D} \mathrm{grad}[ w_h ] 
    \big ] \Big \|^2_{0,\Omega_{\mathrm{e}}}
  \end{align}
  From equations \eqref{Eqn:Eqn:Inequality_NSSD_7} and \eqref{Eqn:Eqn:Inequality_NSSD_12}, 
  we get the desired result given by equation \eqref{Eqn:Coercivity_NSSD_LSFEM1}.
  The second part of the proof is similar to Theorem \ref{Thm:Weighted_PrimLSFEM_CoerciveEstimates}. 
  These exist a constant $C_{\alpha \mathbf{v} \mathbf{D}} > 0$ (for instance, 
  $C_{\alpha \mathbf{v} \mathbf{D}} = \displaystyle \mathop{\mathrm{max}} 
  \left [nd^2, \widetilde{C}, \widetilde{C}C^2_{pf} \right]$) such that 
  \begin{subequations}
    \begin{align}
      \label{Eqn:Eqn:Inequality_NSSD_12}
      nd^2 \lambda^2_{\mathrm{max}} + \widetilde{C} C^2_{pf} \delta_{\alpha 
      \mathbf{v}} h^2 + \widetilde{C} \delta_{\mathbf{D}} h^2 
      &\leq C_{\alpha \mathbf{v} \mathbf{D}} \delta_{\alpha \mathbf{v} 
      \mathbf{D}}\\
      \label{Eqn:Eqn:Inequality_NSSD_13}
      \| \mathrm{grad}[w_h] \|^2_0 &\leq \frac{32 (1 + C^2_{pf}) 
      \mathfrak{F}_{\mathrm{NgStb}}}{11 \gamma^2_{\mathrm{min}} 
      \lambda^2_{\mathrm{min}}}\\
      \label{Eqn:Eqn:Inequality_NSSD_14}
      \| \mathbf{v} \bullet \mathrm{grad}[w_h] \|^2_0 &\leq 
      \frac{32 C_{\alpha \mathbf{v} \mathbf{D}} \delta_{\alpha 
      \mathbf{v} \mathbf{D}} (1 + C^2_{pf}) \widehat{C}^2 
      \mathfrak{F}_{\mathrm{NgStb}}}{\widetilde{C} \gamma^2_
      {\mathrm{min}} \lambda^2_{\mathrm{min}} h^2}
    \end{align}
  \end{subequations}
  Using Cauchy-Schwartz inequality on $\| \mathbf{v} \bullet 
  \mathrm{grad}[w_h] \|_0$ and \eqref{Eqn:Eqn:Inequality_NSSD_13} 
  gives
  \begin{align}
    \label{Eqn:Eqn:Inequality_NSSD_15}
    \| \mathbf{v} \bullet \mathrm{grad}[w_h] \|^2_0 \leq 
    \| \mathbf{v} \|^2_0 \| \mathrm{grad}[w_h] \|^2_0 \leq
    \frac{32 C_{\mathbf{v}}(1 + C^2_{pf}) \mathfrak{F}_
    {\mathrm{NgStb}}}{11 \gamma^2_{min} \lambda^2_{min}}
  \end{align}
  Now, consider the terms $\| w_h \|^2_1$ and $\| \mathbf{p}_h 
  \|^2_{\mathrm{div}}$:
  \begin{subequations}
    \begin{align}
      \label{Eqn:CoerciveEst2_NSSD1S}
      \| w_h \|^2_1 &= \| w_h \|^2_0 + \| 
      \mathrm{grad}[w_h] \|^2_0 \leq \frac{32 (1 + C^2_{pf})^2 
      \mathfrak{F}_{\mathrm{NgStb}}}{11 \gamma^2_{min} \lambda^2
      _{min}} \\
      \label{Eqn:CoerciveEst2_NSSD2}
      \| \mathbf{p}_h \|^2_{0} &\leq 2 \sum_{\Omega_{\mathrm{e}} 
      \in \Omega_h} \Big \| \mathbf{p}_h - w_h \mathbf{v} + 
      \mathbf{D} \mathrm{grad}[w_h] - \delta_{\Omega_{\mathrm{e}}} 
      \mathbf{v} \left( \mathrm{div}[w_h \mathbf{v} - \mathbf{D} 
      \mathrm{grad}[w_h]] \right) \Big \|^2_{0,\Omega_{\mathrm{e}}}
      \nonumber \\
      &+ 2\sum_{\Omega_{\mathrm{e}} \in \Omega_h} \Big \|- w_h 
      \mathbf{v} + \mathbf{D} \mathrm{grad}[w_h] - \delta_
      {\Omega_{\mathrm{e}}} \mathbf{v} \left( \mathrm{div}[w_h 
      \mathbf{v} - \mathbf{D} \mathrm{grad}[w_h]] \right) \Big 
      \|^2_{0,\Omega_{\mathrm{e}}} \\
      \label{Eqn:CoerciveEst2_NSSD3}
      \| \mathrm{div}[\mathbf{p}_h] \|^2_{0} &\leq 2\sum_{\Omega_
      {\mathrm{e}} \in \Omega_h} \Big \| \alpha w_h + \mathrm{div}[ 
      \mathbf{p}_h ] + \delta_{\Omega_{\mathrm{e}}} \mathrm{div}
      [\alpha w_h \mathbf{v}] \Big \|^2_{0,\Omega_{\mathrm{e}}} 
      \nonumber \\
      &+ 2\sum_{\Omega_{\mathrm{e}} \in \Omega_h} \Big \| \alpha w_h 
      + \delta_{\Omega_{\mathrm{e}}} \mathrm{div}[\alpha w_h \mathbf{v}] 
      \Big \|^2_{0,\Omega_{\mathrm{e}}}
    \end{align}
  \end{subequations}
  Using \eqref{Eqn:Eqn:Inequality_NSSD_12}--\eqref{Eqn:Eqn:Inequality_NSSD_15} 
  and repeated use of triangle inequality on \eqref{Eqn:CoerciveEst2_NSSD2} and 
  \eqref{Eqn:CoerciveEst2_NSSD3} gives the boundedness estimate 
  \eqref{Eqn:Coercivity_NSSD_LSFEM2}.
\end{proof}

\begin{theorem}[\texttt{Error estimate for proposed LSFEM}]
  \label{Thm:Weighted_NSSDLSFEM_ErrorEstimates}
  Given that equations \eqref{Eqn:NN_AD_GE_BE}--\eqref{Eqn:NN_AD_GE_NBC} 
  have a sufficiently smooth solution $\left(c,\mathbf{q} \right) \in 
  \left(\mathcal{C} \times \mathcal{Q} \right) \cap \left(H^{r+1}
  (\Omega) \right)^3$. Then the finite element solution $\left(c_h,
  \mathbf{q}_h \right)$ of the unconstrained weighted negatively 
  stabilized streamline diffusion LSFEM satisfies the following 
  error estimate:
  \begin{align}
    \label{Eqn:Error_Estimate_NSSDLSFEM}
    \sqrt{C_{\mathrm{NgStb}1}} \| c - c_h \|_1 + 
    \sqrt{C_{\mathrm{NgStb}2}} \| \mathbf{q} - 
    \mathbf{q}_h \|_{\mathrm{div}} &+ 
    \sqrt{C_{\mathrm{NgStb}3}} \| \mathbf{v} 
    \bullet \mathrm{grad}[c - c_h] \|_0 \nonumber \\
    &\leq C_{\mathrm{NgStb}} h^r \left( \| c \|_{r+1} 
    + \| \mathbf{q} \|_{r+1} \right)
  \end{align}
  where $C_{\mathrm{NgStb}} > 0$ is a constant independent of 
  $\mathbf{D}$ and $h$.
\end{theorem}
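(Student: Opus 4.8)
The plan is to run a C\'ea-type quasi-optimality argument, adapted to the fact that $\mathfrak{B}_{\mathrm{NgStb}}(\bullet;\bullet)$ is symmetric but \emph{indefinite} on the continuous space: the negative stabilization parameters $\tau_{\Omega_{\mathrm{e}}} \leq 0$ and $\delta_{\Omega_{\mathrm{e}}} \leq 0$ contribute a negative term to $\mathfrak{F}_{\mathrm{NgStb}}$, and coercivity is recovered only on the discrete space $\mathcal{W}_h \times \mathcal{Q}_h$ through the inverse inequalities \eqref{Eqn:Inverse_Estimate1}--\eqref{Eqn:Inverse_Estimate2}, as already established in Theorem \ref{Thm:Weighted_NSSDLSFEM_CoerciveEstimates}. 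First I would record Galerkin orthogonality: since the exact pair $(c,\mathbf{q})$ solves the continuous weak form (the NSSD analog of \eqref{Eqn:Bilinear_Linear_Forms_PrimLSFEM}) and $(c_h,\mathbf{q}_h)$ solves the discrete one (the analog of \eqref{Eqn:Bilinear_Linear_Forms_DiscretePrimLSFEM}), subtraction against a discrete test pair gives $\mathfrak{B}_{\mathrm{NgStb}}((c-c_h,\mathbf{q}-\mathbf{q}_h);(w_h,\mathbf{p}_h)) = 0$ for all $(w_h,\mathbf{p}_h) \in \mathcal{W}_h \times \mathcal{Q}_h$. Throughout I would work with the combined norm $N(w,\mathbf{p})^2 := \| w \|^2_1 + \| \mathbf{p} \|^2_{\mathrm{div}} + \| \mathbf{v} \bullet \mathrm{grad}[w] \|^2_0$, in which Theorem \ref{Thm:Weighted_NSSDLSFEM_CoerciveEstimates} furnishes both the coercivity and boundedness bounds of \eqref{Eqn:Coercivity_NSSD_LSFEM2}.

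Next I would establish quasi-optimality. For arbitrary $(\tilde{c}_h,\tilde{\mathbf{q}}_h) \in \mathcal{C}_h \times \mathcal{Q}_h$ the discrete difference $\chi_h := (\tilde{c}_h - c_h,\, \tilde{\mathbf{q}}_h - \mathbf{q}_h)$ lies in $\mathcal{W}_h \times \mathcal{Q}_h$, so coercivity (lower bound in \eqref{Eqn:Coercivity_NSSD_LSFEM2}) and the identity $\mathfrak{B}_{\mathrm{NgStb}}(\chi_h;\chi_h) = \mathfrak{F}_{\mathrm{NgStb}}(\chi_h, f=0)$ (the analog of \eqref{Eqn:BilinearForm_PrimLSFEM_MainResult}) give a bound on $N(\chi_h)^2$ in terms of $\mathfrak{B}_{\mathrm{NgStb}}(\chi_h;\chi_h)$. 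Galerkin orthogonality replaces one factor, $\mathfrak{B}_{\mathrm{NgStb}}(\chi_h;\chi_h) = \mathfrak{B}_{\mathrm{NgStb}}((c-\tilde{c}_h,\mathbf{q}-\tilde{\mathbf{q}}_h);\chi_h)$. Because an energy Cauchy--Schwarz inequality is \emph{unavailable} here (the form is indefinite off the discrete space), I would instead bound this mixed expression by term-by-term Cauchy--Schwarz on the individual residual $L_2$-inner-products defining $\mathfrak{B}_{\mathrm{NgStb}}$, which is valid irrespective of the sign of $\tau_{\Omega_{\mathrm{e}}}$ and yields a continuity estimate $|\mathfrak{B}_{\mathrm{NgStb}}(a;b)| \leq C\, N(a)\, N(b)$ for a general first argument $a$. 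Combining with a triangle inequality produces
\[
N\!\left(c-c_h,\, \mathbf{q}-\mathbf{q}_h\right) \leq (1+C)\, \mathop{\mathrm{inf}}_{(\tilde{c}_h,\tilde{\mathbf{q}}_h) \in \mathcal{C}_h \times \mathcal{Q}_h} N\!\left(c-\tilde{c}_h,\, \mathbf{q}-\tilde{\mathbf{q}}_h\right),
\]
and the weighted left-hand side of \eqref{Eqn:Error_Estimate_NSSDLSFEM} is controlled by a multiple of $N(c-c_h,\mathbf{q}-\mathbf{q}_h)$ via a three-term Cauchy--Schwarz.

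The final step bounds the infimum by approximation theory. Taking $\tilde{c}_h = \Pi_h c$ and $\tilde{\mathbf{q}}_h = \Pi_h \mathbf{q}$ to be the standard degree-$r$ interpolants on the quasi-uniform mesh, the Bramble--Hilbert lemma gives $\| c - \Pi_h c \|_1 \leq C h^r \| c \|_{r+1}$ and $\| \mathbf{q} - \Pi_h \mathbf{q} \|_{\mathrm{div}} \leq C h^r \| \mathbf{q} \|_{r+1}$, while the streamline term is bounded \emph{directly} (not through the inverse inequality, which fails for $c-\Pi_h c \notin \mathcal{C}_h$) using boundedness of $\mathbf{v}$ from assumption (A2):
\[
\| \mathbf{v} \bullet \mathrm{grad}[c - \Pi_h c] \|_0 \leq \| \mathbf{v} \|_{\infty,\Omega}\, \| \mathrm{grad}[c - \Pi_h c] \|_0 \leq C h^r \| c \|_{r+1}.
\]
Hence $N(c-\Pi_h c, \mathbf{q}-\Pi_h \mathbf{q}) \leq C h^r (\| c \|_{r+1} + \| \mathbf{q} \|_{r+1})$, and substitution into the quasi-optimality bound, absorbing constants into $C_{\mathrm{NgStb}}$, delivers \eqref{Eqn:Error_Estimate_NSSDLSFEM}.

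The hard part will be two intertwined points. First, the indefiniteness of $\mathfrak{F}_{\mathrm{NgStb}}$ away from $\mathcal{W}_h \times \mathcal{Q}_h$ means I cannot invoke a clean energy Cauchy--Schwarz; the term-by-term continuity estimate must be set up so that the negative stabilization contributions, whose coefficients $\delta_{\Omega_{\mathrm{e}}}, \tau_{\Omega_{\mathrm{e}}}$ scale like $h^2_{\Omega_{\mathrm{e}}}$ (equations \eqref{Eqn:StrDiff_EleStabParam}--\eqref{Eqn:NegStabilized_EleStabParam}), remain dominated by the combined norm. Second, and most delicate, is bookkeeping the $h$- and $\mathbf{D}$-dependent factors hidden in $C_{\mathrm{NgStb}2}$ and $C_{\mathrm{NgStb}3}$ (equations \eqref{Eqn:Coercivity_Constant_NSSD2}--\eqref{Eqn:Coercivity_Constant_NSSD3}), verifying that together with the interpolation powers they combine to an \emph{exact} $\mathcal{O}(h^r)$ rate and that the resulting $C_{\mathrm{NgStb}}$ is genuinely independent of $h$ and $\mathbf{D}$. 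A secondary technicality is that $\mathcal{Q}_h$ is the equal-order $C^0$ nodal space rather than a conventional $H(\mathrm{div})$-conforming space, so the $\| \bullet \|_{\mathrm{div}}$ interpolation estimate for the flux must be justified for these elements.
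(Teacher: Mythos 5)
Your overall strategy (Galerkin orthogonality, coercivity from Theorem \ref{Thm:Weighted_NSSDLSFEM_CoerciveEstimates}, interpolation estimates of order $h^r$) matches the paper's, but your middle step is genuinely different. The paper takes the shorter ``best approximation in the energy norm'' route: it applies Cauchy--Schwarz directly in the $\mathfrak{B}_{\mathrm{NgStb}}$-inner product to get $\mathfrak{B}^{1/2}_{\mathrm{NgStb}}(e_h;e_h) \leq \mathfrak{B}^{1/2}_{\mathrm{NgStb}}(u-u_I;u-u_I)$ with $e_h = (c_h-c_I,\mathbf{q}_h-\mathbf{q}_I)$, then invokes coercivity on the left and interpolation on the right. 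That one-line step tacitly presumes $\mathfrak{B}_{\mathrm{NgStb}}$ is positive semi-definite on the span of $e_h$ and $u-u_I$ --- exactly the indefiniteness issue you flag, since $\tau_{\Omega_{\mathrm{e}}},\delta_{\Omega_{\mathrm{e}}}\leq 0$ and the inverse inequality \eqref{Eqn:Inverse_Estimate1} is only available for discrete functions. Your C\'ea/Strang-type argument (coercivity on $\mathcal{W}_h\times\mathcal{Q}_h$ plus a term-by-term continuity bound plus orthogonality) sidesteps that and is the more defensible route; what it buys is rigor at the indefinite step, at the cost of having to track the continuity constant. One caveat: your claimed continuity estimate $|\mathfrak{B}_{\mathrm{NgStb}}(a;b)| \leq C\,N(a)\,N(b)$ cannot hold for a \emph{general} non-discrete first argument $a$, because the $\delta_{\Omega_{\mathrm{e}}}$- and $\tau_{\Omega_{\mathrm{e}}}$-weighted residuals contain second derivatives of the scalar variable, which $\|\cdot\|_1$ does not control. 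It does hold for the specific argument $a = (c-c_I,\mathbf{q}-\mathbf{q}_I)$ you need, provided you bound those terms directly via smoothness of $c$: the factor $|\delta_{\Omega_{\mathrm{e}}}|,|\tau_{\Omega_{\mathrm{e}}}|=\mathcal{O}(h^2)$ against $\|\mathrm{grad}[\mathrm{grad}[c-c_I]]\|_0=\mathcal{O}(h^{r-1}\|c\|_{r+1})$ yields a higher-order $\mathcal{O}(h^{r+1})$ contribution, so the stated $\mathcal{O}(h^r)$ rate survives. With that adjustment (replace the general continuity claim by a bound on $\mathfrak{B}_{\mathrm{NgStb}}(u-u_I;\chi_h)$ tailored to the interpolation error), your proof is sound and, if anything, repairs a gap the paper's own proof leaves implicit; the same care is also needed to justify the paper's final step of bounding $\mathfrak{B}_{\mathrm{NgStb}}(u-u_I;u-u_I)$ from above, since the boundedness estimate \eqref{Eqn:Coercivity_NSSD_LSFEM2} is stated only for discrete pairs.
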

\begin{proof}
  Let $c_{I} \in \mathcal{C}_h$ and $\mathbf{q}_{I} 
  \in \mathcal{Q}_h$ be the standard finite element 
  interpolants of $c$ and $\mathbf{q}$, respectively. 
  From the interpolation theory \cite{Bochev_Gunzburger}, 
  we have
  \begin{subequations}
    \begin{align}
      \label{Eqn:FEM_Error_Estimate1}
      \|c - c_{I} \|_1 &\leq C h^r \| c \|_{r+1} \\
      \label{Eqn:FEM_Error_Estimate3}
      \|\mathbf{q} - \mathbf{q}_{I} \|_{\mathrm{div}} 
      &\leq C h^r \| \mathbf{q} \|_{r+1}
    \end{align}
  \end{subequations}
  for some positive constant $C$ independent of $\mathbf{D}$ 
  and $h$. The error $\left(c - c_h, \mathbf{q} - \mathbf{q}_h 
  \right)$ satisfies the following orthogonality property:
  \begin{align}
    \label{Eqn:OrthoProp_NSSDLSFEM}
    \mathfrak{B}_{\mathrm{NgStb}} \left(\left(c_h - c,
    \mathbf{q}_h -\mathbf{q} \right); \left(w_h,\mathbf{p}_h 
    \right) \right) = 0 \quad \forall \left(w_h,\mathbf{p}_h 
    \right) \in \mathcal{W}_h \times \mathcal{Q}_h
  \end{align}
  Cauchy-Schwartz inequality implies:
   \begin{align}
    \label{Eqn:OrthoProp_NSSDLSFEM_Result}
    \mathfrak{B}^{1/2}_{\mathrm{NgStb}} \left(\left(c_h - c_I,
    \mathbf{q}_h -\mathbf{q}_I \right); \left(c_h - c_I,
    \mathbf{q}_h -\mathbf{q}_I \right) \right)
    \leq \mathfrak{B}^{1/2}_{\mathrm{NgStb}} \left(\left(c - c_I,
    \mathbf{q} -\mathbf{q}_I \right); \left(c - c_I,
    \mathbf{q} -\mathbf{q}_I \right) \right)
  \end{align}
  From Theorem 
  \ref{Thm:Weighted_NSSDLSFEM_CoerciveEstimates} and interpolation 
  estimates \eqref{Eqn:FEM_Error_Estimate1}--\eqref{Eqn:FEM_Error_Estimate3}, 
  one can obtain the desired error estimate \eqref{Eqn:Error_Estimate_NSSDLSFEM}. 
\end{proof}

From the above mathematical analysis, it is evident 
that the element-dependent stabilization parameters 
$\tau_{\Omega_{\mathrm{e}}} \leq 0$ and $\delta_{\Omega_{\mathrm{e}}} 
\leq 0$ can be taken as:
\begin{subequations}
  \begin{align}
    \label{Eqn:StrDiff_Params}
    \delta_{\Omega_{\mathrm{e}}} &= - \frac{\delta_o 
    \lambda_{\mathrm{min}} h^2_{\Omega_{\mathrm{e}}}}{\left( 
    \lambda^2_{\mathrm{max}} + \delta_1 \displaystyle \mathop{\mathrm{max}}_
    {\mathbf{x} \in \overline{\Omega}} \left [ \left(\alpha + 
    \mathrm{div} [\mathbf{v}] \right)^2 \right ] h^2 + 
    \delta_2 \displaystyle \mathop{\mathrm{max}}_
    {\mathbf{x} \in \overline{\Omega}} \left [ \| 
    \mathrm{div}[\mathbf{D}] \|^2 \right ] h^2 \right)} \\
    \label{Eqn:Stab_Params}
    \tau_{\Omega_{\mathrm{e}}} &= - \frac{\tau_o 
    \lambda^2_{\mathrm{min}} h^2_{\Omega_{\mathrm{e}}}}{\left( 
    \lambda^2_{\mathrm{max}} + \tau_1 \displaystyle \mathop{\mathrm{max}}_
    {\mathbf{x} \in \overline{\Omega}} \left [ \left(\alpha + 
    \mathrm{div} [\mathbf{v}] \right)^2 \right ] h^2 + 
    \tau_2 \displaystyle \mathop{\mathrm{max}}_
    {\mathbf{x} \in \overline{\Omega}} \left [ \| 
    \mathrm{div}[\mathbf{D}] \|^2 \right ] h^2 \right)}
  \end{align}
\end{subequations}
where $\delta_o$, $\delta_1$, $\delta_2$, $\tau_o$, $\tau_1$, 
and $\tau_2$ are non-negative constants. 

\begin{remark}
  The mathematical analysis provided by Hsieh and Yang 
  \cite{2009_Hsieh_Yang_CMAME_v199_p183_p196} can be 
  obtained as a special case of the mathematical 
  analysis presented above. Specifically, take 
  $\alpha = 0$, $\mathbf{D}(\mathbf{x})$ to be 
  homogeneous and isotropic, and $\mathbf{v}(\mathbf{x})$ 
  to be solenoidal and constant.
\end{remark}

\section{APPENDIX C:~Finite element stiffness matrices and load vectors}
\label{Sec:NN_AD_Appendix3}
For weighted primitive LSFEM the terms $\boldsymbol{K}_{cc}$, $\boldsymbol{K}
_{c \mathbf{q}}$, $\boldsymbol{K}_{\mathbf{q} \mathbf{q}}$, $\boldsymbol{r}_
{c}$, and $\boldsymbol{r}_{\mathbf{q}}$ are constructed from the local stiffness 
matrices and load vectors $\boldsymbol{K}^e_{cc}$, $\boldsymbol{K}^e_{c \mathbf{q}}$, 
$\boldsymbol{K}^e_{\mathbf{q} \mathbf{q}}$, $\boldsymbol{r}^e_{c}$, and $\boldsymbol{r}
^e_{\mathbf{q}}$ through the standard finite element assembly process. The expressions 
for these element stiffness matrices and element load vectors in terms of shape 
functions and their derivatives are explicitly defined as follows:
\begin{align}
  \label{Eqn:Kcc_WeighPrimLSFEM}
  \boldsymbol{K}^e_{cc} &= \int \limits_{\Omega_e} 
  \left( \beta^2 \alpha^2 \right) \boldsymbol{N}^{\mathrm{T}} 
  \boldsymbol{N} \; \mathrm{d} \Omega_e \, + \, \int \limits_{\Omega_e} 
  \boldsymbol{N}^{\mathrm{T}} \mathbf{v}^{\mathrm{T}} \mathbf{A}^2 
  \mathbf{v} \boldsymbol{N} \; \mathrm{d} \Omega_e \, - \, \int 
  \limits_{\Omega_e} \left(\boldsymbol{DN} \boldsymbol{J}^{-1} \right)
  \mathbf{D} \mathbf{A}^2 \mathbf{v} \boldsymbol{N} \; \mathrm{d} 
  \Omega_e \nonumber \\
  &- \int \limits_{\Omega_e} \boldsymbol{N}^{\mathrm{T}} \mathbf{v}
  ^{\mathrm{T}} \mathbf{A}^2 \mathbf{D} \left(\boldsymbol{DN} 
  \boldsymbol{J}^{-1} \right)^{\mathrm{T}} \; \mathrm{d} \Omega_e 
  + \int \limits_{\Omega_e} \left(\boldsymbol{DN} \boldsymbol{J}^{-1} 
  \right) \mathbf{D} \mathbf{A}^2 \mathbf{D} \left(\boldsymbol{DN} 
  \boldsymbol{J}^{-1} \right)^{\mathrm{T}} \; \mathrm{d} \Omega_e 
  \nonumber \\ 
  &+ \int \limits_{\Omega_{\mathrm{e}} \in {\Gamma^{q}}} \left( 
  \frac{1 + \mathrm{Sign}[\mathbf{v} \bullet \widehat{\mathbf{n}}]}{2} 
  \right)^2 \left(\mathbf{v} \bullet \widehat{\mathbf{n}} \right)^2
  \boldsymbol{N}^{\mathrm{T}} \boldsymbol{N} \; \mathrm{d} \Gamma^q_e
\end{align}
\begin{align}
  \label{Eqn:Kcq_WeighPrimLSFEM}
  \boldsymbol{K}^e_{c \mathbf{q}} &= \int \limits_{\Omega_e} 
  \left( \beta^2 \alpha \right) \boldsymbol{N}^{\mathrm{T}} 
  \left( \mathrm{vec} \left[\left(\boldsymbol{DN} \boldsymbol{J}^{-1} 
  \right)^{\mathrm{T}} \right] \right)^{\mathrm{T}} \; \mathrm{d} 
  \Omega_e \, - \, \int \limits_{\Omega_e} \boldsymbol{N}^{\mathrm{T}} 
  \mathbf{v}^{\mathrm{T}} \mathbf{A}^2 \left(\boldsymbol{N} \odot 
  \boldsymbol{I} \right) \; \mathrm{d} \Omega_e \nonumber \\
  &+ \int \limits_{\Omega_e} \left(\boldsymbol{DN} \boldsymbol{J}^{-1} 
  \right) \mathbf{D} \mathbf{A}^2 \left(\boldsymbol{N} \odot \boldsymbol{I} 
  \right) \; \mathrm{d} \Omega_e \, \nonumber \\
  &- \, \int \limits_{\Omega_{\mathrm{e}} \in {\Gamma^{q}}} \left( \frac{1 + 
  \mathrm{Sign}[\mathbf{v} \bullet \widehat{\mathbf{n}}]}{2} \right) 
  \left(\mathbf{v} \bullet \widehat{\mathbf{n}} \right) \boldsymbol{N}
  ^{\mathrm{T}} \widehat{\mathbf{n}}^{\mathrm{T}} \left(\boldsymbol{N} 
  \odot \boldsymbol{I} \right) \; \mathrm{d} \Gamma^q_e
\end{align}
\begin{align}
  \label{Eqn:Kqq_WeighPrimLSFEM}
  \boldsymbol{K}^e_{\mathbf{q} \mathbf{q}} &= \int \limits_{\Omega_e} 
  \beta^2  \left( \mathrm{vec} \left[\left(\boldsymbol{DN} \boldsymbol{J}
  ^{-1} \right)^{\mathrm{T}} \right] \right) \left( \mathrm{vec} 
  \left[\left(\boldsymbol{DN} \boldsymbol{J}^{-1} \right)^{\mathrm{T}} 
  \right] \right)^{\mathrm{T}} \; \mathrm{d} \Omega_e \, + \, \int 
  \limits_{\Omega_e} \left(\boldsymbol{N}^{\mathrm{T}} \odot \boldsymbol{I} 
  \right) \mathbf{A}^2 \left(\boldsymbol{N} \odot \boldsymbol{I} \right)
  \; \mathrm{d} \Omega_e \nonumber \\
  &+ \int \limits_{\Omega_{\mathrm{e}} \in {\Gamma^{q}}} \left(\boldsymbol{N}
  ^{\mathrm{T}} \odot \boldsymbol{I} \right) \widehat{\mathbf{n}} \, \widehat{\mathbf{n}}
  ^{\mathrm{T}} \left(\boldsymbol{N} \odot \boldsymbol{I} \right) \; \mathrm{d} 
  \Gamma^q_e
\end{align}
Correspondingly, the expressions for the element load vectors in terms of 
shape functions and their derivatives are explicitly defined as follows:
\begin{align}
  \label{Eqn:rc_WeighPrimLSFEM}
  \boldsymbol{r}^e_{c} &= \int \limits_{\Omega_e} \left( \beta^2 \alpha f 
  \right) \boldsymbol{N}^{\mathrm{T}} \; \mathrm{d} \Omega_e \, - \, 
  \int \limits_{\Omega_{\mathrm{e}} \in {\Gamma^{q}}} \left( \frac{1 + 
  \mathrm{Sign}[\mathbf{v} \bullet \widehat{\mathbf{n}}]}{2} \right) 
  \left(\mathbf{v} \bullet \widehat{\mathbf{n}} \right) \boldsymbol{N}
  ^{\mathrm{T}} q^{\mathrm{p}} \; \mathrm{d} \Gamma^q_e
\end{align}
\begin{align}
  \label{Eqn:rq_WeighPrimLSFEM}
  \boldsymbol{r}^e_{\mathbf{q}} &= \int \limits_{\Omega_e} \left( \beta^2 
  f \right) \mathrm{vec} \left[\left(\boldsymbol{DN} \boldsymbol{J}^{-1} 
  \right)^{\mathrm{T}} \right] \; \mathrm{d} \Omega_e \, + \, \int \limits
  _{\Omega_{\mathrm{e}} \in {\Gamma^{q}}} \left(\boldsymbol{N}^{\mathrm{T}} 
  \odot \boldsymbol{I} \right) \widehat{\mathbf{n}} \, q^{\mathrm{p}} \; 
  \mathrm{d} \Gamma^q_e
\end{align}
It should be noted that these terms are obtained from the bilinear 
and linear forms of the weighted primitive least-squares functional 
$\mathfrak{F}_{\mathrm{Prim}}$, which are given as follows:
\begin{align}
  \label{Eqn:BiLin_WeighPrimLSFEM1}
  \mathfrak{B}_{\mathrm{Prim}} \left(\left(c,\mathbf{q} 
  \right); \left(w,\mathbf{p} \right) \right) &= \left(w; 
  \beta^2 \alpha^2 c \right) + \left(w; \left(\mathbf{v} 
  \bullet \mathbf{A}^2 \mathbf{v}  \right) c \right) - 
  \left(\mathrm{grad}[w]; \left(\mathbf{D} A^2 \mathbf{v} 
  \right) c \right) \nonumber \\
  &- \left(w ; \mathbf{v} \bullet A^2 \mathbf{D} \, \mathrm{grad}[c] 
  \right) + \left(\mathrm{grad}[w]; \left(\mathbf{D} A^2 \mathbf{D} 
  \right) \mathrm{grad}[c] \right) \nonumber \\
  &+\left(w; \beta^2 \alpha \, \mathrm{div}[\mathbf{q}] \right) + 
  \left(\mathrm{div}[\mathbf{p}]; \beta^2 \alpha c \right) - \left(w; 
  \mathbf{v} \bullet \mathbf{A}^2 \mathbf{q} \right) - \left(\mathbf{p}; 
  \mathbf{A}^2 \mathbf{v} c \right) \nonumber \\
  &+ \left(\mathrm{grad}[w]; \mathbf{D} A^2 \,\mathbf{q} \right) + 
  \left(\mathbf{p}; A^2 \mathbf{D} \, \mathrm{grad}[c] \right) +
  \left(\mathrm{div}[\mathbf{p}]; \beta^2 \mathrm{div}[\mathbf{q}] 
  \right) \nonumber \\
  &+ \left(\mathbf{p}; A^2 \mathbf{q} \right) + \left(w; \left( 
  \frac{1 + \mathrm{Sign}[\mathbf{v} \bullet \widehat{\mathbf{n}}]}{2} 
  \right)^2 \left(\mathbf{v} \bullet \widehat{\mathbf{n}} \right)^2 
  \, c \right)_{\Gamma^{q}} \nonumber \\
  &- \left(w; \left( \frac{1 + \mathrm{Sign}[\mathbf{v} \bullet
  \widehat{\mathbf{n}}]}{2} \right) \left(\mathbf{v} \bullet 
  \widehat{\mathbf{n}} \right) \, \mathbf{q} \bullet \widehat{\mathbf{n}} 
  \right)_{\Gamma^{q}} + \left(\mathbf{p} \bullet \widehat{\mathbf{n}}; 
  \mathbf{q} \bullet \widehat{\mathbf{n}} \right)_{\Gamma^{q}} \nonumber \\
  &- \left(\mathbf{p} \bullet \widehat{\mathbf{n}}; 
  \left( \frac{1 + \mathrm{Sign}[\mathbf{v} \bullet \widehat{\mathbf{n}}]}{2} 
  \right) \left(\mathbf{v} \bullet \widehat{\mathbf{n}} \right) c \right)_
  {\Gamma^{q}}
\end{align}
\begin{align}
  \label{Eqn:BiLin_WeighPrimLSFEM2}
  \mathfrak{L}_{\mathrm{Prim}} \left(\left(w,\mathbf{p} 
  \right) \right) &= \left(w; \beta^2 \alpha f \right) + 
  \left(\mathrm{div}[\mathbf{p}]; \beta^2 f \right) - 
  \left(w; \left( \frac{1 + \mathrm{Sign}[\mathbf{v} 
  \bullet \widehat{\mathbf{n}}]}{2} \right) \left(\mathbf{v} 
  \bullet \widehat{\mathbf{n}} \right) \; q^{\mathrm{p}} 
  \right)_{\Gamma^{q}} \nonumber \\
  &+ \left(\mathbf{p} \bullet \widehat{\mathbf{n}};q^{\mathrm{p}} 
  \right)_{\Gamma^{q}}
\end{align}
Similarly, one can derive the stiffness matrices and load vectors for weighted 
negatively stabilized streamline diffusion LSFEM $\mathfrak{F}_{\mathrm{NgStb}}$. 
For sake of saving space, herein we shall not explicitly define them as the 
bilinear and linear forms of $\mathfrak{F}_{\mathrm{NgStb}}$ have more than 
fifty terms (from which $\boldsymbol{K}_{c \mathbf{q}}$, $\boldsymbol{K}_{\mathbf{q} 
\mathbf{q}}$, $\boldsymbol{r}_{c}$, and $\boldsymbol{r}_{\mathbf{q}}$ are derived).

\section*{ACKNOWLEDGMENTS}
The authors acknowledge the support from the DOE Nuclear 
Energy University Programs (NEUP). The opinions expressed 
in this paper are those of the authors and do not necessarily 
reflect that of the sponsors. 

\bibliographystyle{unsrt}
\bibliography{Master_References,Books}


\begin{figure}
  \centering
  \includegraphics[scale=0.4,clip]{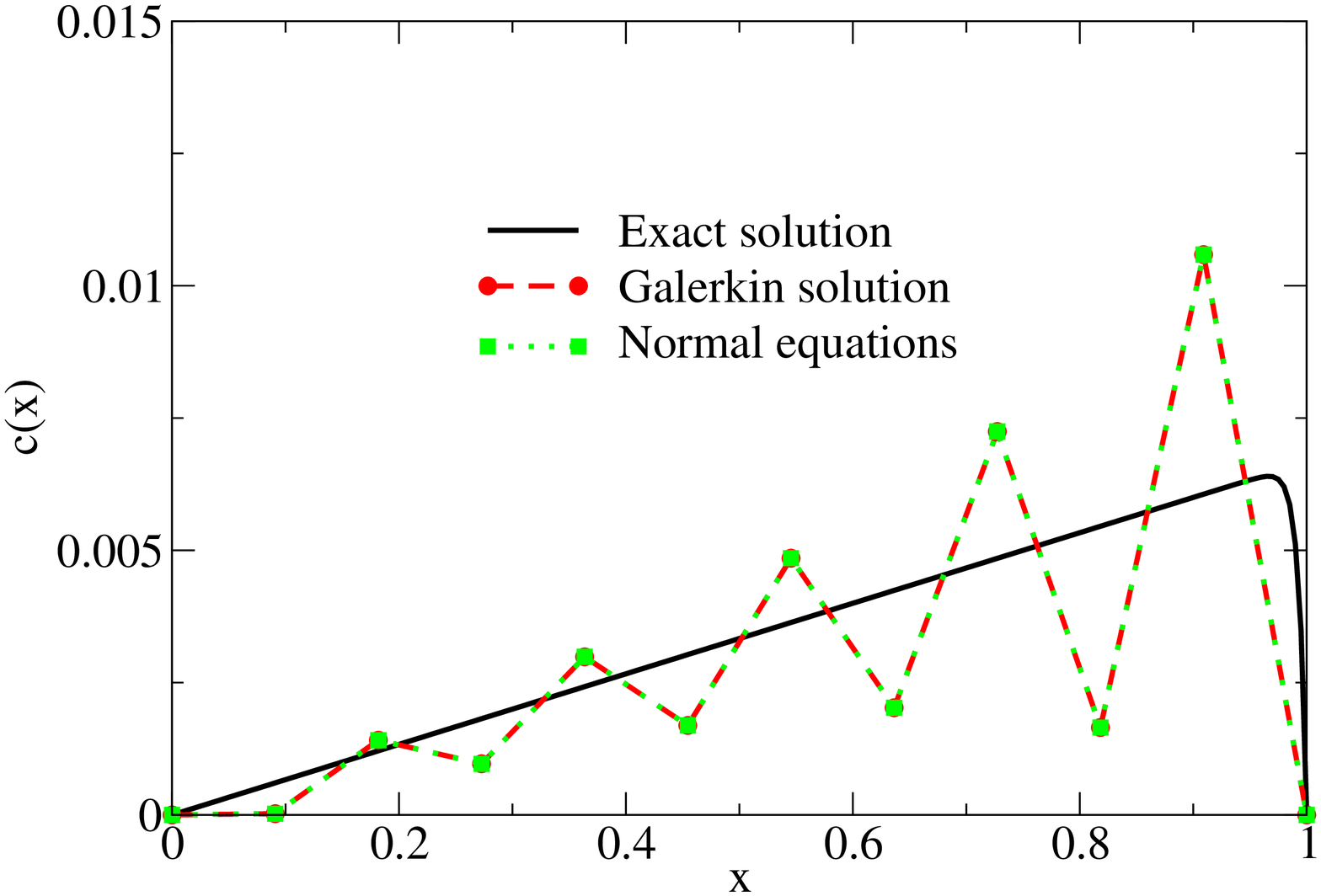}
  \caption{\textsf{Academic problem:}~This figure compares 
    the numerical solutions under the standard single-field 
    Galerkin formulation and the normal equations approach 
    with the exact solution. The normal equations approach 
    does not eliminate node-to-node spurious oscillations. 
    \label{Fig:Normal_spurious}}
\end{figure}

    
\begin{figure}
  \psfrag{A}{$(0,0)$}
  \psfrag{B}{$(1,0)$}
  \psfrag{C}{$(0,1)$}
  \psfrag{BC1}{$c = \sin(\pi x)$}
  \psfrag{BC2}{\rotatebox{-90}{$c = 0$}}
  \psfrag{BC3}{$c = 0$}
  \psfrag{BC4}{\rotatebox{90}{$c = 0$}}
  \psfrag{v}{$\mathbf{v} = \mathbf{\hat{e}}_y$}
  \psfrag{D}{$D = 0.01$}
  \psfrag{x}{$x$}
  \psfrag{y}{$y$}
  \includegraphics[scale=0.9]{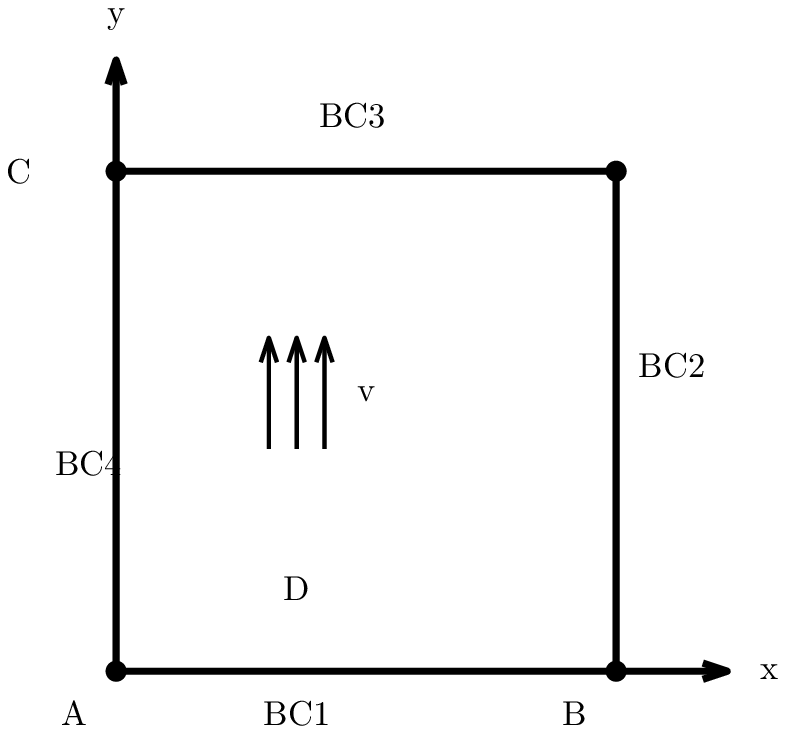}
  \caption{\textsf{Numerical $h$-convergence study:}~A 
    pictorial description of the two-dimensional boundary 
    value problem used in the numerical convergence analysis. 
    Dirichlet boundary conditions are prescribed on the entire 
    boundary. \label{Fig:2D_Numerical_Convergence}}
\end{figure}

\begin{figure}
  \begin{center}
  \subfigure[Mesh using T3 elements]{\includegraphics[scale=0.35]
    {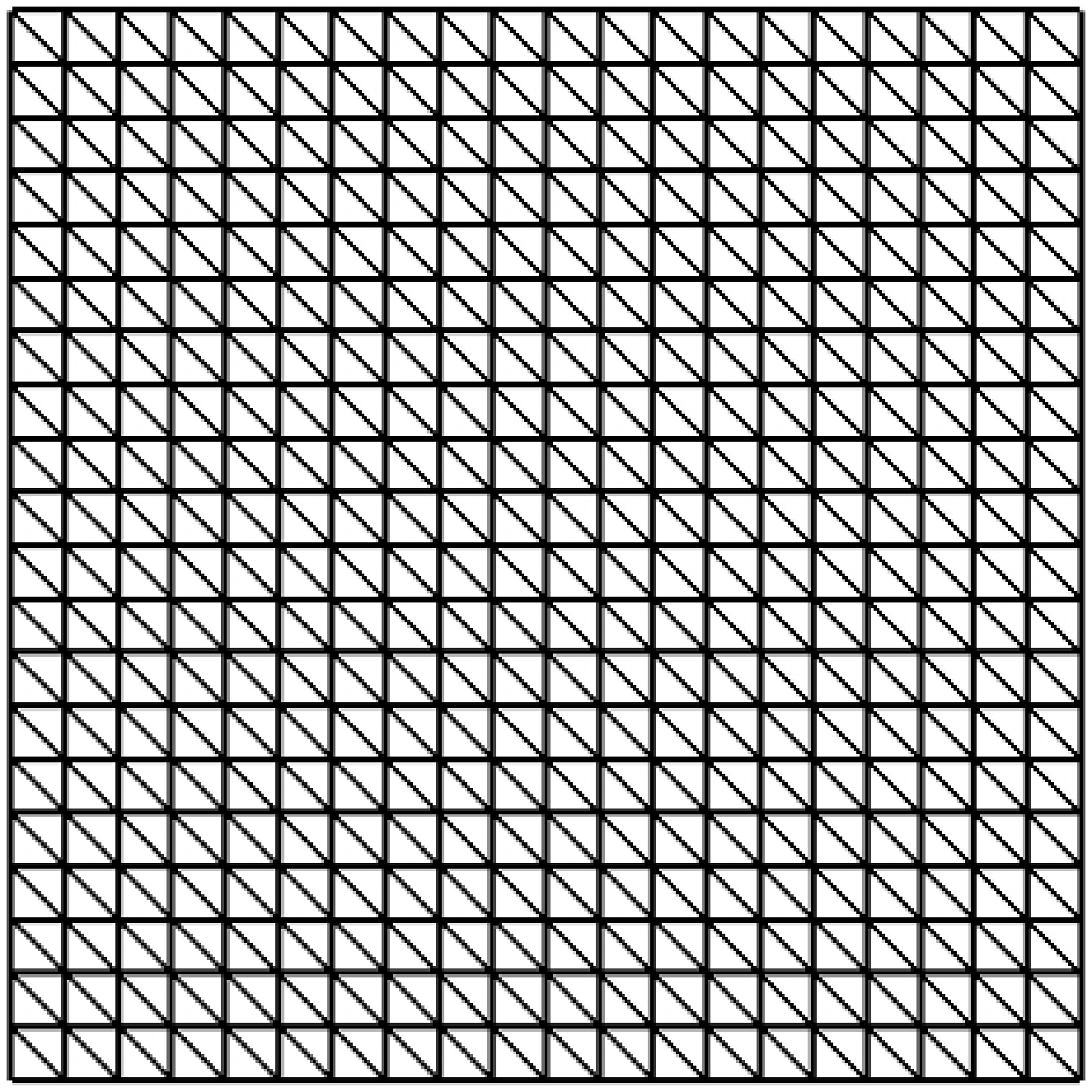}}
  \subfigure[Mesh using Q4 elements]{\includegraphics[scale=0.35]
    {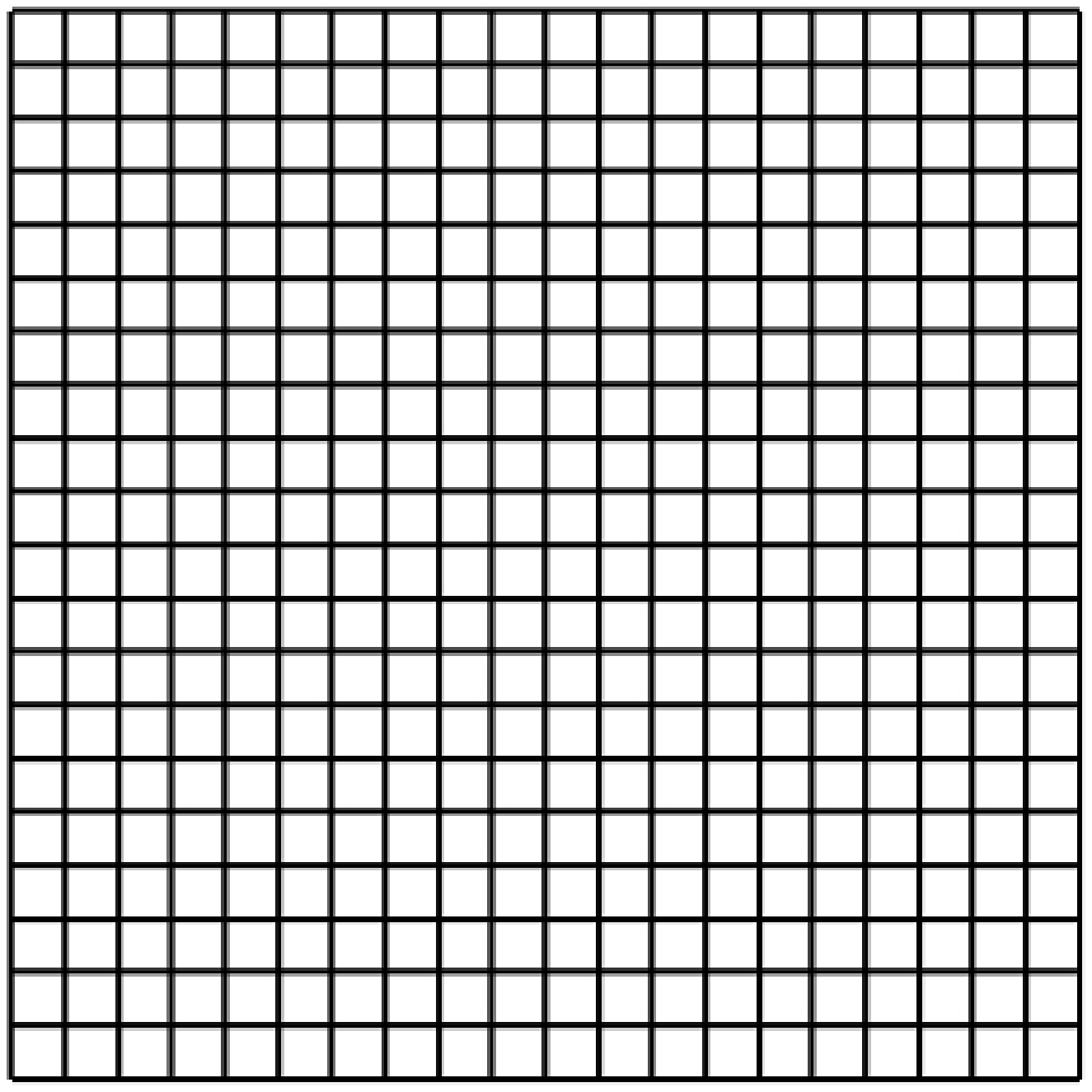}}
  \end{center}
  \caption{\textsf{Numerical $h$-convergence study:}~This 
    figure shows the typical computational meshes used in 
    the numerical convergence analysis. The meshes shown 
    in this figure have 21 nodes along each side of the 
    computational domain (i.e., XSeed = YSeed = 21). A 
    series of hierarchical computational meshes are 
    employed in the study with $11 \times 11$, $21 
    \times 21$, $41 \times 41$ and $81 \times 81$ 
    nodes.
  \label{Fig:2D_Numerical_Convergence_HierarchicalMeshes}}
\end{figure}

\begin{figure}
  \begin{center}
  \subfigure[Concentration: No constraints]{\includegraphics[scale=0.12]
    {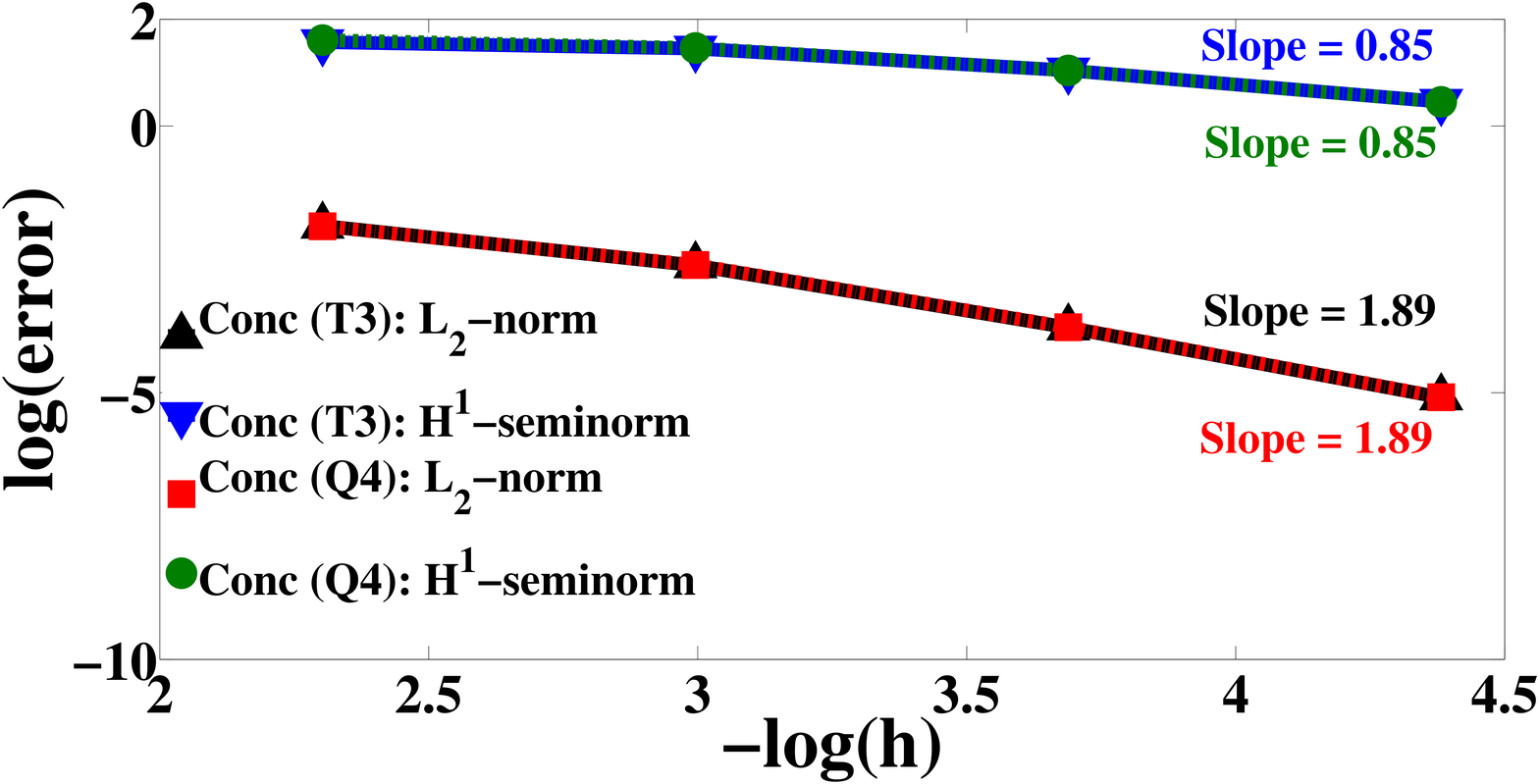}}  
  \subfigure[Concentration: LSB constraints]{\includegraphics[scale=0.12]
    {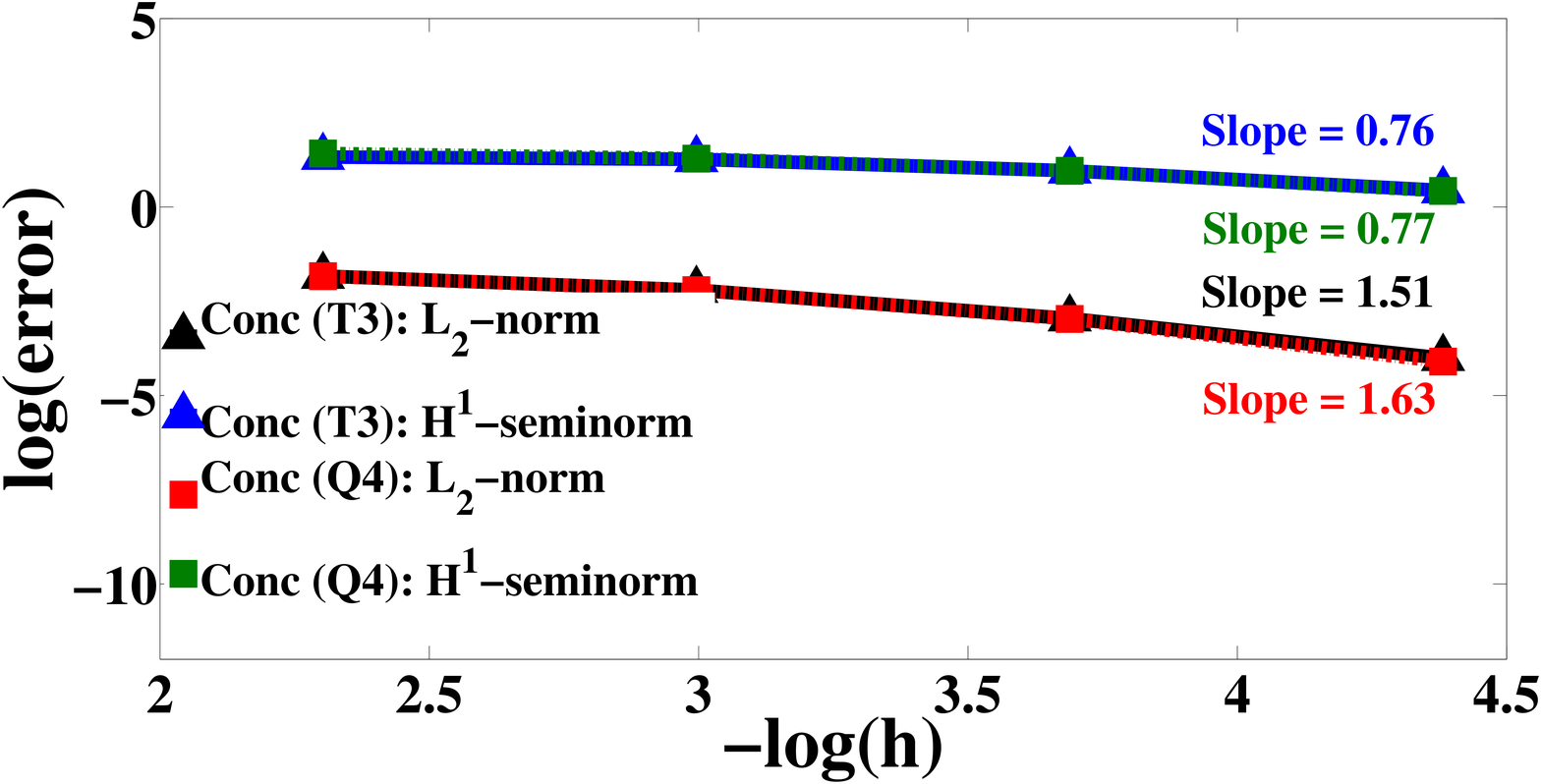}}    
  \subfigure[Flux: No constraints]{\includegraphics[scale=0.12]
    {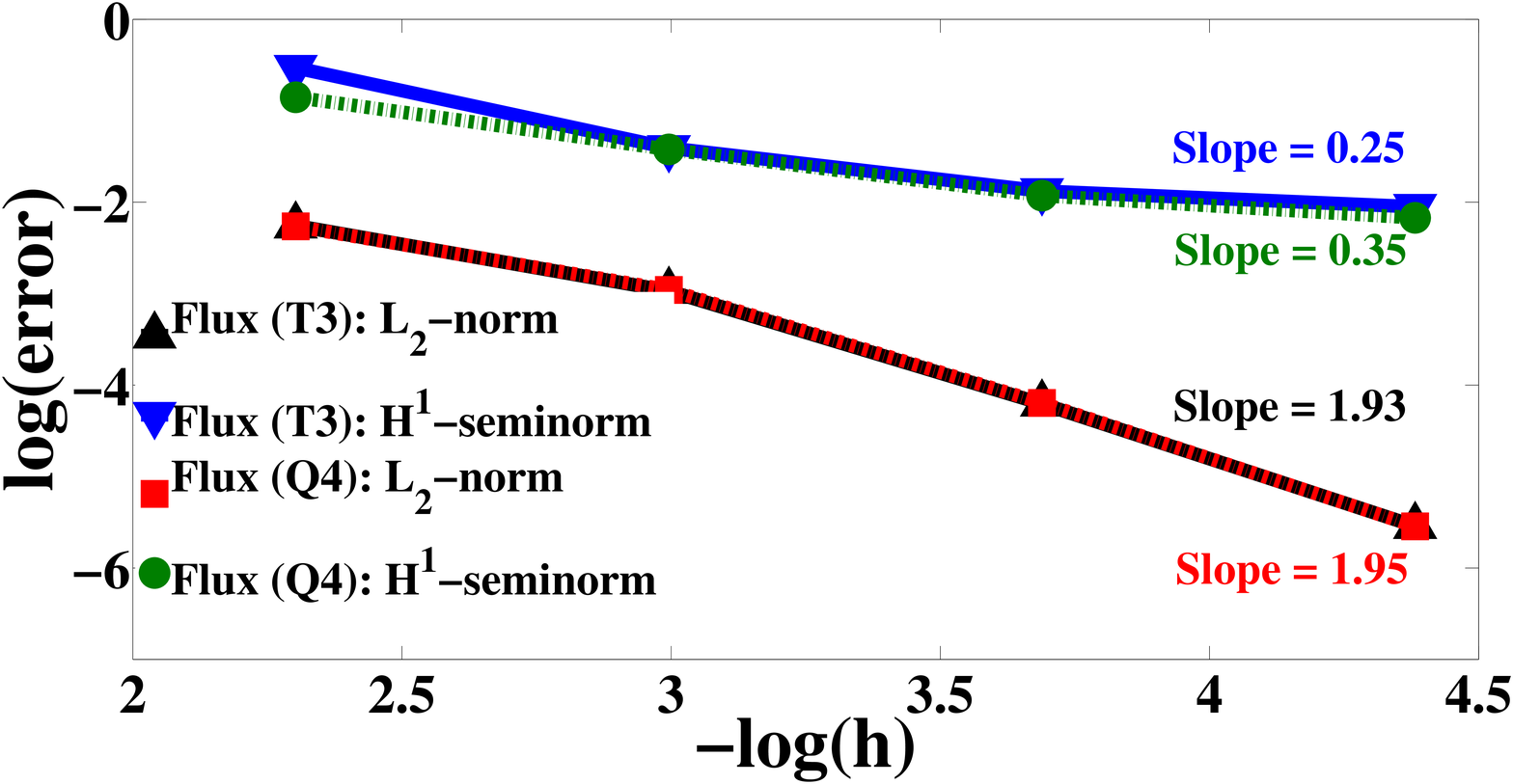}}  
  \subfigure[Flux: LSB constraints]{\includegraphics[scale=0.12]
    {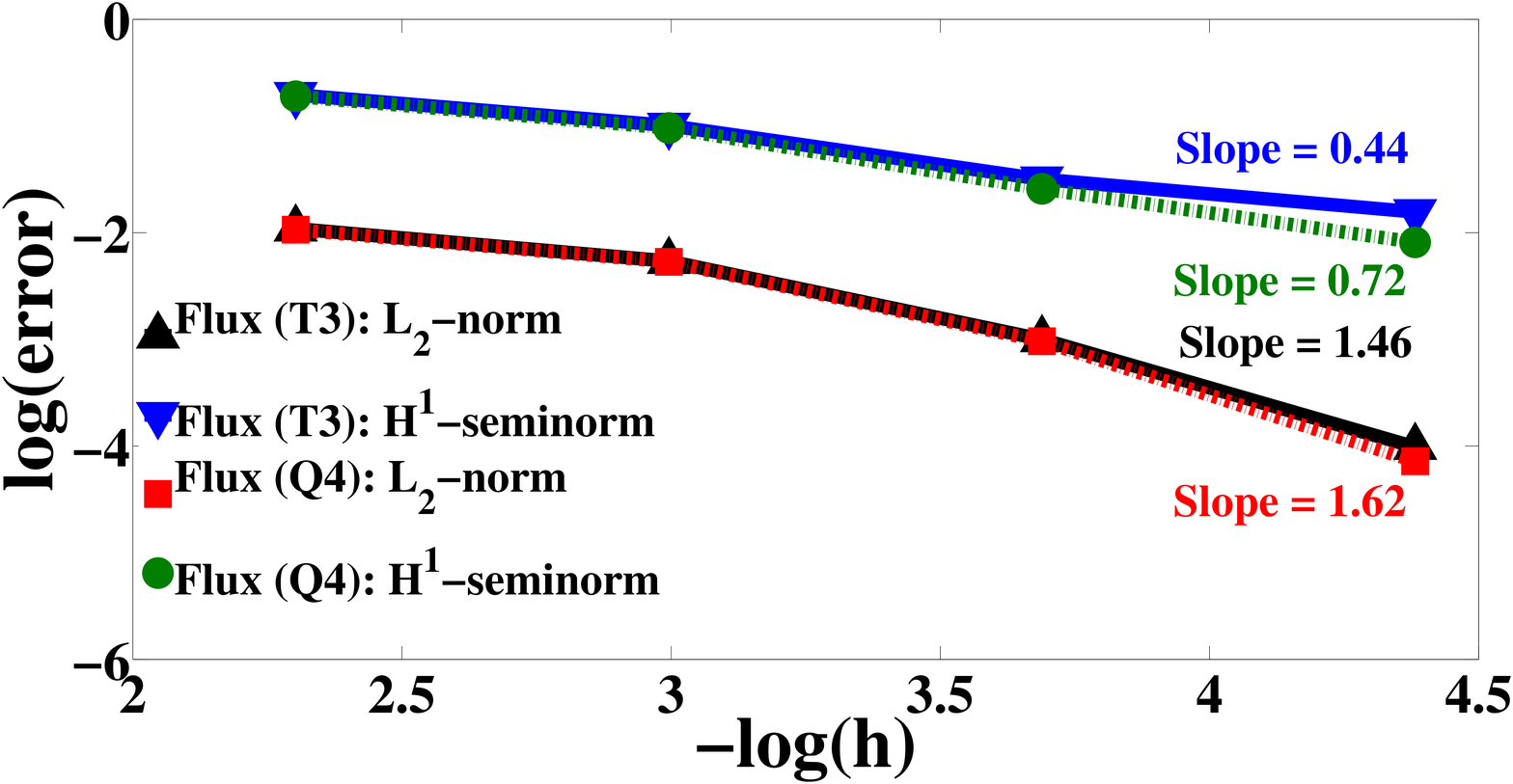}}      
  \end{center}
  \caption{\textsf{Numerical $h$-convergence study:}~This 
    figure shows the convergence rates for the concentration 
    and flux vector in $L_2$-norm and $H^1$-semi-norm with and 
    without LSB constraints. Convergence studies are performed 
    using T3- and Q4-based meshes under the negatively stabilized 
    streamline diffusion LSFEM. It is evident that the Q4 element 
    slightly outperforms the T3 element in terms of rates of convergence.
  \label{Fig:2D_NumConvDiff1by100_ConcFluxT3Q4_NoAndLSB_Constraints}}
\end{figure}


\begin{figure}
  \begin{center}
  \subfigure[T3 mesh: Error in LSB]
    {\includegraphics[scale=0.31]
    {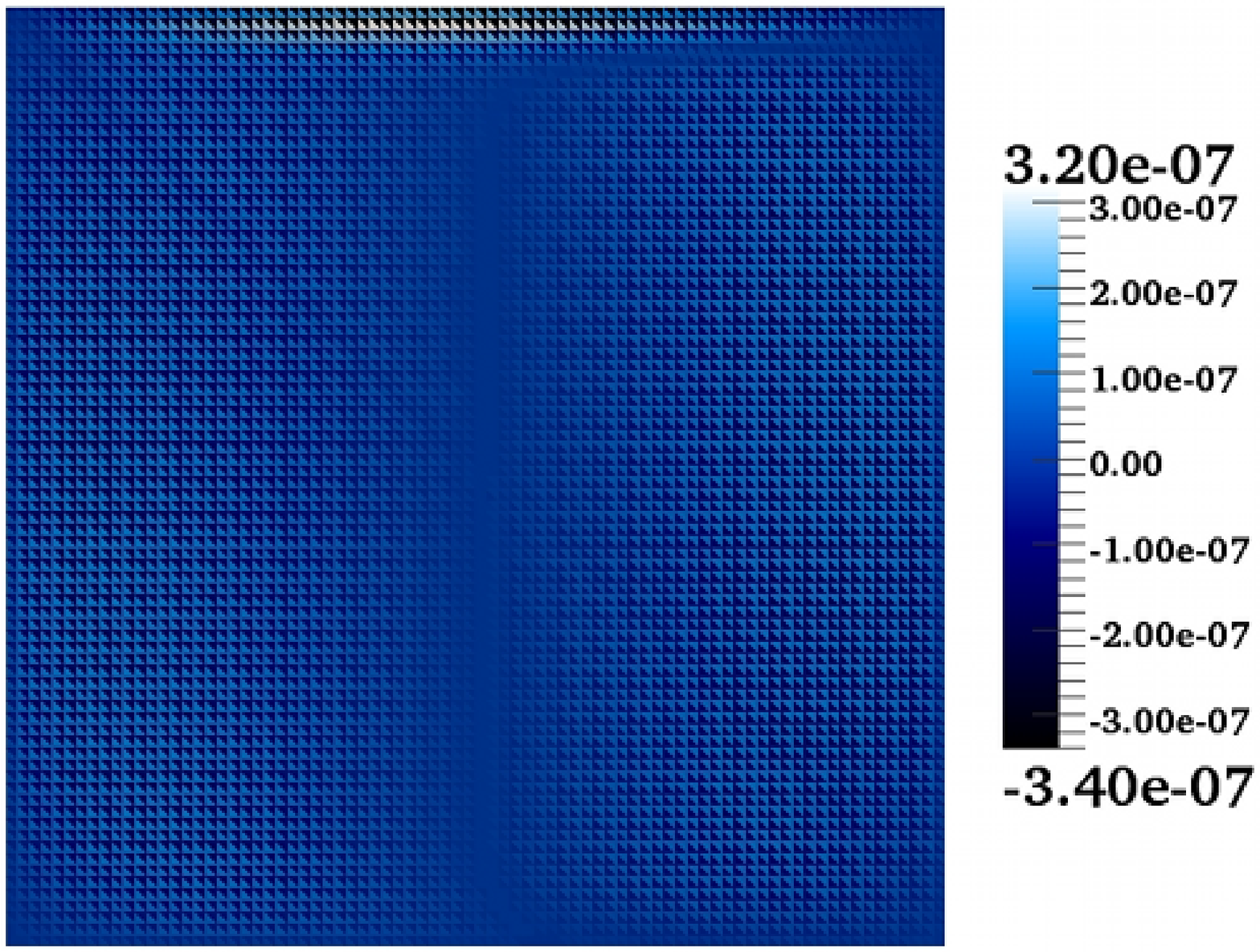}}
  \subfigure[T3 mesh: Lagrange multiplier enforcing LSB]
    {\includegraphics[scale=0.31]
    {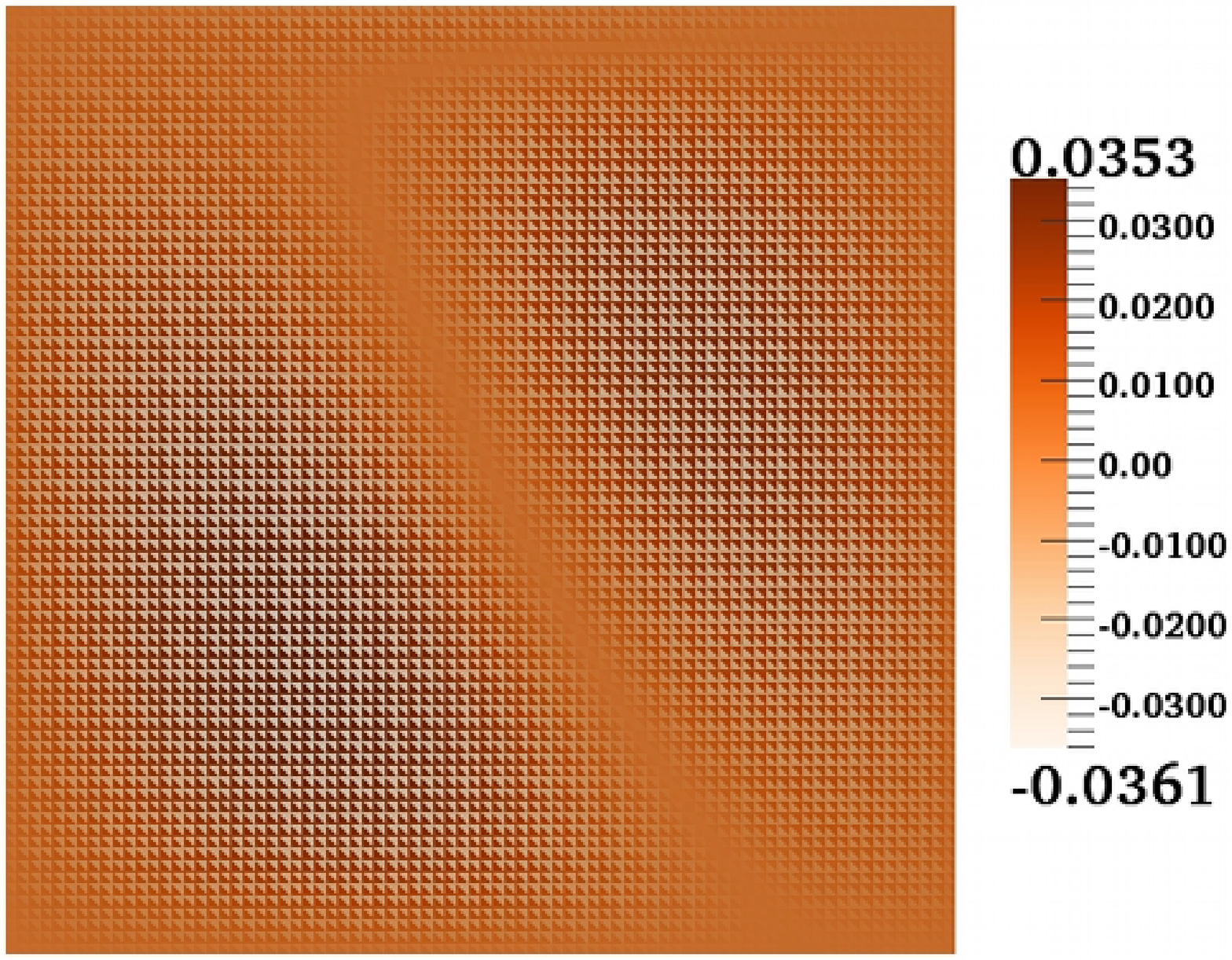}} 
  \subfigure[Q4 mesh: Error in LSB]
    {\includegraphics[scale=0.31]
    {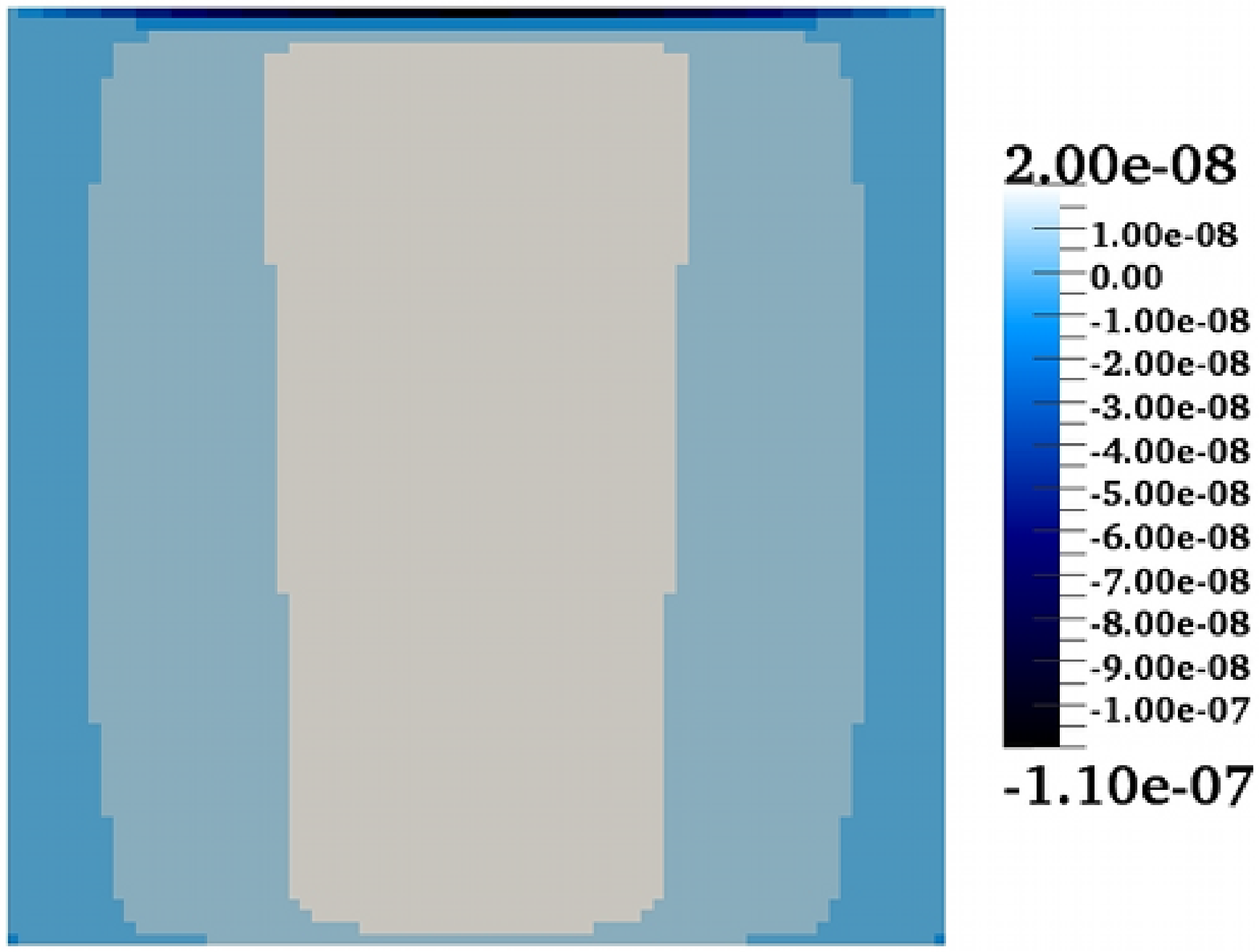}}
  \subfigure[Q4 mesh: Lagrange multiplier enforcing LSB]
    {\includegraphics[scale=0.31]
    {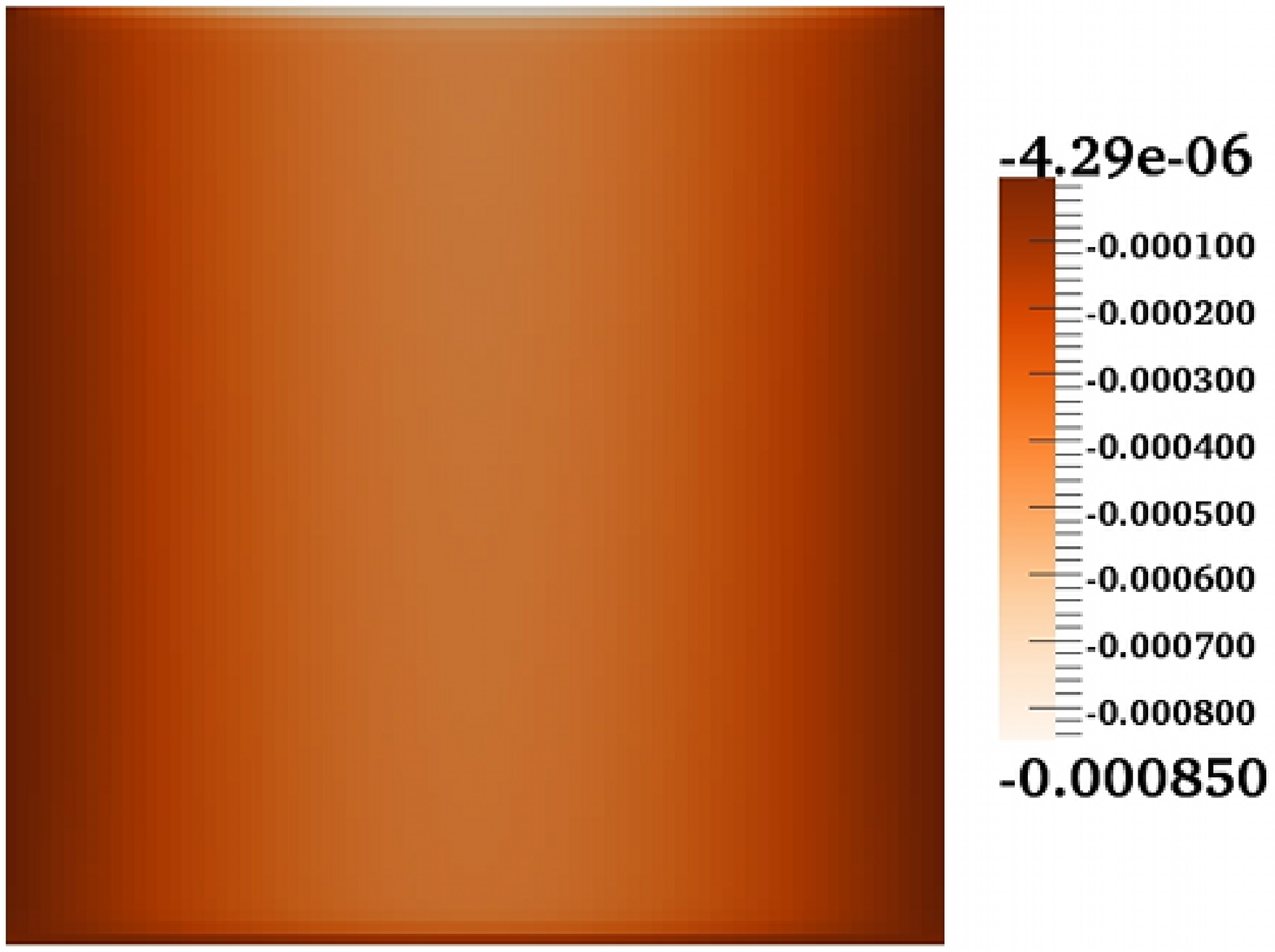}}  
  \end{center}
  \caption{\textsf{Numerical $h$-convergence study:}~The top 
    and bottom left figures show the contours of error incurred in satisfying 
    LSB for unconstrained negatively stabilized streamline diffusion LSFEM. 
    The right set of figures show the contours of Lagrange multiplier enforcing 
    LSB constraint using the proposed LSFEM. Note that the Lagrange multipliers 
    enforcing the LSB constraint can have negative value as opposed to KKT 
    multipliers. Numerical simulations are performed based on three-node 
    triangular mesh and four-node quadrilateral mesh with 81 nodes on each 
    side of the domain. In essence, the LSB errors and Lagrange multipliers 
    enforcing LSB based on a Q4 mesh is lesser than a T3 mesh.
  \label{Fig:2D_NumConvDiff1by100_LocMass_LambdaLMBT3MeshAllFourLSFEM}}
\end{figure}

\begin{figure}
  \subfigure[LSB errors]{\includegraphics[scale=0.12]
    {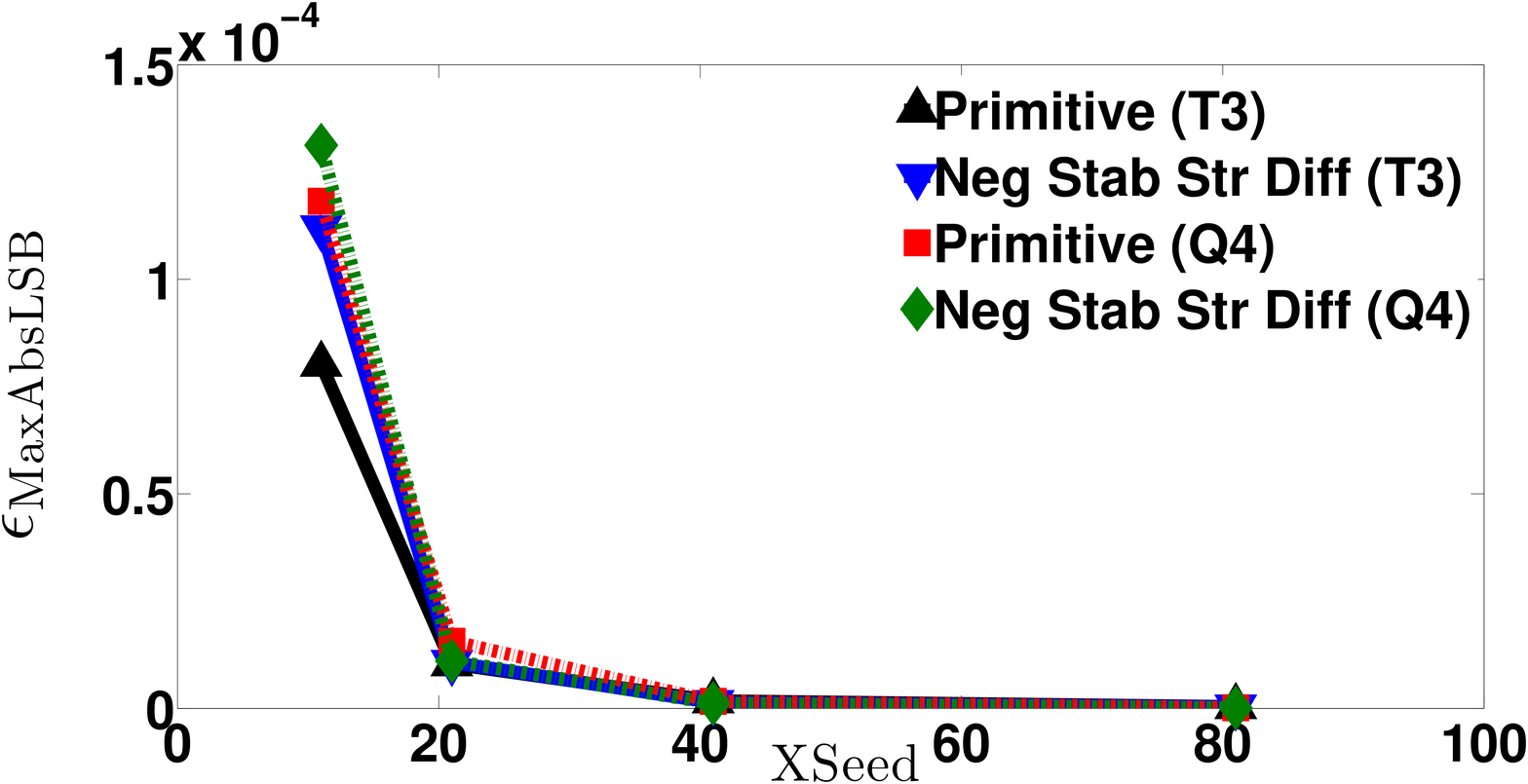}}
  \subfigure[GSB errors]{\includegraphics[scale=0.12]
    {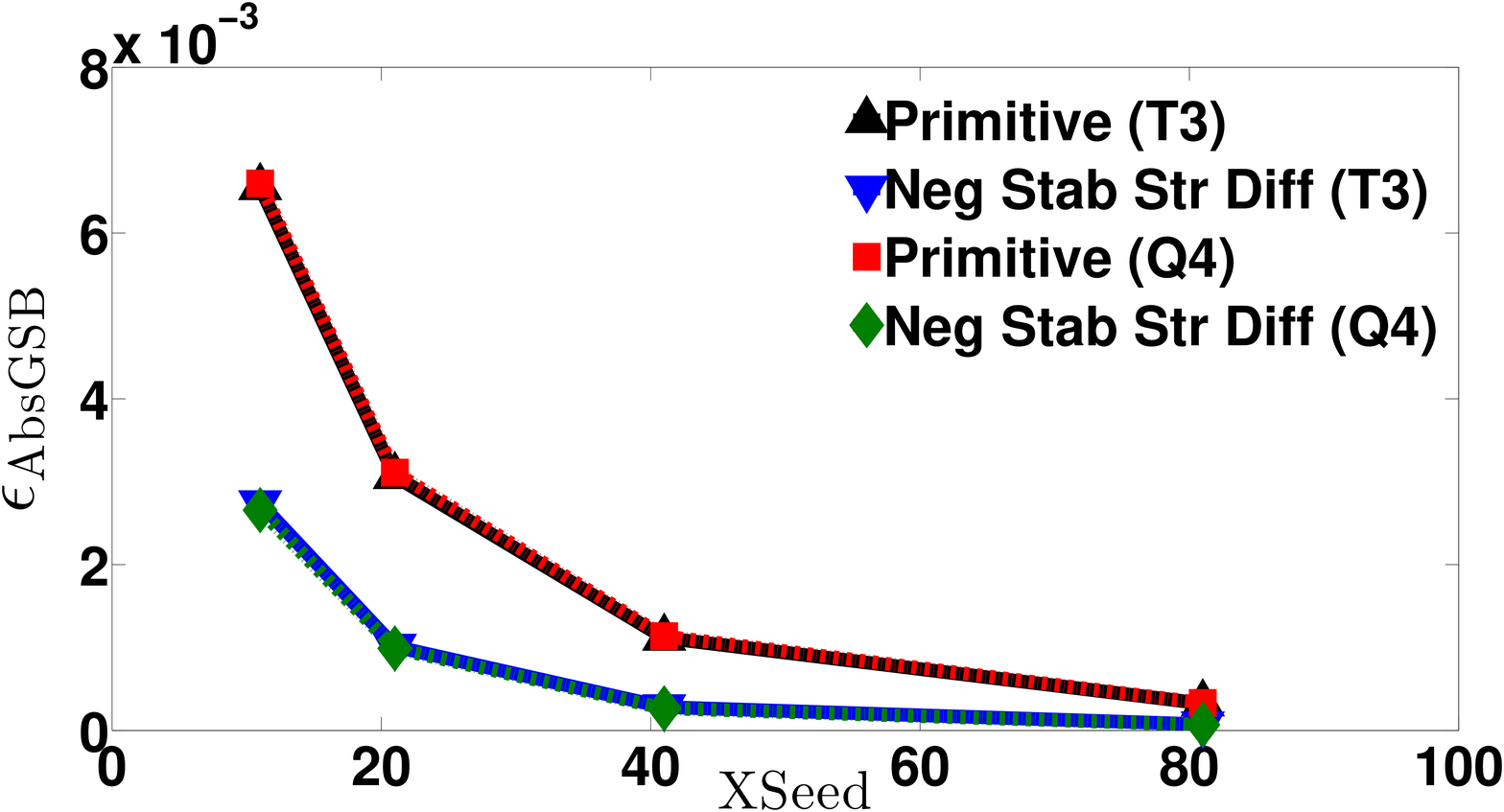}}
  \caption{\textsf{Numerical $h$-convergence study:}~These 
    figures show the decrease of $\epsilon_{\text{\tiny {MaxAbsLSB}}}$ 
    and $\epsilon_{\text{\tiny {AbsGSB}}}$ with respect to XSeed for a 
    series of hierarchical three-node triangular and four-node 
    quadrilateral meshes. (See equations 
    \eqref{Eqn:Errors_in_LSB_GSB}--\eqref{Eqn:AbsError_in_GSB} for 
    the definitions of $\epsilon_{\text{\tiny {MaxAbsLSB}}}$ 
    and $\epsilon_{\text{\tiny {AbsGSB}}}$.)
    Numerical simulations are performed using the 
    unconstrained primitive and negatively stabilized 
    streamline diffusion LSFEMs. 
    For XSeed = 81, $\epsilon_{\text{\tiny {MaxAbsLSB}}}$ and 
    $\epsilon_{\text{\tiny {AbsGSB}}}$ are in $\mathcal{O}(10^{-6})$. 
    In addition, the decrease in LSB and GSB errors with respect 
    to $h$-refinement is slow, and the values are not close to 
    the machine precision. 
  \label{Fig:2D_NumConvDiff1by100_ErrorsLMBandGMB_AllFourLSFEMs}}
\end{figure}

\begin{figure}
  \subfigure[T3 mesh]{\includegraphics[scale=0.12]
    {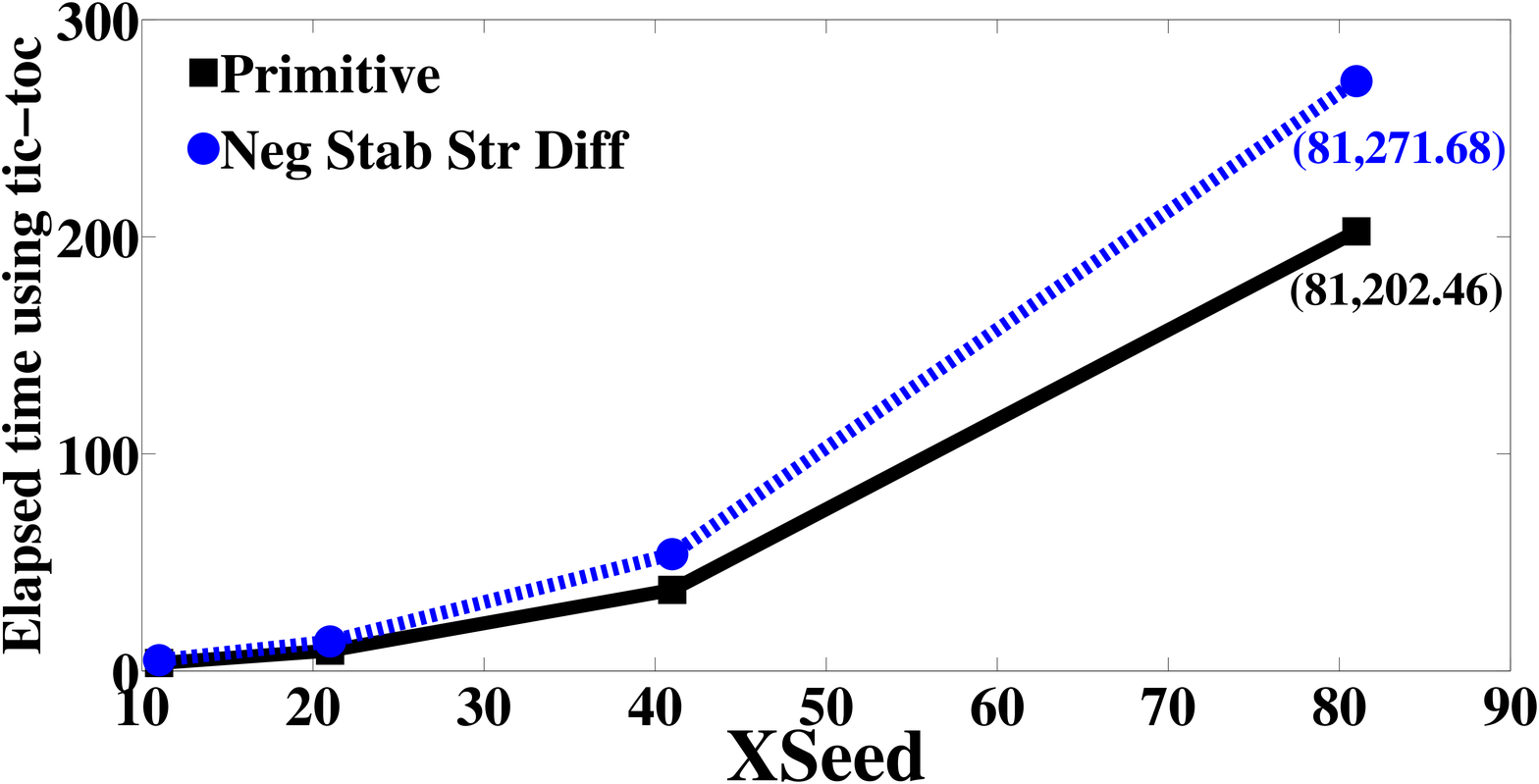}}
  \subfigure[Q4 mesh]{\includegraphics[scale=0.12]
    {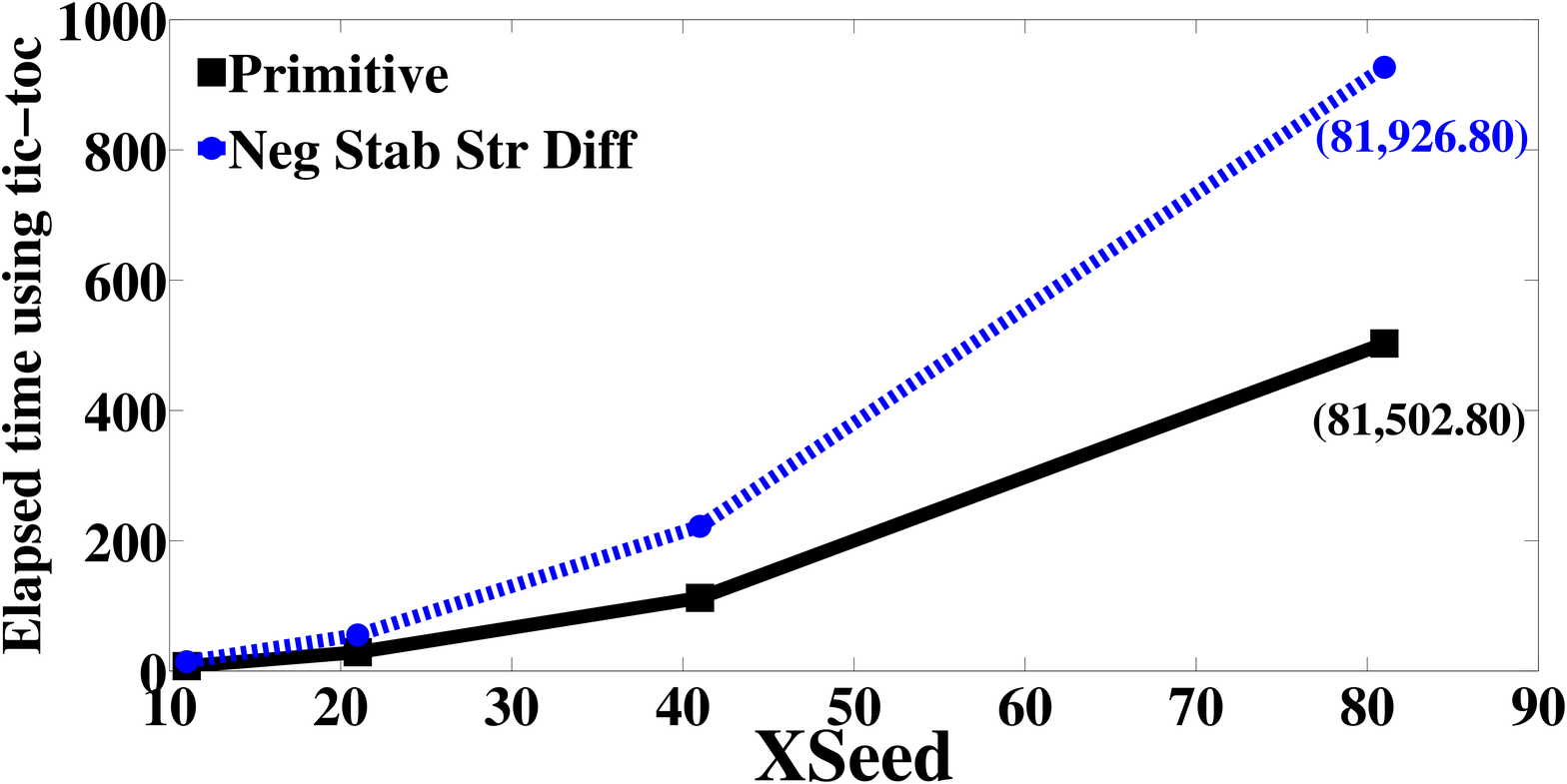}}
  \caption{\textsf{Numerical $h$-convergence study:}~This figure shows 
    the CPU time (in seconds) of the proposed computational framework 
    for unconstrained primitive and unconstrained negatively stabilized 
    streamline diffusion LSFEMs. For Q4 mesh, as 
    $\mathrm{div}[\mathrm{grad}[c]] \neq 0$, the computational cost 
    is higher than that of the T3 mesh.
  \label{Fig:2D_NumConvDiff1by100_TicTocNoConsT3Q4_AllFourLSFEMs}}
\end{figure}

\begin{figure}
  \subfigure[T3 mesh]{\includegraphics[scale=0.12]
    {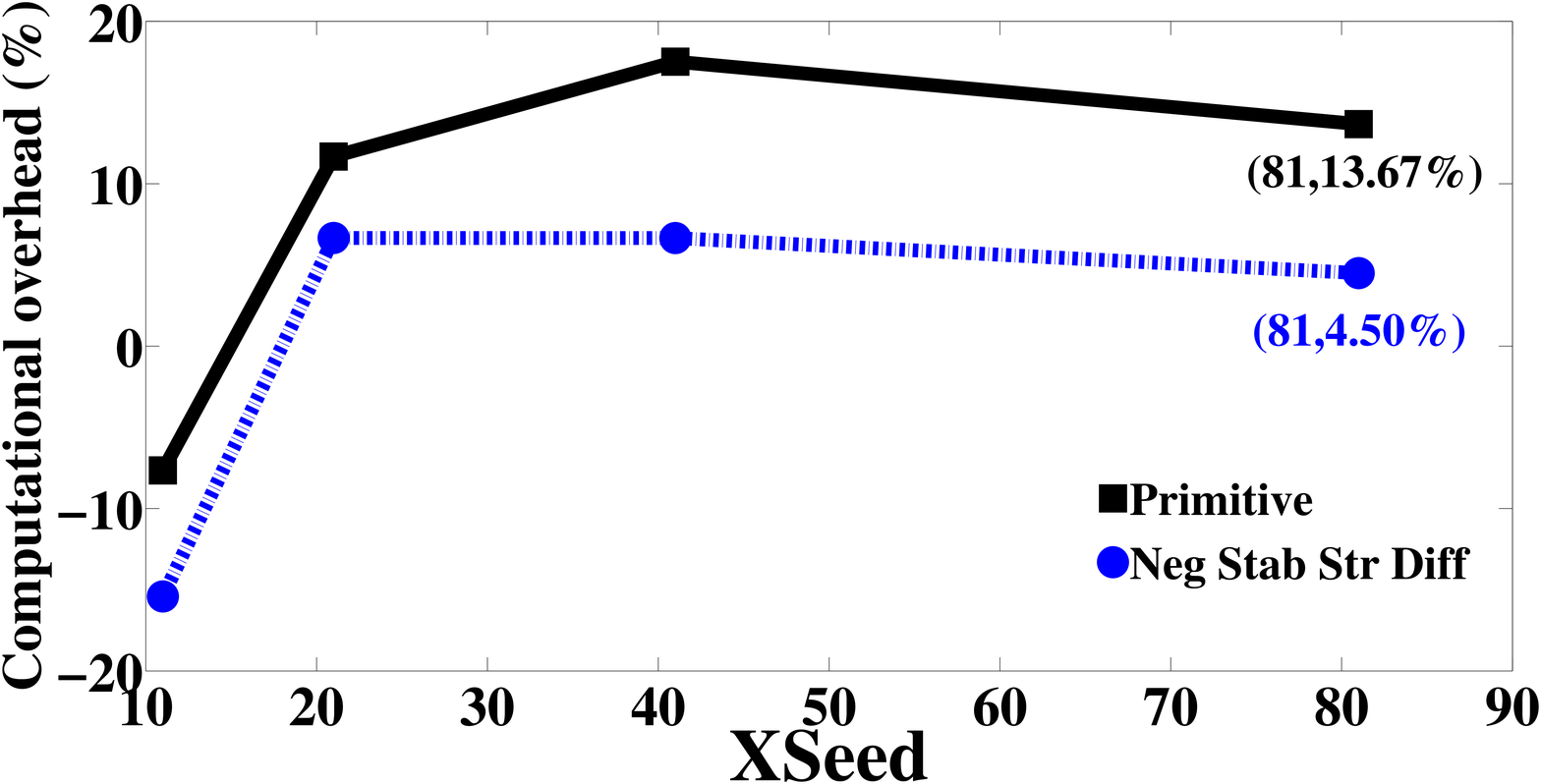}}
  \subfigure[Q4 mesh]{\includegraphics[scale=0.12]
    {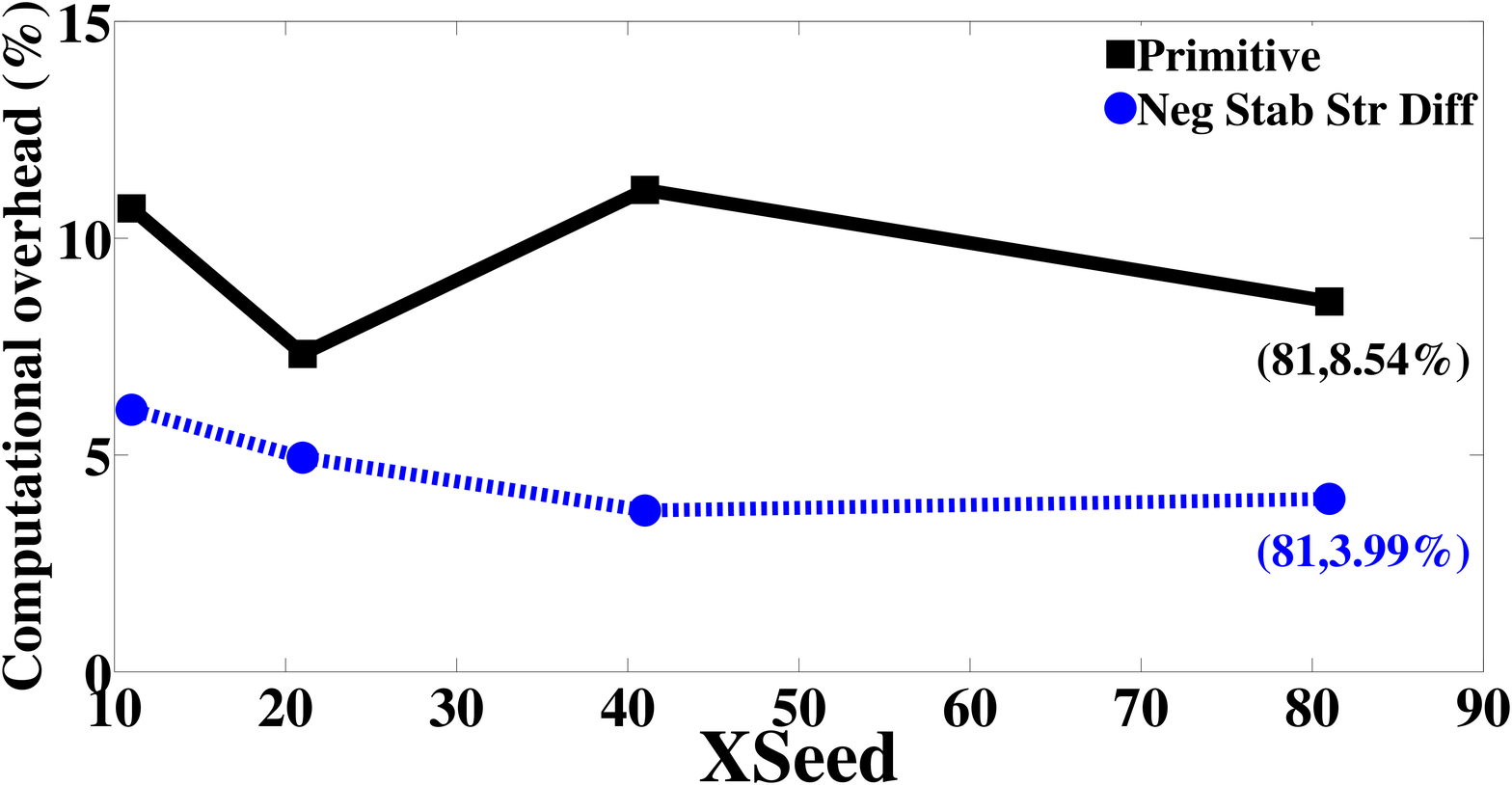}}
  \caption{\textsf{Numerical $h$-convergence study:}~This figure 
    shows the computational overhead incurred in satisfying LSB as 
    compared to that of the corresponding unconstrained formulations. 
    Analysis is performed for primitive and negatively stabilized 
    streamline diffusion LSFEMs. For XSeed = 11, we obtained negative 
    value for the computational overhead. This is because the 
    \texttt{interior point convex algorithm} used in \textsf{MATLAB 
    optimization solver} \cite{MATLAB_2015a} pre-processes the 
    constrained convex quadratic programming problem simplifies 
    the given LSB constraints by removing redundancies. Hence, 
    for very low number of unknowns, the computational cost 
    associated with \texttt{interior point convex algorithm} is 
    much faster than the \texttt{LU solver} for the unconstrained 
    optimization problem.
    \label{Fig:2D_NumConvDiff1by100_TicTocLocMassT3Q4_AllFourLSFEMs}}
\end{figure}


\begin{figure}
  \psfrag{A}{$(0,0)$}
  \psfrag{B}{$(1,0)$}
  \psfrag{C}{$(1,0.5)$}
  \psfrag{D}{$(0,0.5)$}
  \psfrag{BC1}{$c = 0$}
  \psfrag{BC2}{\rotatebox{-90}{$c = 2y$}}
  \psfrag{BC3}{$c = 1$}
  \psfrag{BC4}{\rotatebox{90}{$c = 1$}}
  \psfrag{v}{$\mathbf{v} = 2y \mathbf{\hat{e}}_x$}
  \psfrag{f}{$f = 0$}
  \psfrag{alpha}{$\alpha = 0$}
  \psfrag{d}{$D = 10^{-4}$}
  \includegraphics[scale=0.55]{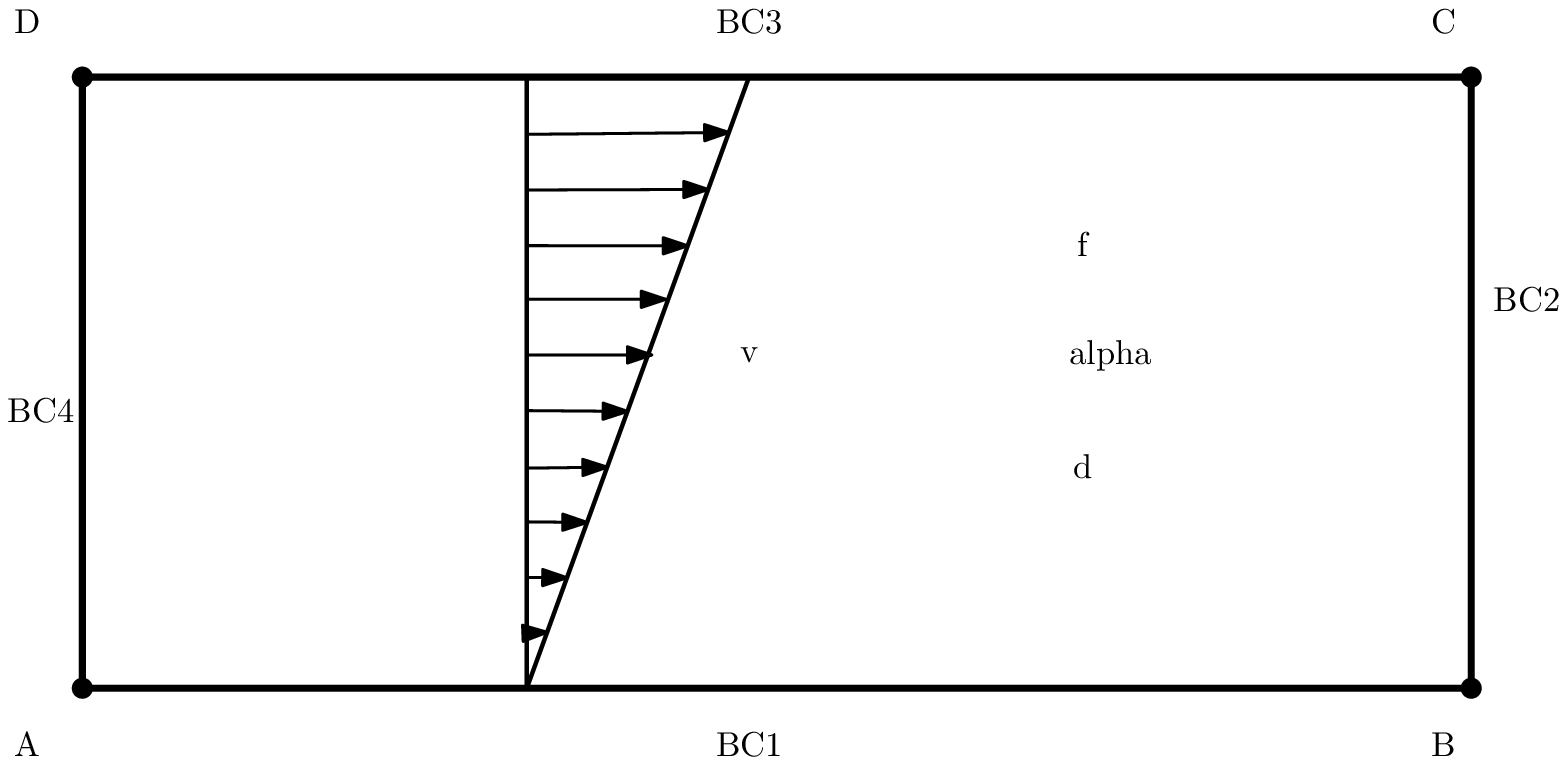}
  \caption{\textsf{Thermal boundary layer problem:}~This 
    figure shows a pictorial description of the boundary 
    value problem. Dirichlet boundary conditions are 
    prescribed on all four sides of the computational 
    domain. We have taken $c(\mathbf{x}) = 1$ at 
    $\mathbf{x} = (0,0)$. 
    \label{Fig:2D_ThermalBoundaryLayer}}
\end{figure}

\begin{figure}
  \centering
  \subfigure[Primitive (No constraints)]{\includegraphics[scale=0.35]
    {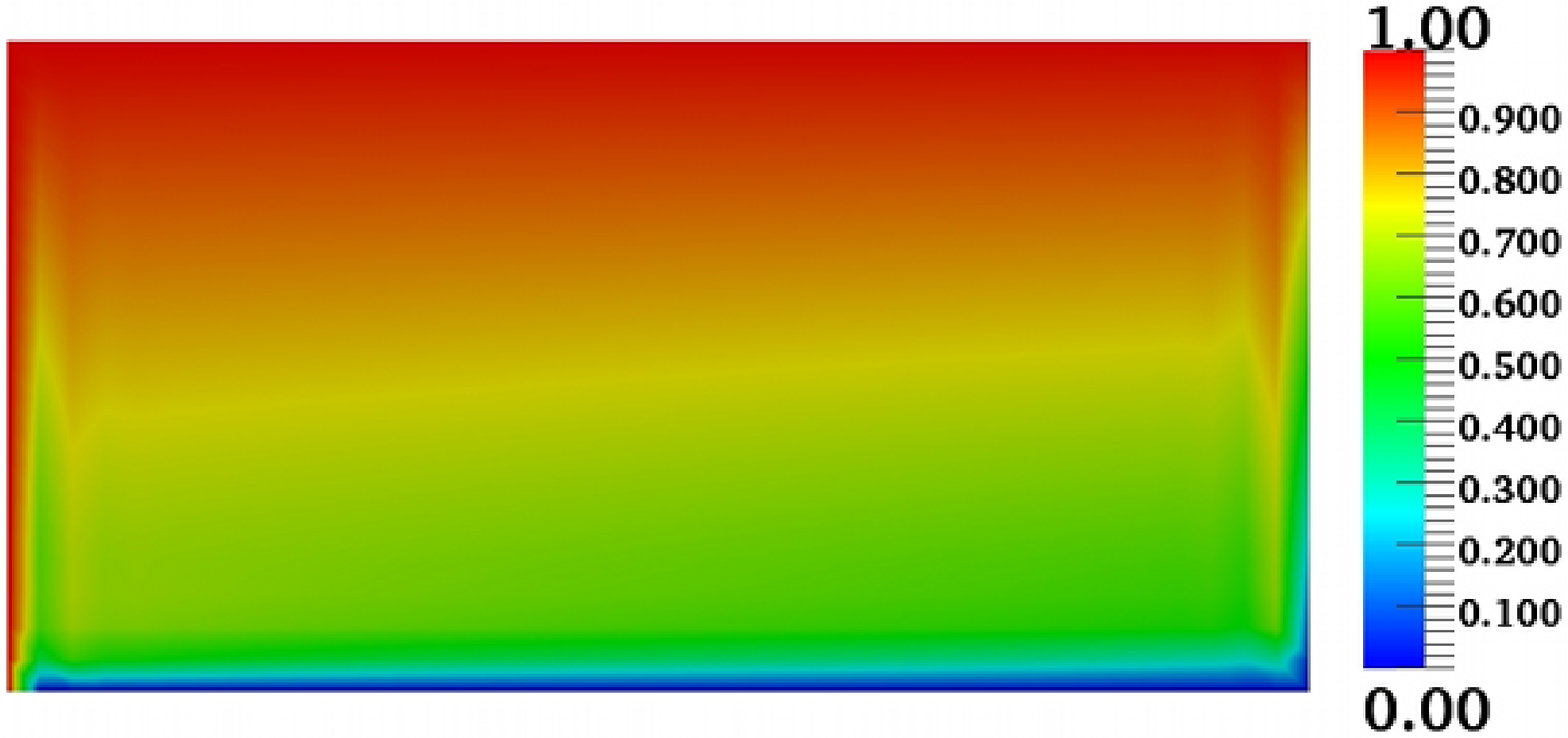}}
  \subfigure[Negatively stabilized streamline diffusion (LSB and NN constraints)]
    {\includegraphics[scale=0.35]
    {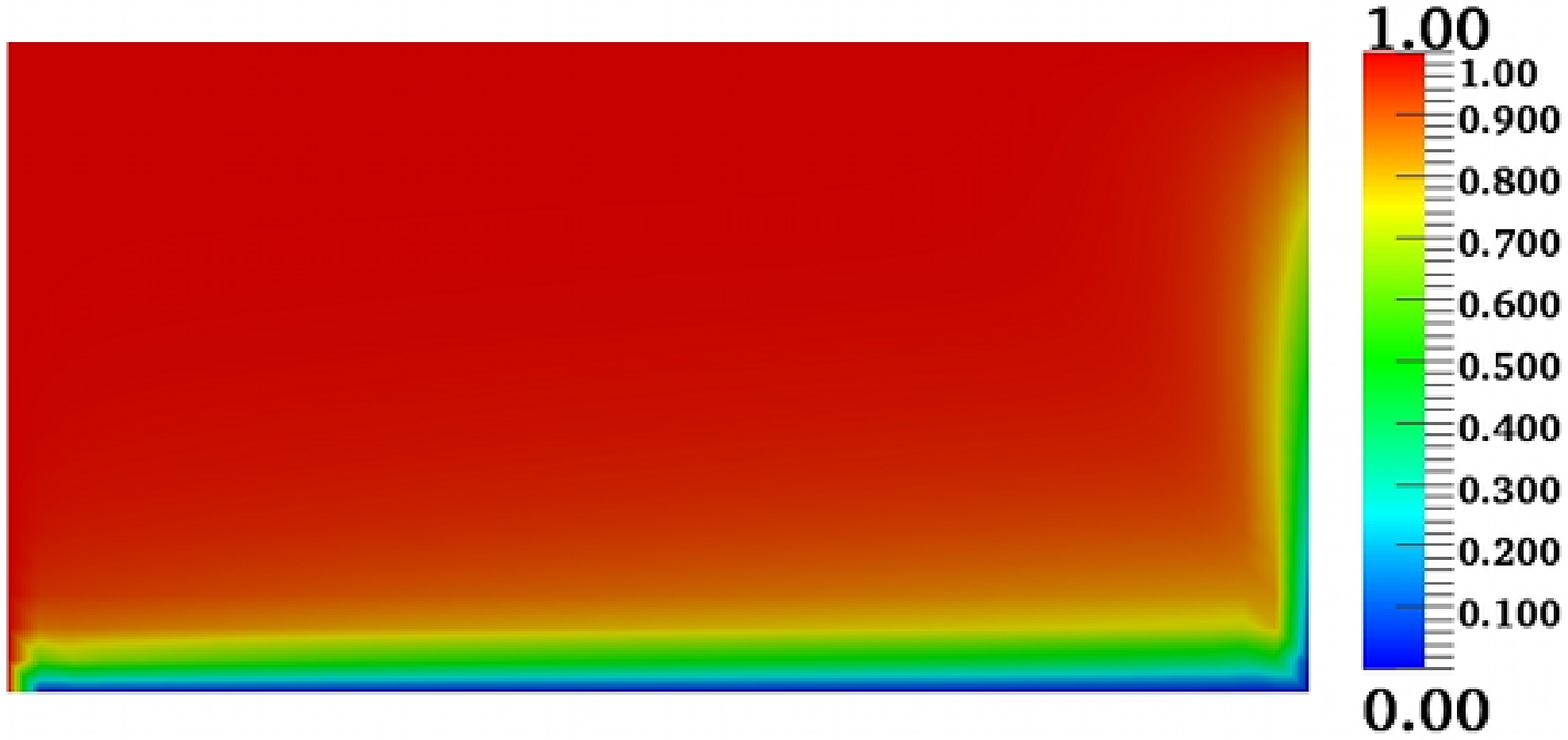}}
  \caption{\textsf{Thermal boundary layer problem:}~This 
    figure shows the contours of concentration obtained 
    for both unconstrained and constrained LSFEMs based 
    on Q4 finite element mesh. The proposed LSFEM-based 
    framework with NN and LSB constraints is able to 
    eliminate spurious oscillations near the boundaries 
    $y = 0$ and $x = 1$. This is not the case with the 
    primitive LSFEM.
    \label{Fig:2D_ThermalBoundaryLayer_Results}}
\end{figure}

\begin{figure}
  \centering
  \subfigure[Primitive]{\includegraphics[scale=0.35]
    {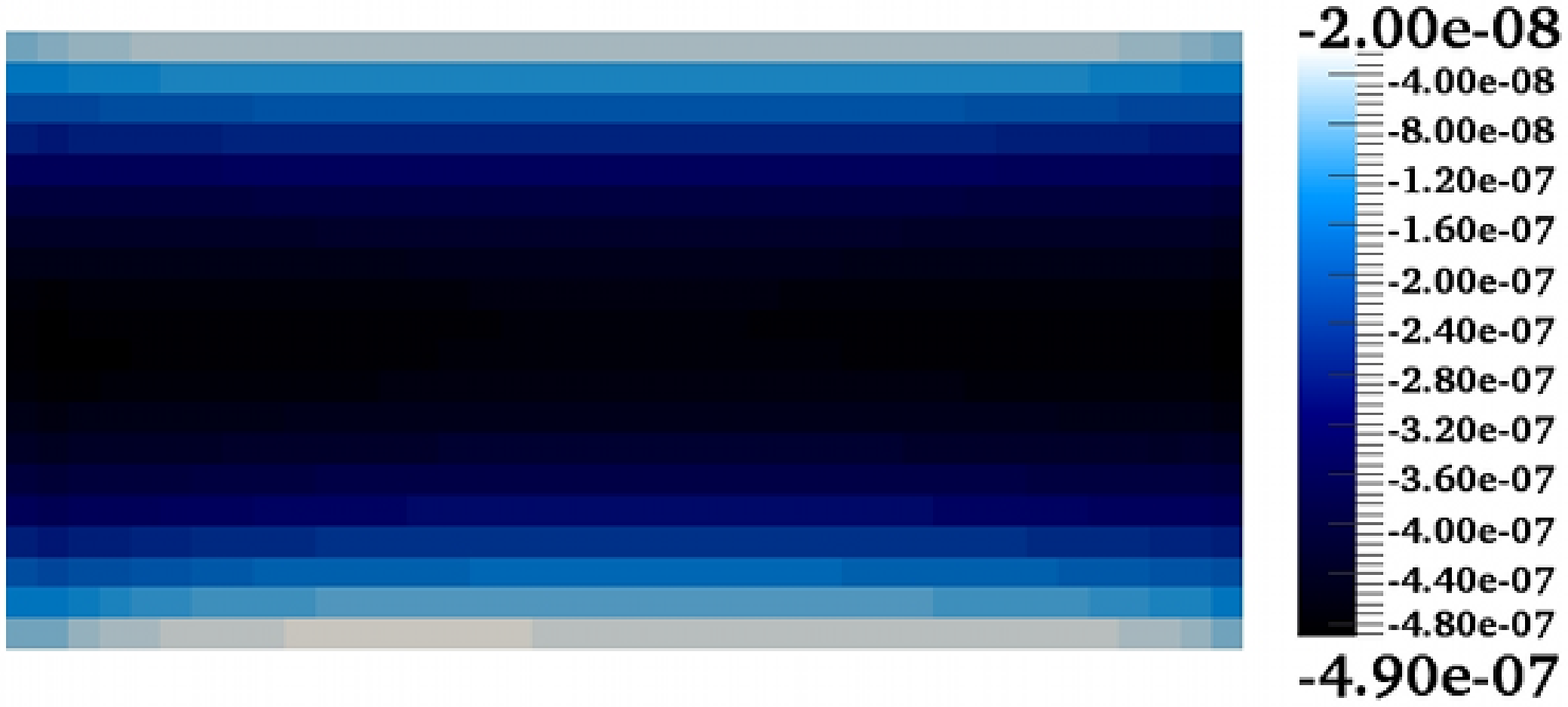}}
  \subfigure[Negatively stabilized streamline diffusion]{\includegraphics[scale=0.35]
    {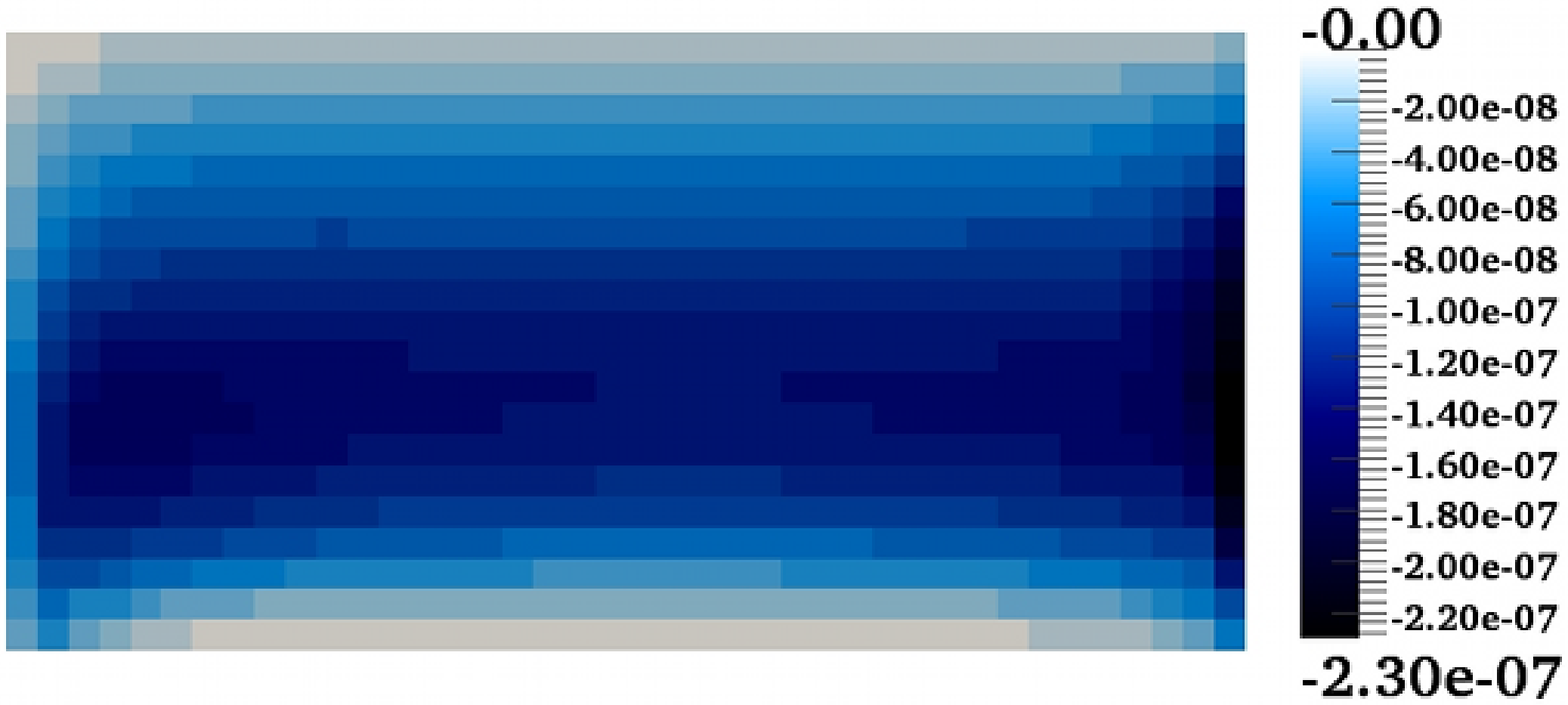}}
  \caption{\textsf{Thermal boundary problem:}~This figure shows 
    the contours of the error incurred in satisfying LSB for 
    various unconstrained LSFEM formulations using Q4 meshes. 
    One can notice that the error is more dominant in the 
    interior of the domain under the primitive LSFEM, whereas 
    the error is dominant at the boundary $x = 1$ under the 
    negatively stabilized streamline diffusion formulation.
    \label{Fig:2D_ThermalBoundaryLayer_LSB}}
\end{figure}


\begin{figure}
  \centering
  \psfrag{A1}{$c^{\mathrm{p}}_A(\mathbf{x} = 0) = 1$}
  \psfrag{A2}{$c^{\mathrm{p}}_B(\mathbf{x} = 0) = 0$}
  \psfrag{A3}{$c^{\mathrm{p}}_C(\mathbf{x} = 0) = 0$}
  \psfrag{B1}{$c^{\mathrm{p}}_A(\mathbf{x} = 1) = 0$}
  \psfrag{B2}{$c^{\mathrm{p}}_B(\mathbf{x} = 1) = 0 \; \mathrm{or} \; 1$}
  \psfrag{B3}{$c^{\mathrm{p}}_C(\mathbf{x} = 1) = 0$}
  \psfrag{C1}{$f_A = 0$}
  \psfrag{C2}{$f_B = 0 \; \mathrm{or} \; 1$}
  \psfrag{C3}{$f_C = 0$}
  \psfrag{x1}{$x = 0$}
  \psfrag{x2}{$x = 1$}
  \psfrag{d1}{$D = 0.0025$}
  \psfrag{v1}{$\mathbf{v} = v \mathbf{\hat{e}}_x$}
  \includegraphics[scale=0.9,clip]{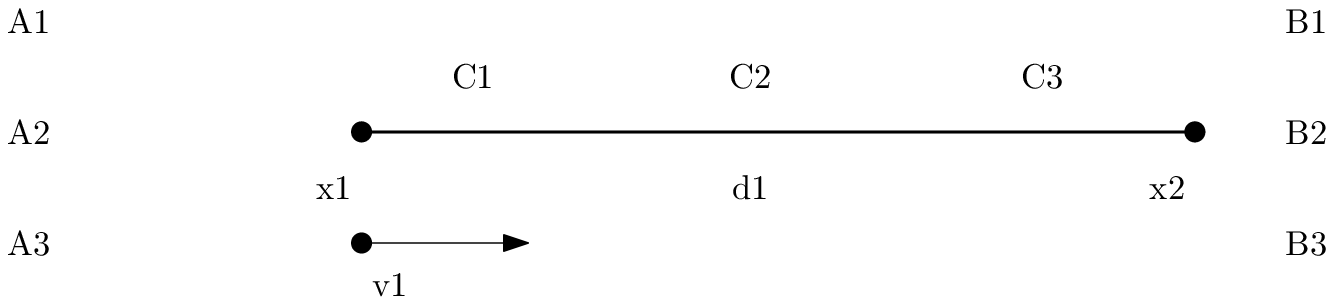}
  \caption{\textsf{1D irreversible bimolecular fast 
      reaction problem:}~A pictorial description 
    of the boundary value problem. For Case \#1: 
    $f_B(\mathbf{x}) = 1$ and $c^{\mathrm{p}}_B(\mathbf{x} 
    = 1) = 0$, and for Case \#2: $f_B(\mathbf{x}) = 0$ 
    and $c^{\mathrm{p}}_B(\mathbf{x} = 1) = 1$.
    \label{Fig:1D_Bimolecular_Type1_Type2}}
\end{figure}

\begin{figure}
  \centering
  \psfrag{cA}{$c_A(\mathrm{x})$}
  \psfrag{cB}{$c_B(\mathrm{x})$}
  \psfrag{cC}{$c_C(\mathrm{x})$}
  \psfrag{Peh5}{$\mathbb{P}\mathrm{e}_{h} = 5$}
  \psfrag{Peh20}{$\mathbb{P}\mathrm{e}_{h} = 20$}
  \subfigure{\includegraphics[scale = 0.31,clip]
    {ConcA_Pe5_PrimNegStab_Type1NLS.eps}}
  \subfigure{\includegraphics[scale = 0.31,clip]
    {ConcA_Pe20_PrimNegStab_Type1NLS.eps}}
  \subfigure{\includegraphics[scale = 0.31,clip]
    {ConcB_Pe5_PrimNegStab_Type1NLS.eps}}
  \subfigure{\includegraphics[scale = 0.31,clip]
    {ConcB_Pe20_PrimNegStab_Type1NLS.eps}}
  \subfigure{\includegraphics[scale = 0.31,clip]
    {ConcC_Pe5_PrimNegStab_Type1NLS.eps}}
  \subfigure{\includegraphics[scale = 0.31,clip]
    {ConcC_Pe20_PrimNegStab_Type1NLS.eps}}
  \caption{\textsf{1D irreversible bimolecular fast reaction problem 
    (Case \#1):}~This figure compares the concentration profile of 
    the reactants and the product for various element P\'{e}clet numbers 
    using unconstrained primitive and unconstrained negatively stabilized 
    streamline diffusion LSFEMs to that of the analytical solution. The 
    primitive LSFEM considerably deviated from the analytical solution. 
    Moreover, it violated the non-negative 
    and maximum constraints. On the other hand, the negatively stabilized streamline 
    diffusion LSFEM is able to capture the analytical solution exactly 
    in the entire domain even at high element P\'{e}clet numbers. 
  \label{Fig:1D_Bimolecular_Type1_concC_Unconstrained}}
\end{figure}

\begin{figure}
  \centering
  \psfrag{cA}{$c_A(\mathrm{x})$}
  \psfrag{cB}{$c_B(\mathrm{x})$}
  \psfrag{cC}{$c_C(\mathrm{x})$}
  \psfrag{Peh1}{$\mathbb{P}\mathrm{e}_{h} = 1$}
  \psfrag{Peh5}{$\mathbb{P}\mathrm{e}_{h} = 5$}
  \subfigure{\includegraphics[scale = 0.31,clip]
    {ConcA_Pe1_PrimNegStab_Type2NLS.eps}}
  \subfigure{\includegraphics[scale = 0.31,clip]
    {ConcA_Pe5_PrimNegStab_Type2NLS.eps}}
  \subfigure{\includegraphics[scale = 0.31,clip]
    {ConcB_Pe1_PrimNegStab_Type2NLS.eps}}
  \subfigure{\includegraphics[scale = 0.31,clip]
    {ConcB_Pe5_PrimNegStab_Type2NLS.eps}}
  \subfigure{\includegraphics[scale = 0.31,clip]
    {ConcC_Pe1_PrimNegStab_Type2NLS.eps}}
  \subfigure{\includegraphics[scale = 0.31,clip]
    {ConcC_Pe5_PrimNegStab_Type2NLS.eps}}
  \caption{\textsf{1D irreversible bimolecular fast reaction problem 
    (Case \#2):}~This figure compares the concentration profile of 
    the chemical species $A$, $B$, and $C$ for various element P\'{e}clet 
    numbers using unconstrained primitive and unconstrained negatively 
    stabilized streamline diffusion LSFEMs to that of the analytical 
    solution. The negatively stabilized 
    streamline diffusion LSFEM is able to capture the features near 
    the boundary layer with considerable accuracy even on coarse meshes. 
    \label{Fig:1D_Bimolecular_Type2OtherSet_concABC_Unconstrained}}
  \vspace{0.22in}
\end{figure}


\begin{figure}
  \centering
  \psfrag{A}{$A$}
  \psfrag{B}{$B$}
  \psfrag{O}{$(0,0)$}
  \psfrag{cpa}{$c^{\mathrm{p}}_{A}$}
  \psfrag{cpb}{$c^{\mathrm{p}}_{B}$}
  \psfrag{f1}{$f_i(\mathbf{x}) = 0$}
  \psfrag{h1}{$c^{\mathrm{p}}_i(\mathbf{x}) = 0$}
  \psfrag{h2}{\rotatebox{-90}{$c^{\mathrm{p}}_i(\mathbf{x}) = 0$}}
  \psfrag{h3}{$c^{\mathrm{p}}_i(\mathbf{x}) = 0$}
  \psfrag{Lx}{$L_x$}
  \psfrag{Ly1}{\rotatebox{90}{$L_y/2$}}
  \psfrag{Ly2}{\rotatebox{90}{$L_y/2$}}
  \subfigure[Problem description]{\includegraphics[scale=0.7]
    {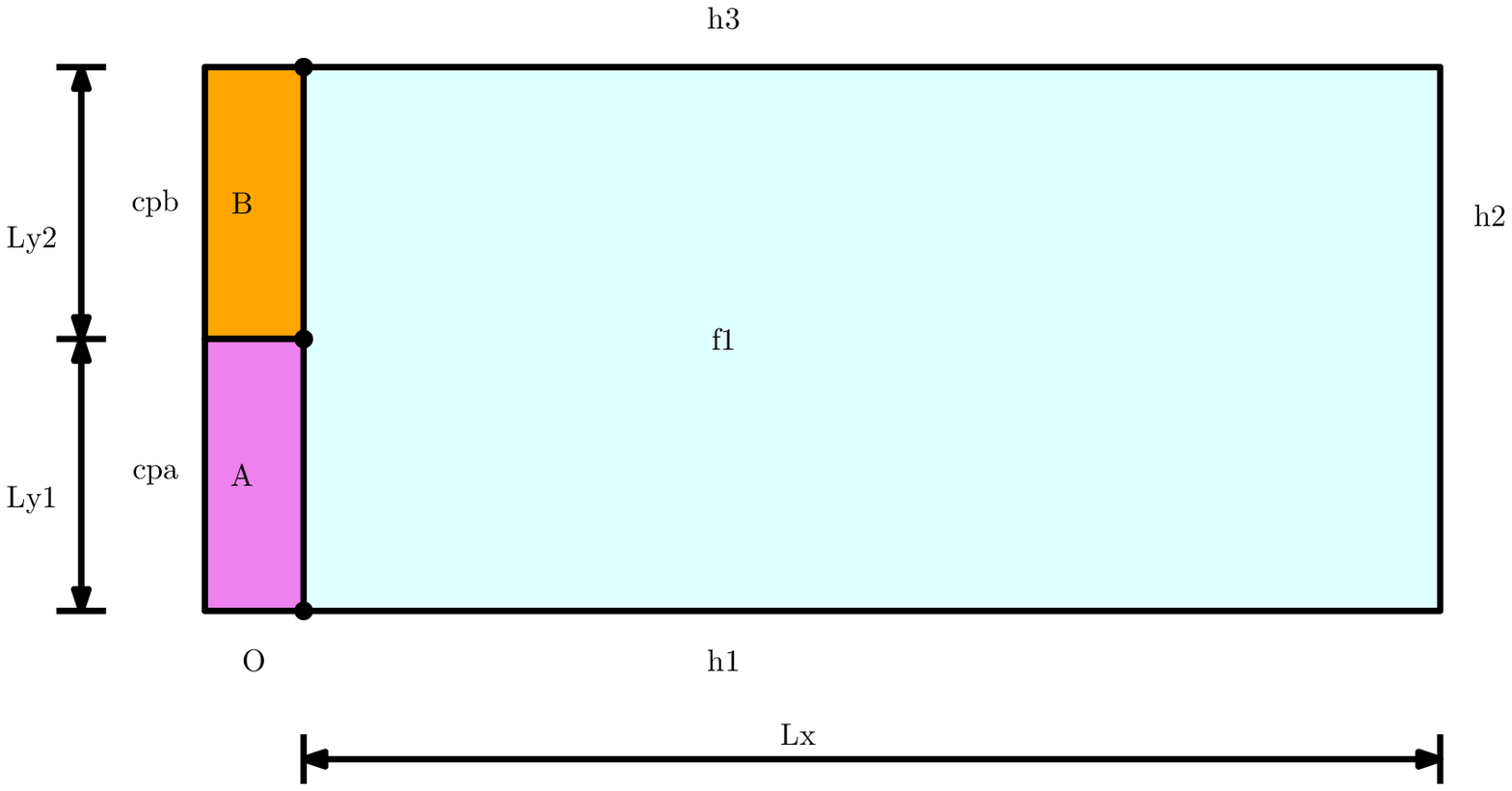}}
  \hspace{0.05in}
  \subfigure[Stream function and advection velocity vector field]
    {\includegraphics[width = 4.5in, height = 2.75in,clip]
    {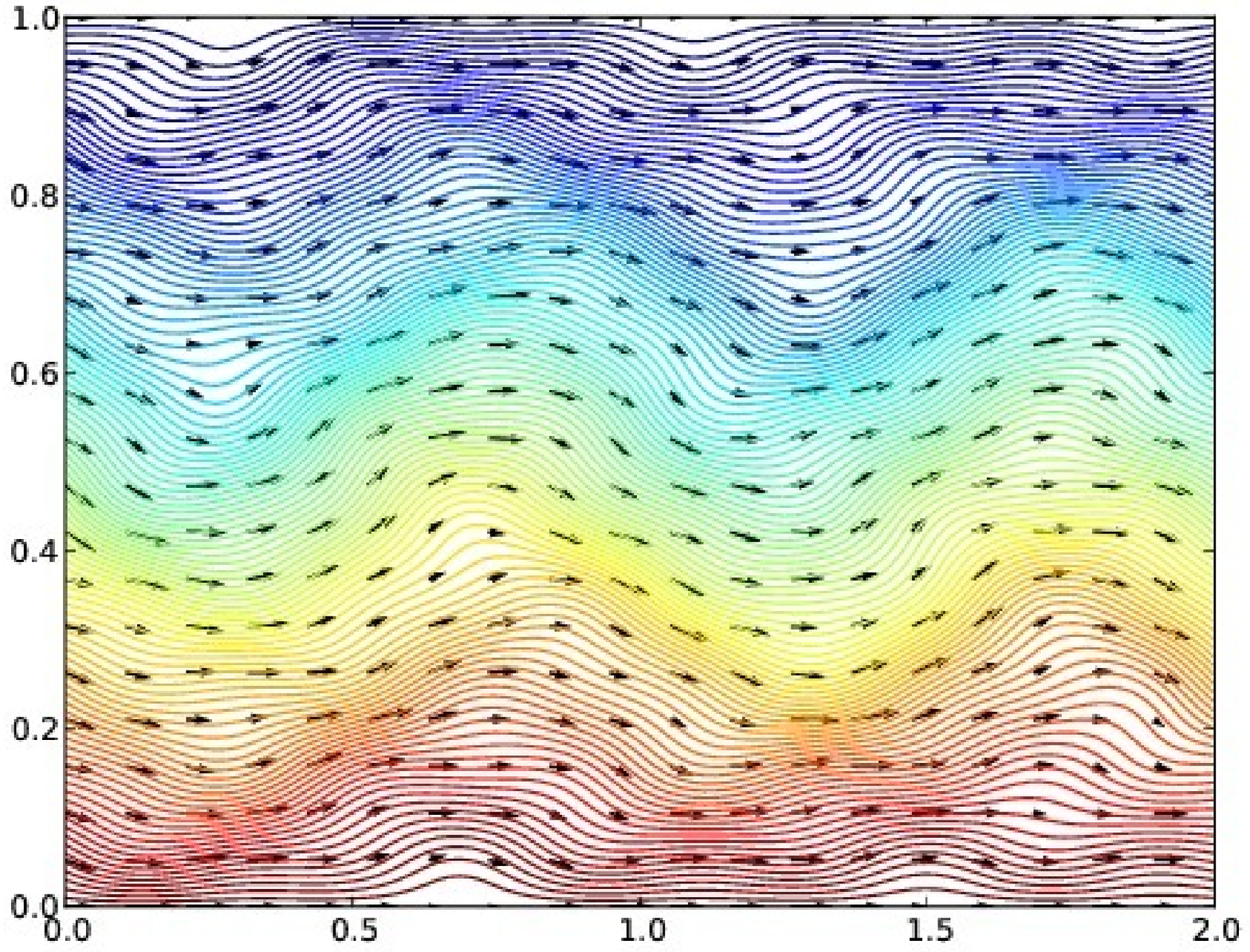}}
  \caption{\textsf{Plume development from boundary in a reaction 
    tank:}~The top figure provides a pictorial description of the 
    boundary value problem. The bottom figure shows the contours 
    of the stream function corresponding to the advection velocity 
    vector field.
  \label{Fig:2D_Bimolecular_ScalDiff_Plume}}
\end{figure}

\begin{figure}
  \centering
  \subfigure[$p = 1$, XSeed = YSeed = 101]
    {\includegraphics[scale = 0.28,clip]
    {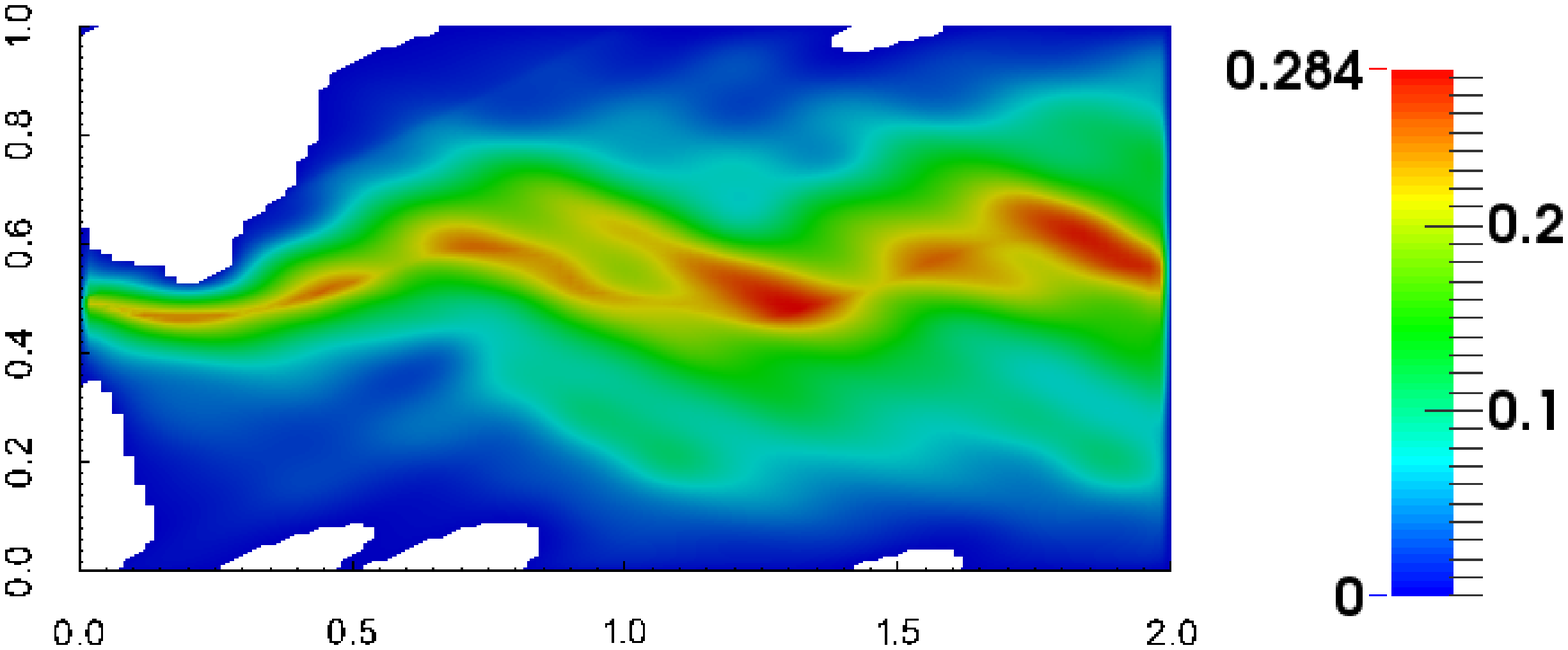}}
  \hspace{0.15in}
  \vspace{0.1in}
  \subfigure[$p = 1$, XSeed = YSeed = 501]
    {\includegraphics[scale = 0.28,clip]
    {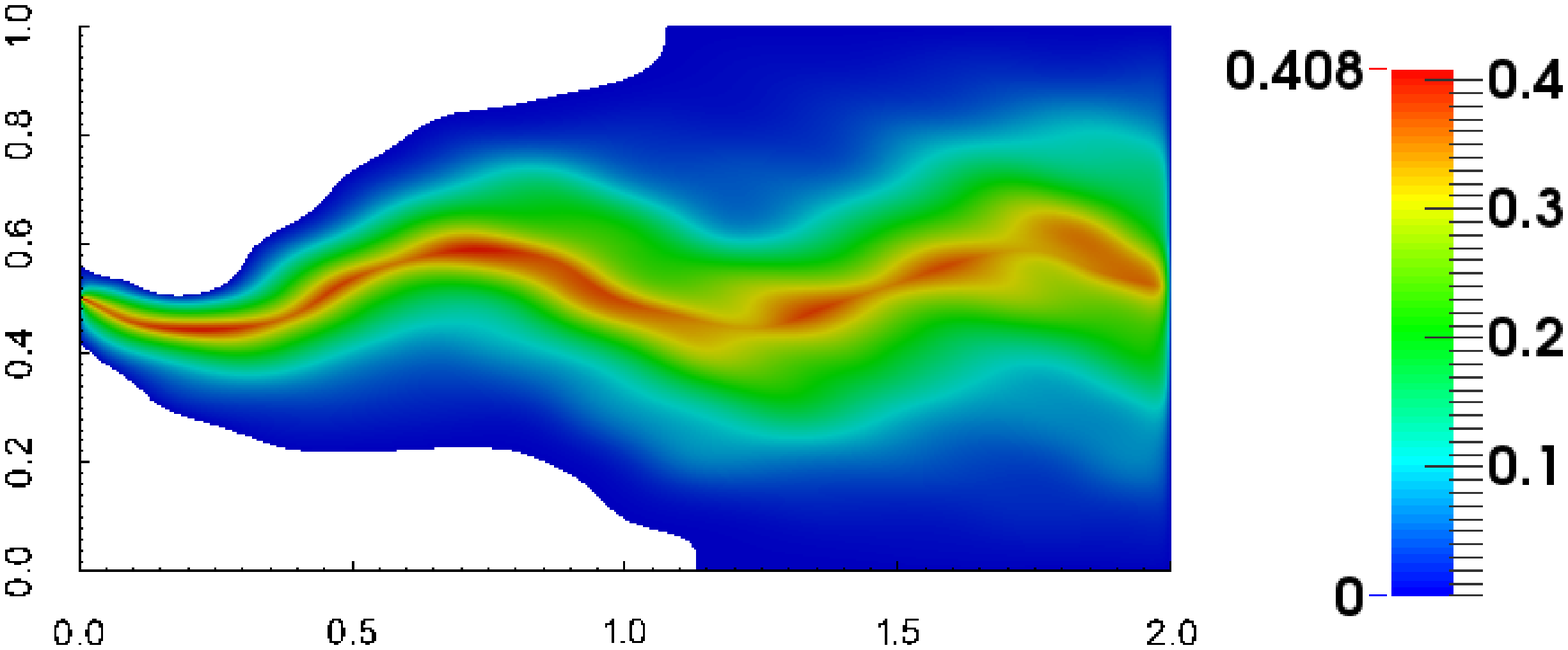}}
  \hspace{0.15in}
  \subfigure[$p = 2$, XSeed = YSeed = 101]
    {\includegraphics[scale = 0.28,clip]
    {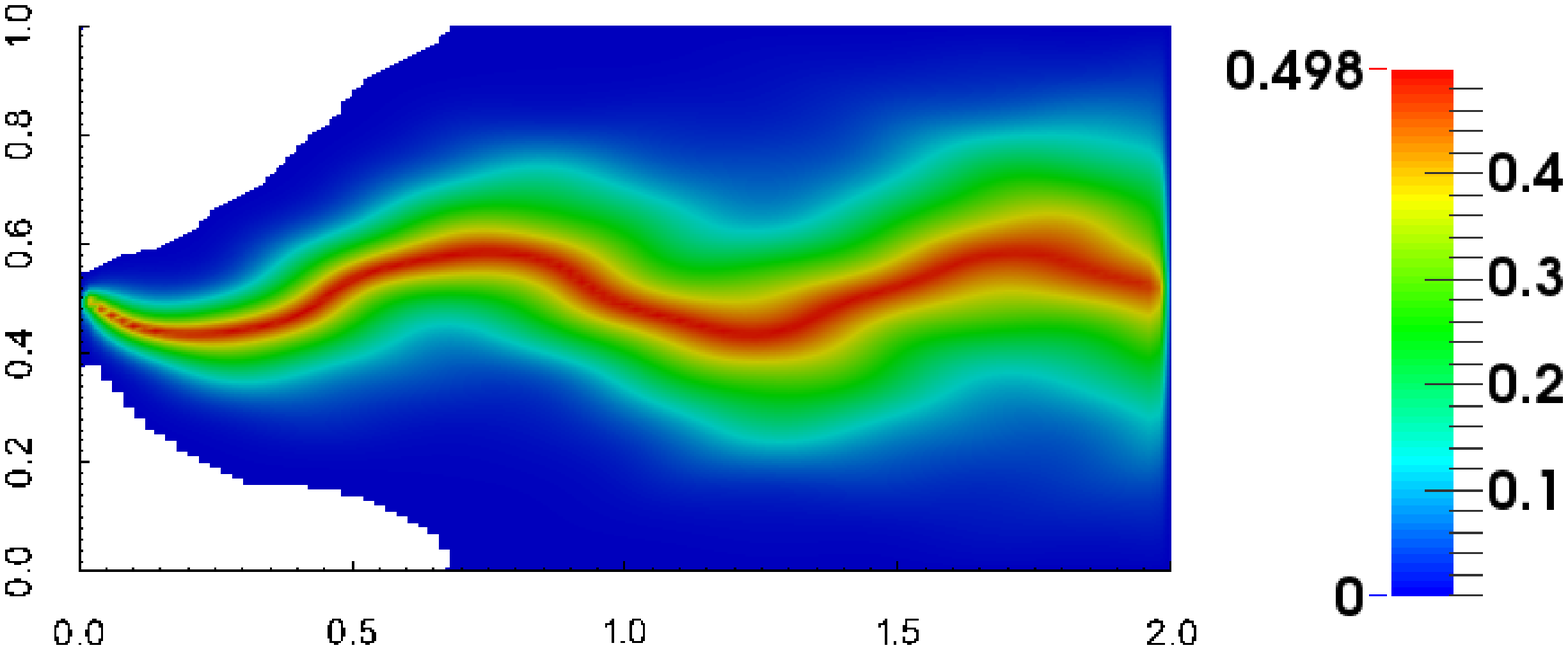}}
  \hspace{0.15in}
  \subfigure[$p = 3$, XSeed = YSeed = 66]
    {\includegraphics[scale = 0.28,clip]
    {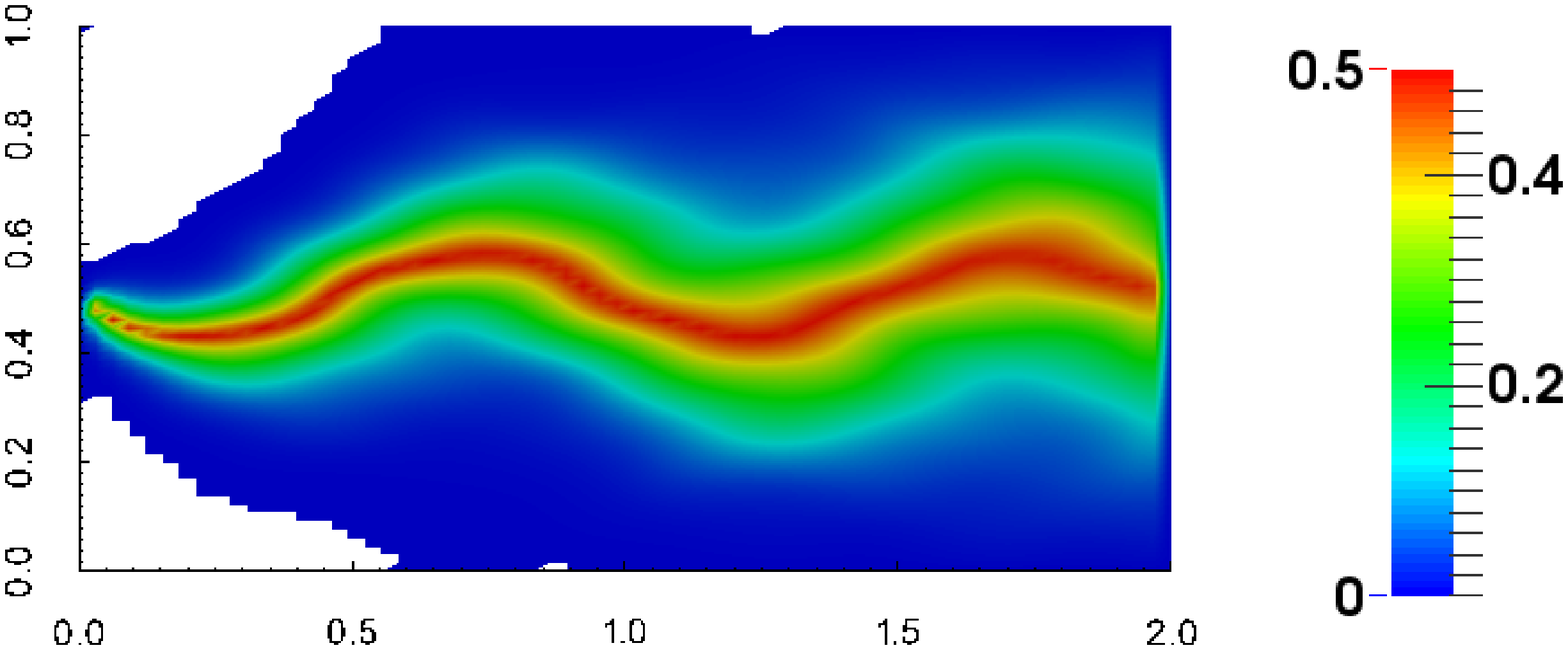}}
  \caption{\textsf{Plume development from boundary in a reaction 
    tank (Type \#1):}~This figure shows the concentration profiles 
    of the product $C$ based on unconstrained primitive LSFEM. The 
    white region shows the area in which concentration is negative. 
    Both the lower-order and higher-order finite elements violate 
    the non-negative constraint. The negative values are in the 
    range $\mathcal{O}(10^{-2})$ to $\mathcal{O}(10^{-4})$, which 
    are not close to the machine precision $\epsilon_{\mathrm{mach}} 
    = \mathcal{O}(10^{-16})$.
  \label{Fig:2D_Bimolecular_ScalDiff_ConcC_PrimNoCons}}
\end{figure}

\begin{figure}
  \centering
  \subfigure[XSeed = YSeed = 501 (No constraints)]
    {\includegraphics[scale = 0.325,clip]
    {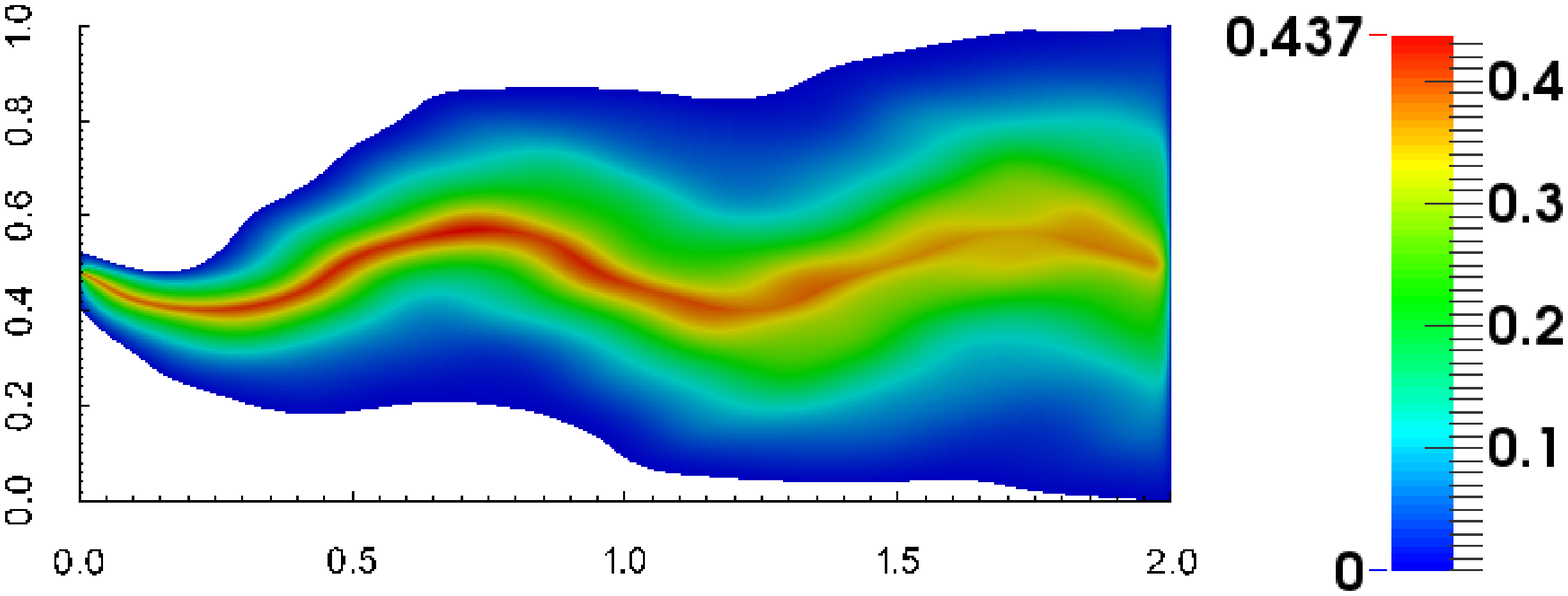}}
  \hspace{0.015in}
  \subfigure[XSeed = YSeed = 251 (NN constraints)]
    {\includegraphics[scale = 0.30,clip]
    {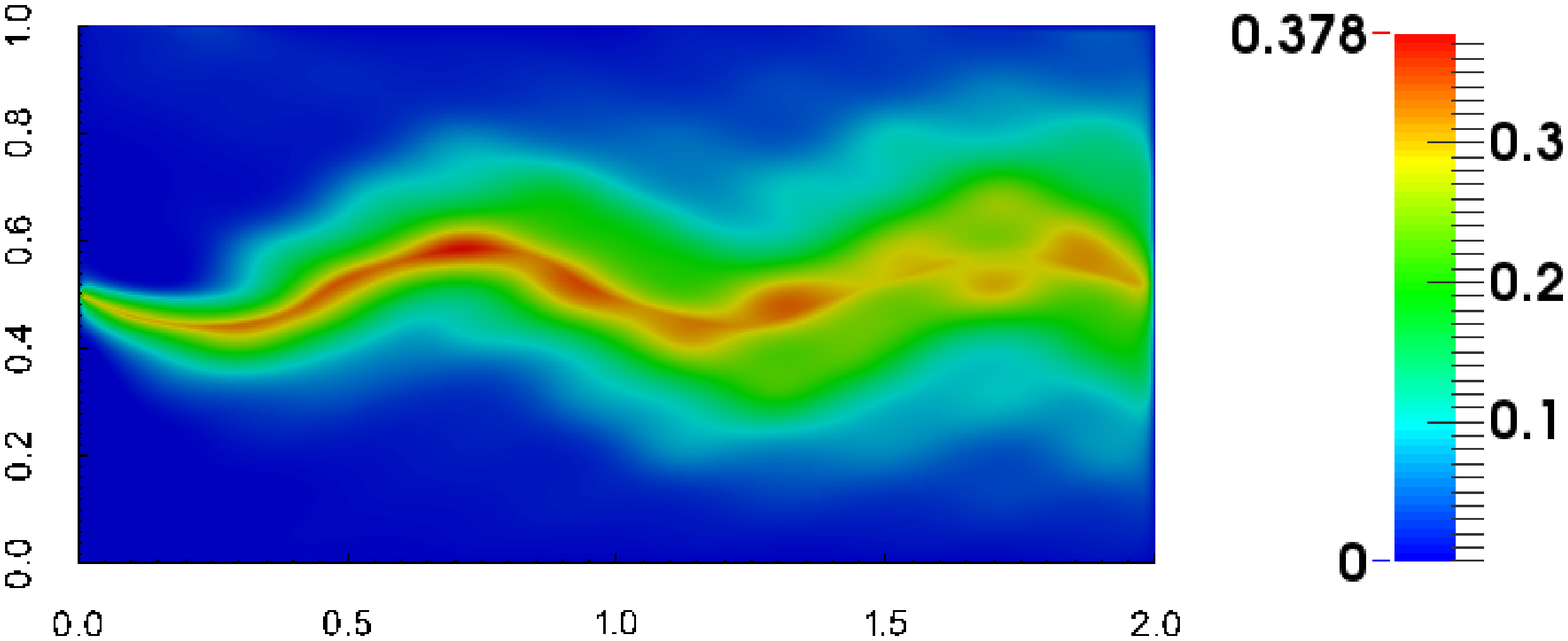}}
  \subfigure[XSeed = YSeed = 251 (LSB and NN constraints)]
    {\includegraphics[scale = 0.30,clip]
    {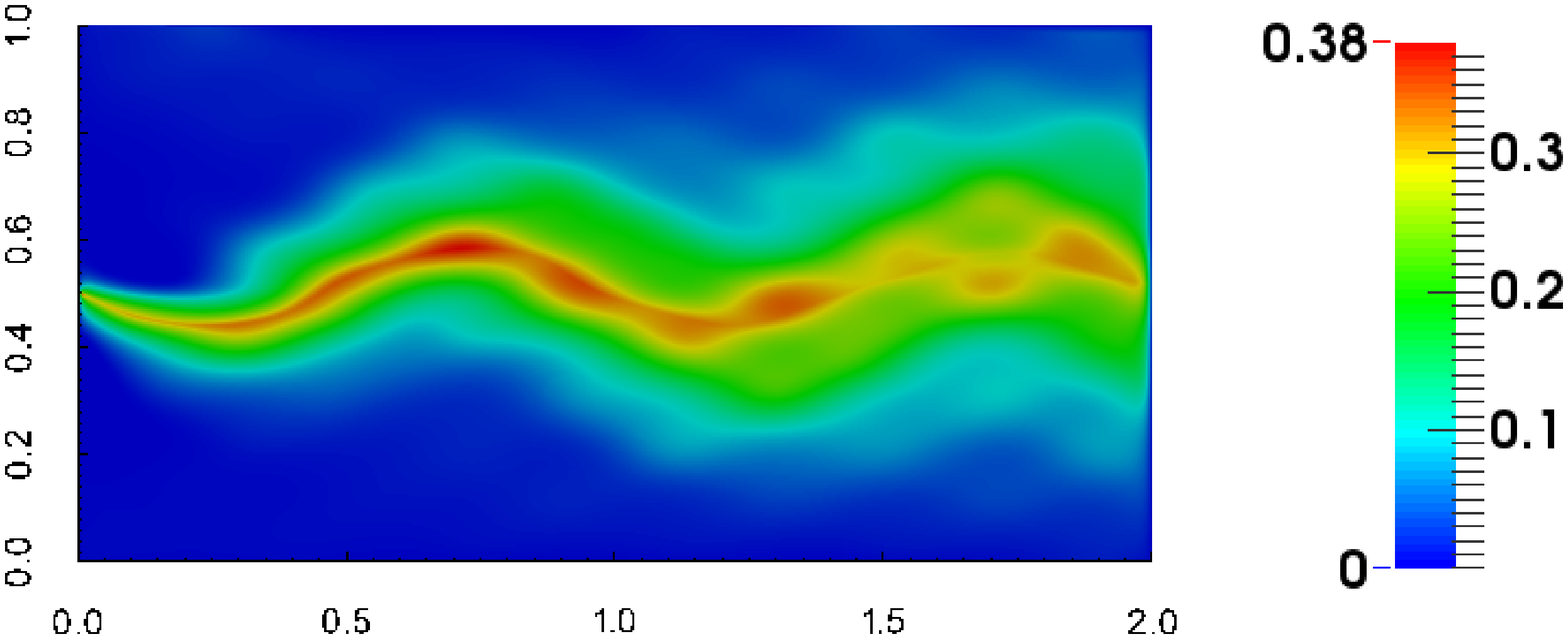}}
  \caption{\textsf{Plume development from boundary in a reaction 
    tank (Type \#1):}~This figure shows the concentration profiles 
    of the product $C$ based on unconstrained and constrained 
    negatively stabilized streamline diffusion LSFEM. The white 
    region shows the area in which concentration is negative. 
    Considerable part of the domain violated the non-negative 
    constraint. The proposed framework with NN and LSB constraints 
    is able to capture the plume formation on a coarse mesh for 
    a highly heterogeneous advection velocity vector field.  
  \label{Fig:2D_Bimolecular_ScalDiff_ConcC_NegStabStrDiff}}
\end{figure}

\begin{figure}
  \centering
  \subfigure[$p = 1$, XSeed = YSeed = 101]
    {\includegraphics[scale = 0.28,clip]
    {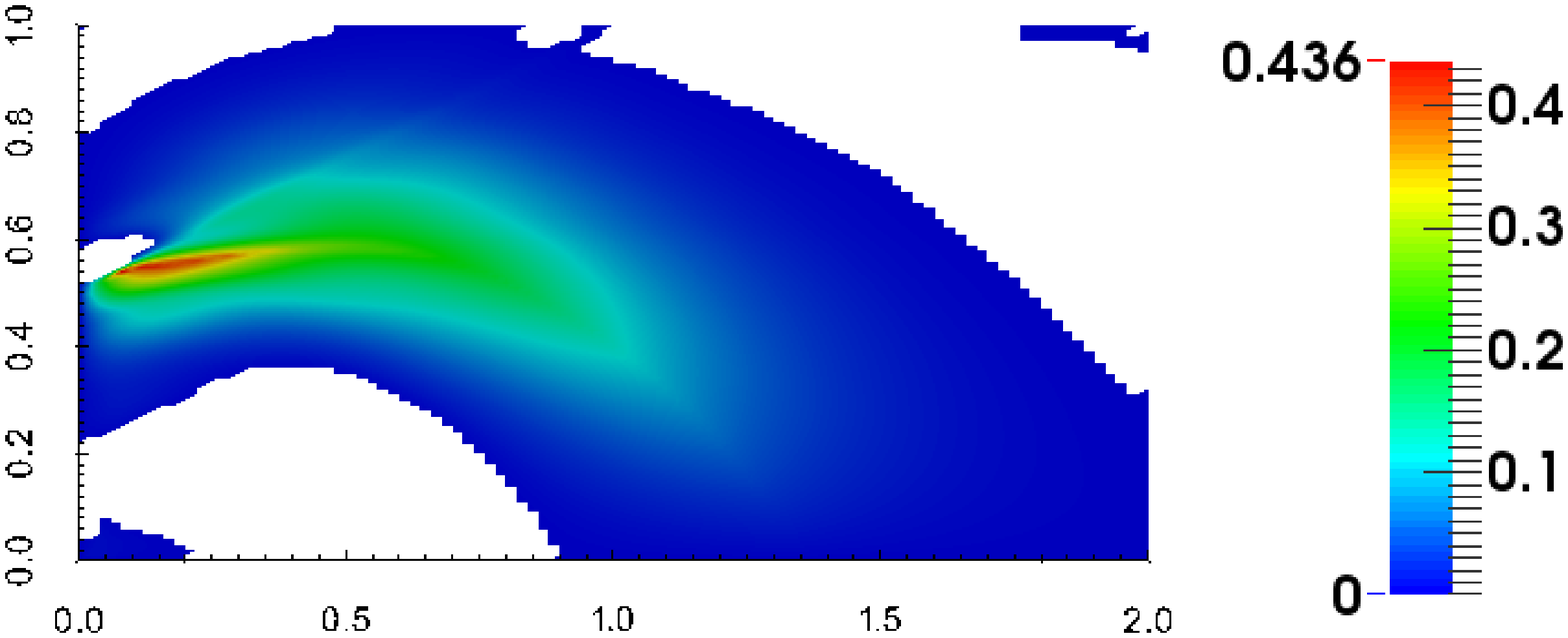}}
  \hspace{0.15in}
  \vspace{0.1in}
  \subfigure[$p = 1$, XSeed = YSeed = 501]
    {\includegraphics[scale = 0.28,clip]
    {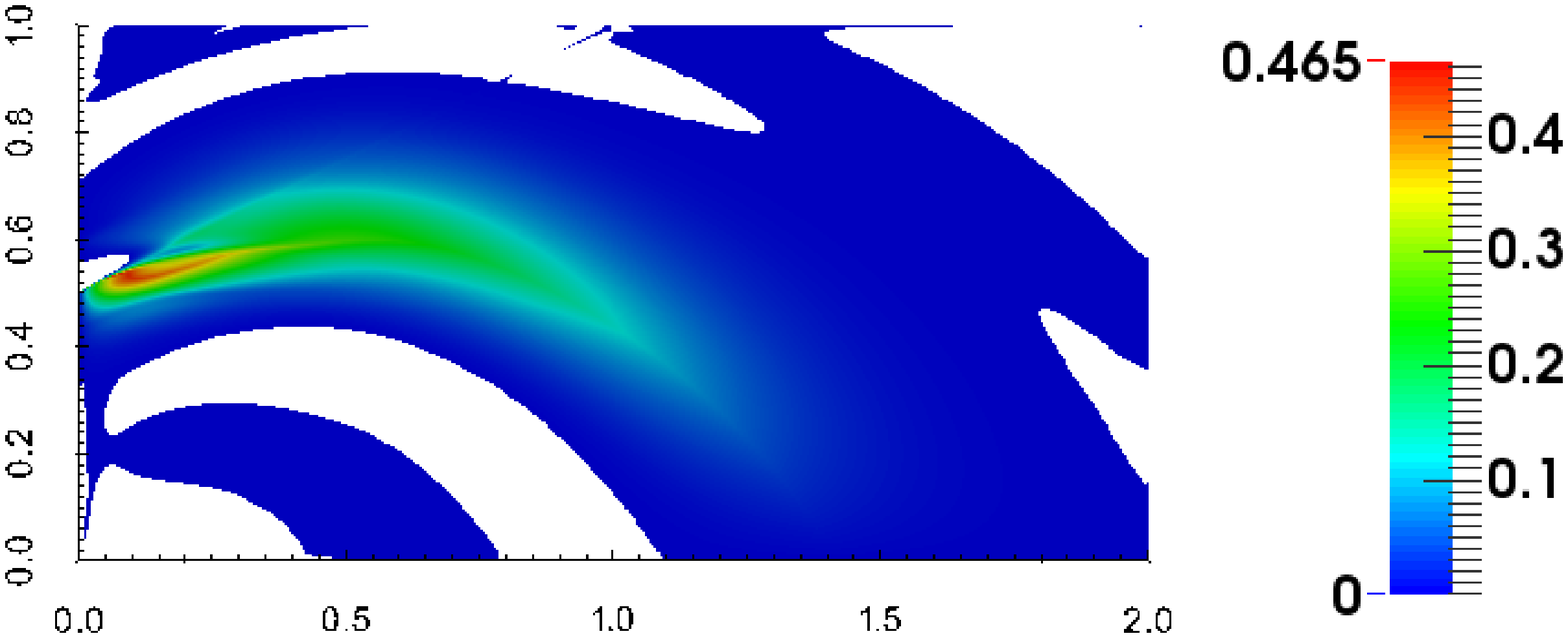}}
  \hspace{0.15in}
  \subfigure[$p = 2$, XSeed = YSeed = 101]
    {\includegraphics[scale = 0.28,clip]
    {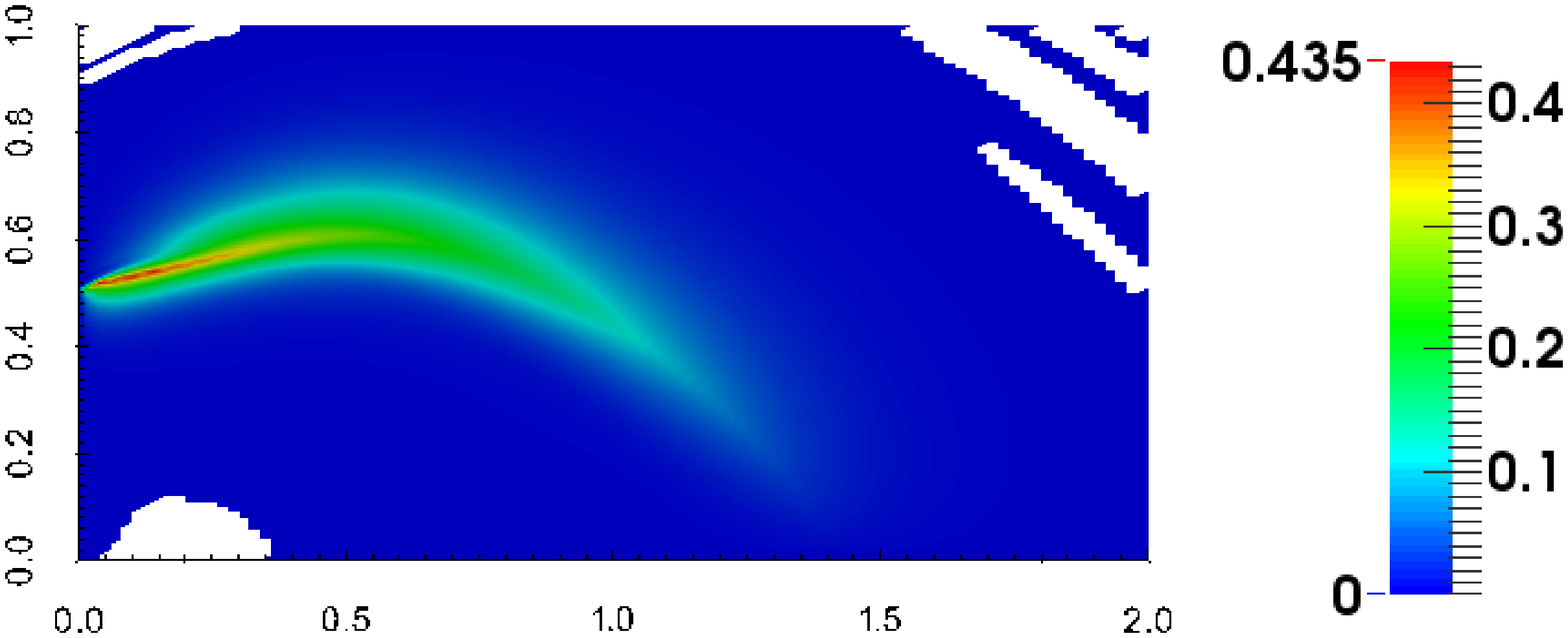}}
  \hspace{0.15in}
  \subfigure[$p = 3$, XSeed = YSeed = 66]
    {\includegraphics[scale = 0.28,clip]
    {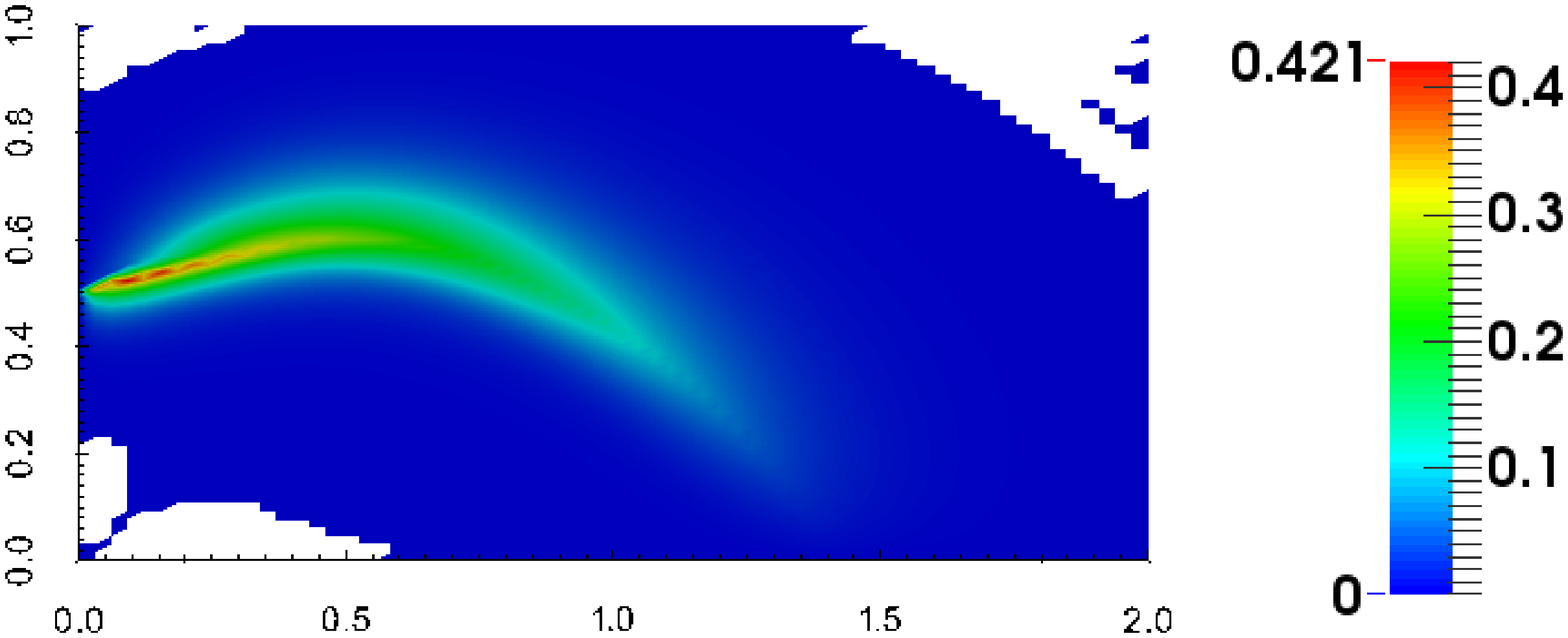}}
  \caption{\textsf{Plume development from boundary in a reaction 
    tank (Type \#2):}~This figure shows the concentration profiles 
    of the product $C$ based on unconstrained primitive LSFEM. The 
    white region indicates the area in which the obtained concentration 
    is negative. The negative values are in the range $\mathcal{O}(10^
    {-3})$ to $\mathcal{O}(10^{-5})$. Both $h$-refinement 
    and $p$-refinement could not eliminate the violation in the non-negative 
    constraint for this problem, which has highly heterogeneous 
    anisotropic diffusivity.
  \label{Fig:2D_Bimolecular_AnisoDiff_ConcC_PrimNoCons}}
\end{figure}

\begin{figure}
  \centering
  \subfigure[XSeed = YSeed = 251 (No constraints)]
    {\includegraphics[scale = 0.325,clip]
    {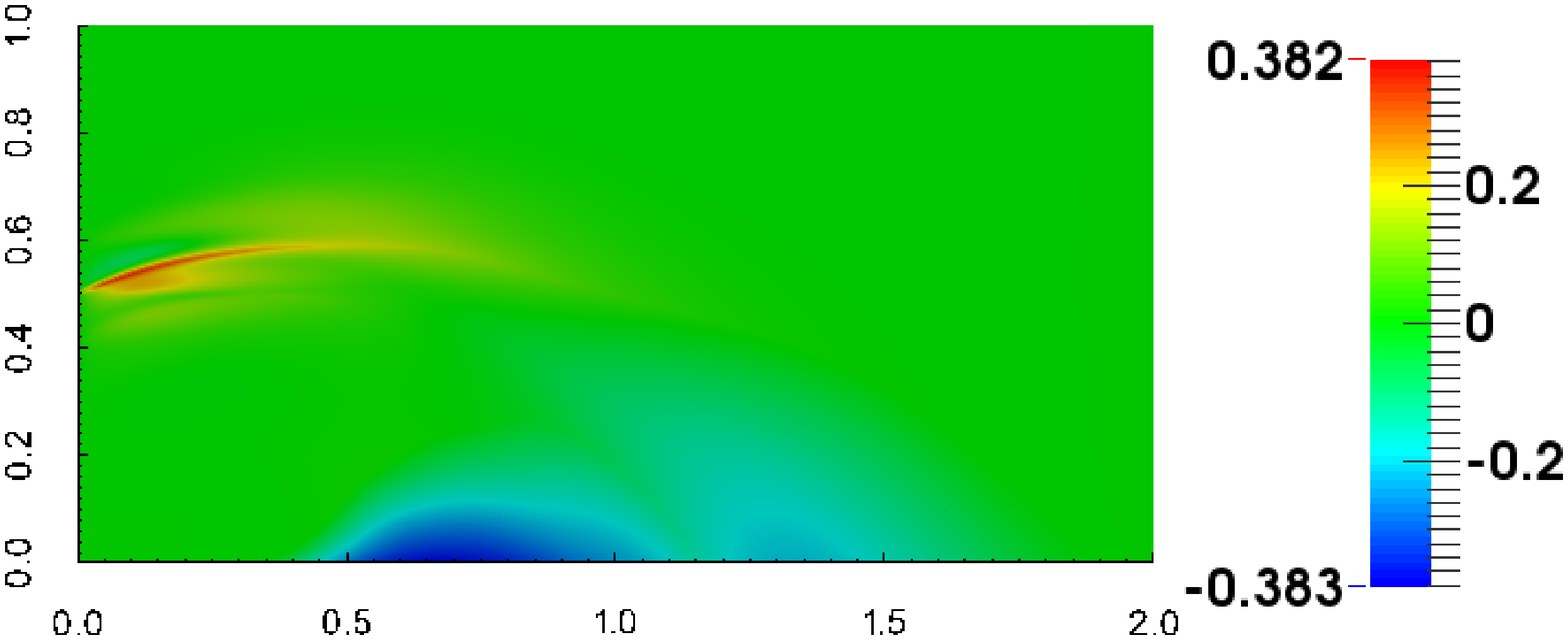}}
  \hspace{0.015in}
  \subfigure[XSeed = YSeed = 251 (NN constraints)]
    {\includegraphics[scale = 0.325,clip]
    {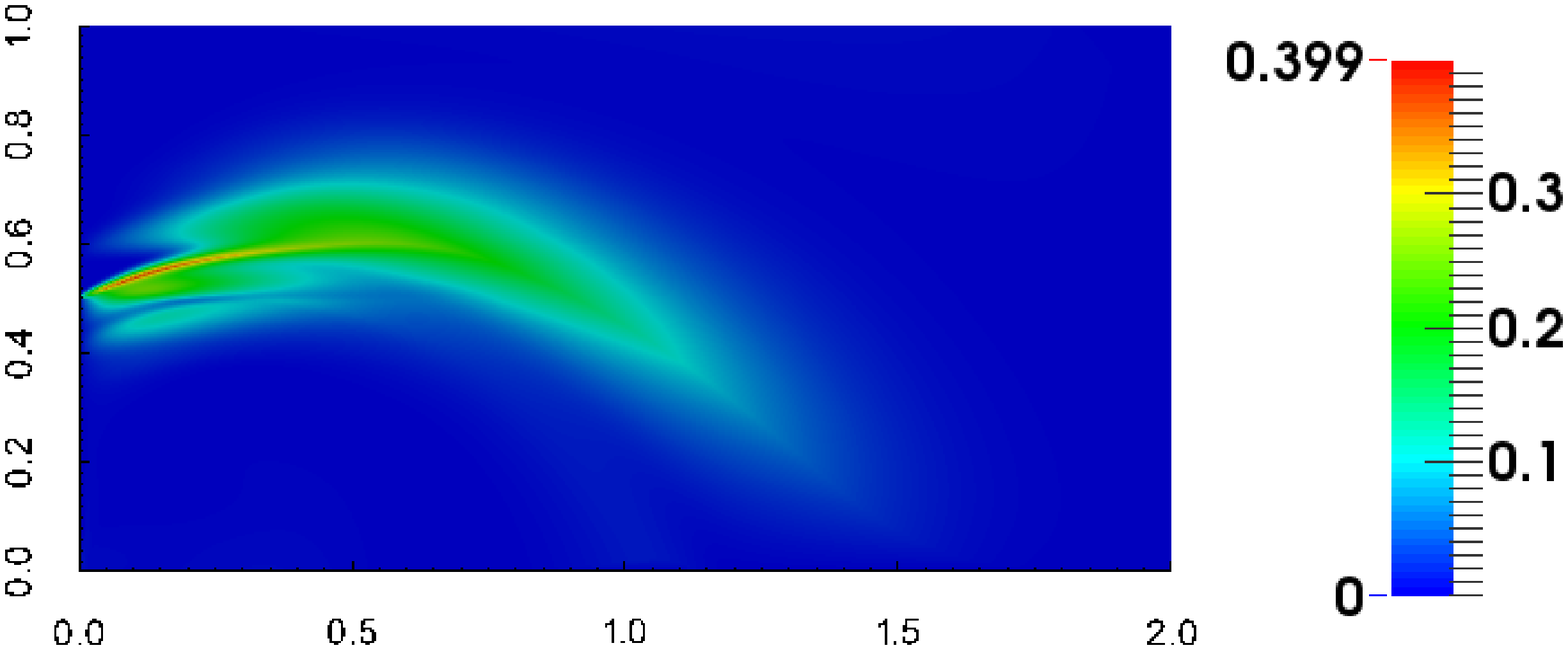}}
  \subfigure[XSeed = YSeed = 251 (LSB and NN constraints)]
    {\includegraphics[scale = 0.325,clip]
    {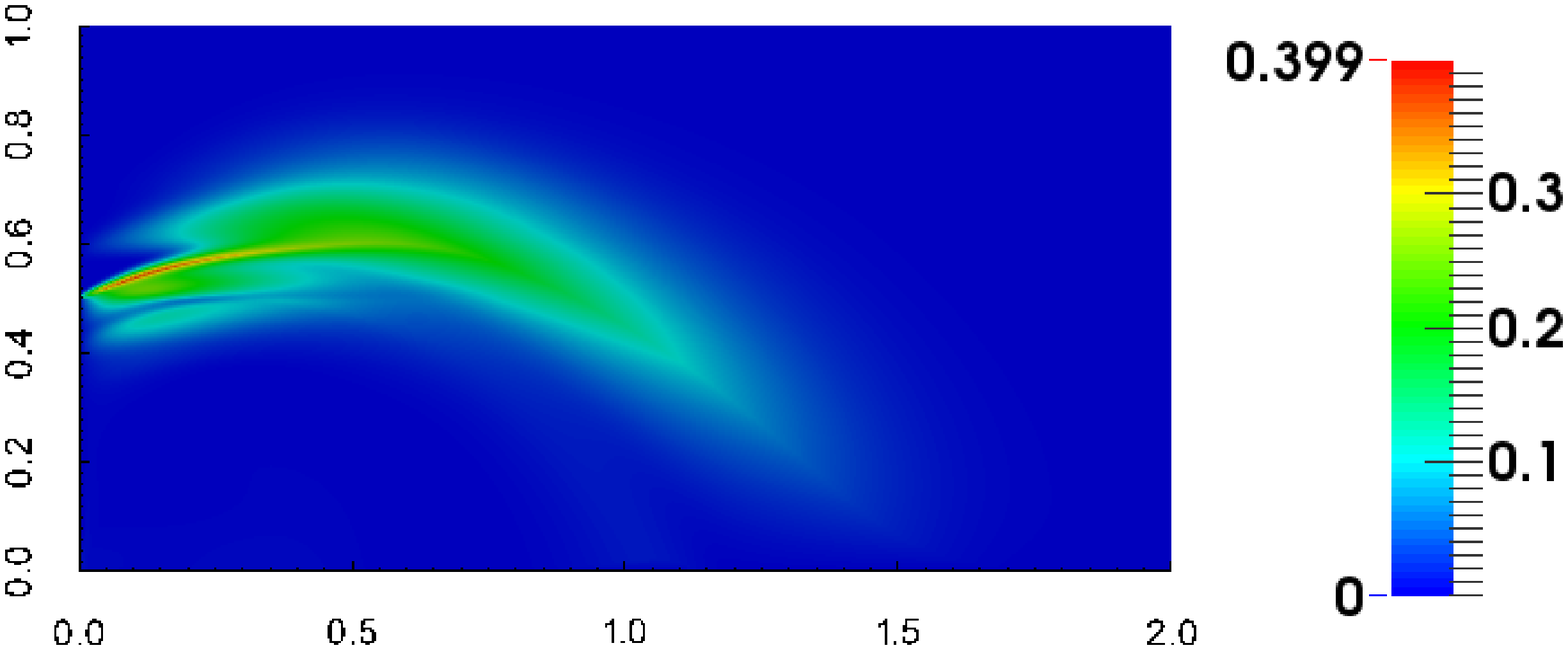}}
  \caption{\textsf{Plume development from boundary in a reaction 
    tank (Type \#2):}~This figure shows the concentration profiles 
    of the product $C$ based on unconstrained and constrained 
    negatively stabilized streamline diffusion LSFEM. Compared 
    to Figure \ref{Fig:2D_Bimolecular_AnisoDiff_ConcC_PrimNoCons}, 
    the proposed methodology with NN and LSB constraints is able 
    to accurately describe the plume formation of the product $C$ 
    even for highly heterogeneous anisotropic diffusivities and 
    highly spatially varying velocity fields.
  \label{Fig:2D_Bimolecular_AnisoDiff_ConcC_NegStabStrDiff}}
\end{figure}

\begin{figure}
  \centering
  \subfigure[No constraints]
    {\includegraphics[scale = 0.12,clip]{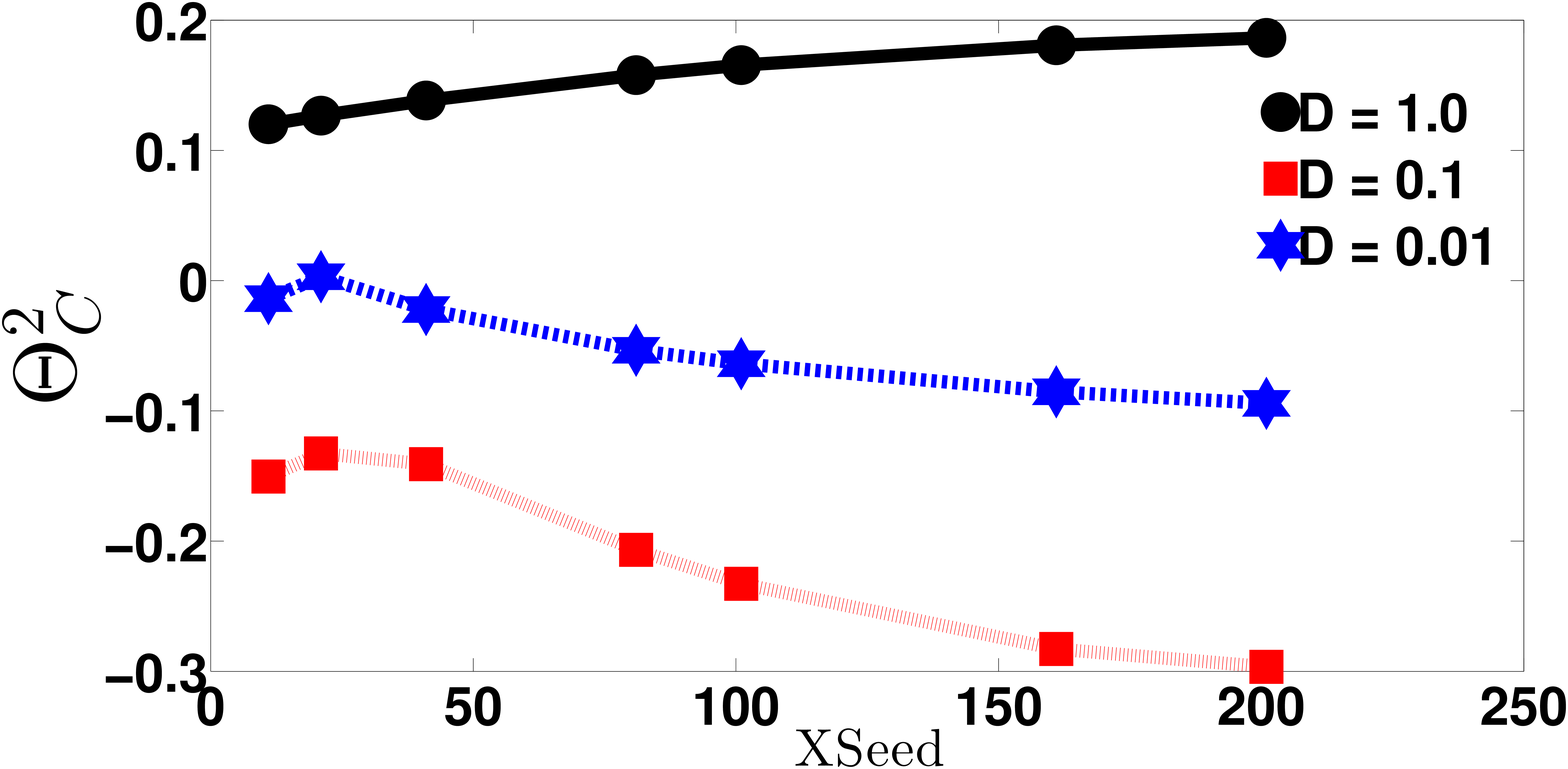}}
  \subfigure[With LSB and DMP constraints]
    {\includegraphics[scale = 0.12,clip]{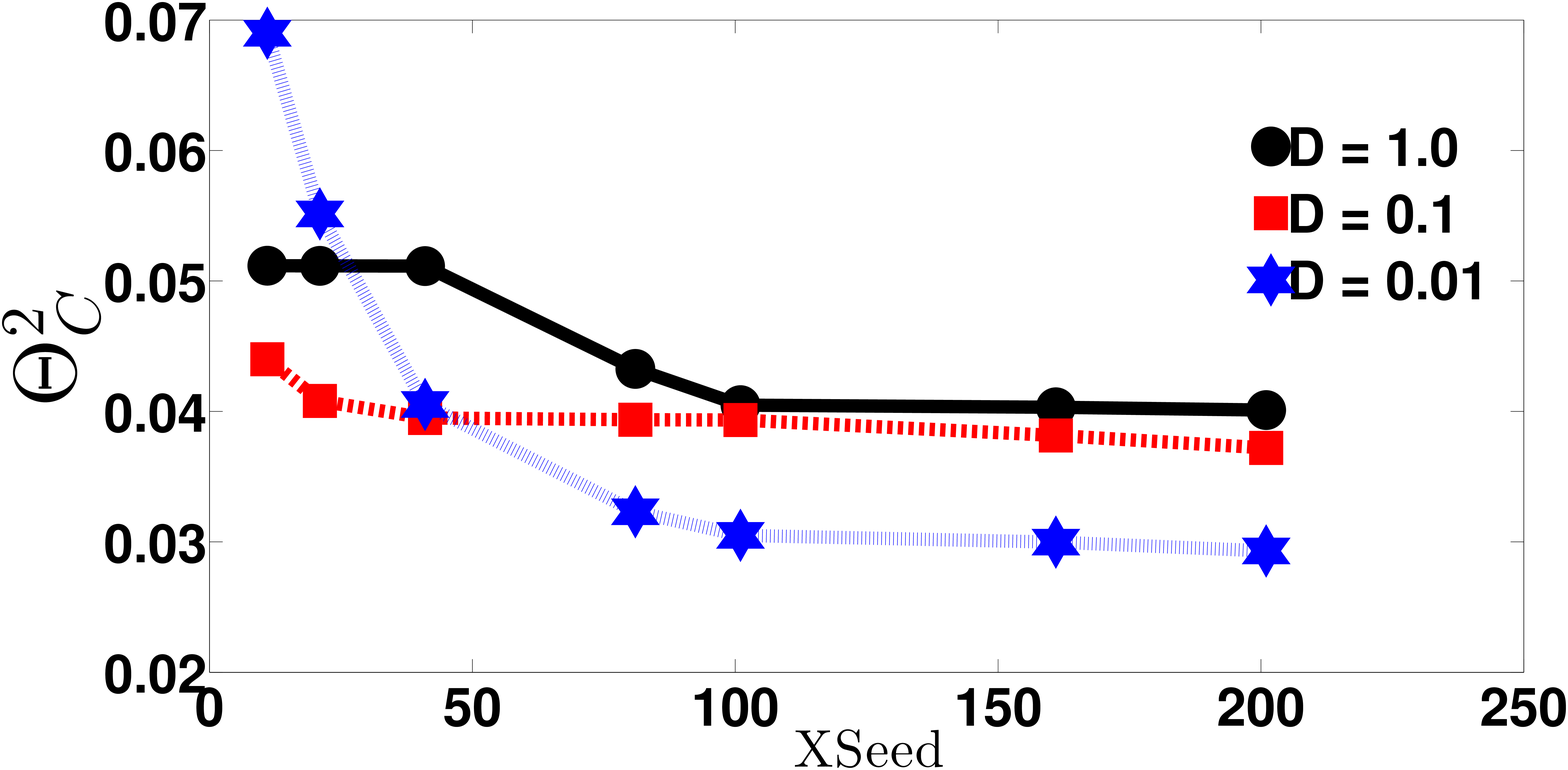}}
  \caption{\textsf{Plume development from boundary in a reaction 
    tank (Type \#1):}~This figure shows the variation $\Theta_C^2$ 
    with mesh refinement under the weighted negatively stabilized 
    streamline diffusion LSFEM. It should be noted that $\Theta_C^2$ 
    is a non-negative quantity. However, the \emph{unconstrained} 
    negatively stabilized streamline diffusion LSFEM gives negative 
    values for $\Theta_C^2$. Mesh refinement did not alleviate this 
    problem. On the other hand, the proposed framework not only 
    gives non-negative values for $\Theta_C^2$ but flattens upon 
    mesh refinement, which indicates convergence.
    \label{Fig:2D_Bimolecular_ScalDiff_ConcC_NegStabStrDiff_NSSD}}
\end{figure}

\begin{figure}
  \centering
  \includegraphics[scale = 0.15,clip]{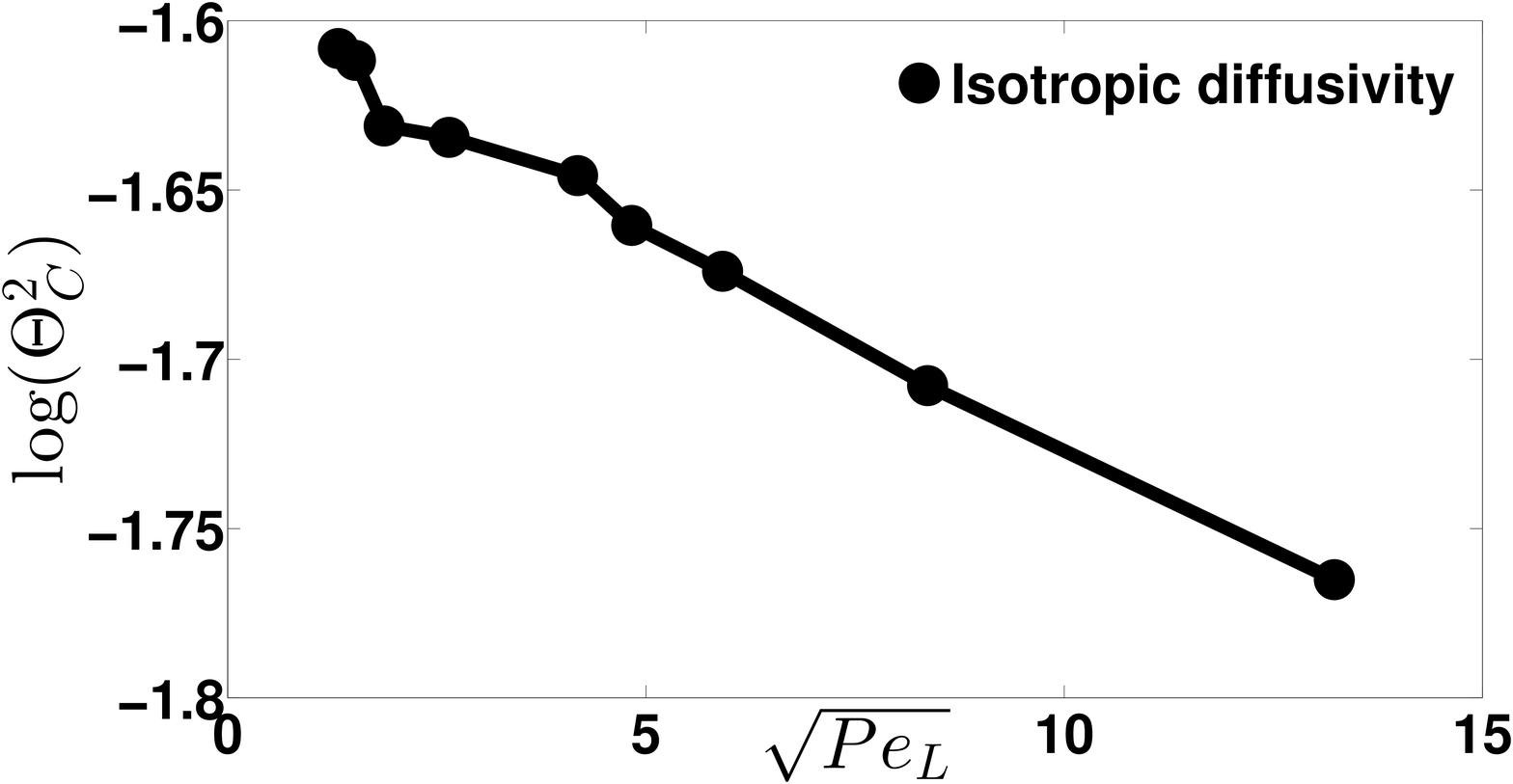}
  \caption{\textsf{Plume development from boundary 
    in a reaction tank (Type \#1):}~This figure shows 
    the variation $\log(\Theta^2_C)$ 
    with respect to $\sqrt{\mathbb{P}\mathrm{e}_L}$ for isotropic 
    diffusivity under the weighted negatively stabilized streamline 
    diffusion LSFEM with LSB and DMP constraints. Herein, analysis 
    is performed using XSeed = YSeed = 201. Through numerical 
    simulations, we observed that $\log(\Theta^2_C) \propto 
    \sqrt{\mathbb{P}\mathrm{e}_L}$.
  \label{Fig:2D_BiMolScalDiff_ConcC_NSSD}}
\end{figure}


\begin{figure}
  \centering
  \psfrag{A}{Reactant $A$}
  \psfrag{B}{Reactant $B$}
  \psfrag{O}{$(0,0)$}
  \psfrag{f1}{$f_i(\mathbf{x},t) = 0$}
  \psfrag{q1}{$h^{\mathrm{p}}_i(\mathbf{x},t) = 0$}
  \psfrag{q2}{\rotatebox{-90}{$h^{\mathrm{p}}_i(\mathbf{x},t) = 0$}}
  \psfrag{q3}{$h^{\mathrm{p}}_i(\mathbf{x},t) = 0$}
  \psfrag{q4}{\rotatebox{90}{$h^{\mathrm{p}}_i(\mathbf{x},t) = 0$}}
  \psfrag{L2}{$L_x$}
  \psfrag{L3}{\rotatebox{-90}{$L_y$}}
  \subfigure[Problem description]{\includegraphics[scale=0.6]
    {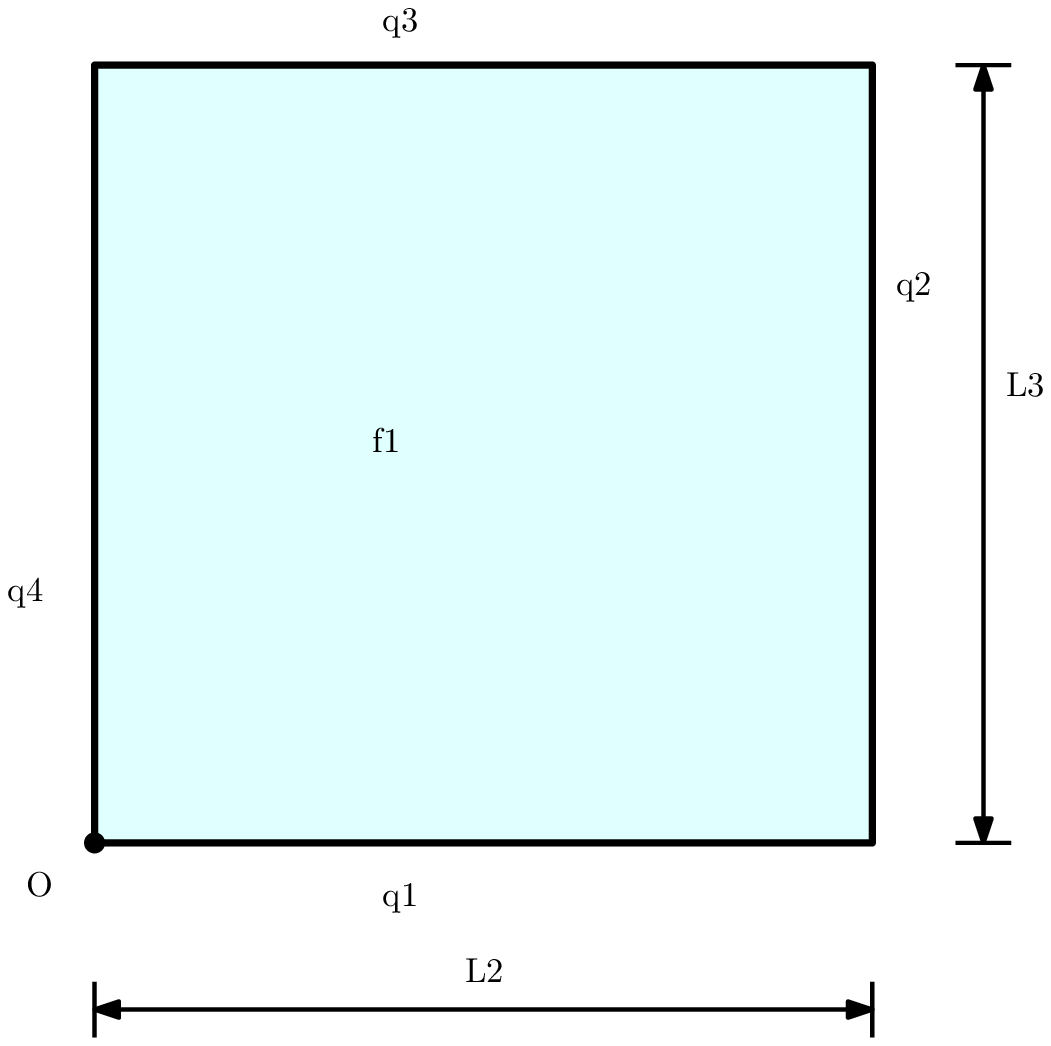}}
  \subfigure[Stream function and associated advection velocity field]{\includegraphics[scale=0.42]
    {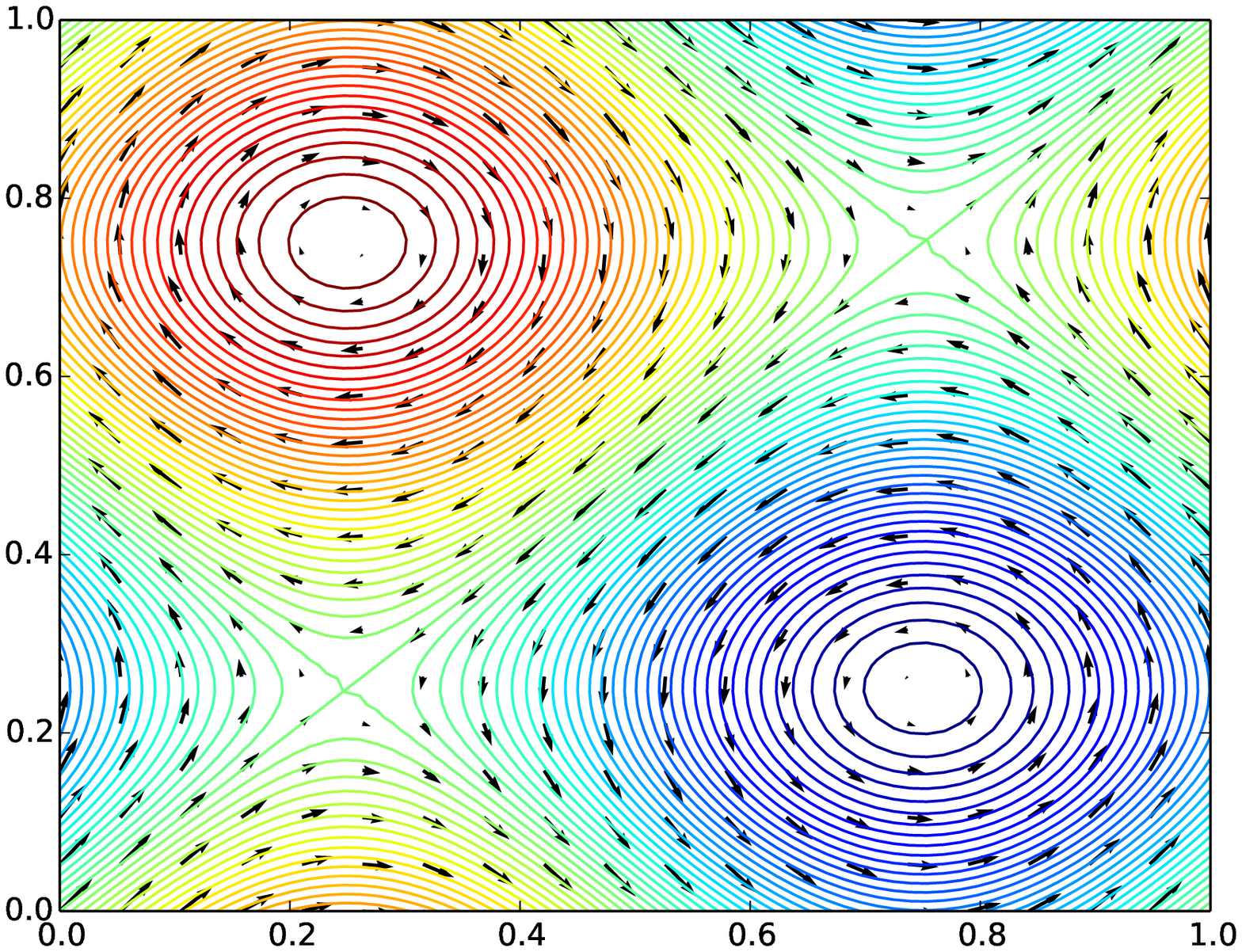}}
  \subfigure[Reactant $A$: Initial condition]{\includegraphics[scale=0.30,clip]
    {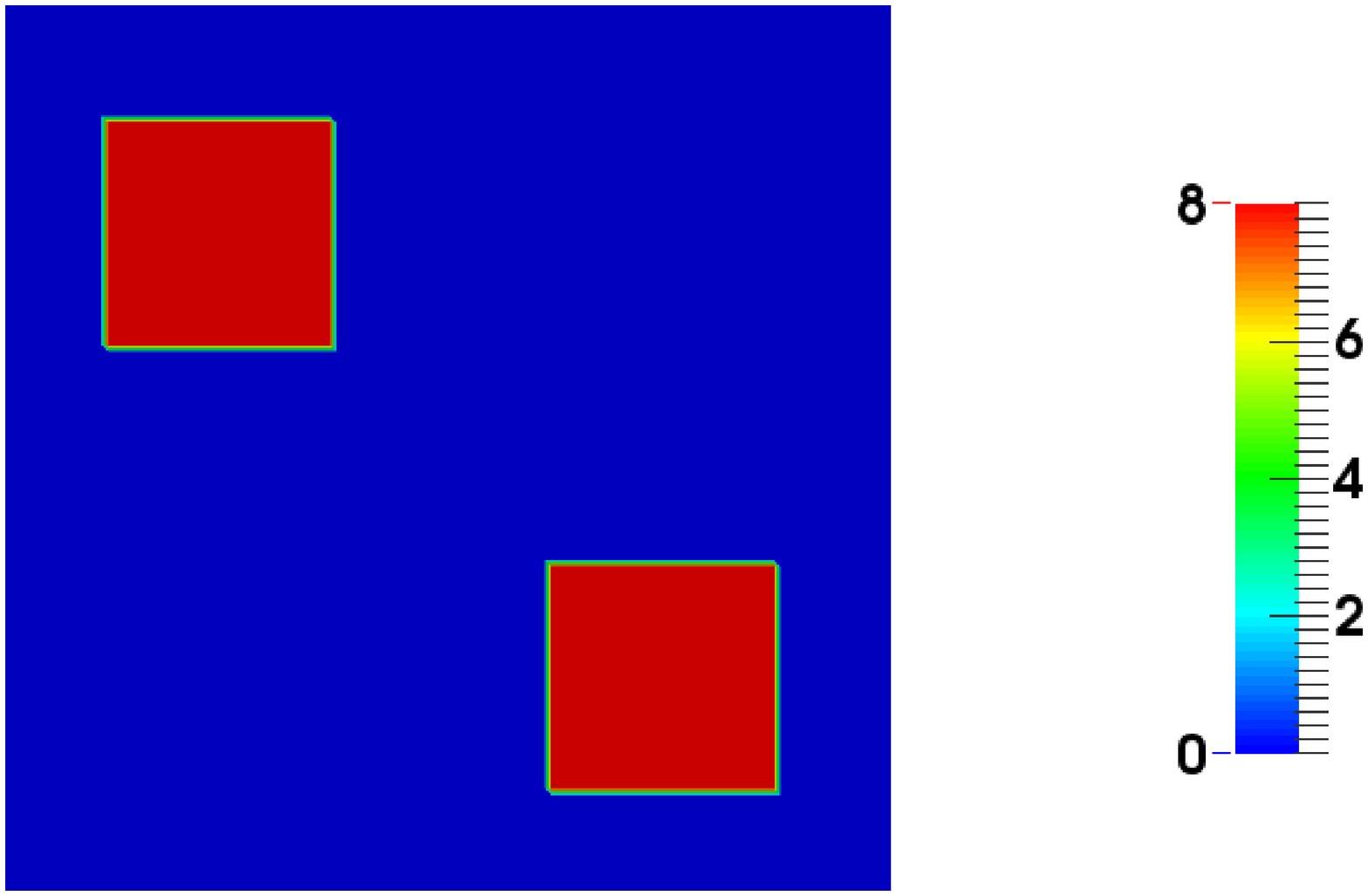}}
  \hspace{0.15in}
  \subfigure[Reactant $B$: Initial condition]{\includegraphics[scale=0.30,clip]
    {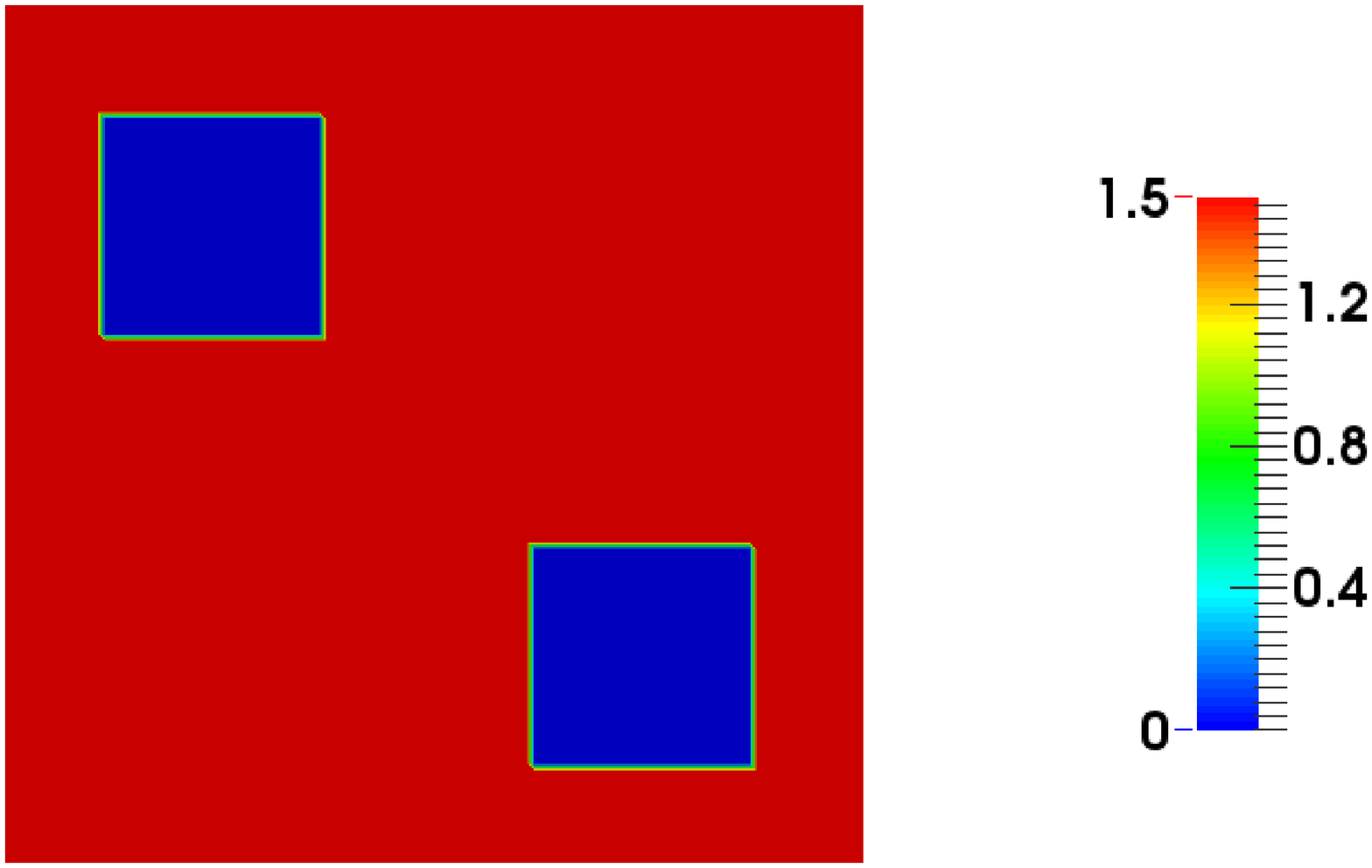}}
  \caption{\textsf{Vortex-stirred mixing in a reaction 
    tank:}~The top-left figure provides a pictorial 
    description of the initial boundary value problem. 
    The top-right figure shows the contours of the stream 
    function corresponding to the advection velocity field 
    given by \eqref{Eqn:TransMixing_VortexFlowModel}. 
    The bottom figures show the initial conditions for the 
    reactants $A$ and $B$ such that $\langle c_A(\mathbf{x},
    t=0) \rangle = \langle c_B(\mathbf{x},t=0) \rangle = 1$.
    \label{Fig:2D_Bimolecular_ScalDiff_Mixing_Type1}}
\end{figure}

\begin{figure}
  \centering
  \subfigure[Product $C$ at $t = 0.0001$]
    {\includegraphics[scale=0.30,clip]
    {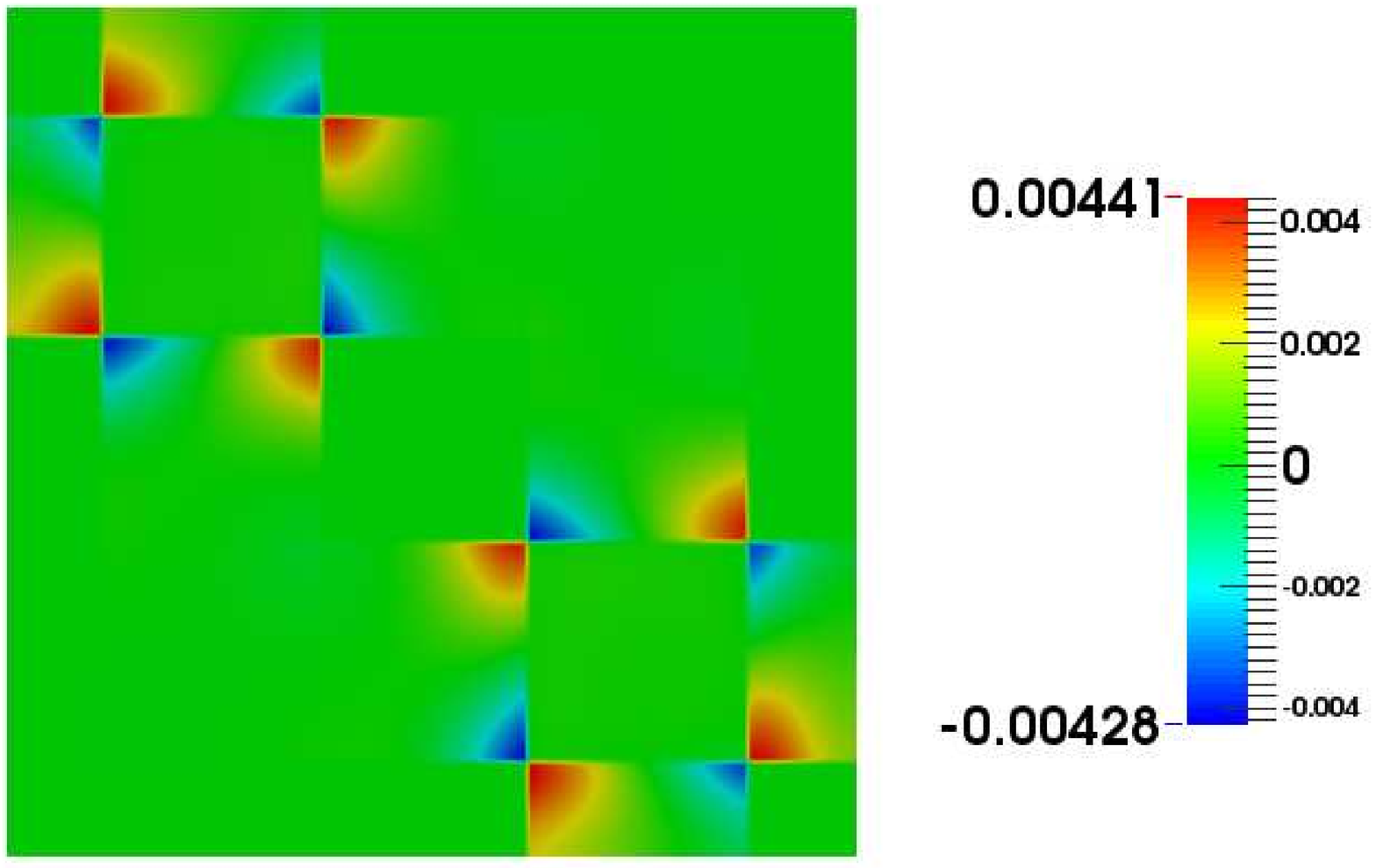}}
  \hspace{0.15in}
  \vspace{0.1in}
  \subfigure[Product $C$ at $t = 0.001$]
    {\includegraphics[scale=0.30,clip]
    {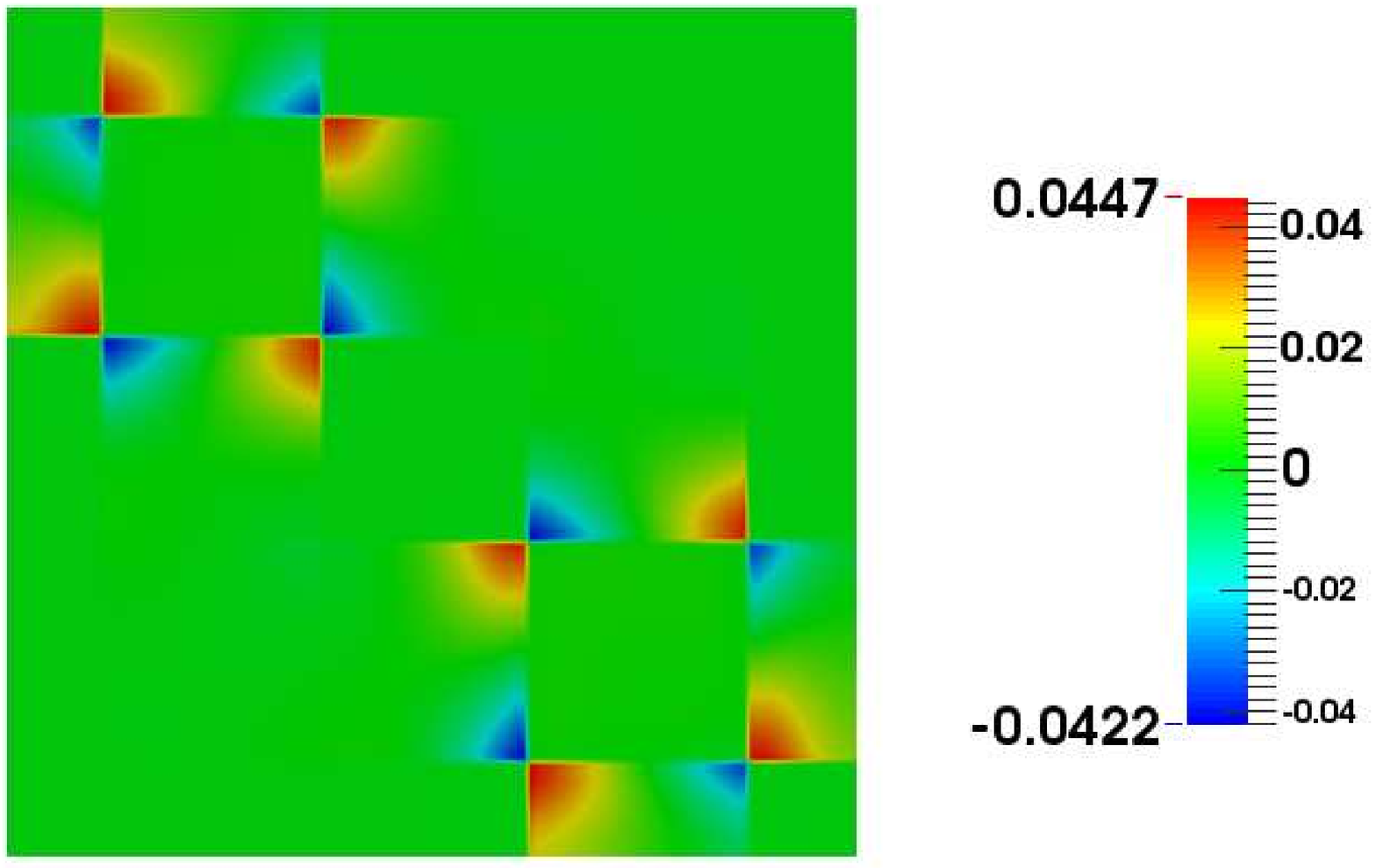}}
  \hspace{0.15in}
  \subfigure[Product $C$ at $t = 0.01$]
    {\includegraphics[scale=0.30,clip]
    {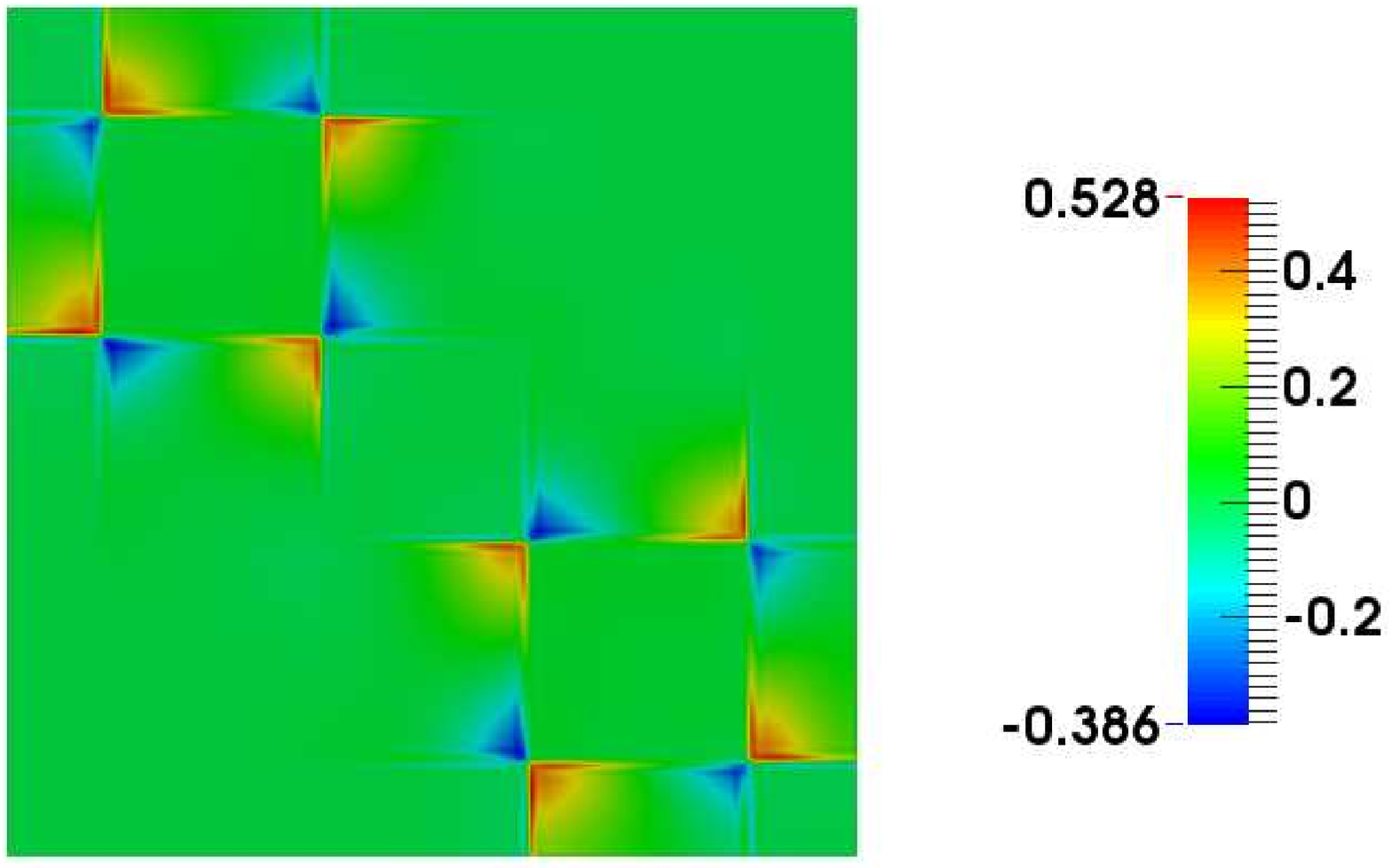}}
  \caption{\textsf{Non-chaotic vortex-stirred mixing in 
    a reaction tank:}~This figure shows the concentration 
    profiles of the product $C$ after the first time-step 
    using the unconstrained weighted negatively stabilized 
    streamline diffusion LSFEM. We have taken XSeed = 
    YSeed = 121. If constraints are not enforced, one 
    gets unphysical negative values for the concentration 
    of product $C$. This will be particularly true in 
    the early times of a numerical simulation.
    \label{Fig:2D_Bimolecular_ScalDiff_FirstTimeStep_ConcC}}
\end{figure}

\begin{figure}
  \centering
  \subfigure[$c_C(y=0.5,t=0.0001)$ with $\Delta t = 0.0001$]
    {\includegraphics[scale=0.12,clip]
    {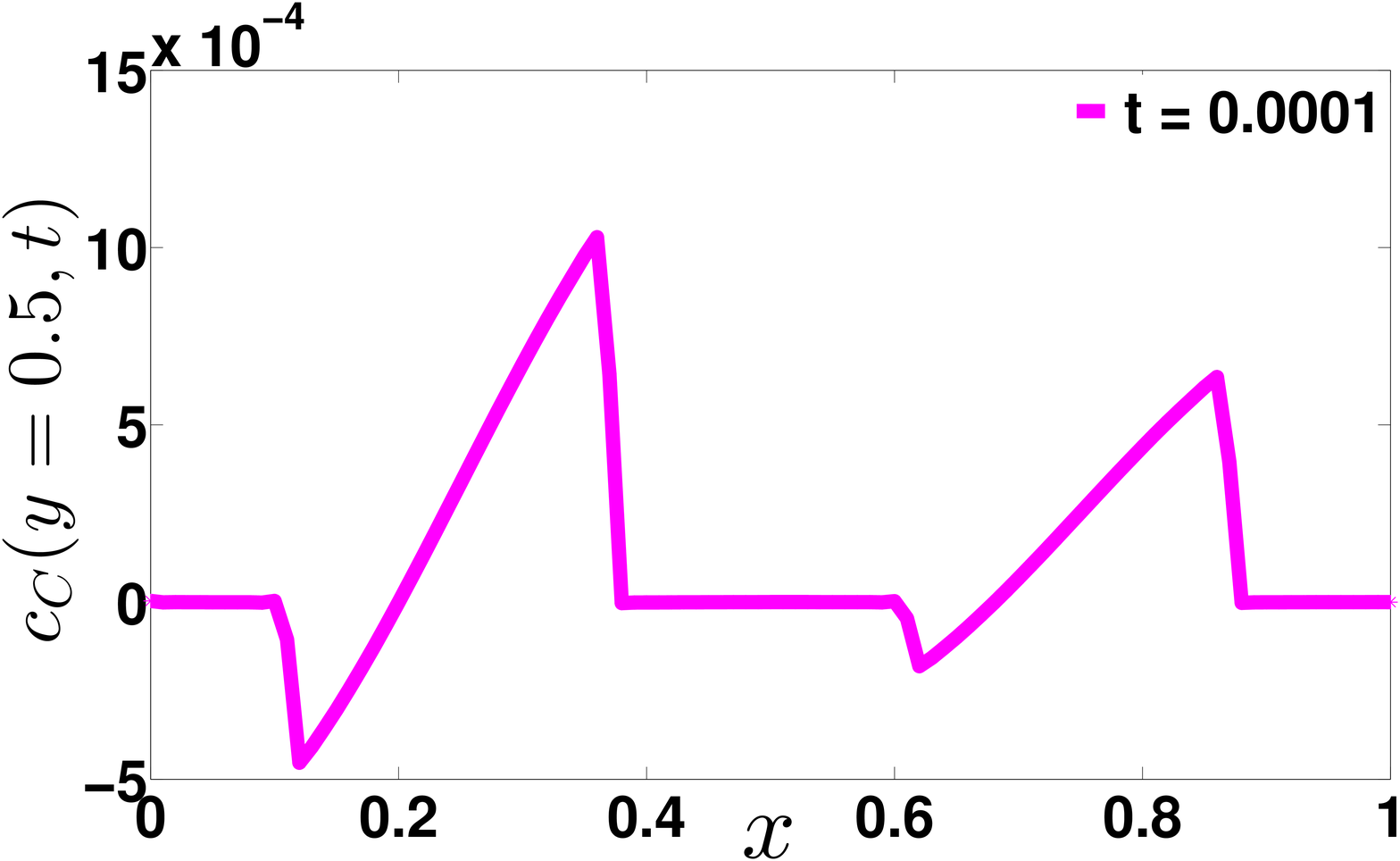}}
  \subfigure[$c_C(y=0.5,t=0.001)$ with $\Delta t = 0.001$]
    {\includegraphics[scale=0.12,clip]
    {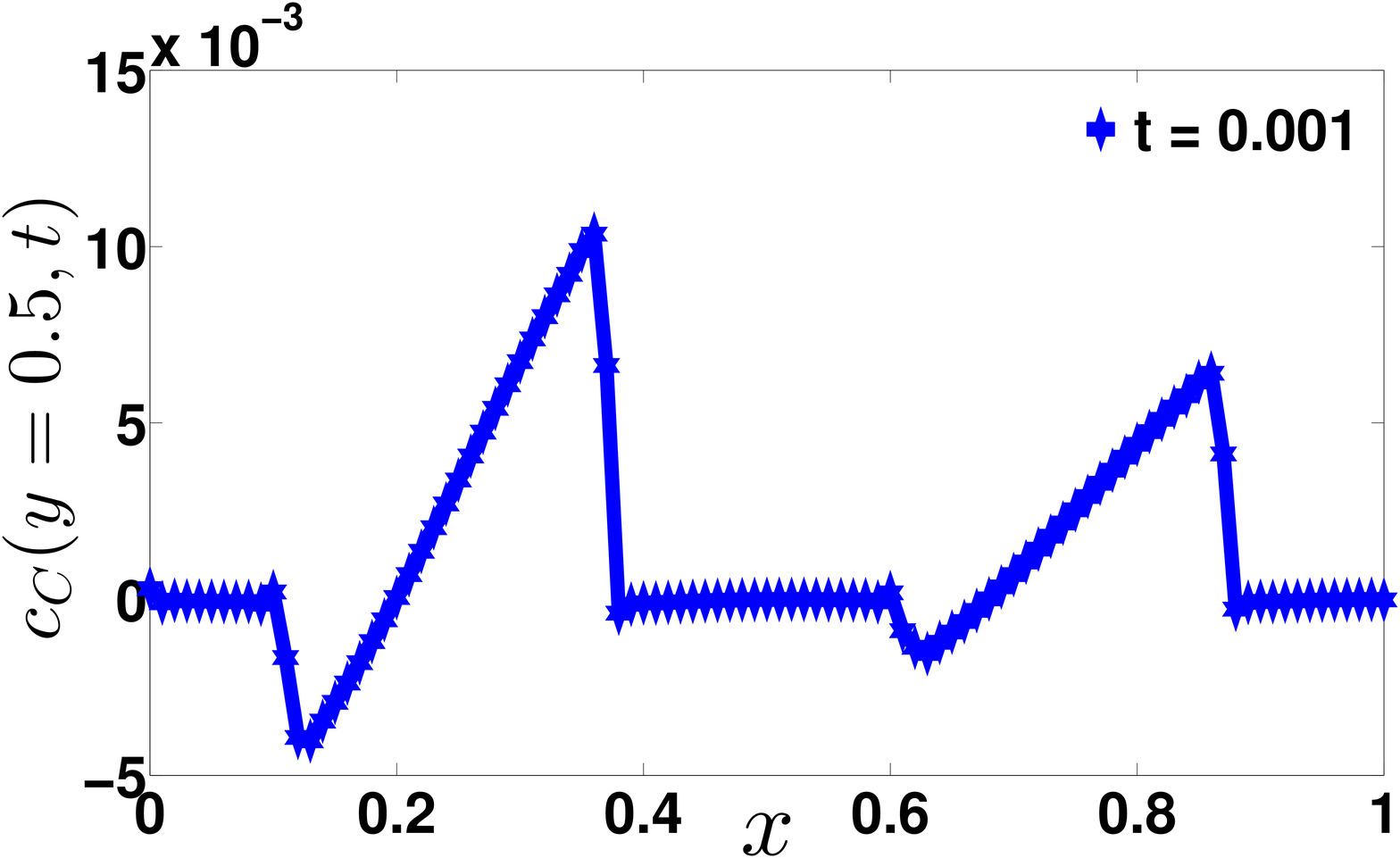}}
  \subfigure[$c_C(y=0.5,t=0.01)$ with $\Delta t = 0.01$]
    {\includegraphics[scale=0.12,clip]
    {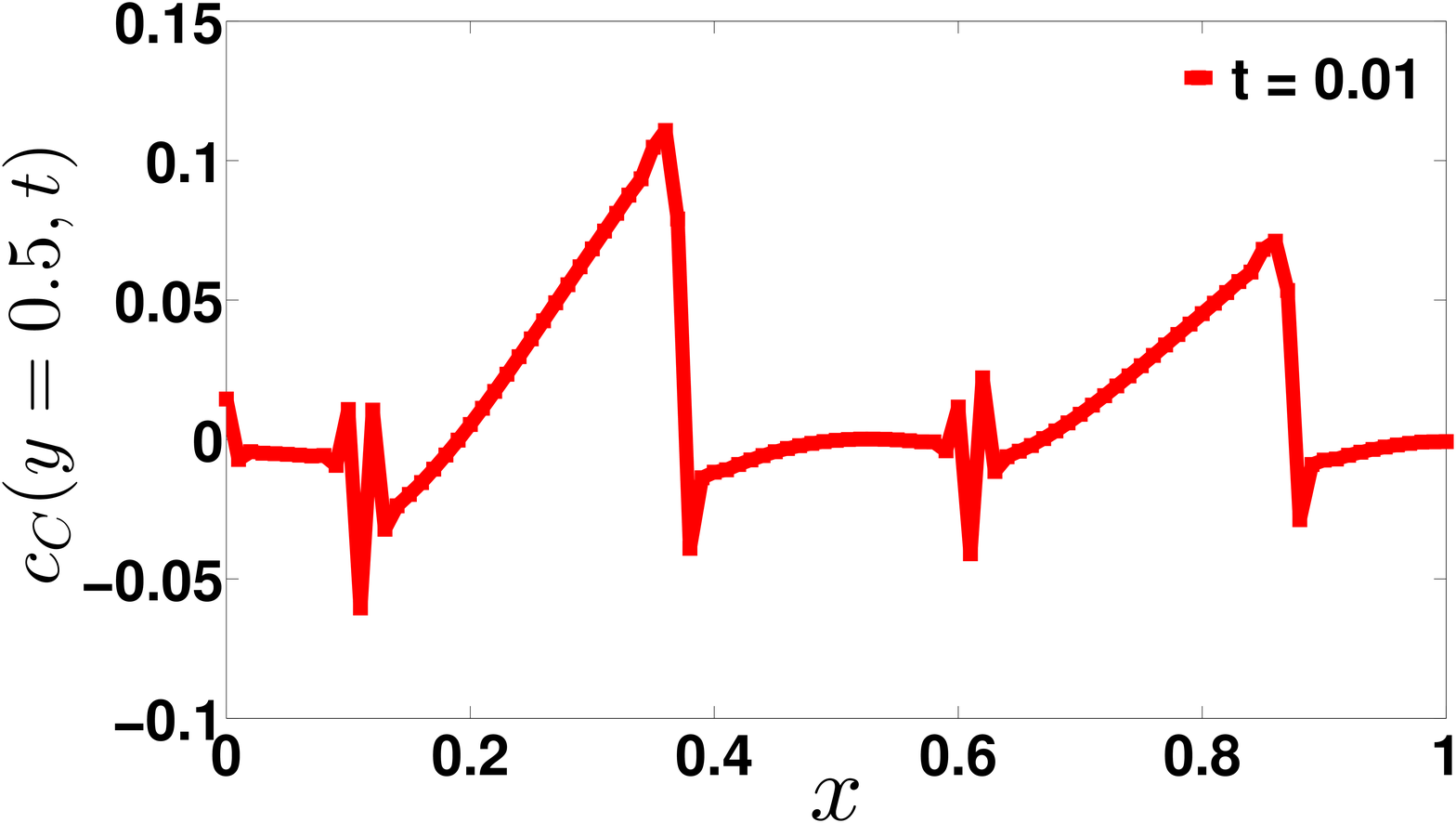}}
  \subfigure[$c_C(y=0.5,t=0.1)$ with $\Delta t = 0.1$]
    {\includegraphics[scale=0.12,clip]
    {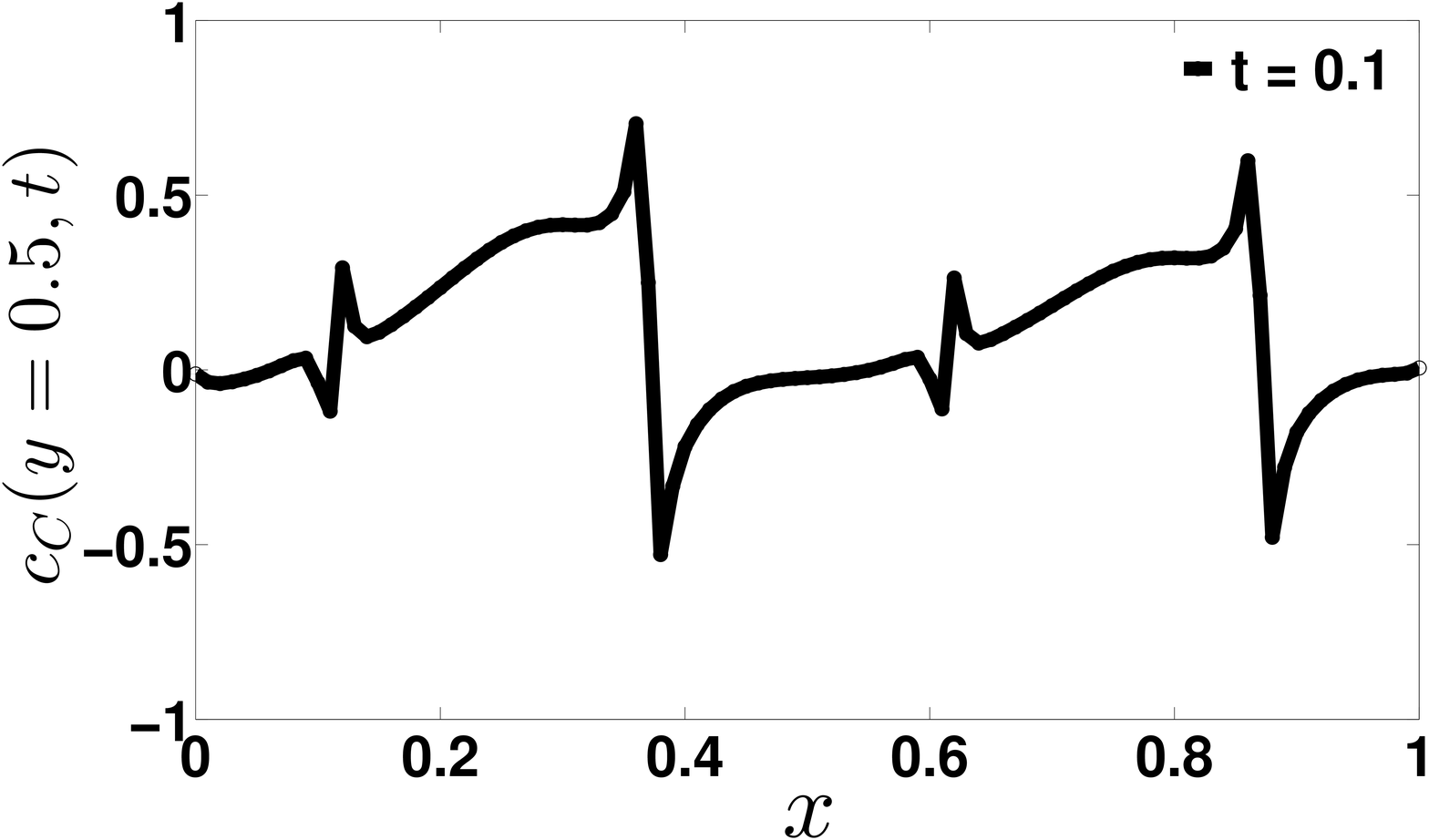}}
  \caption{\textsf{Non-chaotic vortex-stirred mixing in 
    a reaction tank:}~This figure shows the concentration 
    profiles of the product $C$ at $y = 0.5$ after the 
    first time-step using the unconstrained weighted 
    negatively stabilized streamline diffusion LSFEM. 
    The violations of the non-negative constraint are 
    significant, and are present for various choices 
    of the time-step.
    \label{Fig:2D_Bimolecular_ScalDiff_y0dot5_ConcC}}
\end{figure}

\begin{figure}
  \centering
  \subfigure[Product $C$ at $t = 0.1$ (No constraints)]{\includegraphics[scale=0.27,clip]
    {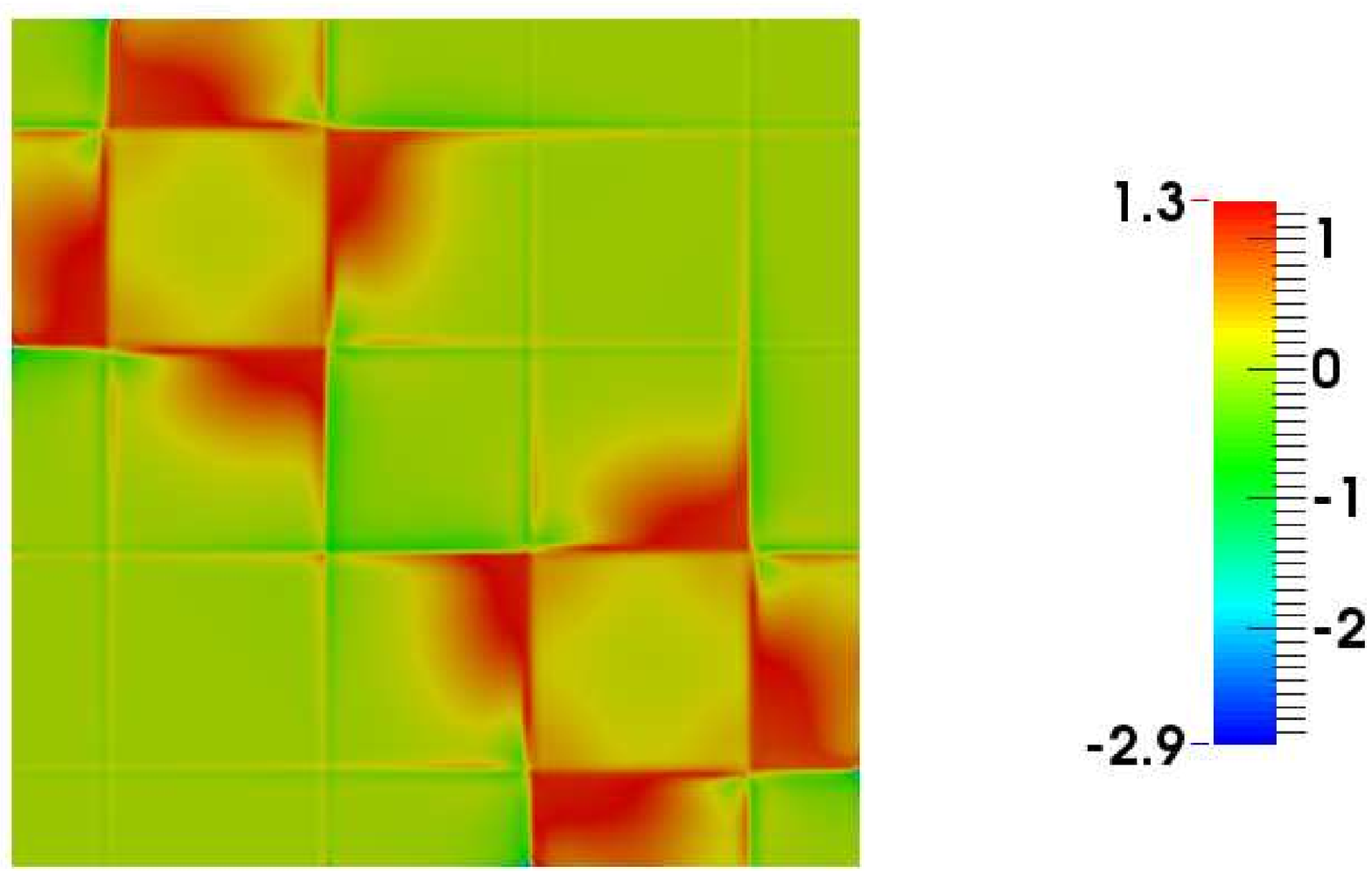}}
  \hspace{0.15in}
  \vspace{0.05in}
  \subfigure[Product $C$ at $t = 0.1$ (NN constraints)]{\includegraphics[scale=0.27,clip]
    {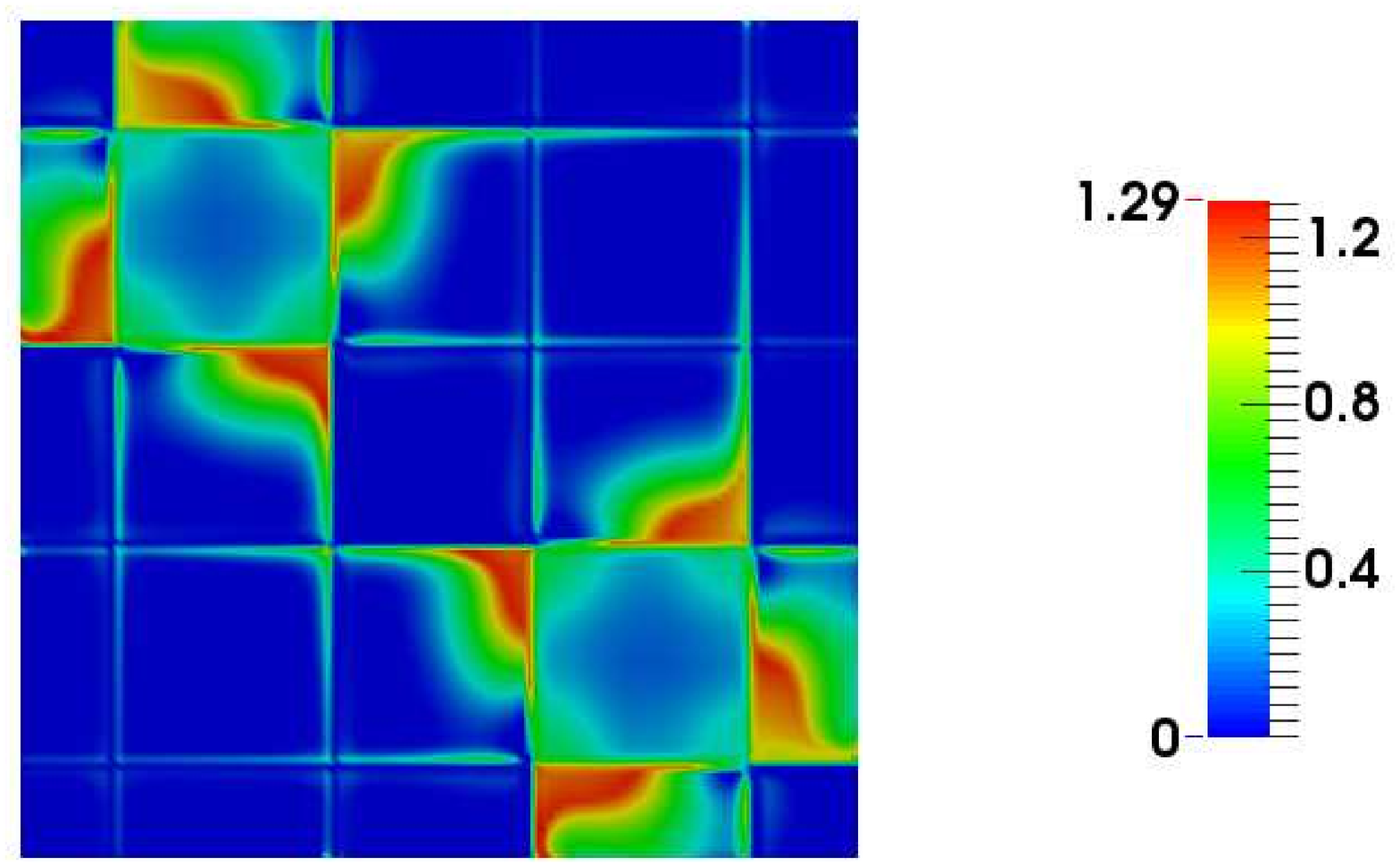}}
  \vspace{0.05in}
  \subfigure[Product $C$ at $t = 0.5$ (No constraints)]{\includegraphics[scale=0.27,clip]
    {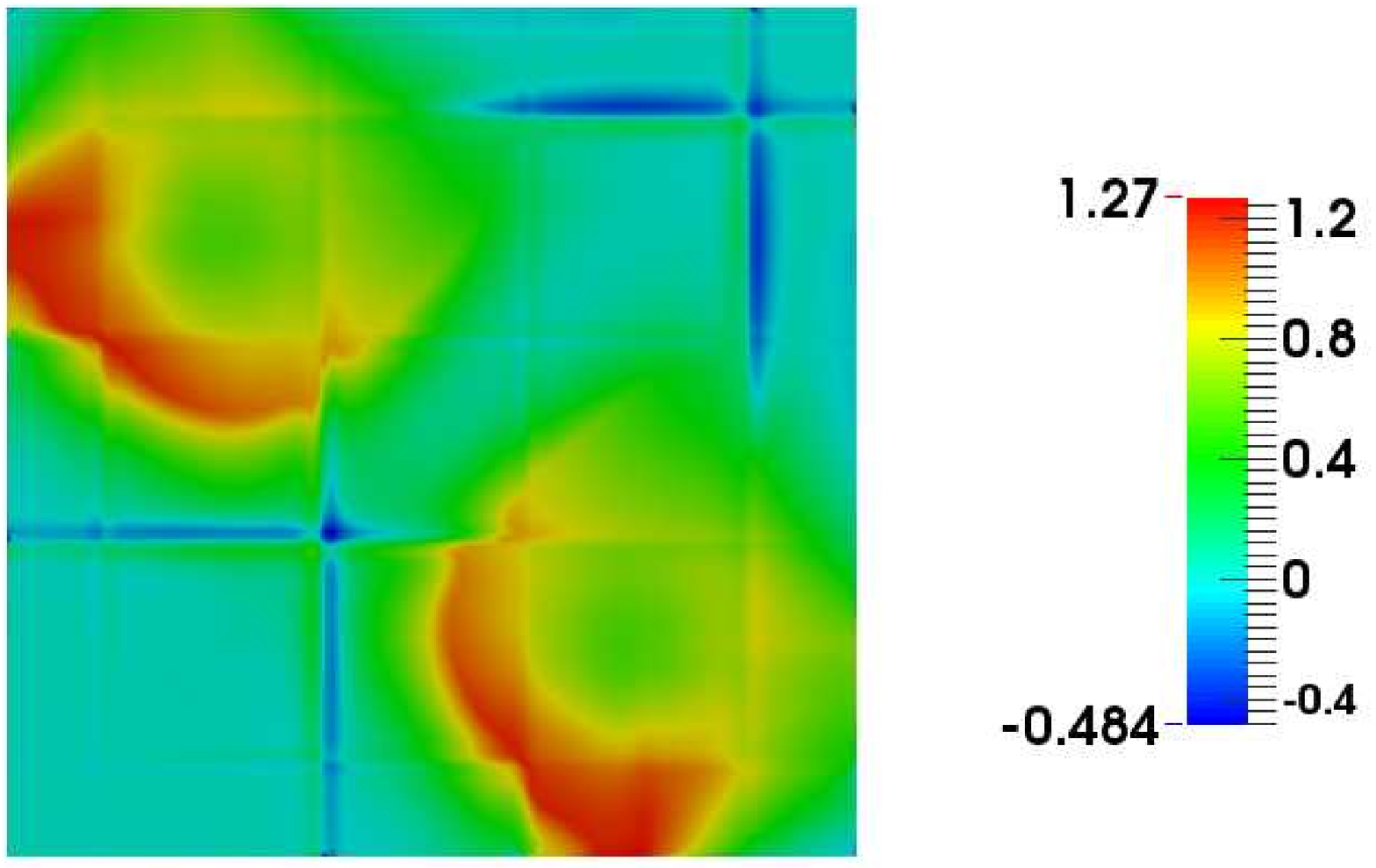}}
  \hspace{0.15in}
  \vspace{0.05in}
  \subfigure[Product $C$ at $t = 0.5$ (NN constraints)]{\includegraphics[scale=0.27,clip]
    {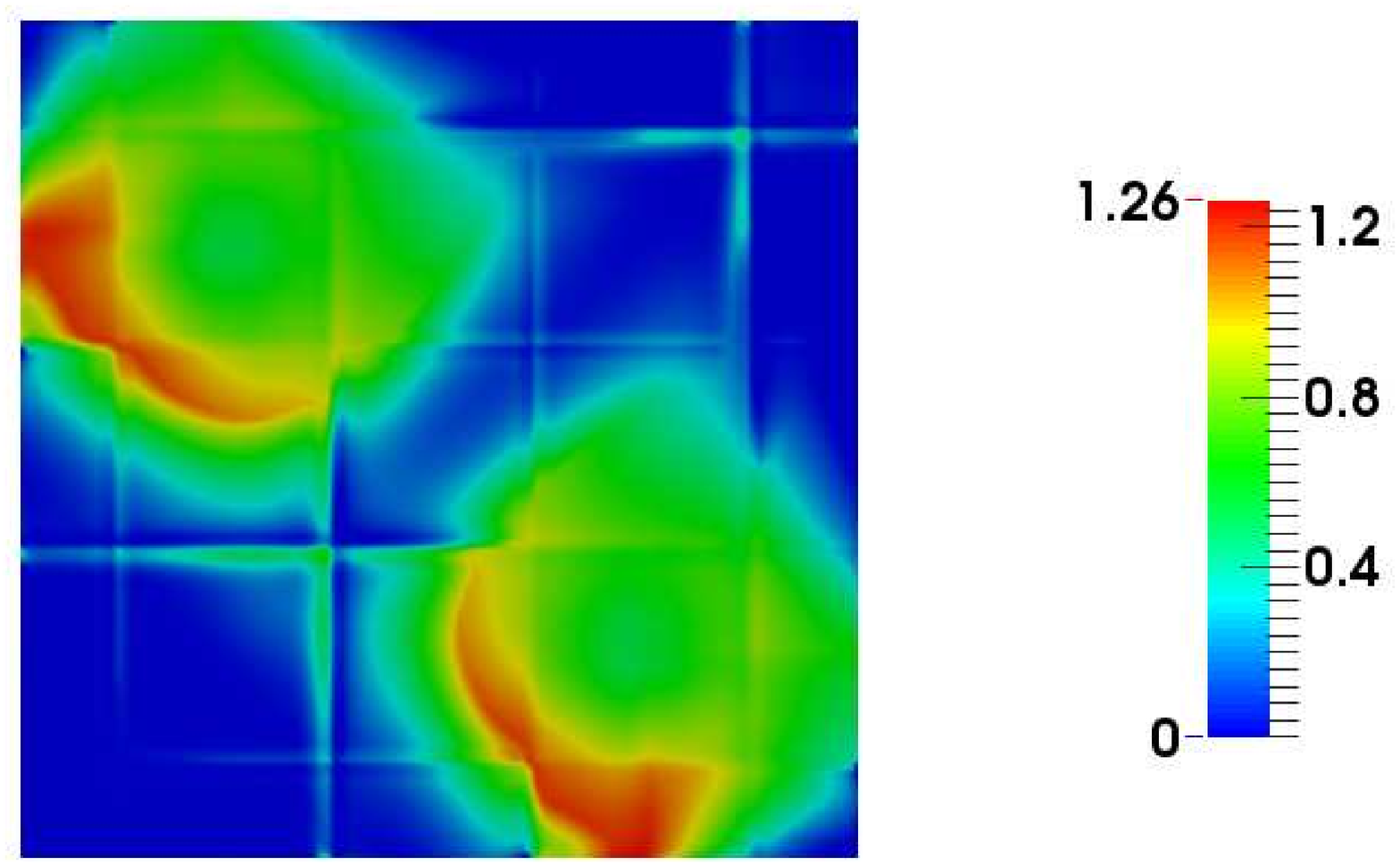}}
  \vspace{0.05in}
  \subfigure[Product $C$ at $t = 1.0$ (No constraints)]{\includegraphics[scale=0.27,clip]
    {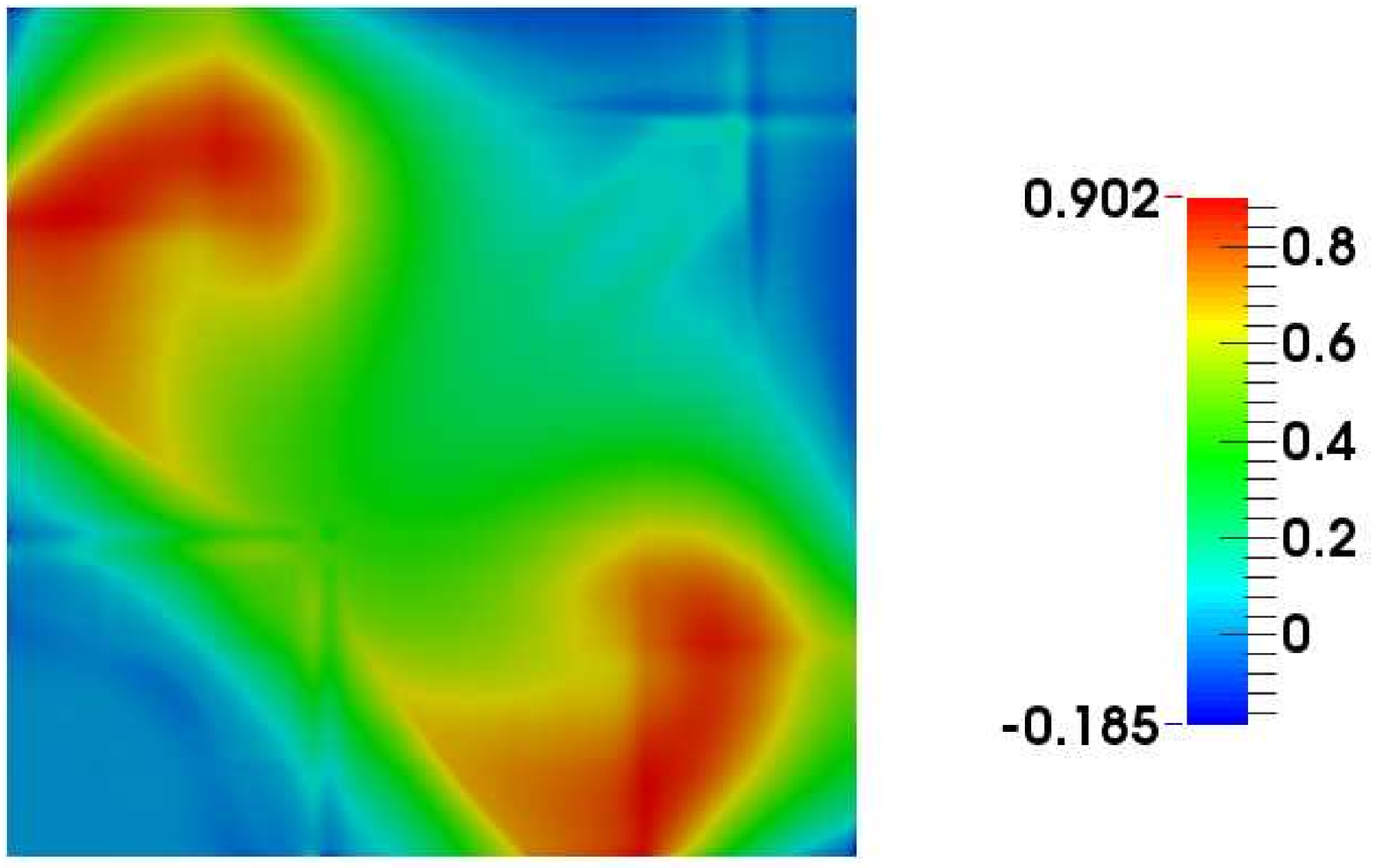}}
  \hspace{0.15in}
  \vspace{0.05in}
  \subfigure[Product $C$ at $t = 1.0$ (NN constraints)]{\includegraphics[scale=0.27,clip]
    {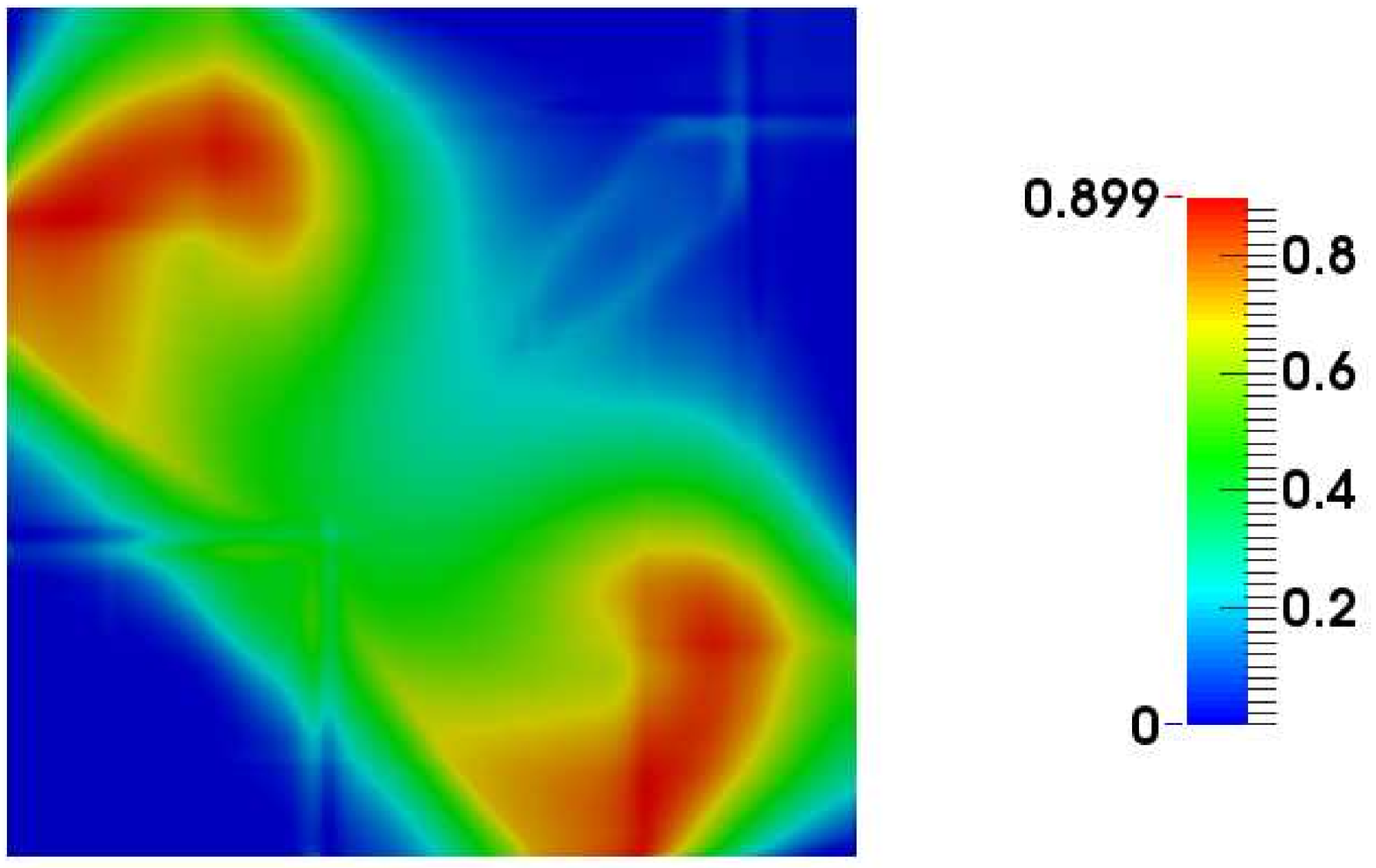}}
  \vspace{0.05in}
  \subfigure[Product $C$ at $t = 5.0$ (No constraints)]{\includegraphics[scale=0.27,clip]
    {ConcC_121by121_Time5dot0_NoCons_Prob2_Trans_Mixing.eps}}
  \hspace{0.15in}
  \vspace{0.05in}
  \subfigure[Product $C$ at $t = 5.0$ (NN constraints)]{\includegraphics[scale=0.27,clip]
    {ConcC_121by121_Time5dot0_NonNeg_Prob2_Trans_Mixing.eps}}
  \caption{\textsf{Non-chaotic vortex-stirred mixing in 
    a reaction tank:}~This figure shows the concentration 
    profiles of the product $C$ at various time levels 
    using the weighted negatively stabilized streamline 
    diffusion LSFEM with and without constraints. The 
    time-step is taken as $\Delta t = 0.1$. Herein, 
    XSeed = YSeed = 121. As $t$ increases, the product 
    $C$ should accumulate near the center of the two 
    vortices. The proposed computational framework is 
    able to accurately capture such features, and the 
    obtained solutions are physical at all times.
    \label{Fig:2D_Bimolecular_ScalDiff_Mixing_ConcC}}
\end{figure}

\begin{figure}
  \centering
  \subfigure[Product $C$ at $t = 0.1$]{\includegraphics[scale=0.27,clip]
    {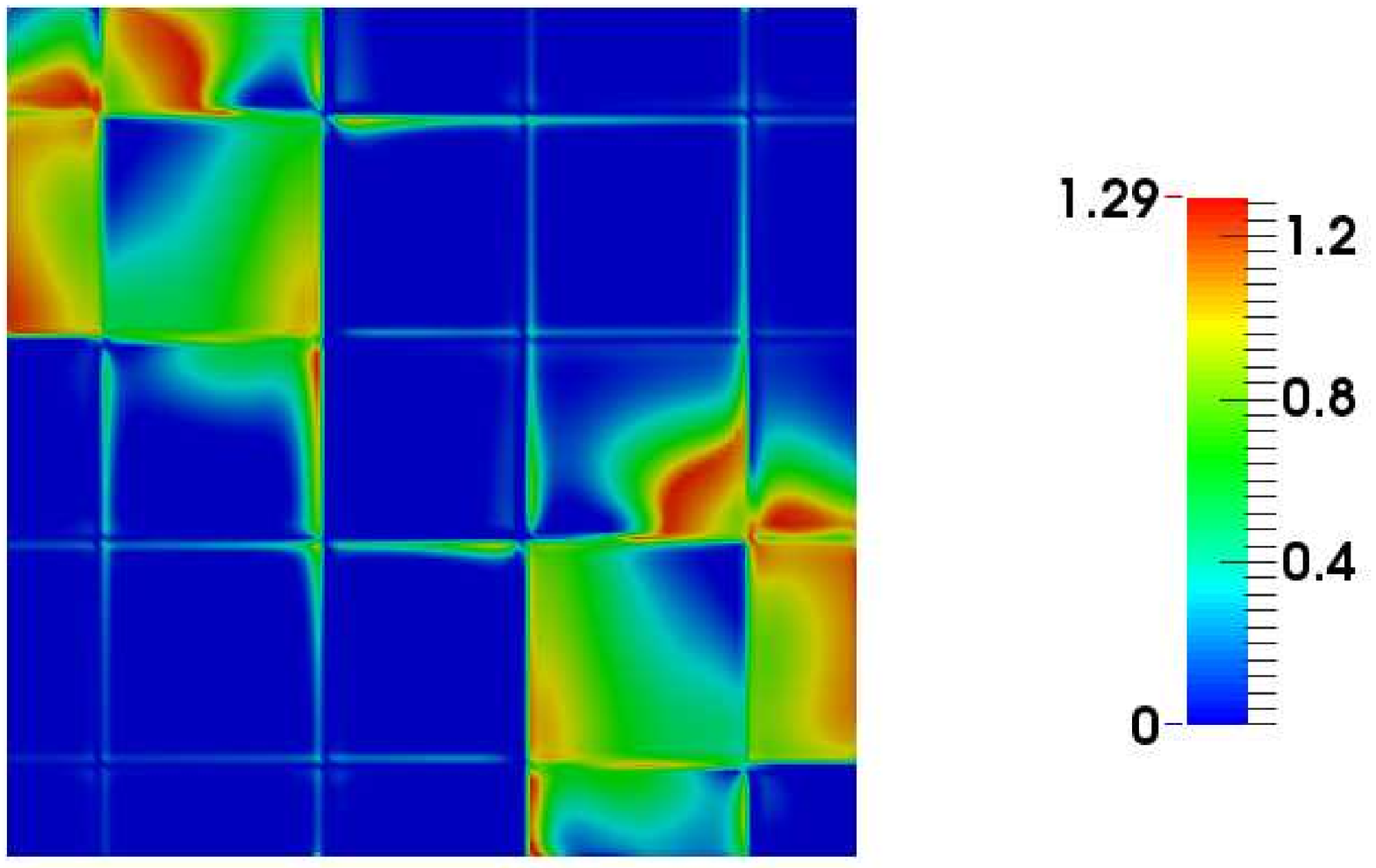}}
  \hspace{0.15in}
  \vspace{0.05in}
  \subfigure[Product $C$ at $t = 0.5$]{\includegraphics[scale=0.27,clip]
    {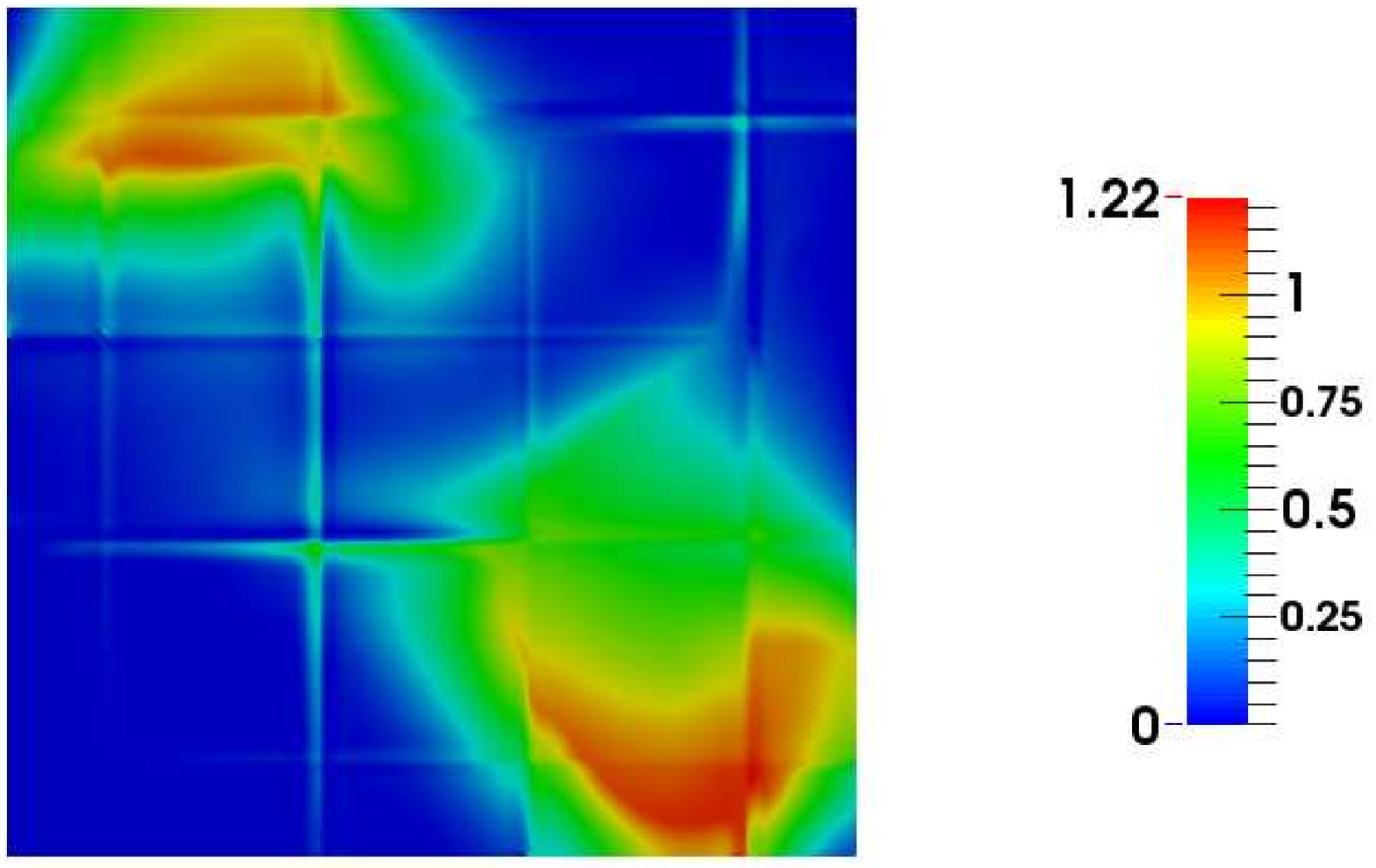}}
  \vspace{0.05in}
  \subfigure[Product $C$ at $t = 1.0$]{\includegraphics[scale=0.27,clip]
    {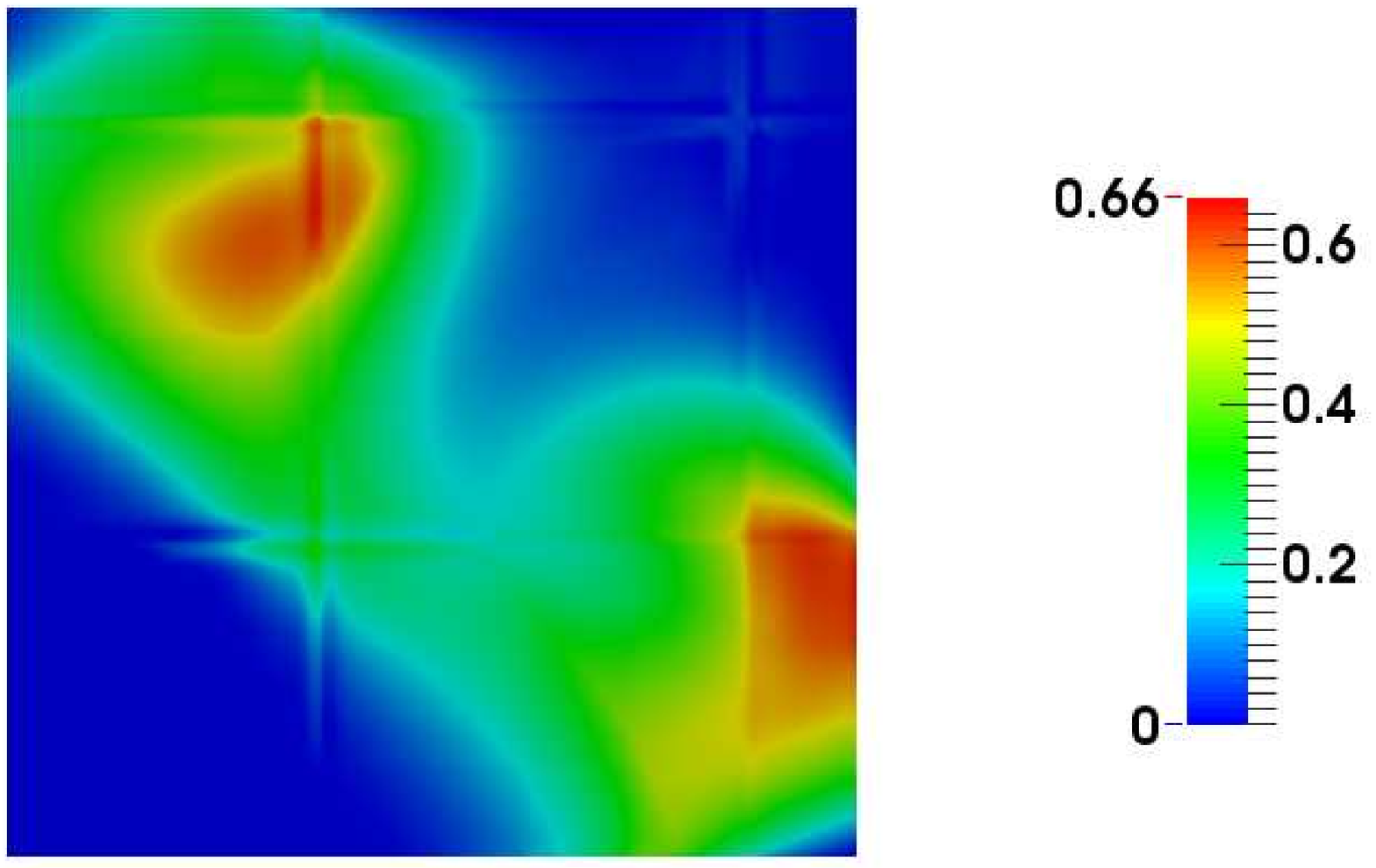}}
  \hspace{0.15in}
  \vspace{0.05in}
  \subfigure[Product $C$ at $t = 1.5$]{\includegraphics[scale=0.27,clip]
    {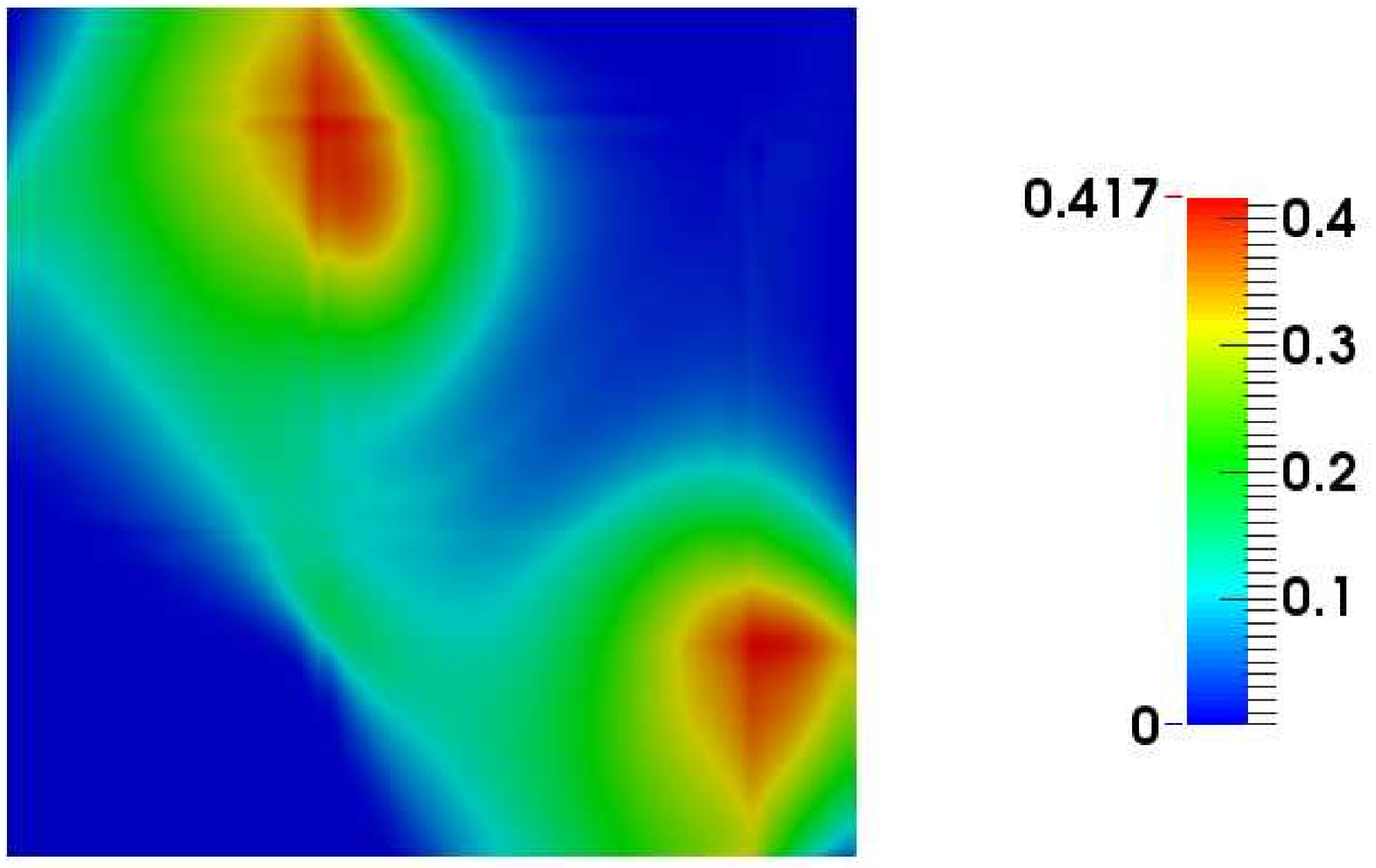}}
  \vspace{0.05in}
  \subfigure[Product $C$ at $t = 2.0$]{\includegraphics[scale=0.27,clip]
    {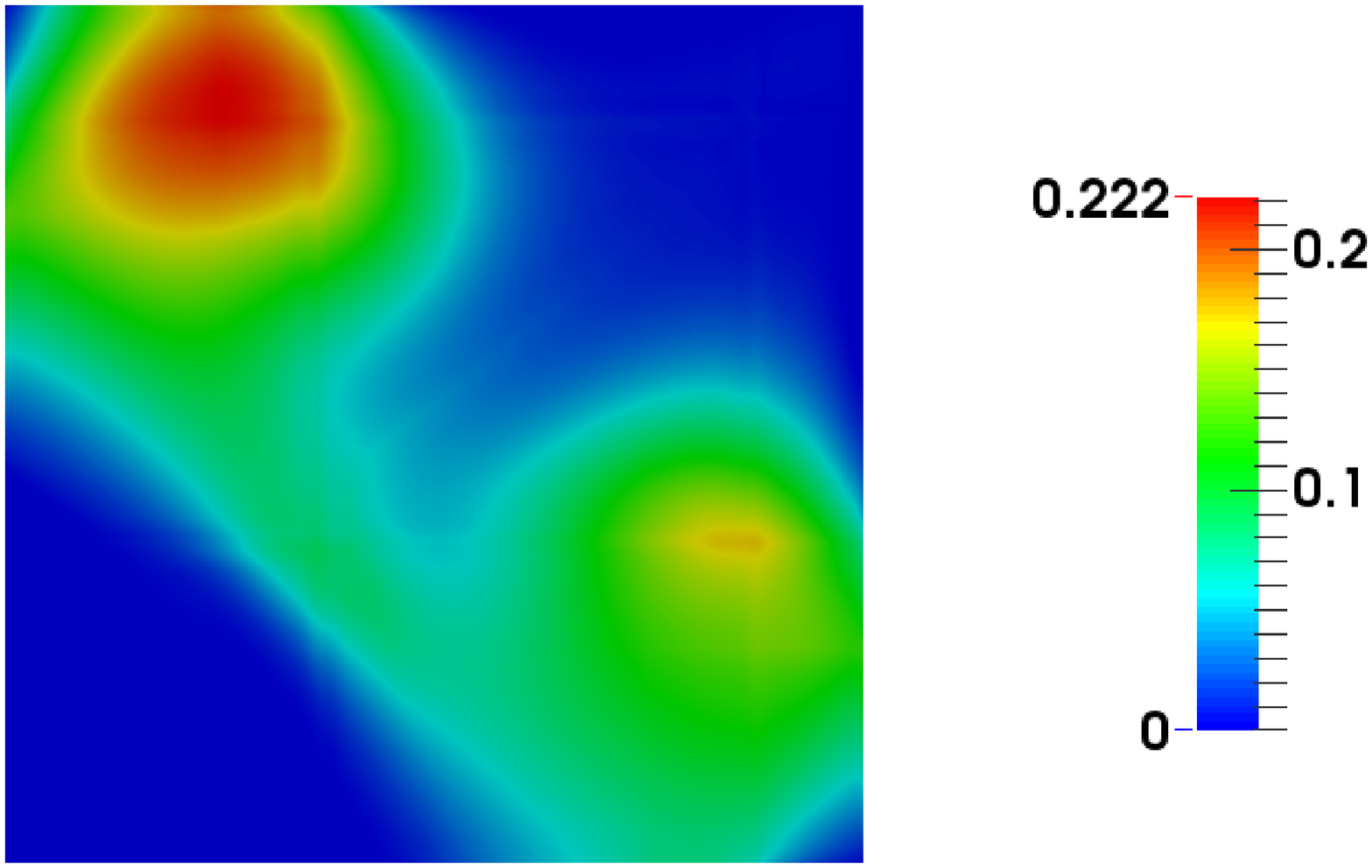}}
  \hspace{0.15in}
  \vspace{0.05in}
  \subfigure[Product $C$ at $t = 3.0$]{\includegraphics[scale=0.27,clip]
    {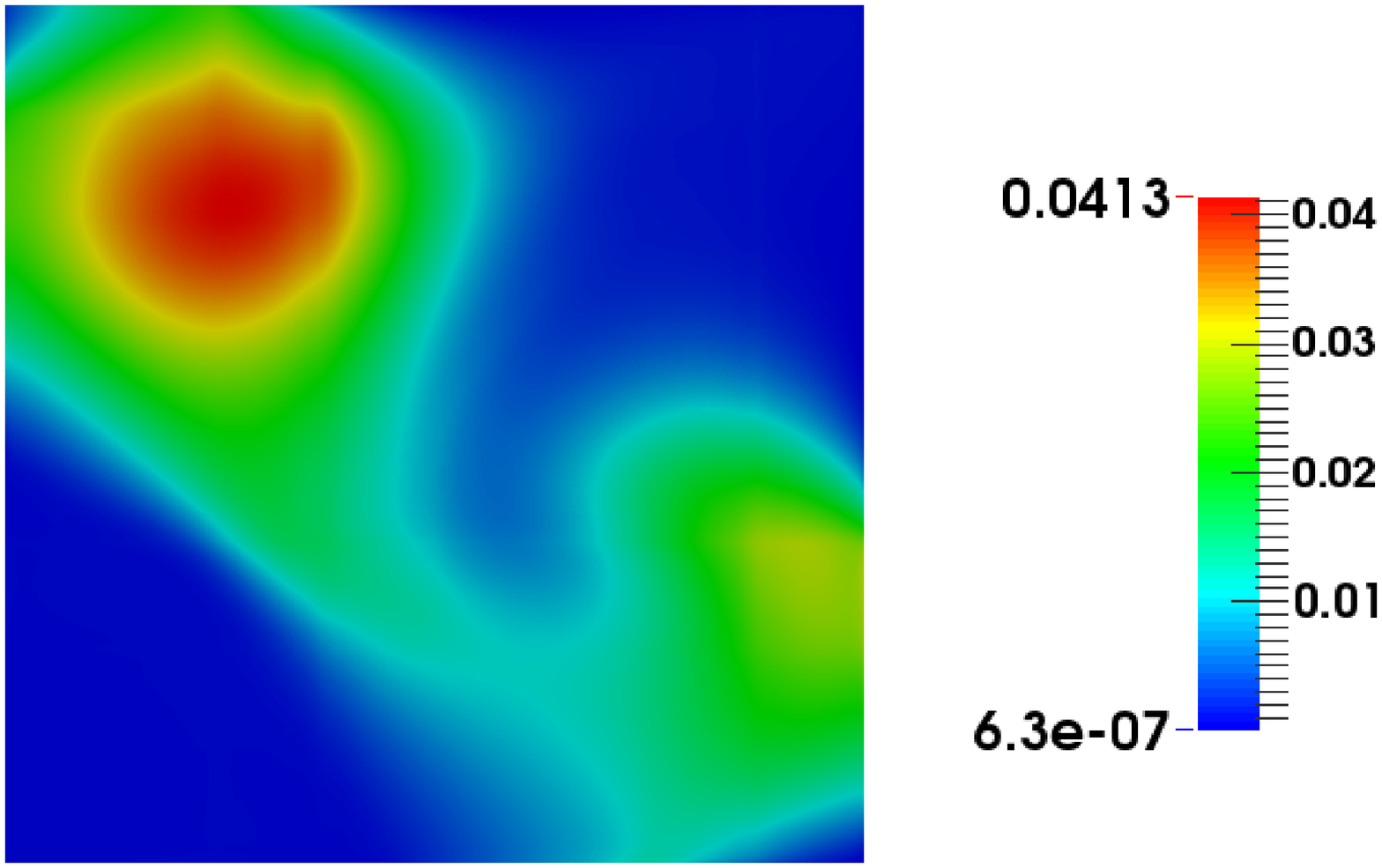}}
  \vspace{0.05in}
  \subfigure[Product $C$ at $t = 4.0$]{\includegraphics[scale=0.27,clip]
    {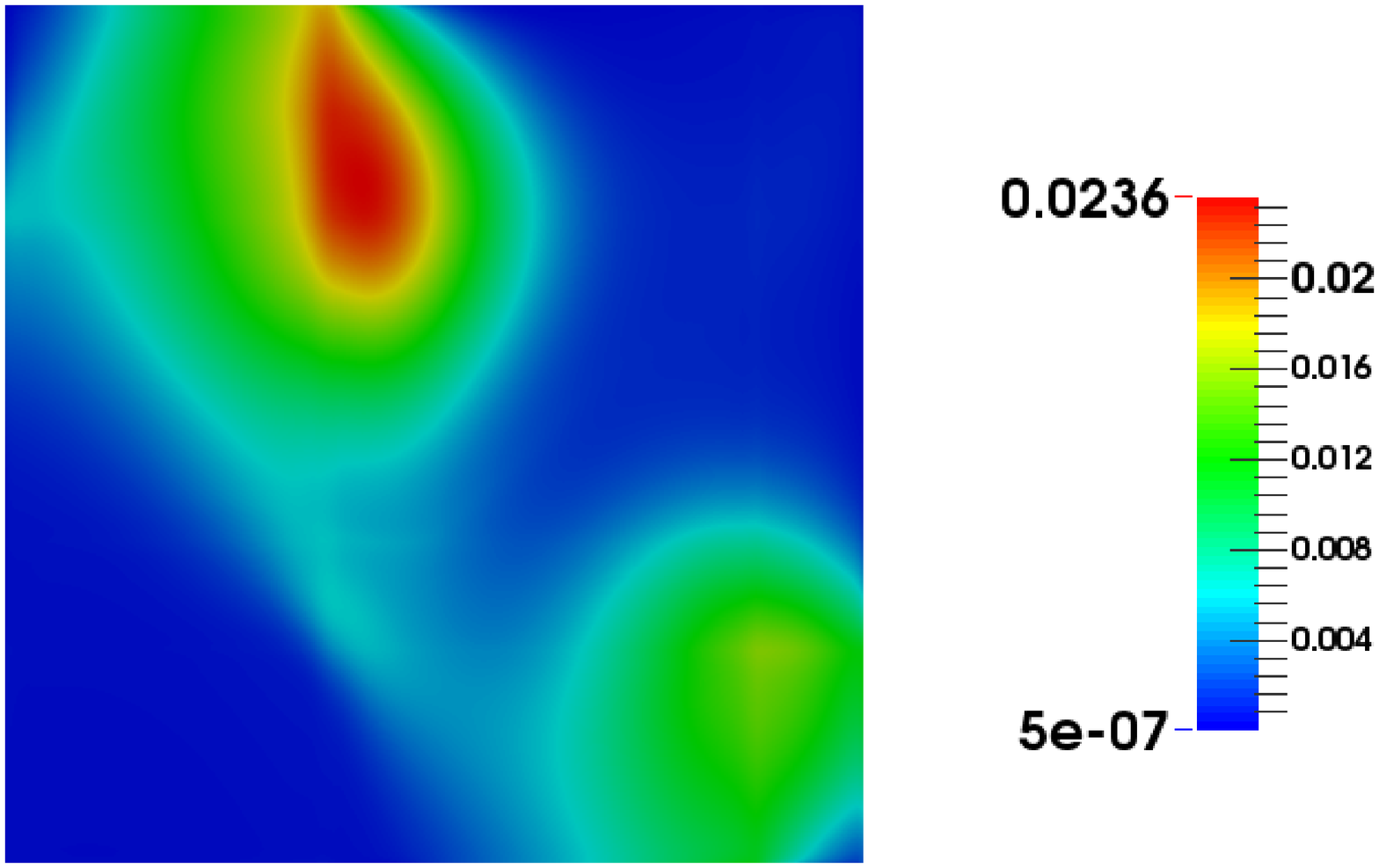}}
  \hspace{0.15in}
  \vspace{0.05in}
  \subfigure[Product $C$ at $t = 5.0$]{\includegraphics[scale=0.27,clip]
    {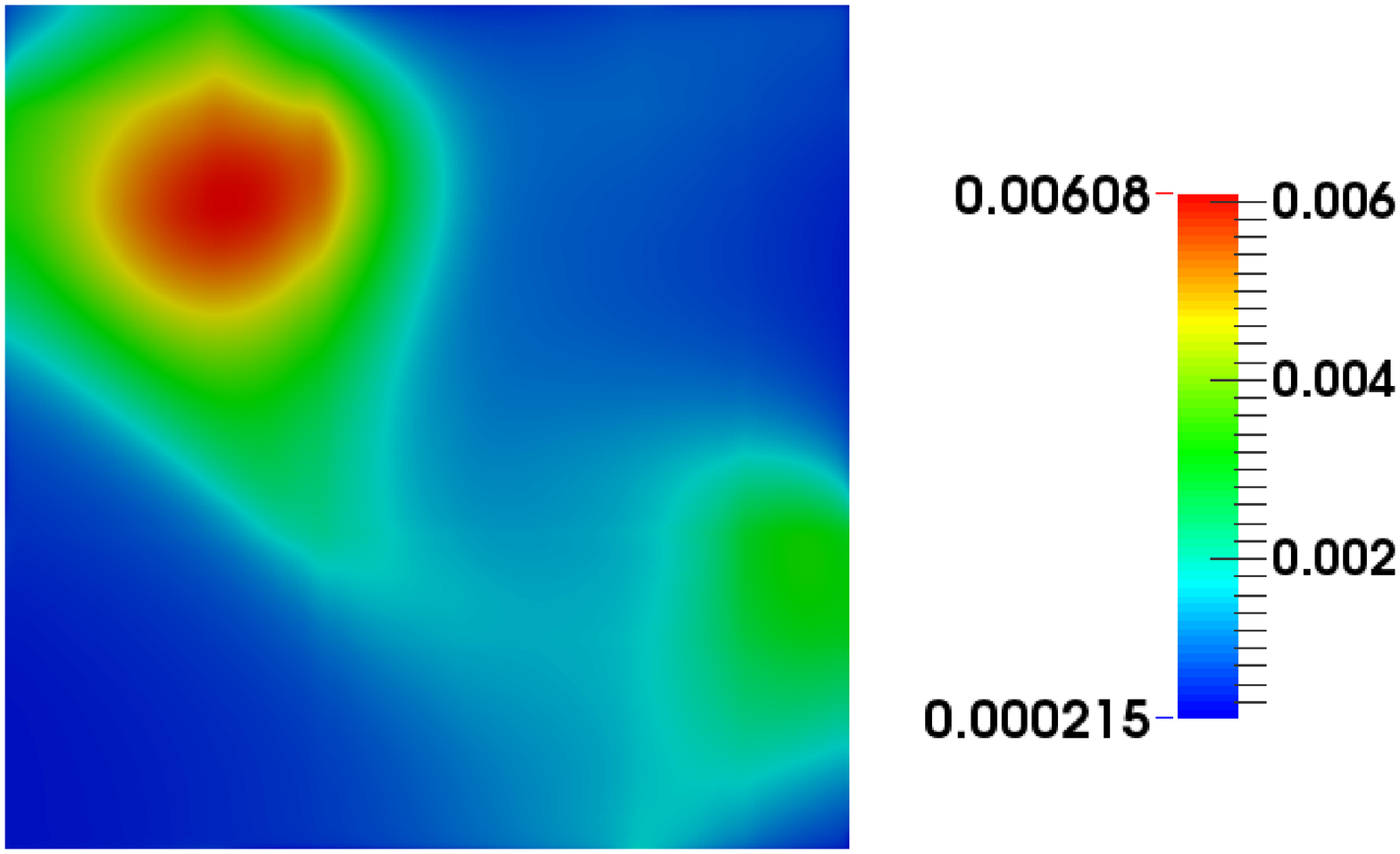}}
  \caption{\textsf{Chaotic vortex-stirred mixing in 
    a reaction tank:}~This figure shows the concentration
    profiles of the product $C$ at various time levels
    using the proposed method with non-negative constraints.
    The time-step is taken as $\Delta t = 0.1$. Herein, 
    XSeed = YSeed = 121. An interesting feature observed 
    is that the mixing of $c_C$ is enhanced in the entire 
    domain when compared to non-chaotic advection (as the 
    minimum value for $c_C$ is greater than zero).
    \label{Fig:2D_Bimolecular_ScalDiff_Mixing_ConcC1}}
\end{figure}


\begin{figure}
  \centering
  \psfrag{O}{$(0,0)$}
  \psfrag{A}{$A$}
  \psfrag{B}{$B$}
  \psfrag{A1}{$c^{0}_A(\mathbf{x}) = 1$}
  \psfrag{A2}{$c^{0}_B(\mathbf{x}) = 0$}
  \psfrag{A3}{$c^{0}_C(\mathbf{x}) = 0$}
  \psfrag{B1}{$c^{0}_A(\mathbf{x}) = 0$}
  \psfrag{B2}{$c^{0}_B(\mathbf{x}) = 1$}
  \psfrag{B3}{$c^{0}_C(\mathbf{x}) = 0$}
  \psfrag{BC1}{$h^{\mathrm{p}}_i(\mathbf{x},t) = 0$}
  \psfrag{BC2}{\rotatebox{-90}{$h^{\mathrm{p}}_i(\mathbf{x},t) = 0$}}
  \psfrag{BC3}{$h^{\mathrm{p}}_i(\mathbf{x},t) = 0$}
  \psfrag{BC4}{\rotatebox{90}{$h^{\mathrm{p}}_i(\mathbf{x},t) = 0$}}
  \psfrag{f1}{$f_i(\mathbf{x},t) = 0$}
  \psfrag{f2}{$f_i(\mathbf{x},t) = 0$}
  \psfrag{Lx}{$L_x$}
  \psfrag{Ly1}{\rotatebox{-90}{$L_y/2$}}
  \psfrag{Ly2}{\rotatebox{-90}{$L_y/2$}}
  \subfigure[Problem description]{\includegraphics[scale=0.8]
    {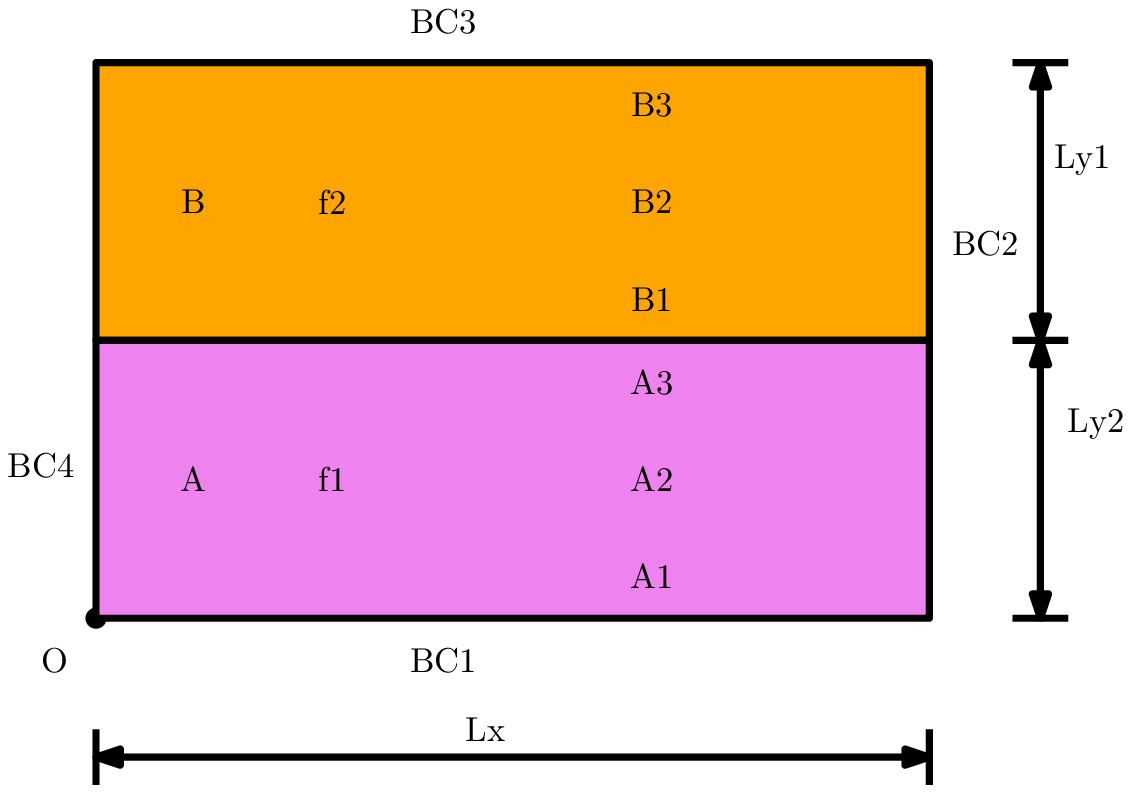}}
  \subfigure[Stream function and advection velocity]{\includegraphics[scale=0.42]
    {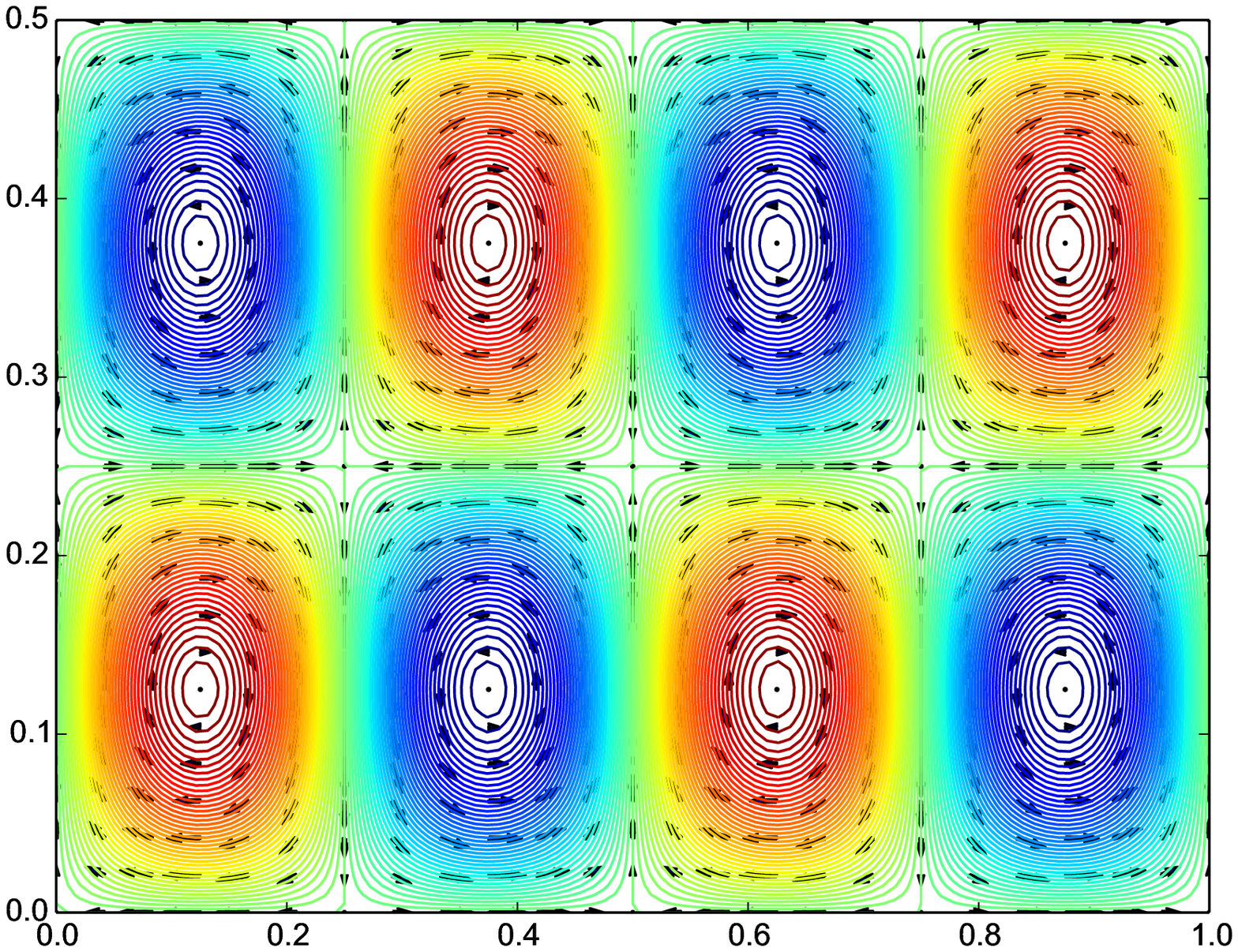}}
  \caption{\textsf{Transport-controlled mixing in cellular 
    flows:}~A pictorial description of the initial boundary 
    value problem and associated advection velocity field for 
    the cellular flow when $L_{\text{\tiny {Cell}}} = 0.5$.
  \label{Fig:2D_CellFlow_ScalDiff_Mixing}}
\end{figure}

\begin{figure}
  \centering
  \subfigure[Product $C$ at $t = 0.1$ (No constraints)]{\includegraphics[scale=0.30,clip]
    {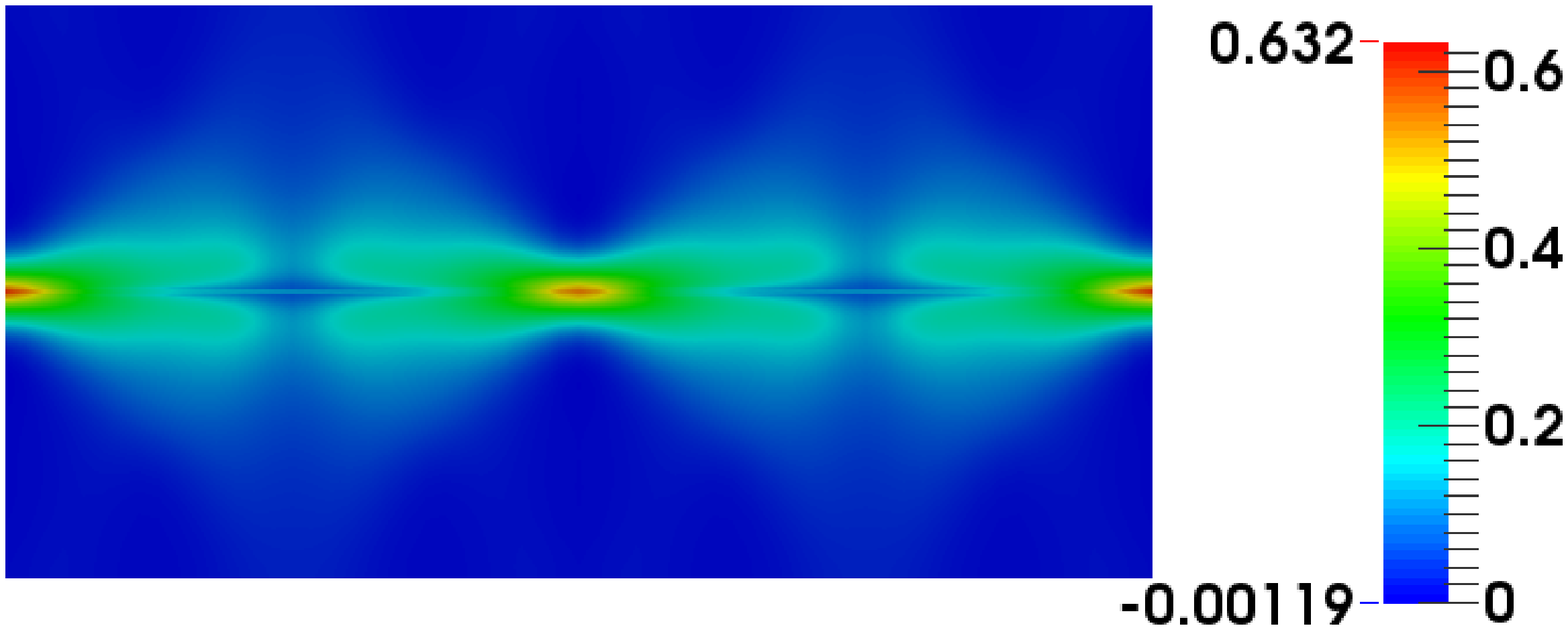}}
  \hspace{0.15in}
  \vspace{0.1in}
  \subfigure[Product $C$ at $t = 0.1$ (LSB and DMP constraints)]{\includegraphics[scale=0.30,clip]
    {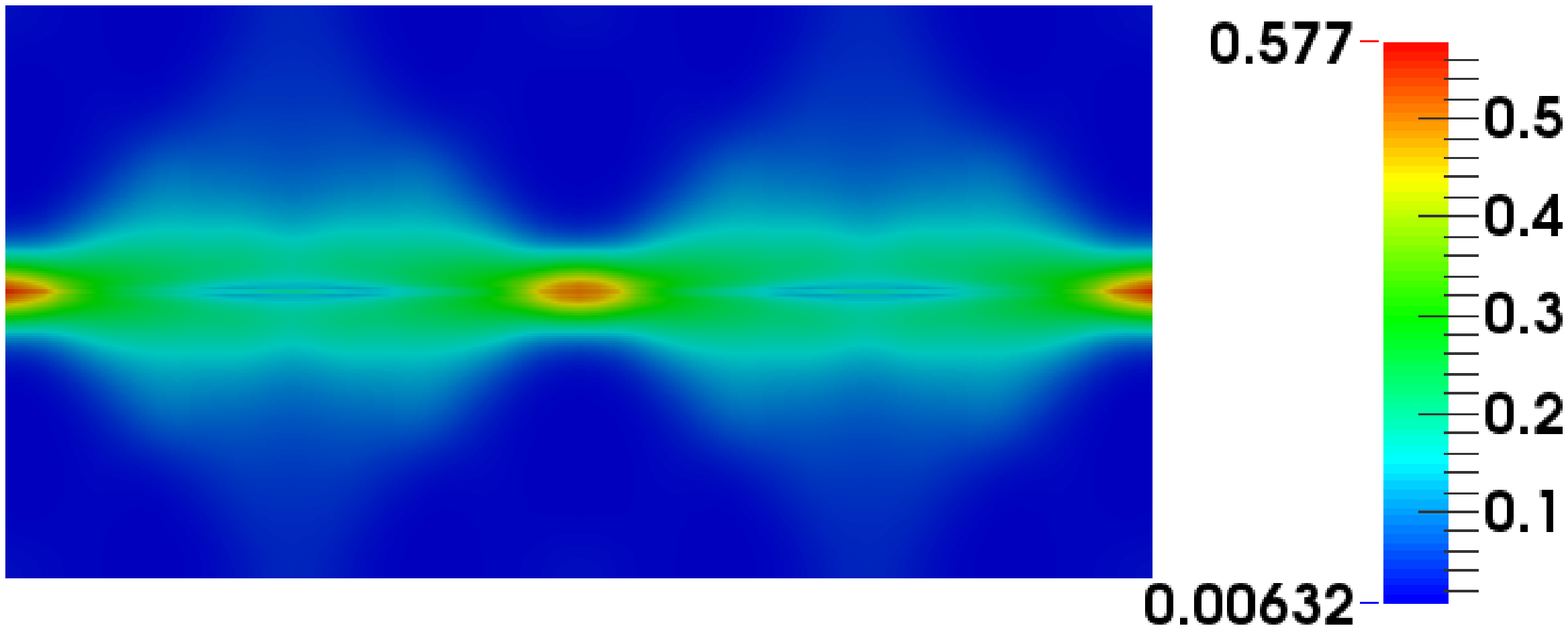}}
  \vspace{0.1in}
  \subfigure[Product $C$ at $t = 0.5$ (No constraints)]{\includegraphics[scale=0.30,clip]
    {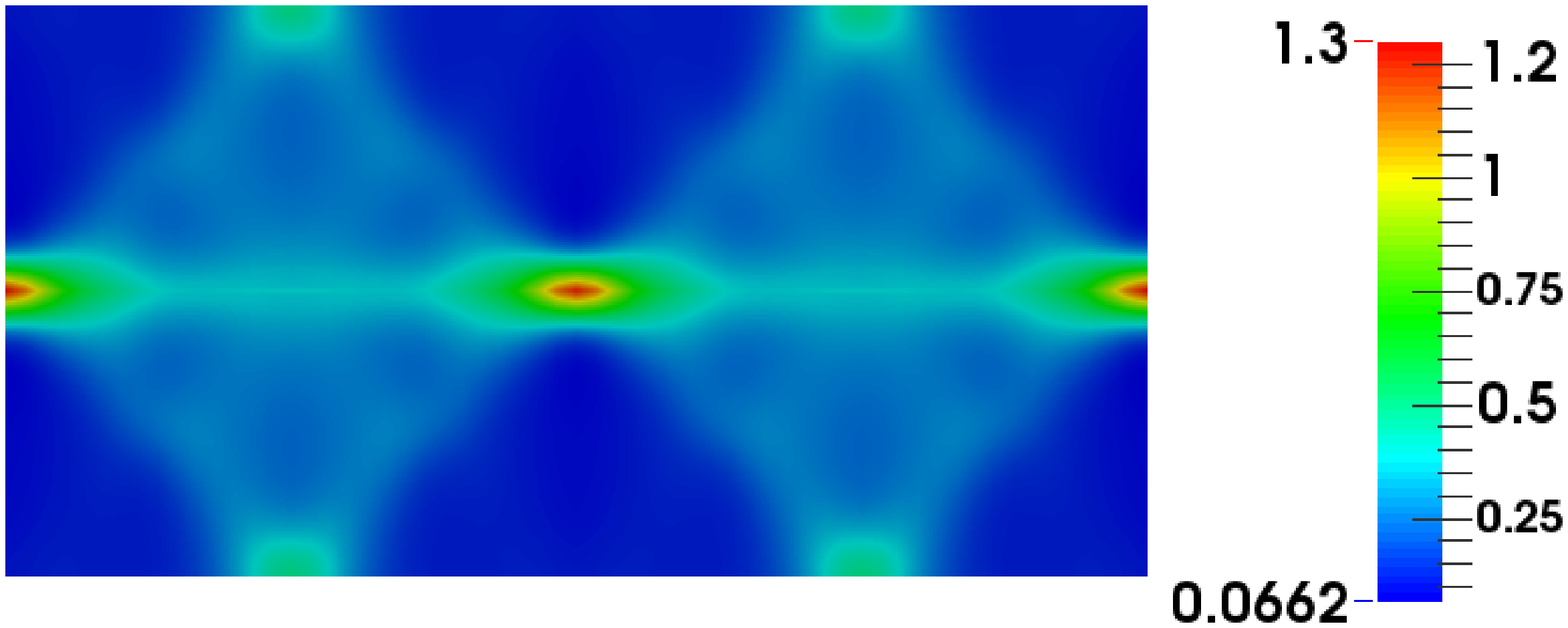}}
  \hspace{0.15in}
  \vspace{0.1in}
  \subfigure[Product $C$ at $t = 0.5$ (LSB and DMP constraints)]{\includegraphics[scale=0.30,clip]
    {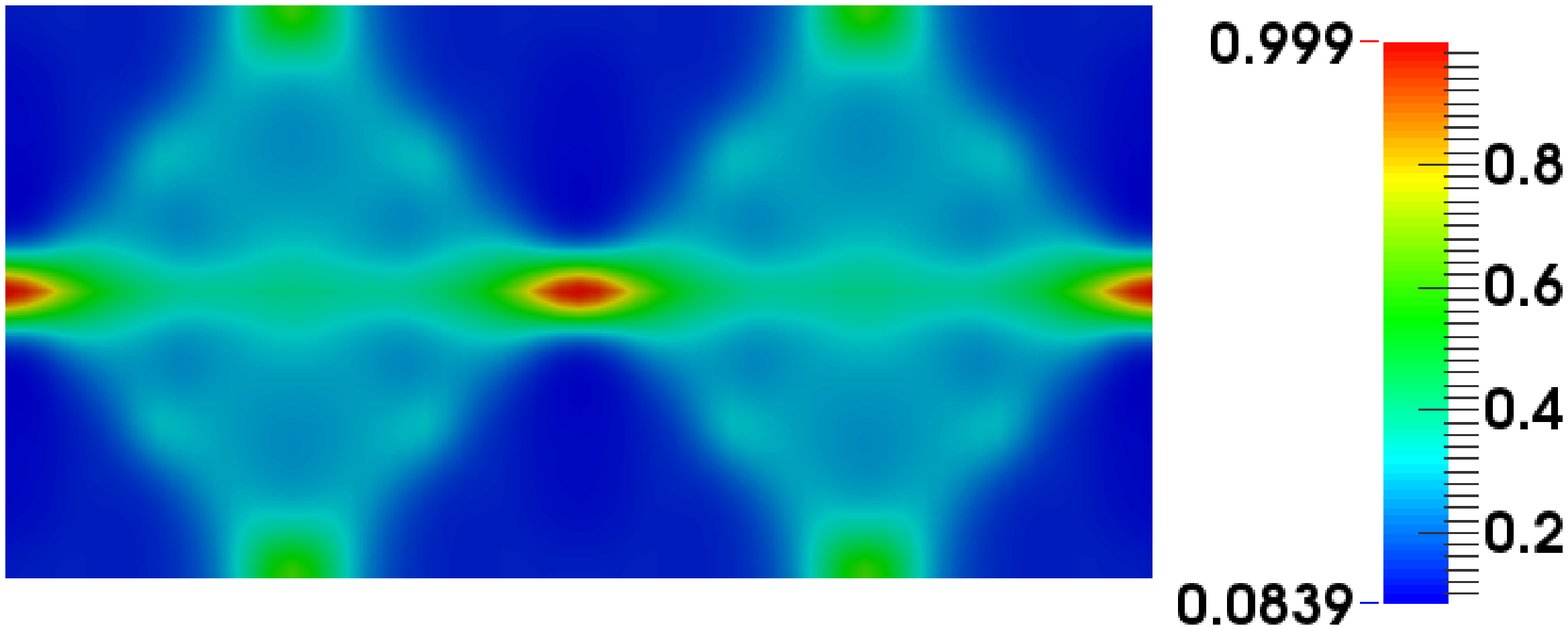}}
  \vspace{0.1in}
  \subfigure[Product $C$ at $t = 1.0$ (No constraints)]{\includegraphics[scale=0.30,clip]
    {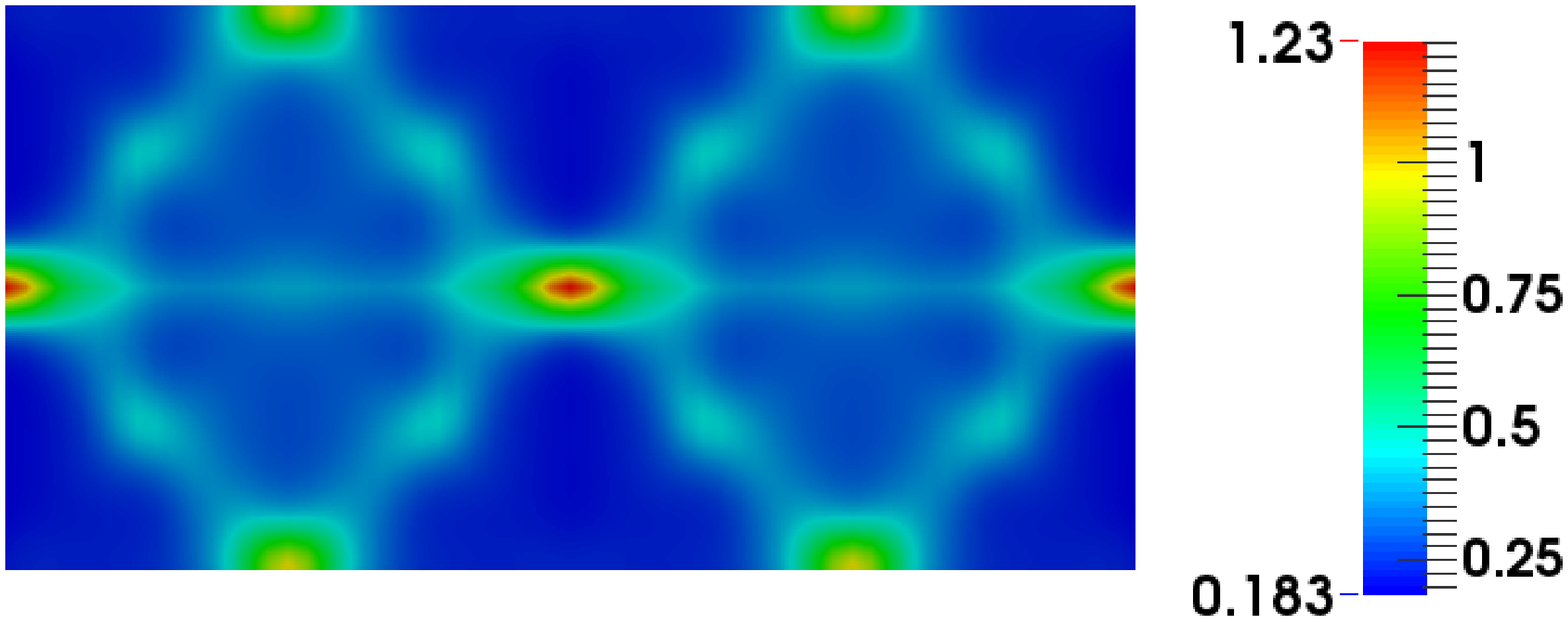}}
  \hspace{0.15in}
  \vspace{0.1in}
  \subfigure[Product $C$ at $t = 1.0$ (LSB and DMP constraints)]{\includegraphics[scale=0.30,clip]
    {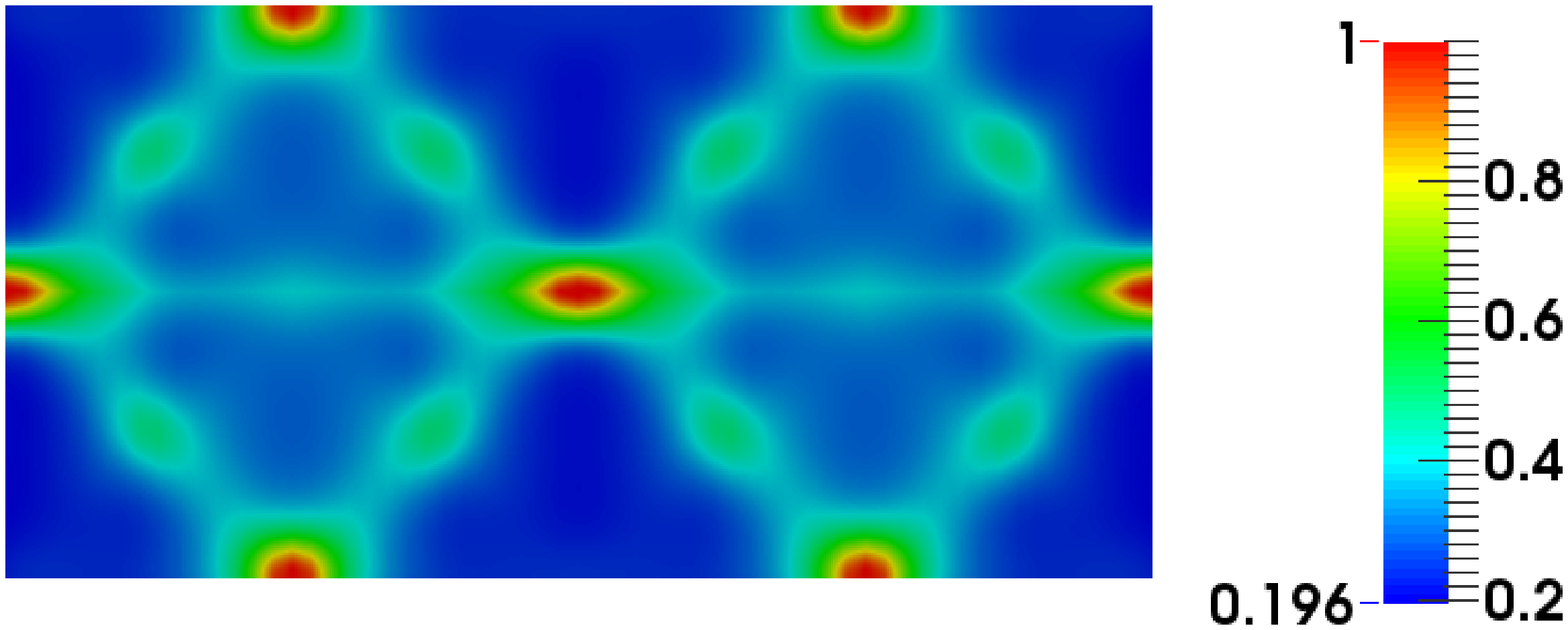}}
  \vspace{0.1in}
  \subfigure[Product $C$ at $t = 5.0$ (No constraints)]{\includegraphics[scale=0.30,clip]
    {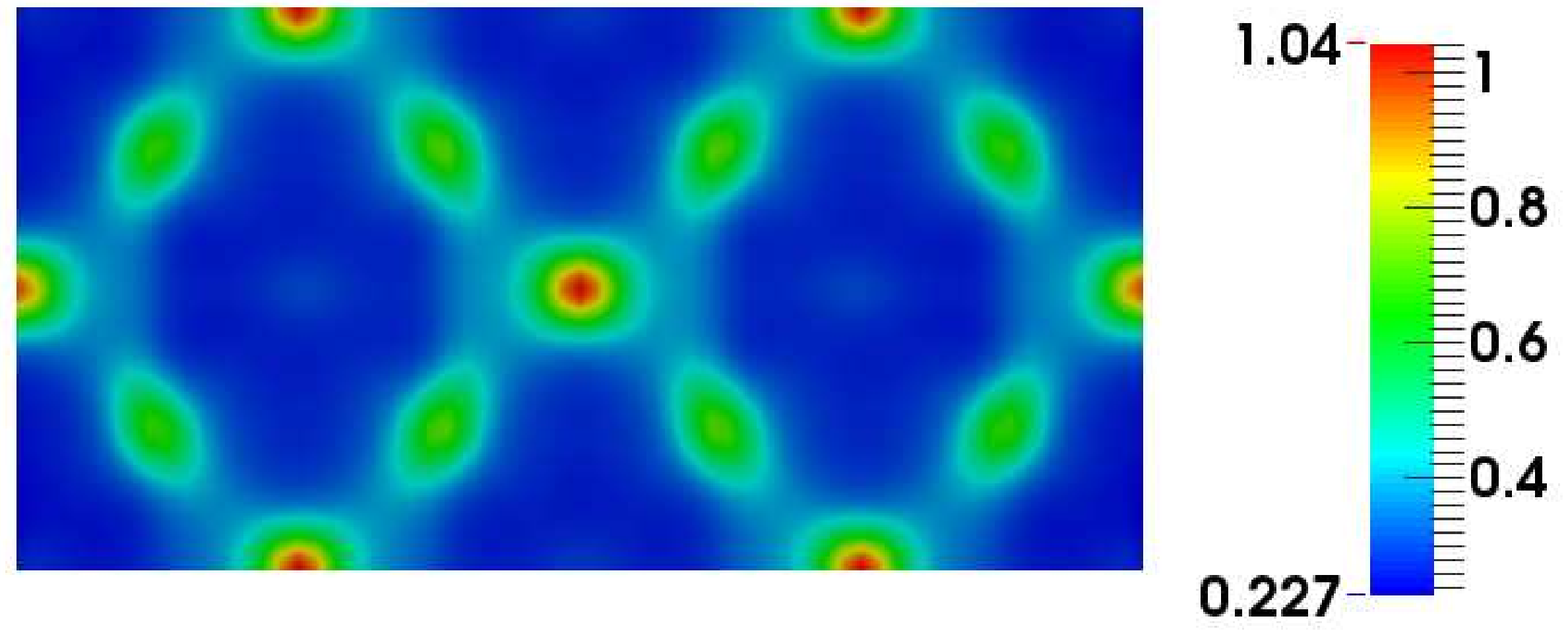}}
  \hspace{0.15in}
  \vspace{0.1in}
  \subfigure[Product $C$ at $t = 5.0$ (LSB and DMP constraints)]{\includegraphics[scale=0.30,clip]
    {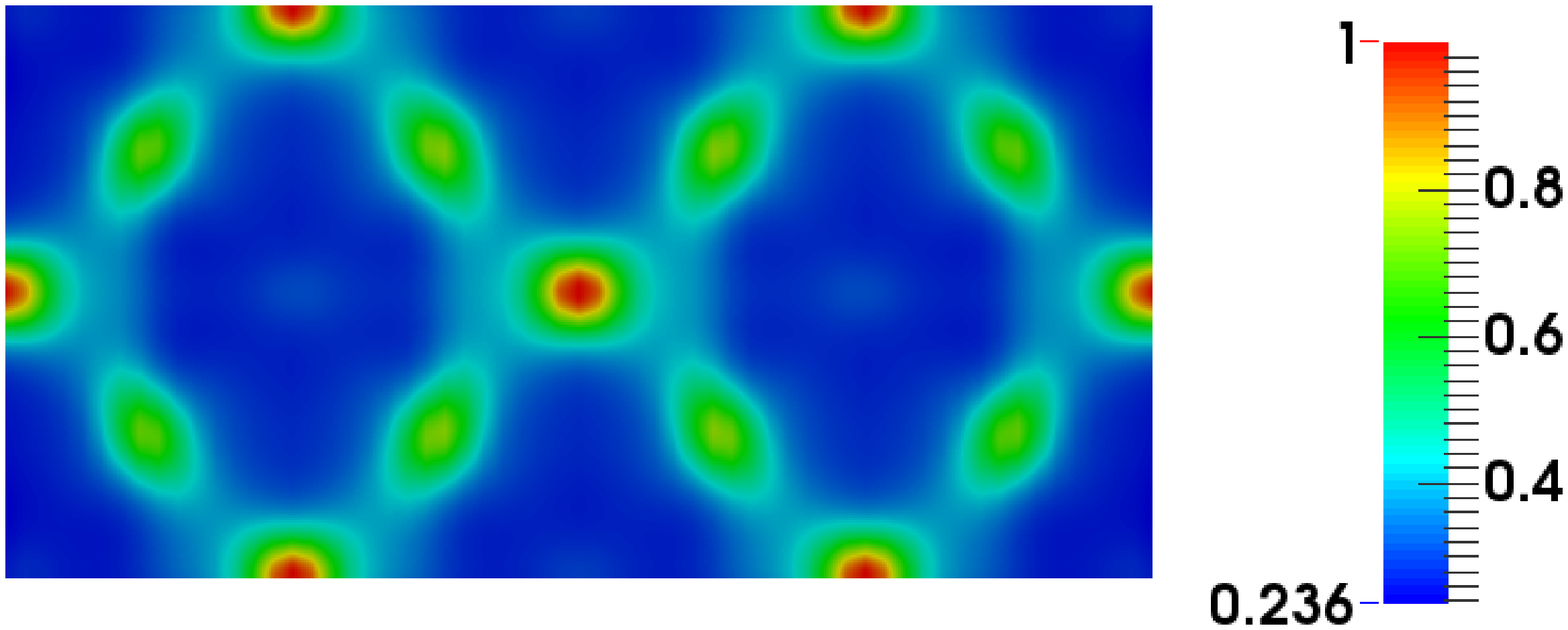}}
  \caption{\textsf{Transport-controlled mixing in cellular 
    flows:}~This figure shows the concentration profiles 
    of the product $C$ at various time levels using the 
    unconstrained and constrained weighted negatively 
    stabilized streamline diffusion LSFEM when $L_{\text{\tiny {Cell}}} 
    = 0.5$. The proposed computational framework is able 
    to produce physically meaningful solution (i.e., 
    satisfies the non-negative constraint, maximum 
    principle, and local species balance) for the 
    product $C$ in a transient cellular flow. The 
    time-step $\Delta t$ taken for the numerical simulation 
    is equal to 0.1.
    \label{Fig:2D_CellFlow_ScalDiffMixing_cCNSSDNoCons}}
\end{figure}

\begin{figure}
  \centering
  \subfigure[Product $C$ for $L_{\text{\tiny {Cell}}} = 0.25$]
    {\includegraphics[scale=0.30,clip]
    {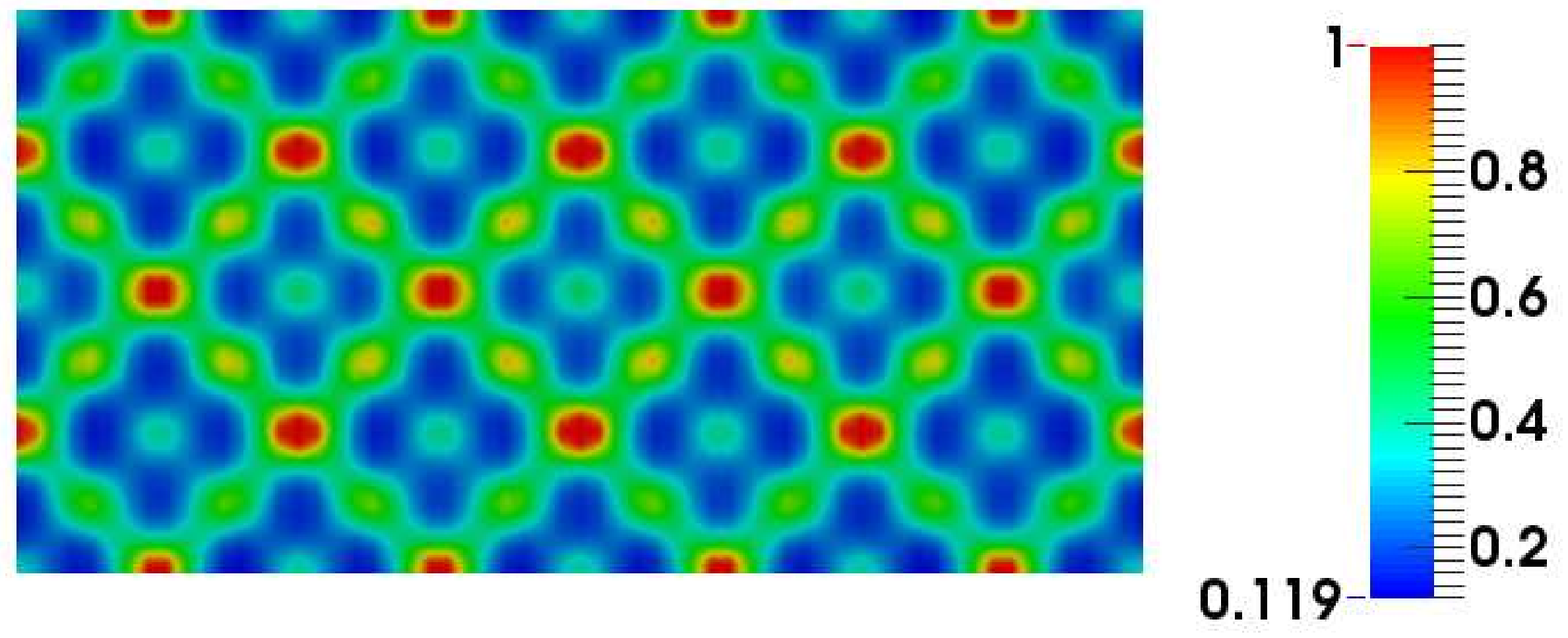}}
  \hspace{0.15in}
  \vspace{0.1in}
  \subfigure[Product $C$ for $L_{\text{\tiny {Cell}}} = 0.125$]
    {\includegraphics[scale=0.30,clip]
    {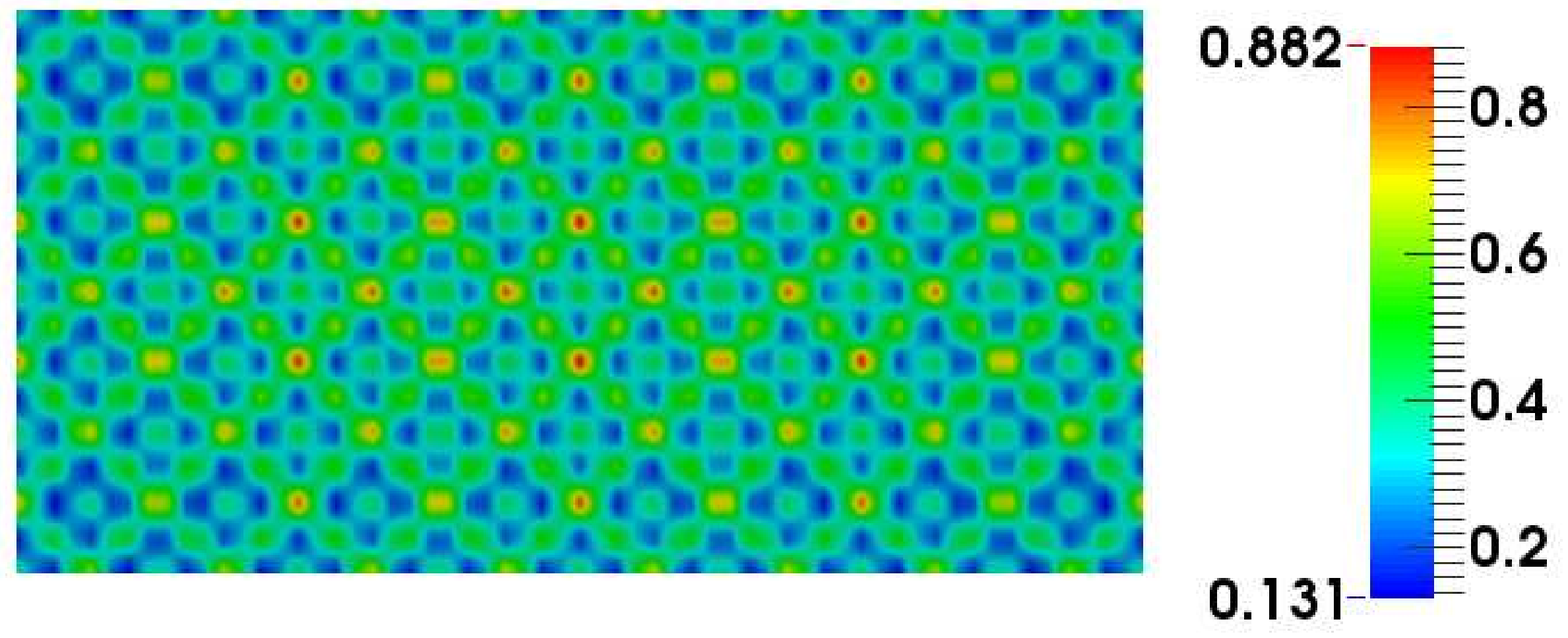}}
  \caption{\textsf{Transport-controlled mixing in cellular 
    flows (with hierarchical cell lengths):}~This 
    figure shows the concentration profiles of the product $C$ 
    at $t = 1$ using the proposed formulation with LSB and DMP 
    constraints. Analysis is performed for a series of hierarchical 
    $L_{\text{\tiny {Cell}}}$. The time-step $\Delta t$ is taken 
    to be equal to 0.1.
    \label{Fig:2D_CellFlow_ScalDiffMixing_cCNSSDConsLCells}}
\end{figure}

\begin{figure}
  \centering
  \subfigure[$\langle c_C \rangle$ vs $t$]
    {\includegraphics[scale=0.12,clip]{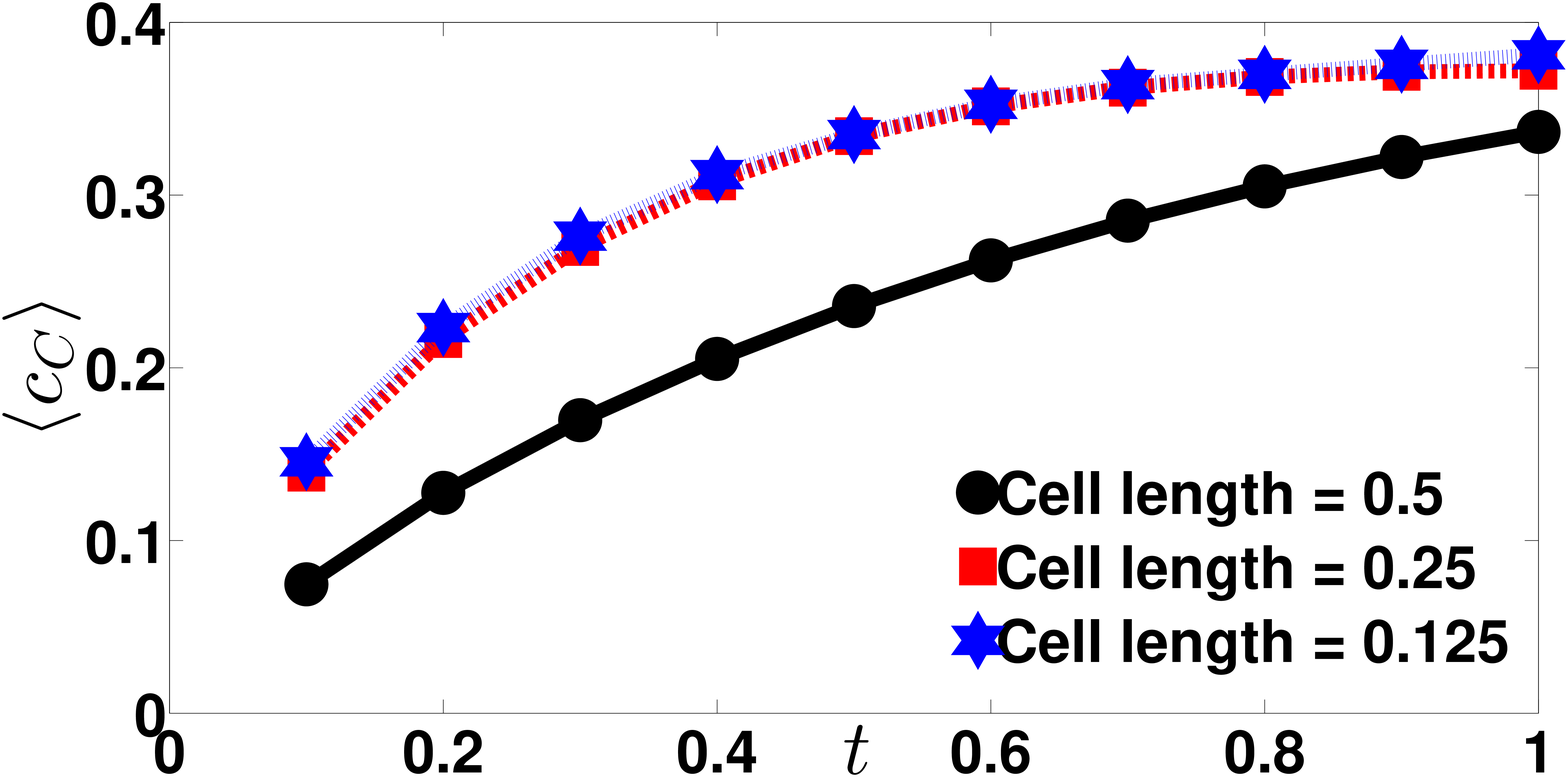}}
  \subfigure[$\langle c_C \rangle$ vs $L_{\text{\tiny {Cell}}}$]
    {\includegraphics[scale=0.12,clip]{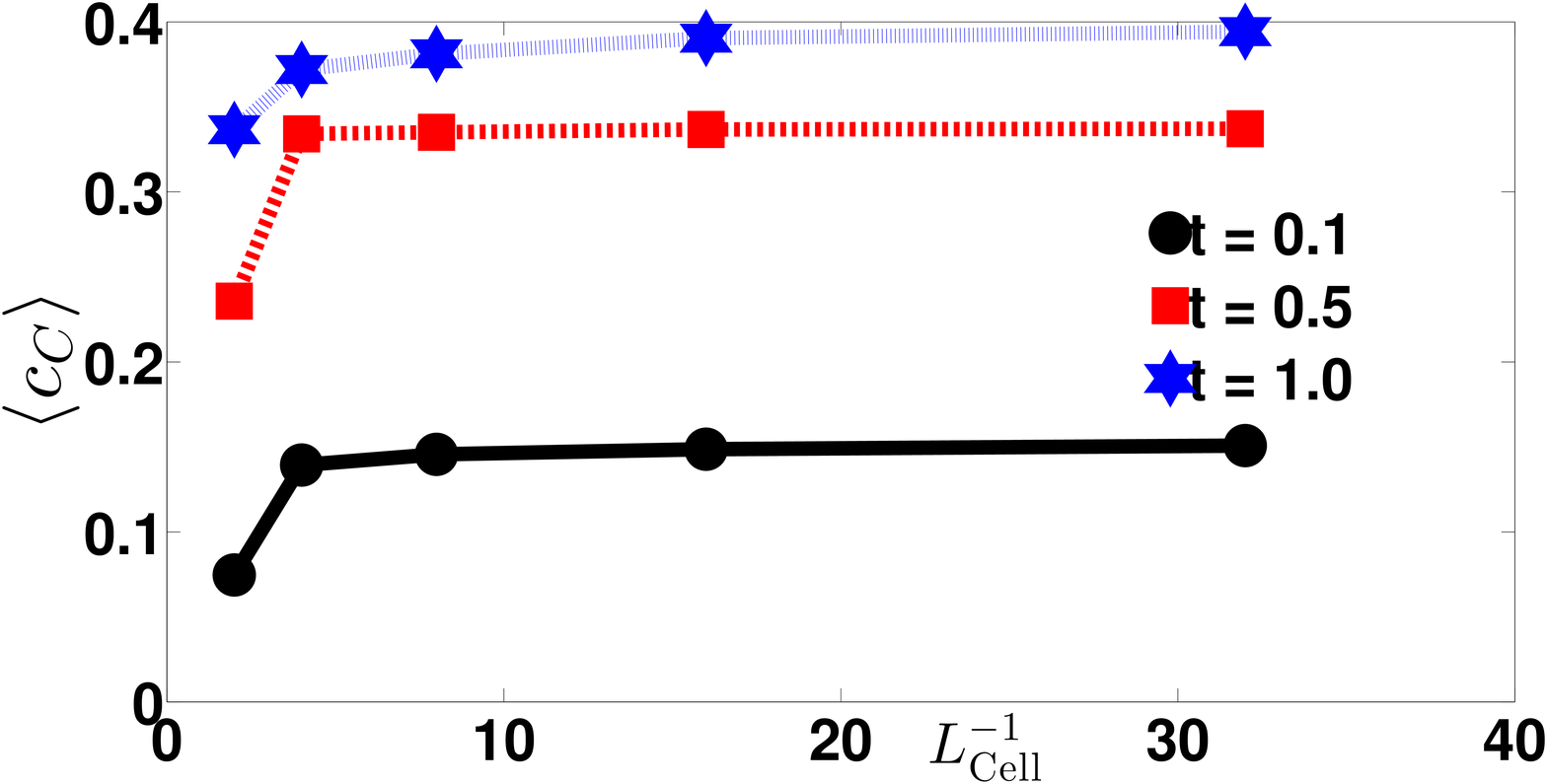}}
  \caption{\textsf{Transport-controlled mixing in cellular 
    flows (with hierarchical cell lengths):}~This 
    figure shows the average concentration of the product $C$ 
    at various times and different cell lengths. Analysis is 
    performed using the proposed formulation with LSB and DMP 
    constraints. The time-step $\Delta t$ is taken to be equal 
    to 0.1. The main inference from this numerical simulation 
    is that species mixing happens faster as $L_{\text{\tiny {Cell}}}$ 
    decreases.
    \label{Fig:2D_CellFlow_ScalDiffMixing_cCNSSDAvgLCells}}
\end{figure}

\end{document}